\newcommand\reallywidehat[1]{
\savestack{\tmpbox}{\stretchto{
  \scaleto{
    \scalerel*[\widthof{\ensuremath{#1}}]{\kern-.6pt\bigwedge\kern-.6pt}
    {\rule[-\textheight/2]{1ex}{\textheight}}WIDTH-LIMITED BIG WEDGE
  }{\textheight} 
}{0.5ex}}
\stackon[1pt]{#1}{\tmpbox}
}
\numberwithin{equation}{subsection}
\newcommand{\norm}[1]{\left\lVert#1\right\rVert}
\newtheorem{theorem}{Theorem}[section]
\newtheorem{corollary}[theorem]{Corollary}
\newtheorem{lemma}[theorem]{Lemma}
\newtheorem{definition}[theorem]{Definition}
\newtheorem{notation}[theorem]{Notation}
\newtheorem{remark}[theorem]{Remark}
\theoremstyle{definition}
\DeclareMathOperator{\sppp}{Span}
\DeclareMathOperator{\I}{Im}
\DeclareMathOperator{\Ree}{Re}
\title{ Collision of two solitons for $1d$ Nonlinear Schrodinger Equation with the same mass}
\author{Abdon Moutinho}
\thanks{ The author is a member of the math department of Georgia Tech.
Address: 686 Cherry St NW, Atlanta, GA 30332, USA.
ORCID: 0000-0003-2841-8444.
The author consents the AMS permission to publish this article. The author acknowledges the support of Georgia Tech for this research. The author also thanks the suggestions of Gong Chen and Wilhelm Schlag in his work on this problem. Electronic address: aneto8@gatech.edu}
\date{May 2024}
\begin{document}

\begin{abstract}
 We study the global dynamics of the collision of two opposite solitons having the same mass for one-dimensional Nonlinear Schrödinger models with multi-power nonlinearity. For any natural number $k,$ it is verified that if the incoming speed $v$ between the two solitary waves is small enough, then, due to the interaction force between the solitons, the two solitary waves will move away with an outcoming speed $v_{f}=v+O(v^{k})$ after the collision and the remainder of the solution will also have energy and weighted norms of order  $O(v^{k}).$ This is applied to the one-dimensional Nonlinear Schrödinger equations having an odd polynomial nonlinearity with stable solitons such as the cubic NLS, and cubic-quintic NLS.   
\end{abstract}
\maketitle

\section{Introduction}
 
In this manuscript, we consider the following one-dimensional nonlinear Schrödinger model
\begin{equation}\label{NLS3}
    \Lambda(u)(t)\coloneq iu_{t}+u_{xx} +F^{'}(\vert u\vert^{2})u=0,
\end{equation}
such that $F$ is a real polynomial satisfying a real function satisfying
\begin{equation}\label{H1}\tag{H1}
    F(0)=0,\,F^{'}(0)=0. 
\end{equation}
One particular example of \eqref{NLS3} corresponds to the one-dimensional cubic Schrödinger equation, which is given by
\begin{equation}\label{cubicnls}\tag{Cubic NLS}
    iu_{t}+u_{xx}+2\vert u \vert^{2} u=0.
\end{equation} The partial differential equation \eqref{cubicnls} is known to be completely integrable and that there are explicit formulas for its solutions including the multi-soliton solutions, see the classical work of \cite{cubicinv}. Moreover, all the solutions of \eqref{cubicnls} with finite energy are invariant under infinite invariances including the following
\begin{itemize}
    \item $u_{\mu}(t,x)=\mu u(\mu^{2}t,\mu x)$ (dilation),
    \item $\tau_{\zeta}u(t,x)=u(t,x-\zeta)$ (space translation),
    \item $\iota_{\delta}u(t,x)=u(t+\delta,x)$ (time translation),
    \item $\sigma_{\theta}u(t,x)=e^{i\theta}u(t,x)$  (phase shift),\\
    \item $\Theta_{v}u(t,x)=u(t,x-vt)e^{i(\frac{vx}{2}-\frac{vt^{2}}{4})}$ (Galilean Transformation).
\end{itemize} 
Moreover, it is standard to verify that any strong solution of \eqref{NLS3} is invariant under space translation, time translation, and phase shift.
 \par Furthermore, the integrability of \eqref{cubicnls} also implies that all the strong solutions of the cubic Schrödinger model satisfy infinite conserved quantities. However, since many models of the form \eqref{NLS3} are not integrable, we will only use the following conserved quantities 
\begin{align}\label{H}\tag{Hamiltonian}
    H(u)=\int_{\mathbb{R}}\frac{\vert u_{x} \vert^{2}}{2}-\frac{F( \vert u\vert^{2})}{2},dx,\\ \label{M}\tag{Mass}
    Q(u)=\int_{\mathbb{R}}\vert u \vert^{2}\,dx,\\ \label{P}\tag{Momentum}
    M(u)=\I \int_{\mathbb{R}} \bar{u}u_{x}\,dx. 
\end{align}
\par In addition, the partial differential equation \eqref{NLS3} can be used to describe one-dimensional nonlinear Schrodinger with double power nonlinearity such as cubic-quintic models, which for real parameters $a,\, b$ is given by the following partial differential equation
\begin{equation}\label{gcq}
    iu_{t}+u_{xx}+a\vert u\vert^{2}u+b\vert u\vert^{4}u=0.
\end{equation}
Different from \eqref{cubicnls}, this model is non-integrable and we cannot use the inverse scattering transform to describe explicitly the strong solutions of \eqref{gcq} for all time $t.$ The model \eqref{gcq} has many physical applications, see \cite{phy1}, \cite{phy2}, \cite{phy3}, \cite{phy4}, \cite{opticssoltion} and \cite{phy5} for example. 
\par Moreover, the partial differential equation \eqref{NLS3} also includes the one-dimensional Schrödinger with triple  power nonlinearity such as the following partial differential equation
\begin{equation}\label{triplee}
    iu_{t}+u_{xx}+a\vert u\vert^{2n}u+b\vert u\vert^{2m}u+c\vert u\vert^{2l}u=0,
\end{equation}
for real parameters $a,\,b,\,c.$ The study of the dynamics of soliton and multi-solitons for the model \eqref{triplee} is of great interest to Optical Physics, see for example \cite{optics}, \cite{opticstriple1}. Concerning one-dimensional Schrödinger with four-power nonlinearity, see also the article \cite{fourpower} of the field of Optical physics. 
\par Next, concerning the existence of solutions \eqref{soliton} of the partial differential equation \eqref{NLS3}, we consider the Theorem $5$ from the article \cite{solitoneq} by Berestycki and Lions.
\begin{theorem}\label{ordt} %improve condition of F in my main result
Let $\omega>0,$ if
    \begin{equation*}
    T_{\omega}(y)\coloneq -\omega \frac{y^{2}}{2}+\frac{F(\vert y\vert^{2})}{2}
\end{equation*}
satisfies for some $y_{0}>0$ 
\begin{itemize}
    \item [(H2)] $T(y_{0})=0,$ 
    \item [(H3)] $T^{'}(y_{0})>0$ and $T(y)>0$ for all $y>y_{0},$
\end{itemize}
then the ordinary differential equation
\begin{equation}\label{odeord}
\begin{cases}
{-}\phi_{\omega}^{''}=T^{'}_{\omega}(\phi_{\omega})={-}\omega\phi_{\omega}+F^{'}(\phi_{\omega}^{2})\phi_{\omega},\\
\phi_{\omega}(0)=y_{0}
\end{cases}
\end{equation}
has a unique positive solution $\phi_{\omega}\in H^{1}(\mathbb{R}).$
\end{theorem}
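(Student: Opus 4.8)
The plan is to treat \eqref{odeord} as a one-dimensional autonomous conservative system and analyze it in the phase plane, which simultaneously produces existence, the qualitative shape of the solution, and uniqueness. Writing $\phi=\phi_{\omega}$ and multiplying the equation $\phi''=-T_{\omega}'(\phi)$ by $\phi'$, one sees that the quantity
\[
E(x)=\frac{1}{2}\big(\phi'(x)\big)^{2}+T_{\omega}\big(\phi(x)\big)
\]
is constant in $x$. First I would record the decay of any candidate $H^{1}$ solution: since $H^{1}(\mathbb{R})\hookrightarrow C_{0}(\mathbb{R})$ we have $\phi(x)\to 0$ as $|x|\to\infty$, and bootstrapping through the equation (the right side $\omega\phi-F'(\phi^{2})\phi$ lies in $L^{2}$, so $\phi\in H^{2}$ and $\phi'\in H^{1}$) gives $\phi'(x)\to 0$ as well. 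Using \eqref{H1}, $T_{\omega}(0)=\tfrac12 F(0)=0$, so passing to the limit forces $E\equiv 0$. Combined with the normalization $\phi(0)=y_{0}$ and hypothesis H2 ($T_{\omega}(y_{0})=0$), this yields $\phi'(0)=0$, so the Cauchy data are completely determined.

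Next I would invoke the Cauchy--Lipschitz theorem. Because $F$ is a polynomial, the map $\phi\mapsto -T_{\omega}'(\phi)$ is smooth, so the initial value problem with $\phi(0)=y_{0}$, $\phi'(0)=0$ has a unique maximal solution; this already gives uniqueness of the positive $H^{1}$ solution once existence is shown, and it shows the solution is even in $x$ (the equation is invariant under $x\mapsto -x$ and the data are symmetric). From H3, $\phi''(0)=-T_{\omega}'(y_{0})<0$, so $x=0$ is a strict local maximum and $\phi$ decreases just to the right of the origin.

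To construct the solution and read off its behaviour I would use the first integral $E\equiv 0$, which on the descending branch reads $\phi'=-\sqrt{-2T_{\omega}(\phi)}$. For this to be meaningful I must show $T_{\omega}(\phi)<0$ for $0<\phi<y_{0}$: near $0$, \eqref{H1} gives $F(\phi^{2})=o(\phi^{2})$, hence $T_{\omega}(\phi)\sim-\tfrac{\omega}{2}\phi^{2}<0$ since $\omega>0$, and taking $y_{0}$ to be the first positive zero of $T_{\omega}$ (consistent with H2--H3) guarantees negativity on the whole interval. Separating variables gives
\[
x=\int_{\phi(x)}^{y_{0}}\frac{d\sigma}{\sqrt{-2T_{\omega}(\sigma)}},
\]
and the heart of the argument is the analysis of this integral at the two endpoints. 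Near $\sigma=y_{0}$ the simple zero guaranteed by $T_{\omega}'(y_{0})>0$ makes the integrand behave like $(y_{0}-\sigma)^{-1/2}$, which is integrable, so the maximum $y_{0}$ is attained at finite $x$ (namely $x=0$); near $\sigma=0$ the quadratic vanishing $T_{\omega}(\sigma)\sim-\tfrac{\omega}{2}\sigma^{2}$ makes the integrand behave like $(\sqrt{\omega}\,\sigma)^{-1}$, so the integral diverges, meaning $\phi(x)\to 0$ only as $x\to+\infty$ and, by linearizing the equation at $0$, with exponential rate $e^{-\sqrt{\omega}\,x}$. This exponential decay places $\phi$ in $H^{1}(\mathbb{R})$ and, together with evenness, completes the construction.

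The main obstacle I expect is the sign analysis of $T_{\omega}$ on $(0,y_{0})$ together with the endpoint behaviour of the phase-plane integral: one must ensure $T_{\omega}<0$ strictly on the open interval (so that $\phi$ is strictly monotone and never gets trapped at an interior equilibrium) and extract the precise convergence/divergence at $y_{0}$ and at $0$, since it is exactly this dichotomy --- integrable at the turning point, logarithmically divergent at the origin --- that produces a genuine localized soliton rather than a periodic orbit or a solution that fails to decay. The rigorous justification that every $H^{1}$ solution satisfies $E\equiv 0$ (the bootstrap giving $\phi'\to 0$) is the other point requiring care, as it is what links the PDE-motivated $H^{1}$ constraint to the phase-plane picture.
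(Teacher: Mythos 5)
Your proposal is correct and is essentially the standard argument: the paper does not prove Theorem \ref{ordt} at all, but quotes it as Theorem $5$ of Berestycki and Lions \cite{solitoneq}, and the proof of that result in dimension one is precisely your phase-plane analysis. The first integral you obtain from $E\equiv 0$ is exactly the identity \eqref{ode} that the paper records in the remark following the theorem, and your endpoint analysis of the quadrature (integrable square-root singularity at $y_{0}$ because the zero is simple, logarithmic divergence at $0$ because of the quadratic vanishing, hence decay at rate $e^{-\sqrt{\omega}x}$) reproduces the content of Remark \ref{asyrem}.

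One substantive point, which you identified and resolved correctly, deserves emphasis: the hypotheses H2--H3 as literally stated do not imply $T_{\omega}<0$ on $(0,y_{0})$, and without that property the conclusion itself is false, not merely your proof of it. For instance, take $h(s)=s(s-1)(s-2)(s-4)$, $\omega=8$, and $F(s)=h(s)+8s$, so that $F(0)=F'(0)=0$ and $T_{\omega}(y)=h(y^{2})/2$. Then $y_{0}=2$ satisfies H2 and H3 (indeed $T_{\omega}'(2)=2h'(4)=48>0$ and $T_{\omega}(y)>0$ for $y>2$), yet $T_{\omega}>0$ on $(1,\sqrt{2})$; the solution of \eqref{odeord} with $\phi_{\omega}(0)=2$, $\phi_{\omega}'(0)=0$ descends only until $\phi_{\omega}=\sqrt{2}$, where $T_{\omega}=0$ and $T_{\omega}'<0$, so it turns around there and is periodic, and consequently no positive $H^{1}$ solution with $\phi_{\omega}(0)=2$ exists. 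So the theorem is only true under the reading you adopt --- that $y_{0}$ is the \emph{first} positive zero of $T_{\omega}$, which is the hypothesis actually imposed in \cite{solitoneq} --- and your explicit insertion of that assumption repairs an imprecision in the statement rather than concealing a gap in your own argument.
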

\begin{remark}\label{asyrem}
    Indeed, $\phi_{\omega}$ shall be an even function and under the assumption that $F$ satisfies \eqref{H1}, we can verify that there is a real number $a_{{+}\infty}>0$ satisfying 
    \begin{equation*}
          \lim_{x\to \pm \infty} \phi_{\omega}(x)e^{\pm \sqrt{\omega} x}=a_{{+} \infty}.
    \end{equation*}
\end{remark}
\begin{remark}
\par Furthermore, from the article \cite{solitoneq}, $\phi_{\omega}$ shall satisfy for all $x\geq 0$
\begin{equation}\label{ode}
    \phi^{'}_{\omega}(x)={-}\sqrt{\omega \phi_{\omega}(x)^{2}-F(\phi^{2}_{\omega})}.
\end{equation}
Consequently, using the change of variables $y(x)=e^{{-}\sqrt{\omega}x}$ and considering $\mathcal{G}_{\omega}(y)\coloneqq e^{\sqrt{\omega}x}\phi_{\omega}(x),$ the ordinary differential equation \eqref{ode} can be rewritten for $x>1$ as
\begin{equation}\label{ode2}
\begin{cases}
    \frac{d}{dy}\mathcal{G}_{\omega}(y)=\frac{1}{y}\left(\left[\mathcal{G}_{\omega}(y)^{2}-\frac{F\left(\mathcal{G}_{\omega}(y)^{2}y^{2}\right)}{y^{2}\omega}\right]^{\frac{1}{2}}-\mathcal{G}_{\omega}(y)\right),\\
    \mathcal{G}_{\omega}(0)=a_{+\infty}.
\end{cases}    
\end{equation}
Consequently, since $F$ satisfies \eqref{H1}, the function
\begin{equation*}
    H(z,y)=\frac{1}{y}\left(\left[z^{2}-\frac{F\left(z^{2}y^{2}\right)}{y^{2}\omega}\right]^{\frac{1}{2}}-z\right)
\end{equation*}
has an analytic extension over some set of the form $\{(z,y)\in\mathbb{C}^{2}\vert\,\vert z-a_{+\infty}\vert+\vert y \vert<\delta\},$ we can verify using Piccard-Lindelöf Theorem that there exists a unique holomorphic function $\mathcal{G}_{\omega}(y)$ satisfying \eqref{ode2} over a set $\{\vert y\vert\leq \delta\}.$ This implies for a $\delta_{\omega}>0$ depending on $\omega$ and $F$ the existence of a real analytic function $\mathcal{P}$ satisfying $\mathcal{P}(0)=0,\,\mathcal{P}^{'}(0)\neq 0$ and
\begin{equation}\label{domega}
    \phi_{\omega}(x)=\mathcal{P}\left(e^{-{\sqrt{\omega}\vert x\vert}}\right) \text{, if $\vert x\vert\geq \delta_{\delta_{\omega}}.$}
\end{equation}
Moreover, since $\phi_{\omega}$ also satisfies \eqref{odeord}, we can verify by induction that $\mathcal{P}$ is an odd analytic function.
\end{remark}
In this paper, we are only going to consider the partial differential equations \eqref{NLS3} such that there exists $\omega>0$ satisfying Theorem \ref{ordt}.
As a consequence, if $\omega>0$ satisfies all the hypotheses in the statement of Theorem \ref{ordt}, then the following function
\begin{equation}\label{soliton}\tag{Standing wave}
    u(t,x)=e^{i\omega t}\phi_{\omega}(t)
\end{equation}
is a strong solution of the partial differential equation \eqref{NLS3} in the space $C(\mathbb{R},H^{1}(\mathbb{R},\mathbb{C})).$  Moreover, under the hypotheses of Theorem \ref{ordt}, we can obtain using the Galilean transformation in the solution \eqref{soliton} the following set of solutions
\begin{equation*}
    u(t,x)=\phi_{\omega}(x-vt-y)e^{i(\omega t+\gamma)}e^{i\left(\frac{v x}{2}-\frac{v^{2}t}{4}\right)},
\end{equation*}
for any $v,\,y,\,\gamma\in\mathbb{R},$ which are denominated by solitary waves.
\par The stability theory of the solitons for Nonlinear Schrödinger models was studied by Grillakis, Shatah, and Strauss in the articles \cite{solisch}, \cite{solisch2}. In addition, there have been several research about the asymptotic stability of solitons for one-dimensional Nonlinear Schrödinger models. See for example \cite{asympt0}, \cite{asympt1}, \cite{asympt2}, \cite{asympt3}, \cite{asympt4}, \cite{asympt5} and \cite{asympt6}.
\par Moreover, it was proved the following proposition in article \cite{solisch}.
\begin{lemma}\label{L0}
If the function $\phi_{\omega}(x)$ satisfies
\begin{equation}\label{H4}\tag{H4}
\frac{d}{d\omega}\int_{\mathbb{R}}\vert \phi_{\omega}(x) \vert^{2}>0,
\end{equation}
for some $\omega=\omega_{1},$ then the soliton $\phi_{\omega}(x)$ is orbital stable in the space $H^{1}(\mathbb{R}).$ Otherwise, if
\begin{equation}\label{H3}
\frac{d}{d\omega}\int_{\mathbb{R}}\vert \phi_{\omega}(x) \vert^{2}<0,
\end{equation}
then the soliton $\phi_{\omega}$ is unstable.
\end{lemma}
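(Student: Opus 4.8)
The plan is to follow the variational framework of Grillakis, Shatah, and Strauss. First I would recognize the standing wave $e^{i\omega t}\phi_{\omega}$ as a critical point of the action functional $\mathcal{S}_{\omega}(u)=H(u)+\frac{\omega}{2}Q(u)$, since the Euler--Lagrange equation $\mathcal{S}_{\omega}^{'}(\phi_{\omega})=-\phi_{\omega}^{''}-F^{'}(\phi_{\omega}^{2})\phi_{\omega}+\omega\phi_{\omega}=0$ is exactly \eqref{odeord}. I would then introduce the scalar function $d(\omega)\coloneq\mathcal{S}_{\omega}(\phi_{\omega})$ and observe, using that $\phi_{\omega}$ is a critical point, that $d^{'}(\omega)=\frac{1}{2}Q(\phi_{\omega})=\frac{1}{2}\int_{\mathbb{R}}\vert\phi_{\omega}\vert^{2}\,dx$, whence $d^{''}(\omega)=\frac{1}{2}\frac{d}{d\omega}\int_{\mathbb{R}}\vert\phi_{\omega}\vert^{2}\,dx$. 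In this way the hypotheses \eqref{H4} and \eqref{H3} translate into the strict convexity ($d^{''}>0$) or strict concavity ($d^{''}<0$) of $d$, which is precisely the quantity governing stability in this theory.

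Next I would carry out the spectral analysis of the Hessian $\mathcal{S}_{\omega}^{''}(\phi_{\omega})$. Writing a perturbation as $\eta=\eta_{1}+i\eta_{2}$ with $\eta_{1},\eta_{2}$ real, the quadratic form decouples as $\langle L_{+}\eta_{1},\eta_{1}\rangle+\langle L_{-}\eta_{2},\eta_{2}\rangle$, where $L_{-}=-\partial_{xx}+\omega-F^{'}(\phi_{\omega}^{2})$ and $L_{+}=-\partial_{xx}+\omega-F^{'}(\phi_{\omega}^{2})-2\phi_{\omega}^{2}F^{''}(\phi_{\omega}^{2})$. Since $\phi_{\omega}>0$ solves $L_{-}\phi_{\omega}=0$, Perron--Frobenius/Sturm--Liouville theory identifies $\phi_{\omega}$ as the ground state of $L_{-}$, so $L_{-}\geq 0$ with kernel $\mathrm{span}\{\phi_{\omega}\}$. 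Differentiating \eqref{odeord} in $x$ gives $L_{+}\phi_{\omega}^{'}=0$, and because $\phi_{\omega}^{'}$ has exactly one zero it is the second eigenfunction of $L_{+}$; consequently $L_{+}$ has exactly one simple negative eigenvalue and kernel $\mathrm{span}\{\phi_{\omega}^{'}\}$.

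The decisive step is to tie the sign of the constrained form to $d^{''}(\omega)$. Differentiating \eqref{odeord} in $\omega$ yields $L_{+}\partial_{\omega}\phi_{\omega}=-\phi_{\omega}$, so $\langle L_{+}^{-1}\phi_{\omega},\phi_{\omega}\rangle=-\langle\partial_{\omega}\phi_{\omega},\phi_{\omega}\rangle=-\frac{1}{2}\frac{d}{d\omega}\int_{\mathbb{R}}\vert\phi_{\omega}\vert^{2}\,dx=-d^{''}(\omega)$. I would then invoke the standard fact that a self-adjoint operator with a single negative eigenvalue and one-dimensional kernel is positive definite on the subspace orthogonal to both its kernel and a fixed vector $\psi$ precisely when $\langle L_{+}^{-1}\psi,\psi\rangle<0$. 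Taking $\psi=\phi_{\omega}$ (the $L^{2}$-gradient of the mass constraint), this shows that under \eqref{H4} the form $\mathcal{S}_{\omega}^{''}(\phi_{\omega})$ is coercive on the codimension-three subspace orthogonal to the symmetry and modulation directions $\mathrm{span}\{i\phi_{\omega},\ \phi_{\omega}^{'},\ \phi_{\omega}\}$.

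Finally I would convert coercivity into orbital stability. Using conservation of $H$ and $Q$, a modulation decomposition $u(t)=e^{i\theta(t)}(\phi_{\omega}+\eta)(\cdot-y(t))$ with $\eta(t)$ orthogonal to the three directions above, and the expansion $\mathcal{S}_{\omega}(u)-\mathcal{S}_{\omega}(\phi_{\omega})=\frac{1}{2}\langle\mathcal{S}_{\omega}^{''}(\phi_{\omega})\eta,\eta\rangle+o(\norm{\eta}_{H^{1}}^{2})$, the lower bound controls $\norm{\eta(t)}_{H^{1}}$ by its initial size uniformly in time, which is orbital stability. In the concave case \eqref{H3}, the constrained form has a genuine negative direction, and one builds, via the convexity argument of Grillakis--Shatah--Strauss, a Lyapunov functional whose second time-derivative is bounded below by a positive multiple of $-d^{''}(\omega)$, forcing the solution to leave every tubular neighborhood of the orbit and yielding instability. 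I expect the \emph{main obstacle} to be the third step: establishing the one-negative-eigenvalue structure of $L_{+}$ together with the sign relation $\langle L_{+}^{-1}\phi_{\omega},\phi_{\omega}\rangle=-d^{''}(\omega)$, as this is where the abstract spectral hypotheses must be matched to the concrete quantity $\frac{d}{d\omega}\int_{\mathbb{R}}\vert\phi_{\omega}\vert^{2}\,dx$.
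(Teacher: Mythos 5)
The paper gives no proof of this lemma at all: it is quoted as a known result from Grillakis--Shatah--Strauss \cite{solisch} (together with Ohta's \cite{ohta} for the double-power case), so there is nothing internal to compare against. Your proposal is a correct reconstruction of precisely that cited argument --- the action $\mathcal{S}_{\omega}=H+\frac{\omega}{2}Q$, the identity $d^{''}(\omega)=\frac{1}{2}\frac{d}{d\omega}\int\vert\phi_{\omega}\vert^{2}$, the spectral structure of $L_{\pm}$ (which match the real and imaginary parts of the paper's operator $S_{\omega}$), the relation $\left\langle L_{+}^{-1}\phi_{\omega},\phi_{\omega}\right\rangle=-d^{''}(\omega)$ with the constrained coercivity lemma, and the GSS convexity argument for instability --- so it follows essentially the same approach as the source the paper relies on.
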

\par Furthermore, in the article \cite{ohta}, using Lemma \ref{L0}, Ohta studied the stability and instability of one-dimensional Nonlinear Schrödinger models with double-power nonlinearity such as the one-dimensional cubic-quintic Schrödinger equation given in \eqref{gcq}. 
\par We can now state our main theorems.

\begin{theorem}\label{main}
Let $F$ be any real polynomial and $\omega>0$ both satisfying hypotheses $(H1),\,(H2),\,(H3),\,(H4).$ There exists constants $C>0,\,M>1$ and, for any $k\in\mathbb{N},$  there is $\delta_{k}\in(0,1)$ such that for any $v_{1}\in\mathbb{R}$ if $0<v<\delta_{k},$ and $u$ is the unique solution of \eqref{NLS3} 
satisfying 
\begin{align}\label{initialcondition}
\lim_{t\to{+\infty}}\norm{e^{{-}i\omega t}u(t,x)-\phi_{\omega}(x-(v+v_{1})t)e^{i\left(\frac{(v+v_{1})x}{2}-\frac{(v+v_{1})^{2}t}{4}\right)}+\phi_{\omega}(x+(v-v_{1})t)e^{i\left(-\frac{(v-v_{1})x}{2}-\frac{(v-v_{1})^{2}t}{4}\right)}}_{H^{1}}e^{ct}\\ \nonumber=0,
\end{align}
  for a $c>0,$
  then there are functions $\gamma,\,\zeta$ of class $C^{1}$ satisfying for all $t<\frac{{-}2Ck\vert\ln{v}\vert}{v}$
\begin{align}\nonumber
    \norm{e^{{-}i\gamma(t)}u(t)-\left[\phi_{\omega}(x-\zeta(t)-v_{1}t)e^{i\left({-}\frac{(v-v_{1})x}{2}-\frac{v_{1}^{2} t}{4}\right)}-\phi_{\omega}(x+\zeta(t)-v_{1}t)e^{i\left(\frac{(v+v_{1})x}{2}-\frac{v_{1}^{2}t}{4}\right)}\right]}_{L^{2}}<&v^{k},\\ \label{almoelres}
    \norm{\frac{\partial}{\partial x}\left[e^{{-}i\frac{v_{1}x}{2}}u(t)-\left(\phi_{\omega}(x-\zeta(t)-v_{1}t)e^{i\left(\gamma(t)-\frac{vx}{2}-\frac{v_{1}^{2} t}{4}\right)}-\phi_{\omega}(x+\zeta(t)-v_{1}t)e^{i\left(\gamma(t)+\frac{vx}{2}-\frac{v_{1}^{2}t}{4}\right)}\right)\right]}_{L^{2}}<& v^{k},
\end{align}
and 
\begin{align}\label{modass}
    \left\vert \dot \gamma(t)-\omega+\frac{v^{2}}{4}\right\vert+
    \vert \dot \zeta(t)+v \vert<v^{k},
\end{align}
for all $t\leq \frac{{-}\vert\ln{v}\vert^{\frac{4}{3}}}{v}.$
\par Moreover, for any $l\in\mathbb{N},$ there exists $0<\delta_{k,l}<\delta_{k}$ such that if $0<v<\delta_{k,l},$ then
\begin{equation}\label{weightednorm}
    \norm{(1+\vert x\vert^{l})\left[e^{{-}i\gamma+\frac{v_{1}t^{2}}{4}}u(t)-\phi_{\omega}(x-\zeta(t)-v_{1}(t))e^{i\left(\frac{{-}(v+v_{1})x}{2}\right)}+\phi_{\omega}(x+\zeta(t)-v_{1}t)e^{i\left(\frac{(v+v_{1})x}{2}\right)}\right]}_{L^{2}}<v^{k}
\end{equation}
for any $\frac{{-}2\vert\ln{v}\vert^{\frac{4}{3}}}{v}\leq t\leq \frac{{-}\vert\ln{v}\vert^{\frac{4}{3}}}{v}.$
\end{theorem}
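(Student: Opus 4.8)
The plan is to track the solution $u$ backward in time from its prescribed asymptotic state as $t\to+\infty$ by combining a high-order two-soliton ansatz with modulation theory and a coercive energy functional. First I would construct the approximate solution: writing the configuration as a difference of two boosted copies of $\phi_{\omega}$ centered at $\pm\zeta(t)+v_{1}t$ with phase $\gamma(t)$ and relative speed $2v$, I would solve iteratively for a hierarchy of corrections $w_{1},\dots,w_{N}$, each gaining one factor of the mutual interaction, so that the resulting ansatz $\varphi$ satisfies $\Lambda(\varphi)=O(v^{N})$ in $H^{1}$, uniformly on the time interval connecting the asymptotic regime $t\to+\infty$ to the pre-collision region. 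The size of the interaction is governed by the overlap of the exponential tails of $\phi_{\omega}$ from Remark \ref{asyrem}, namely $e^{-2\sqrt{\omega}\,\zeta(t)}$; since $\dot\zeta\approx-v$ gives $\zeta(t)\approx-vt$, on the region $t<\frac{-2Ck\vert\ln v\vert}{v}$ this overlap is already $\lesssim v^{k}$, which is what fixes the threshold and forces $C$ to be comparable to $\frac{1}{\sqrt{\omega}}$.

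Next I would set up the modulation decomposition. Using the implicit function theorem near the two-soliton manifold, I would write $e^{-i\gamma(t)}u$ as the ansatz $\varphi$ plus a remainder $\eta$ subject to orthogonality conditions against the symmetry generators (phase, translation, scaling, and Galilean boost) attached to each bubble; these conditions determine $\gamma,\zeta$ and the auxiliary frequency and velocity parameters as $C^{1}$ functions of $t$. Differentiating the orthogonality relations yields a closed ODE system whose leading terms are $\dot\gamma=\omega-\tfrac{v^{2}}{4}+O(\cdot)$ and $\dot\zeta=-v+O(\cdot)$, with remainders controlled by $\norm{\eta}_{H^{1}}$ and the interaction $e^{-2\sqrt{\omega}\zeta}$; this is exactly what delivers \eqref{modass} once the remainder is shown to be $O(v^{k})$. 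To control $\eta$ I would use the conserved quantities \eqref{H}, \eqref{M}, \eqref{P} to build a Lyapunov functional localized on each bubble, whose second variation is coercive on the orthogonal subspace by the stability hypothesis \eqref{H4} and the Grillakis--Shatah--Strauss criterion (Lemma \ref{L0}). Its time derivative is a total derivative up to interaction and ansatz-error terms, so integrating backward from $t=+\infty$, where $\eta$ vanishes by \eqref{initialcondition}, and running a bootstrap argument, I would obtain $\norm{\eta(t)}_{H^{1}}\lesssim v^{k}$ on the stated interval, which yields \eqref{almoelres}.

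Finally, for the weighted estimate \eqref{weightednorm} I would propagate the spatial localization of $\eta$ by commuting the weight $(1+\vert x\vert^{l})$ through the linearized flow and using a localized virial/monotonicity identity: because the source terms driving $\eta$ are exponentially localized near the two soliton centers and the asymptotic data at $+\infty$ is itself localized, the weighted norm grows at most polynomially in $\vert t\vert$, which is precisely why it can only be closed on the shorter window $\frac{-2\vert\ln v\vert^{4/3}}{v}\le t\le\frac{-\vert\ln v\vert^{4/3}}{v}$. I expect the main obstacle to be the passage through the collision region $\vert t\vert\lesssim\frac{\vert\ln v\vert}{v}$, where the two bubbles overlap, the interaction is no longer small, and the coercivity of the energy degenerates because the translation and phase directions of the two solitons nearly coincide; handling this requires the ansatz to be accurate to an order $N\gg k$ and a careful separation of the localized energies so that the cross terms do not destroy the smallness. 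The whole scheme hinges on balancing the exponential smallness $e^{-2\sqrt{\omega}\zeta}$ of the interaction against the algebraic factors $v^{-1}$ and $\vert\ln v\vert$ coming from the modulation equations and from integrating over a time interval of length $\sim\vert\ln v\vert/v$.
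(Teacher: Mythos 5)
Your overall skeleton for the middle regime (high-order two-soliton ansatz, modulation decomposition, coercive Lyapunov functional, Gronwall bootstrap) does match Sections 3--4 of the paper, but the plan to close everything by ``integrating backward from $t=+\infty$ \dots and running a bootstrap'' hides the main difficulty and, as stated, fails. A Gronwall-type energy estimate of the kind you set up loses a factor $\exp\left(Cv\vert t-T_{0}\vert/\ln{\frac{1}{v}}\right)$ and can therefore only be closed on windows of length $O\left(\left(\ln{\frac{1}{v}}\right)^{4/3}/v\right)$ — this is exactly the restriction appearing in Theorem \ref{energyestimatetheor}; it can neither be started at $t=+\infty$ nor produce the bounds \eqref{almoelres}--\eqref{modass} uniformly on the infinite interval $t\leq -\left(\ln{\frac{1}{v}}\right)^{4/3}/v$ that the statement demands. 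The paper instead runs three different arguments in three regimes: (i) for $t\geq T\sim \frac{4\ln(1/v)}{\sqrt{\omega}v}$, a fixed-point/contraction argument in time-exponentially weighted spaces (Theorem \ref{uniq}, proved in the appendix), which moreover supplies the spatially weighted closeness \eqref{pureweight} used as the seed \eqref{hypoo} of the energy estimate — note that \eqref{initialcondition} alone gives only unweighted $H^{1}$ decay, so your weighted bootstrap has no admissible starting data; (ii) the energy estimate of Theorem \ref{energyestimatetheor} on the finite window around the turning time; and (iii) for all $t\leq -\left(\ln{\frac{1}{v}}\right)^{4/3}/v$, the orbital-stability Theorem \ref{orbitstab}, based on monotonicity of the half-mass, half-energy and half-momentum of odd solutions, which gives bounds uniform in $t$ with no Gronwall loss. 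Your proposal contains no substitute for (i) or (iii). Relatedly, you never perform the reduction to $v_{1}=0$ of Remark \ref{v1=0}; it is this reduction (combined with uniqueness) that makes the solution odd in $x$, and oddness is the engine of regime (iii), since it is what gives $\frac{d}{dt}M^{+}(\psi(t))=\frac{\vert\psi_{x}(t,0)\vert^{2}}{2}\geq 0$.

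Your picture of the collision region is also incorrect in a way that matters. For equal masses and opposite phases the interaction is repulsive: the separation follows $\ddot d=Ce^{-2\sqrt{\omega}d}$ with $C>0$, whose solution \eqref{d0} satisfies $d(t)\gtrsim \frac{1}{\sqrt{\omega}}\ln{\frac{1}{v}}$ for all $t$, so the two solitons never overlap, the interaction stays $O(v^{2})$ uniformly through the ``collision,'' and the coercivity of the localized quadratic form never degenerates (Lemma \ref{coerc}). The obstacle you anticipate — overlapping bubbles, degenerate coercivity, uncontrollable cross terms — does not occur here, and your proposed remedy (ansatz accuracy $N\gg k$ plus separation of localized energies) would not help if it did, because no coercive functional exists in a genuinely overlapping regime. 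What actually makes the whole scheme work is precisely this logarithmic lower bound on the separation, which is a dynamical output of the approximate-solution construction (Theorem \ref{app lemma} and Remark \ref{remestimate}), not something that can be assumed or bypassed.
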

\begin{remark}
 From Theorems \ref{energyestimatetheor}, \ref{app lemma} and Lemma \ref{lambdasol}, we have that $\zeta(t)=-vt+c_{\omega}\ln{\frac{1}{v}}+O(v^{2})$ as $t$ approach ${-}\infty$ for some real constant $c_{\omega}.$ The shift $c_{\omega}\ln{\frac{1}{v}}$ follows from the repulsive force between the two solitons having opposite phases, see Lemma \ref{lambdasol}.     
\end{remark}
\begin{remark}
Concerning the case where the two solitons have a small difference in their masses, the behavior of the solution is expected to be different because of the symmetry break of the ordinary differential equations associated with the parameters $\zeta,\,\gamma,\,\omega $
 and $v.$ This argument was used in \cite{collision1} by Martel and Merle to describe the collision of two solitons for the quartic $gKdV,$ see also the work \cite{perelman} of Perelman to describe the collision of two different solitons for \eqref{NLS3} in the case where $F$ is $C^{2}$ and $F^{''}(0)\neq 0.$
 \end{remark}
\begin{remark}
 From the estimates \eqref{weightednorm} considered in Theorem \ref{main}, we might expect to be able to verify new phenomena as the scattering of the remainder for example. In the article \cite{asympt0} of Collot and Germain, similar hypotheses in the weighted norm were considered for the proof of the asymptotic stability of a single soliton for \eqref{NLS3}.   
\end{remark}
\begin{remark}\label{v1=0}
Moreover, because of the Galilean Transformation invariance, it is enough to prove the Theorem \ref{main} when $v_{1}=0.$  \end{remark}
\begin{theorem}\label{uniq}
  There is $\delta_{0}>0$ such that if $0<v<\delta_{0},$ then the solution of \eqref{NLS3} satisfying for $T>1,\,c>0$ and any $t\geq T$
\begin{equation}\label{moq1}
    \norm{u(t,x)-\phi_{\omega}(x-vt)e^{i(\omega t+\frac{vx}{2}-\frac{v^{2}t}{4})}+\phi_{\omega}(x+vt)e^{i(\omega t+\frac{{-}vx}{2}-\frac{v^{2}t}{4})}}_{H^{1}}=O(e^{{-}c\vert t\vert}),
\end{equation}
is unique. 
Furthermore, there is $\delta_{l,m}>0$ such that if $0<v<\delta_{l,m},$ then the solution $u$ satisfies for any $m,\,l \in\mathbb{N}$ and $t\geq 10 (l+m+1) T,$ then
\begin{equation}\label{pureweight}
    \norm{(1+x^{2})^{\frac{l}{2}}\left[u(t,x)-\phi_{\omega}(x-vt)e^{i(\omega t+\frac{vx}{2}-\frac{v^{2}t}{4})}+\phi_{\omega}(x+vt)e^{i(\omega t+\frac{{-}vx}{2}-\frac{v^{2}t}{4})}\right]}_{H^{m}}
    =O\left(e^{{-}c\vert t\vert}\right).
\end{equation}
\end{theorem}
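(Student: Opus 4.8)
The plan is to prove the two assertions of Theorem \ref{uniq} separately: uniqueness by a backward-in-time energy argument, and the weighted bound \eqref{pureweight} by a weighted-norm bootstrap performed on the already-identified solution. By Remark \ref{v1=0} (Galilean invariance) it suffices to work with the symmetric configuration already present in \eqref{moq1}. Suppose $u_1,u_2$ are two solutions of \eqref{NLS3} both satisfying \eqref{moq1}; write $R(t,x)$ for the explicit two-soliton profile appearing in \eqref{moq1} and $\epsilon_j=u_j-R$, so that $\norm{\epsilon_j(t)}_{H^1}=O(e^{-ct})$ as $t\to+\infty$ while the two bumps are separated by distance $\sim 2vt$. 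First I would introduce modulation parameters for each $u_j$, decomposing $u_j$ as a modulated sum of two solitons plus a remainder satisfying suitable orthogonality conditions; the hypothesis \eqref{moq1} forces all modulation parameters to converge exponentially to the explicit ones and the orthogonal remainders to be $O(e^{-ct})$.

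The heart of the argument is an energy estimate for the difference $w=u_1-u_2$, which solves the linear equation $iw_t+w_{xx}+\big[F'(\vert u_1\vert^2)u_1-F'(\vert u_2\vert^2)u_2\big]=0$, i.e.\ a Schrödinger equation with a bounded, soliton-shaped potential. Because solitons do not disperse, a pure Strichartz/Duhamel contraction cannot close (the potential is not small in any Strichartz norm); instead I would build a localized Lyapunov functional $\mathcal{F}(t)$ from the conserved quantities \eqref{H}, \eqref{M}, \eqref{P}, localized by cutoffs that separate the two solitons, evaluated on $w$ and corrected by the differences of the modulation parameters. Using the orbital stability hypothesis \eqref{H4} together with the spectral structure of the linearized operator at a single soliton, $\mathcal{F}$ is coercive, $\mathcal{F}(t)\gtrsim \norm{w(t)}_{H^1}^2$ modulo the modulation-difference terms, which are themselves controlled by $\norm{w}_{H^1}$. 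The Hamiltonian structure makes the leading terms in $\frac{d}{dt}\mathcal{F}$ cancel, leaving an error governed by the soliton interaction and by the modulation equations; since the separation grows linearly, this error is $O(\mu(t)\mathcal{F}(t))$ with $\mu(t)\lesssim e^{-cvt}$ integrable in time.

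With the resulting lower bound $\frac{d}{dt}\mathcal{F}(t)\ge -C\mu(t)\mathcal{F}(t)$ and $\mathcal{F}(t)\to 0$ as $t\to+\infty$, a backward Gronwall argument gives $\mathcal{F}(t)\le \exp\big(C\int_t^{\infty}\mu\big)\lim_{T'\to\infty}\mathcal{F}(T')=0$, hence $w\equiv 0$ on $[T,\infty)$; uniqueness on all of $\mathbb{R}$ then follows from the (backward) uniqueness of the Cauchy problem for \eqref{NLS3}. The main obstacle is establishing the coercivity of $\mathcal{F}$ for the two-soliton: the configuration carries twice the neutral symmetry directions of a single soliton, so the cutoff localization, the cross terms between the two bumps, and the modulation corrections must all be arranged so that the stability of each individual soliton (from \eqref{H4}) can be summed without losing control, while simultaneously keeping the time-derivative error integrable in $t$.

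Finally, for the weighted and higher-regularity estimate \eqref{pureweight}, having identified the unique solution I would propagate the weighted norms $\norm{(1+x^2)^{l/2}\epsilon}_{H^m}$ of the remainder $\epsilon=u-R$ by testing the equation for $\epsilon$ against $(1+x^2)^{l}$ and its derivatives and running weighted energy estimates. Since $R$ is smooth and exponentially localized, and the linear Schrödinger flow propagates weighted regularity with at most polynomial-in-time growth, the $H^1$ decay $O(e^{-ct})$ bootstraps to the weighted $H^m$ decay, each additional weight or derivative costing a fixed amount of time and thereby explaining the threshold $t\ge 10(l+m+1)T$.
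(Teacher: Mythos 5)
Your overall architecture is recognizably different from the paper's: the paper (following Jendrej--Chen) first proves a linear solvability result in exponentially-weighted-in-time spaces (Lemma \ref{edl}), constructs the solution by Picard iteration in $H^{1}_{T,\frac{3\sqrt{\omega}v}{4}}$, and then gets uniqueness by feeding the difference $r_{d}=e^{-i\omega t}(u-u_{0})$ back into the same linear estimate with source $N(u)-N(u_{0})$, obtaining $k\norm{r_{d}}^{2}_{H^{1}_{T,c}}\leq K e^{-\frac{3}{2}\sqrt{\omega}vT}\norm{r_{d}}^{2}_{H^{1}_{T,c}}$ and hence $r_{d}\equiv 0$. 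Your direct Lyapunov/Gronwall comparison of two solutions is, in principle, a legitimate alternative (it is the Martel-style uniqueness scheme). However, as written it has a genuine gap at its central step, the coercivity of $\mathcal{F}$ on $w=u_{1}-u_{2}$.

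The linearized operator around each soliton has kernel $\sppp\{\phi^{'}_{\omega},i\phi_{\omega}\}$, generalized-kernel directions ($x\phi_{\omega}$ and $\partial_{\omega}\phi_{\omega}$, cf.\ \eqref{somega1}--\eqref{somega2}), and, under \eqref{H4}, a negative eigenvalue. The difference $w$ of two solutions cannot be made orthogonal to any of these directions: there is no modulation freedom in $w$. Your statement that $\mathcal{F}$ is coercive ``modulo modulation-difference terms, which are themselves controlled by $\norm{w}_{H^{1}}$'' is vacuous as a lower bound --- it reads $\mathcal{F}\gtrsim \norm{w}_{H^{1}}^{2}-C\norm{w}_{H^{1}}^{2}$, which yields nothing, and modulating $u_{1}$ and $u_{2}$ separately does not control the parameter differences either, for the same reason. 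The missing idea is precisely the paper's Lemma \ref{ortll}: the eight projections $Ort_{j,\pm}(t)$ of the difference onto the non-coercive directions satisfy an ODE system with Jordan-block structure ($\frac{d}{dt}Ort_{3,\pm}=\mp 2\,Ort_{1,\pm}+\dots$, $\frac{d}{dt}Ort_{4,\pm}=Ort_{2,\pm}+\dots$), they vanish as $t\to+\infty$, and integrating these ODEs \emph{backward from infinity} bounds them --- at the price of factors $v^{-1}$, $v^{-2}$, which is exactly what forces the threshold $T\gtrsim \frac{\ln(1/v)}{\sqrt{\omega}\,v}$ of Remark \ref{runiq} and the restriction to small $v$. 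Without this mechanism (or a substitute for it) your functional has no sign, and the backward Gronwall argument cannot even start.

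Two further points. First, your claim that the error in $\frac{d}{dt}\mathcal{F}$ is $O(\mu(t)\mathcal{F}(t))$ with $\mu(t)\lesssim e^{-cvt}$ is not achievable: the time derivative of the moving cutoffs produces terms of size $O\bigl(\tfrac{1}{t}\norm{w}_{H^{1}}^{2}\bigr)$ (this is the $\frac{1}{t}$ in the paper's estimate \eqref{energyestimate2solitons}), which is not integrable; the argument survives only because the exponential decay of $\mathcal{F}$ at infinity beats the resulting polynomial Gronwall factor, so you should state the estimate and the Gronwall step accordingly. Second, for \eqref{pureweight}, testing the equation for $\epsilon$ against $(1+x^{2})^{l}$ presupposes that the weighted norms are finite, which is not known a priori from \eqref{moq1}; the paper obtains finiteness by solving approximate problems $r_{n}$ backward from $T_{n}$ with $r_{n}(T_{n})=0$ (so all weighted norms vanish there), proving uniform bounds by a double induction in which each power of $x$ couples to one extra derivative (the commutator $[x^{j},\partial_{x}^{2}]$ term), and passing to the limit. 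Your sketch needs this approximation-plus-induction structure, not just a formal weighted energy estimate on $\epsilon$ itself.
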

\begin{remark}\label{runiq}
 From the proof of Theorem \ref{uniq}, we obtain $c=\frac{3\sqrt{\omega}v}{4}$ and $T=\frac{4\ln{\left(\frac{1}{v}\right)}}{\sqrt{\omega}v}.$  
\end{remark}
\begin{remark}
It is expected that we can repeat the argument used in the proof of Theorem \ref{uniq} to verify the uniqueness of multi-solitons for \eqref{NLS3} when the norm remainder has exponential decay. The existence of multi-solitons for one-dimensional Nonlinear Schrödinger models was already verified in \cite{mnls} by Martel and Merle. However, the parameters $c,\,T$ obtained in the article \cite{mnls} satisfying \ref{moq1} are not good for our analysis of the long-time behavior of the collision of the two solitons. Therefore, we consider Theorem \ref{uniq} for the collision of two solitons. Therefore, we consider Theorem \ref{uniq} for our article.   
\end{remark}
\par The Theorem \ref{main} describes globally the collision between two stable solitons with the same mass and opposite phases for any model of the form \eqref{NLS3} such as the cubic \eqref{cubicnls}, the cubic-quintic \eqref{gcq} which is non-integrable and also one-dimensional models with multi power nonlinearity such as the \eqref{triplee}. Moreover, the estimates \eqref{almoelres} and \eqref{modass} imply that the collision between the two solitons is almost elastic, because, for any $k\in\mathbb{N},$ the energy norm in the remainder and the change in the size of the speed of propagation of each soliton after the collision can of order $O(v^{k}).$ This conclusion is quite surprising since this is not much expected for non-integrable models of the form \eqref{almoelres}.

\par Concerning the study of the interaction between solitons for Nonlinear Schrödinger models, there exist previous works. In \cite{perelman}, Perelman studied the collision between a large soliton and a small soliton and concluded that the solution doesn't preserve the two solitons' structure during a finite long-time interval after the collision. In the article \cite{holmerlin}, Holmer and Lin studied the interaction between two solitons with the same mass for the model \eqref{cubicnls} having the same or opposite phase. Concerning collision between solitons of Nonlinear Schrödinger models having high speed, see the article \cite{sigcoll} by Salem, Fröhlich, and Sigal.  
\par In this paper, using the methods from \cite{second}, \cite{third}, we are going to analyze the collision of two identical stable solitons of \eqref{NLS3} having low differences in their speeds.

Since similar methods were used by the author in \cite{third} in a non-integrable one-dimensional nonlinear wave equation, we believe the method used in this paper has applications in a large set of one-dimensional nonlinear dispersive models. 
\par The mathematical research of the collision phenomena between solitons hasn't been restricted only to Schrödinger models. In the articles \cite{stabcol} and \cite{collision1}, Martel and Merle studied the collision between two solitons for the quartic $gKdV,$ they proved in \cite{collision1} that no solution of the quartic $gKdV$ is a pure two multi-soliton, and the collision is inelastic and the $H^{1}$ norm of the error in the approximate two solitons solution is of order cubic in the speed of the two solitary waves before the collision, see Theorem $1$ from \cite{collision1}. Moreover, in \cite{munozin}, Muñoz studied the collision between two solitons of different sizes for $gKdV$ models and obtained that the collision is inelastic when the model is non-integrable, see also \cite{munozkdv2} for information about the collision between solitons for slow-varying $gKdV.$
\par In \cite{second} and \cite{third}, the author studied the collision between two solitons denominated kinks for the one-dimensional nonlinear wave equation known as the $\phi^{6}$ model. Moreover, the main result of the paper \cite{third} is similar to Theorem \ref{main}, the collision preserves the two solitons' structure and the energy norm of the defect can be of order $O(v^{k})$ for any $k\in\mathbb{N}$ when $v>0$ is small enough, where $v$ is the incoming speed of the solitons before the collision.
\par Furthermore in \cite{collisiondidier},  we also cite the recent work of Pilod and Valet in the description of the collision of two nearly equal solitary waves for the Zakharov-Kuznetsov partial differential equation in dimensions $2$ and $3.$ 
\par Before, we move to the next section, we need to consider the following notation.
\begin{notation}
\label{notsigma}
For any function $f:\mathbb{R}\to\mathbb{C},$ and any real  functions $\zeta,v,\gamma:\mathbb{R}\to\mathbb{R}$ we denote
\begin{equation*}
    f(v,\zeta,\gamma)
\end{equation*}
by the following function
\begin{equation*}
     e^{\frac{iv(t)(x-\frac{\zeta(t)}{2})}{2}+i\gamma(t)+\omega t}f(x-\zeta(t)),
\end{equation*}
for all $(t,x)\in\mathbb{R}^{2}.$
\par We also consider the following
\begin{equation}\label{symoo}
    Sym(f)(x)\coloneqq f(x)-f({-}x),
\end{equation}
for any function $f:\mathbb{R}\to\mathbb{C}.$ In addition, we for any two functions $f,\,g:\mathbb{R}\to\mathbb{C},$ we consider
\begin{equation}\label{gesym}
Sym\left[f(\cdot)g(\cdot)\right](x)=f(x)g(x)-f({-}x)g({-}x).
\end{equation}
Moreover, for any $f:\mathbb{R}^{2}\to \mathbb{C},$ we denote
\begin{equation*}
Sym(f)(t,x)\coloneqq f(t,x)-f(t,{-}x) \text{, for any $(t,x)\in\mathbb{R}^{2}.$}    
\end{equation*}
In particular, we are going to use the notation \eqref{gesym} to describe many expressions in Section \ref{refapp}.   
\par Next, for any $\zeta\in\mathbb{R},$ we consider the following notation for space translation:
\begin{equation*}
    \tau_{\zeta}f(x)\coloneq f(x-\zeta).
\end{equation*}
\par In this paper, we will only consider the following dot product restricted to the space $L^{2}\left(\mathbb{R},\mathbb{C}\right)$
\begin{equation}\label{dotproduct}
\left\langle f,g\right\rangle\coloneqq \Ree \int_{\mathbb{R}}f(x)\bar{g}(x)\,dx.
\end{equation}
\par Furthermore, in this manuscript, all the expressions with $\sum$ only represent a finite sum, and, we say that a real or complex function $f$ with a domain contained $D$ in $\mathbb{R}$ is of order $O(g(t))$ for some real positive function $g$ if there exists a $C>0$ satisfying
\begin{equation*}
\vert f(t)\vert\leq C g(t )\text{, for all $t\in D.$}
\end{equation*}
In all this manuscript, for any real function $f(t)>0,$ the partial differential inequality
\begin{equation*}
    i\partial_{t}r(t,x)+\partial^{2}_{x}r(t,x)=F(t,x)+O\left(f(t)\right),
\end{equation*}
means for some $C>1$ that $r$ is a strong solution of a partial differential equation
\begin{equation*}
    i\partial_{t}r(t,x)+\partial^{2}_{x}r(t,x)=F(t,x)+G(t,x),
\end{equation*}
such that $\norm{G(t)}_{H^{1}}<Cf(t)$ for all $t\in\mathbb{R}$ or all $t$ in the domain of $r(t,x).$
\par Finally, for any function $f(t,r)$ over $\mathbb{C}$ and defined for $t\in\mathbb{R}$ and $r\in \mathbb{H},$ $\mathbb{H}$ being some Hilbert Space, we define the function $\dot f(t,r)$ to be
\begin{equation*}
    \frac{\partial f(t,r)}{\partial t},
\end{equation*}
when the partial derivative above is well-defined on $t.$
\end{notation}

%comments about application of main theorem any polynomials nonlinearity, many non-integrable models, applications to optics
% Comment similar results of collision of solitons
% Conjectures: wieghted norm of inelasticity? Extension of asymptotic stability for multi-solitons in weighted norms.
% Resume of the proof
\subsection{Resume of the proof}
\par The proof of Theorem \ref{main} is similar to the demonstration of Theorem $1.2$ from the paper \cite{third}, and it also requires the techniques from \cite{second} which are going to be revised in Sections \ref{back} and \ref{refapp}. 
\par First, in Section \ref{refapp}, we focus on constructing a sequence of approximate solutions $(\varphi_{k})_{k\in\mathbb{N}}$ of \eqref{NLS3} satisfying for any $s\geq 0$
\begin{equation}\label{000}
\norm{\frac{\partial^{l}}{\partial t^{l}}\Lambda(\varphi_{k})(t,x)}_{H^{s}_{x}(\mathbb{R})}\leq C(s,l)v^{2k+2+l}\left(\vert t\vert v+\ln{\frac{1}{v}}\right)^{c(k)}e^{{-}2\sqrt{\omega}\vert t\vert v},
\end{equation}

for some $t_{k}\in\mathbb{R},$ and for $v>0$ small enough. The functions $\varphi_{k}$ are of following form:
\begin{align}\label{ge0}
    \varphi_{k}(t,x)=&\left[e^{i\frac{v_{k}}{2}(x-\frac{\zeta_{k}}{2})+i\gamma_{k}}\phi_{\omega}(x-\zeta_{k})-e^{{-}i\frac{v_{k}}{2}(x+\frac{\zeta_{k}}{2})+i\gamma_{k}}\phi_{\omega}({-}x-\zeta_{k})\right]\\ \nonumber &{+}\sum_{j\in I_{k}}g_{j}(t)\left[e^{i\frac{v_{k}}{2}(x-\frac{\zeta_{k}}{2})+i\gamma_{k}}p_{j,\omega}(x-\zeta_{k})-e^{{-}i\frac{v_{k}}{2}(x+\frac{\zeta_{k}}{2})+i\gamma_{k}}p_{j,\omega}({-}x-\zeta_{k})\right],
\end{align}
such that $\zeta_{k}(t)>\frac{1}{2\sqrt{\omega}}\ln{\left(\frac{1}{v\omega^{\frac{1}{4}}}\right)},$ see Remark \ref{remestimate}, and all the functions $g_{j},\,p_{j,\omega}$ are in $\mathscr{S}(\mathbb{R},\mathbb{C})$ having exponential decay. However, since the general formula of $\varphi_{k}$ is slightly complicated, we shall explain briefly the method to obtain each function $\varphi_{k}.$ 
\par From the fact that the soliton $e^{i\omega t}\phi_{\omega}(x)$ is a solution of \eqref{NLS3}, we consider our initial approximate solution satisfying \eqref{000} to be
\begin{equation*}
    \varphi_{0}(t,x)=e^{i\omega t}e^{\frac{i\dot d(t)}{2}\left(x-\frac{d(t)}{2}\right)}\phi_{\omega}(x-d(t))-e^{i\omega t}e^{{-}\frac{i\dot d(t) }{2}\left(x+\frac{d(t)}{2}\right)}\phi_{\omega}(x+d(t)),
\end{equation*}
  for some large function $d(t)>1$ to be chosen carefully. More precisely, it is possible to find a smooth function $d(t)>1$ such that $\varphi_{0}$ satisfies \eqref{000} and
  \begin{equation}\label{dotsmall}
     \left\langle \Lambda(\varphi_{0}(t,x)),ie^{i\omega t}e^{\frac{i\dot d(t)}{2}\left(\pm x-\frac{d(t)}{2}\right)} \phi_{\omega}(x\mp d(t)) \right\rangle=O\left(v^{4}\right),
  \end{equation}
when $v>0$ is small enough.
The estimate \eqref{dotsmall} happens when $d(t)$ satisfies the following ordinary differential equation
\begin{equation*}
\begin{cases}
    \ddot d(t)=Ce^{-2\sqrt{\omega} d(t)},\\
    \lim_{t\to{+}\infty}\left\vert d(t)-vt-c_{\omega} \ln{v}\right\vert=0,\, \lim_{t\to{+}\infty}\left\vert \dot d(t)-v \right\vert=0,
\end{cases}
\end{equation*}
see Lemma \ref{ODE}. From this choice of $d(t),$ we deduce that
\begin{equation}
\norm{\frac{\partial^{l}}{\partial t^{l}}\Lambda(\varphi_{0})(t,x)}_{H^{1}}=O(v^{2+l}).
\end{equation}
The ordinary differential equation above was also studied in \cite{second} to describe the collision between two kinks for the $\phi^{6}$ model in \cite{third}.
\par Furthermore, using estimate \eqref{dotsmall}, we can find a better approximate solution $\varphi_{1}$ of \eqref{NLS3}.
But, since the construction of $\varphi_{1}$ from $\varphi_{0}$ is similar to the process to obtain $\varphi_{k+1}$ from $\varphi_{k}$ for any $k\in\mathbb{N},$ we  shall explain the construction of $\varphi_{k}$ for general $k\in\mathbb{N}.$
\par Using the exponential decay of the functions $g_{j},\, p_{j}$ and that $\zeta_{k}>1$ is very large, we find a finite set of complex Schwartz functions $r_{j},\,\rho_{j}$
such that
\begin{equation*}
\Lambda(\varphi_{k})(t,x)=\sum r_{j}(t)Sym\left[\rho_{j}(\cdot-\zeta_{k}(t))e^{i\frac{v_{k}(t)}{2}\left(\cdot-\frac{\zeta_{k}(t)}{2}\right)}e^{i\gamma_{k}(t)}\right](x)+O(v^{2k+4}).
\end{equation*}
Next, we need the properties of the operator $S_{\omega}$ defined by

\begin{equation*}
    S_{\omega}(\rho)\coloneqq -\rho^{''}+\omega \rho-F^{'}(\phi_{\omega}^{2})\rho-F^{''}(\phi_{\omega}^{2})\phi_{\omega}^{2}\left[\rho+\bar{\rho}\right],
\end{equation*}
which is invertible in the orthogonal complement of $\sppp\{\phi^{'}_{\omega},i\phi_{\omega},\partial_{\omega}\phi_{\omega},ix\phi_{\omega}\},$ see Lemmas \ref{coerc} and \ref{inverhol}. The operator $S_{\omega}$ comes from the linear part of equation \eqref{NLS3} when  $u(t)=\left[\phi_{\omega}(x)+\rho(t,x)\right]e^{i\omega t}.$ 
\par Next, using ordinary differential equation methods, we can replace the smooth parameters $\zeta_{k},\,v_{k},\,\gamma_{k}$ with $\zeta_{k+1},\,v_{k+1},\,\gamma_{k+1}$ such that the new function $\varphi_{k,0}$ obtained satisfies for some $c>0$
\begin{equation}\label{badd}
\Lambda(\varphi_{k,0})(t,x)=\sum r_{j}(t)Sym\left[\Pi^{\perp}\left(\rho_{j}\right)(\cdot-\zeta_{k+1}(t))e^{i\frac{v_{k+1}(t)}{2}\left(\cdot-\frac{\zeta_{k+1}(t)}{2}\right)}e^{i\gamma_{k+1}(t)}\right](x)+O\left(v^{2k+4}\left(\ln{\frac{1}{v}}\right)^{c}\right),
\end{equation}
where $\Pi^{\perp}$ is the orthogonal projection operator in the orthogonal complement of the subspace $\sppp\{\phi^{'}_{\omega},i\phi_{\omega},\partial_{\omega}\phi_{\omega},ix\phi_{\omega}\}$ in $L^{2}(\mathbb{R}).$
More precisely, using the invertible property of $S_{\omega},$ we can consider the following function of correction 
\begin{align*}
    Corr(t,x)=&\sum r_{j}(t)Sym\left[S^{{-}1}_{\omega}\left(\Pi^{\perp}\left(\rho_{j}\right)\right)(\cdot-\zeta_{k+1}(t))e^{i\frac{v_{k+1}(t)}{2}\left(\cdot-\frac{\zeta_{k+1}(t)}{2}\right)}e^{i\gamma_{k+1}(t)}\right](x)\\
    &{+}\sum \dot r_{j}(t)Sym\left[S^{{-}1}_{\omega}\left(i\Pi^{\perp}\left(\rho_{j}\right)\right)(\cdot-\zeta_{k+1}(t))e^{i\frac{v_{k+1}(t)}{2}\left(\cdot-\frac{\zeta_{k+1}(t)}{2}\right)}e^{i\gamma_{k+1}(t)}\right](x)
\end{align*}
for the removal of the main estimate in \eqref{badd}, the function $Corr$ is well define because of Lemma \ref{inverhol} from Section \ref{back}. Therefore, we conclude that the approximate solution $\varphi_{k+1}(t,x)=\varphi_{k,0}(t,x)+Corr(t,x)$ satisfies 
\begin{equation*}
\norm{\frac{\partial^{l}}{\partial t^{l}}\Lambda(\varphi_{k+1})(t,x)}_{H^{s}_{x}(\mathbb{R})}\leq C_{1}(s,l)v^{2k+4+l}\left(\vert t\vert v+\ln{\frac{1}{v}}\right)^{c(k+1)}e^{{-}2\sqrt{\omega}\vert t\vert v},
\end{equation*}
for any $l\in\mathbb{N}$ if $v>0$ is small enough.
\par In Section \ref{s}, we study the long-time stability of the approximate solutions $\varphi_{k}$ using energy estimate methods, this is very similar to the approach in \cite{holmerlin} and \cite{third}. In the remaining sections, we prove the main theorem from the results of Section \ref{s}. See also Section \ref{back} for background information about the techniques used in this manuscript. 
\par In Section \ref{orbsec}, we study the orbital stability of two opposite solitary waves with the distance between their centers sufficiently large. The results in \eqref{orbsec} will allow us to prove the estimate \eqref{almoelres} for all $t\leq \frac{{-}\vert\ln{v}\vert^{\frac{4}{3}}}{v}$ when $v>0$ is small enough.
\par The proof of Theorem \ref{main} is written in Section $6.$ The proof of Theorem \ref{uniq} is written in the Appendix Section, and it is completely similar to the approach in the article \cite{multison} by Jendrej and Chen to describe the uniqueness of kink Networks.
\section{Background}\label{back} 
\par First, we consider the following lemma which we are going to use several times in the main body of this paper to compute our estimates.

\begin{lemma}\label{interactt}
For any real numbers $x_{2},x_{1}$, such that $\zeta=x_{2}-x_{1}>0$ and $\alpha,\,\beta,\,m>0$ with $\alpha\neq \beta$ the following bound holds:
\begin{equation*}
    \int_{\mathbb{R}}\vert x-x_{1}\vert ^{m} e^{-\alpha(x-x_{1})_{+}}e^{-\beta(x_{2}-x)_{+}}\lesssim_{\alpha,\beta,m} \max\left(\left(1+\zeta^{m}\right)e^{-\alpha \zeta},e^{-\beta \zeta}\right),
\end{equation*}
For any $\alpha>0$, the following bound holds
\begin{equation*}
    \int_{\mathbb{R}}\vert x-x_{1}\vert^{m} e^{-\alpha(x-x_{1})_{+}}e^{-\alpha(x_{2}-x)_{+}}\lesssim_{\alpha}\left[1+\zeta^{m+1}\right] e^{-\alpha \zeta}.
\end{equation*}
\end{lemma}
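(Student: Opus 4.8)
The plan is to estimate the integral by splitting the real line at the two points $x_1$ and $x_2$, since the integrand is governed by the piecewise-defined positive-part exponents. Recall $(y)_+ = \max(y,0)$, so on the region $x \le x_1$ we have $(x-x_1)_+ = 0$ and $(x_2-x)_+ = x_2 - x$; on the middle region $x_1 \le x \le x_2$ both positive parts are active with $(x-x_1)_+ = x - x_1$ and $(x_2-x)_+ = x_2 - x$; and on $x \ge x_2$ only $(x-x_1)_+ = x - x_1$ survives. I would therefore write the integral as a sum of three pieces $I_- + I_{\mathrm{mid}} + I_+$ and bound each separately, tracking the polynomial weight $|x-x_1|^m$ throughout.

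For the tail regions the estimates are routine single-exponential integrals. On $x \ge x_2$ the integrand is $|x-x_1|^m e^{-\alpha(x-x_1)}$; substituting $s = x - x_1 \ge \zeta$ gives $\int_{\zeta}^{\infty} s^m e^{-\alpha s}\,ds$, which by a standard incomplete-Gamma bound is $\lesssim_{\alpha,m} (1+\zeta^m)e^{-\alpha\zeta}$. On $x \le x_1$ the weight $|x-x_1|^m = (x_1 - x)^m$ and the exponential is $e^{-\beta(x_2-x)} = e^{-\beta\zeta}e^{-\beta(x_1-x)}$; substituting $s = x_1 - x \ge 0$ yields $e^{-\beta\zeta}\int_0^\infty s^m e^{-\beta s}\,ds \lesssim_{\beta,m} e^{-\beta\zeta}$, with no polynomial factor in $\zeta$ since the weight is centered at $x_1$. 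The middle region is where the two exponents combine: the integrand is $|x-x_1|^m e^{-\alpha(x-x_1)}e^{-\beta(x_2-x)}$, and the behavior depends on whether $\alpha \ne \beta$ or $\alpha = \beta$.

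In the case $\alpha \ne \beta$ I would factor the exponent on $x_1 \le x \le x_2$ as $-\alpha(x-x_1) - \beta(x_2-x)$; writing $s = x - x_1 \in [0,\zeta]$ this equals $-\alpha s - \beta(\zeta - s) = -\beta\zeta - (\alpha-\beta)s$, so the middle integral is $e^{-\beta\zeta}\int_0^\zeta s^m e^{-(\alpha-\beta)s}\,ds$. If $\alpha > \beta$ the integral converges and is $O(1)$, giving a contribution $\lesssim e^{-\beta\zeta}$; if $\alpha < \beta$ the integral is dominated by its upper endpoint and produces $\lesssim \zeta^m e^{(\beta-\alpha)\zeta} = \zeta^m e^{(\beta-\alpha)\zeta}$, so that $e^{-\beta\zeta}$ times this is $\lesssim \zeta^m e^{-\alpha\zeta}$. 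Combining the three pieces and taking the worse of the $e^{-\alpha\zeta}$ and $e^{-\beta\zeta}$ decay rates (together with the polynomial factor attached to whichever exponent is smaller) yields the claimed $\max\bigl((1+\zeta^m)e^{-\alpha\zeta}, e^{-\beta\zeta}\bigr)$. In the degenerate case $\alpha = \beta$, the middle exponent collapses to the constant $-\alpha\zeta$, so the integral becomes $e^{-\alpha\zeta}\int_0^\zeta s^m\,ds = \frac{\zeta^{m+1}}{m+1}e^{-\alpha\zeta}$, which is exactly the extra power of $\zeta$ that appears in the second bound; absorbing the tail contributions $\lesssim (1+\zeta^m)e^{-\alpha\zeta}$ gives the stated $[1+\zeta^{m+1}]e^{-\alpha\zeta}$.

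I expect no genuine obstacle here, since the lemma is an elementary exponential-integral estimate; the only point requiring care is bookkeeping the polynomial weight across the splitting, in particular recognizing that the weight $|x-x_1|^m$ is centered at the \emph{left} endpoint, which is why the polynomial factor $1+\zeta^m$ attaches to the $e^{-\alpha\zeta}$ term (coming from the $x \ge x_2$ tail and the $\alpha < \beta$ middle case) but not to the pure $e^{-\beta\zeta}$ term. The resonant case $\alpha = \beta$ is handled separately precisely because the linear-in-$s$ term in the exponent vanishes, turning the middle integral into a pure polynomial integral that gains one extra power of $\zeta$.
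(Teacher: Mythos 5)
Your proof is correct and is precisely the computation the paper leaves implicit: its entire proof reads ``Elementary computations,'' and the natural way to carry those out is exactly your splitting of $\mathbb{R}$ at $x_{1}$ and $x_{2}$, with the incomplete-Gamma bound on the right tail, the clean $e^{-\beta\zeta}$ bound on the left tail, and the case distinction $\alpha\gtrless\beta$ (resp.\ $\alpha=\beta$) on the middle region producing the factor $1+\zeta^{m}$ (resp.\ the extra power $\zeta^{m+1}$). Nothing further is needed.
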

\begin{proof}
    Elementary computations.
\end{proof}
Next, we consider the following proposition obtained from Taylor's Expansion Theorem.
\begin{lemma}\label{taylorlemma}
 Let $F$ be a smooth function satisfying \eqref{H1}, if $c>0$ and $u,\,v$ are real numbers satisfying $\vert u\vert+\vert v\vert\leq c,$ then for any $k\in\mathbb{N}$
 \begin{align}\label{taylor1}
     \left\vert F^{'}(u+v)-F^{'}(v)-F^{'}(u)\right\vert\lesssim_{c}& \vert u  v \vert,\\ \label{taylor2}
     \left\vert F^{(k)}(u+v)-\sum_{m=1}^{l} \frac{F^{(k+l)}(u)v^{m}}{m !}\right\vert\lesssim_{c} &\vert v^{l+1}\vert.
\end{align}
\end{lemma}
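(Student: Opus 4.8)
The plan is to treat both bounds as routine consequences of Taylor's theorem, the only genuine input being that $F$ is $C^{\infty}$, so that every derivative of $F$ is continuous and hence bounded on the compact interval $[-c,c]$; this is exactly what makes the implied constants depend only on $c$ (and on the orders $k,l$). I would first record that by \eqref{H1} we have $F'(0)=0$, which is what lets \eqref{taylor1} be read as a genuinely quadratic mixed remainder rather than a bounded quantity. I would also read \eqref{taylor2} in its intended form, namely as the Taylor expansion $F^{(k)}(u+v)=\sum_{m=0}^{l}\frac{F^{(k+m)}(u)}{m!}v^{m}+O(\vert v\vert^{l+1})$.

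For \eqref{taylor1}, I would set $g=F'$ and, using $g(0)=F'(0)=0$, rewrite the left-hand side as the mixed second difference $g(u+v)-g(u)-g(v)+g(0)$. The key identity is
\begin{equation*}
 F'(u+v)-F'(v)-F'(u)=\int_{0}^{u}\int_{0}^{v} F'''(s+t)\,dt\,ds,
\end{equation*}
which follows by computing the inner integral as $F''(s+v)-F''(s)$ and then the outer one by the fundamental theorem of calculus (the boundary term $F'(0)$ vanishing by \eqref{H1}). For $s$ between $0$ and $u$ and $t$ between $0$ and $v$ one has $\vert s+t\vert\le\vert u\vert+\vert v\vert\le c$, so $\vert F'''(s+t)\vert\le \sup_{\vert w\vert\le c}\vert F'''(w)\vert=:M_{c}<\infty$ by continuity of $F'''$ on $[-c,c]$. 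Bounding the double integral by $M_{c}\vert u\vert\vert v\vert$ then yields $\vert F'(u+v)-F'(v)-F'(u)\vert\le M_{c}\vert uv\vert$, which is \eqref{taylor1}.

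For \eqref{taylor2}, I would fix $u$ and apply Taylor's theorem with Lagrange remainder to the one-variable function $v\mapsto F^{(k)}(u+v)$. Since $\frac{d^{m}}{dv^{m}}F^{(k)}(u+v)=F^{(k+m)}(u+v)$, this gives, for some $\xi$ between $0$ and $v$,
\begin{equation*}
 F^{(k)}(u+v)=\sum_{m=0}^{l}\frac{F^{(k+m)}(u)}{m!}v^{m}+\frac{F^{(k+l+1)}(u+\xi)}{(l+1)!}v^{l+1}.
\end{equation*}
Because $\vert u+\xi\vert\le\vert u\vert+\vert v\vert\le c$, the derivative appearing in the remainder is bounded by $\sup_{\vert w\vert\le c}\vert F^{(k+l+1)}(w)\vert$, a finite constant depending only on $c,k,l$; hence the remainder is $\lesssim_{c}\vert v\vert^{l+1}$, which is \eqref{taylor2}.

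There is no serious obstacle here, as both estimates are elementary. The only points requiring care are (i) invoking \eqref{H1} to eliminate the $F'(0)$ term, so that \eqref{taylor1} is genuinely of order $\vert uv\vert$, and (ii) checking that every intermediate argument (respectively $s+t$ and $u+\xi$) stays in $[-c,c]$, so that the relevant derivative of the smooth function $F$ is uniformly bounded there and the implied constants depend only on $c$ and the orders involved. I would also flag that \eqref{taylor2} as displayed contains evident typographical slips — the summand should read $F^{(k+m)}(u)v^{m}/m!$ with the sum starting at $m=0$ — and that the argument above establishes this intended form.
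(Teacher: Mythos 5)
Your proof is correct and takes essentially the same route as the paper: your double-integral identity over $[0,u]\times[0,v]$ is exactly the paper's representation $\left[\int_{0}^{1}\int_{0}^{1}F^{(3)}(\alpha u+\theta v)\,d\theta\,d\alpha\right]uv$ after the change of variables $s=\alpha u$, $t=\theta v$, with the same use of $F^{'}(0)=0$ and boundedness of $F^{(3)}$ on $[-c,c]$. The paper likewise dismisses \eqref{taylor2} as a direct consequence of Taylor's Expansion Theorem, and your reading of its intended (typo-corrected) form, with summand $F^{(k+m)}(u)v^{m}/m!$ starting at $m=0$, is the right one.
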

\begin{proof}
    The inequality \eqref{taylor2} is a direct consequence of Taylor's Expansion Theorem, so we only need to verify the first inequality. 
    \par From the hypothesis \eqref{H1}, we have $F^{'}(0)=0.$ In conclusion, since $F$ is smooth, we obtain that
    \begin{multline*}
        \begin{aligned}
        F^{'}(u+v)-F^{'}(u)-F^{'}(v)= &F^{'}(u+v)-F^{'}(u)-F^{'}(v)-F^{'}(0)\\
        =& \int_{0}^{1}\left[F^{''}(u+\theta v)-F^{''}(\theta v) \right]v\,d\theta\\
        =& \left[\int_{0}^{1}\int_{0}^{1}F^{(3)}(\alpha u+\theta v)\,d\theta\,d\alpha\right]u v\\
        \lesssim_{c} &\vert u v \vert,
        \end{aligned}
    \end{multline*}
    and the last inequality above follows from $F^{(3)}$ being a bounded function in any bounded interval $[{-}c,c].$
\end{proof}
Based on the approach at \cite{second}, we consider the following space:
\begin{definition}\label{s11}
     Let $\delta_{\omega}>0$ be the value defined in Remark \eqref{asyrem} satisfying \eqref{domega}. $\mathcal{S}^{+},\,\mathcal{S}^{-}$ are linear sub-spaces of $L^{\infty}(\mathbb{R})$ such that $f\in \mathcal{S}^{+},\,g\in \mathcal{S}^{-}$ if $f,\,g\in C^{\infty}(\mathbb{R})$ and
      \begin{align*}
    f(x)=&\sum_{j=0}^{+\infty}a_{j}e^{(2j+1)x} \text{, for all $x\leq {-}\delta_{\omega},$}\,
     f(x)=\sum_{j=0}^{+\infty}b_{j}e^{{-}(2j+1)x} \text{, for all $x\geq \delta_{\omega},$}\\
     g(x)=&\sum_{j=0}^{+\infty}c_{j}e^{2 j x} \text{, for all $x\leq {-}\delta_{\omega},$}\,
     g(x)=\sum_{j=0}^{+\infty}q_{j}e^{{-}2jx} \text{, for all $x\geq \delta_{\omega},$}
     \end{align*}
     
     and the functions
    \begin{align*}
        A(z)=\sum_{j=0}^{+\infty}a_{j}z^{(2j+1)},\,
        B(z)=\sum_{j=0}^{+\infty}b_{j}z^{(2j+1)},\,
        C(z)=\sum_{j=1}^{+\infty}c_{j}z^{2j},\,Q(z)=\sum_{j=1}^{+\infty}q_{j}z^{2j}
    \end{align*}
    are analytic on the open unitary disk $\mathbb{D}\subset\mathbb{C}.$
\end{definition}
\begin{remark}\label{phis+}
    If $\phi_{\omega}$ satisfies Theorem \ref{ordt}, it is not difficult to verify using Remark \ref{asyrem} that the function $\Phi_{\omega}(x)\coloneqq \phi_{\omega}(\frac{x}{\sqrt{\omega}})$ is an element of $\mathcal{S}^{+},$ therefore it satisfies
    \begin{equation*}
       \left\vert \frac{d^{l}\phi_{\omega}(x)}{dx^{l}}\right\vert=O\left(e^{{-}\sqrt{\omega}\vert x\vert}\right), 
    \end{equation*}
for any $l\in\mathbb{N}.$
\end{remark}
\begin{definition}\label{S}
For any $n\in \mathbb{N}\cup\{0\},$ the linear spaces
$\mathcal{S}^{+,n}=\{x^{n}f(x)|\,f(x)\in \mathcal{S}^{+}\cap \mathscr{S}(\mathbb{R})\}$
and
$\mathcal{S}^{-,n}=\{x^{n}f(x)|\,f(x)\in \mathcal{S}^{-}\cap \mathscr{S}(\mathbb{R})\},$
and for any $m\in\mathbb{N}\cup\{0\},$ we define 
\begin{equation*}
    \mathcal{S}^{+}_{m}=\bigoplus_{n=0}^{m} \mathcal{S}^{+,n},\,\mathcal{S}^{-}_{m}=\bigoplus_{n=0}^{m} S^{-,n},\,
    S^{+}_{\infty}=\bigoplus_{n=0}^{+\infty} \mathcal{S}^{+,n},\,\mathcal{S}^{-}_{\infty}=\bigoplus_{n=0}^{+\infty} \mathcal{S}^{-,n}.
\end{equation*}
\end{definition}
\begin{remark}
The spaces $\mathcal{S}^{+}$ are going to be used to construct approximate solutions of \eqref{NLS3} that behave like the function $u$ satisfying the conclusion of Theorem \ref{main}. 
\end{remark}
Moreover, from the Definitions \ref{s11} and \ref{S}, we can verify the following elementary propositions.
\begin{lemma}\label{deriv}
    For any $l\in\mathbb{N},$ if $f\in \mathcal{S}^{+}\cup \mathcal{S}^{+}_{\infty}$ and $g\in \mathcal{S}^{-}\cup \mathcal{S}^{-}_{\infty},$ then  
    \begin{equation*}
        \frac{d^{l}f(x)}{dx^{l}}\in \mathcal{S}^{+}_{\infty}, \, \frac{d^{l}g(x)}{dx^{l}}\in \mathcal{S}^{-}_{\infty},
    \end{equation*}
    for any natural number $l\geq 1.$
\end{lemma}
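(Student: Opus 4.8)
The plan is to prove the statement for $l=1$ and then iterate. Since I will establish that $\mathcal{S}^{+}_{\infty}$ and $\mathcal{S}^{-}_{\infty}$ are closed under a single differentiation, the case of general $l\geq1$ follows at once by induction, writing $\frac{d^{l}f}{dx^{l}}=\frac{d}{dx}\big(\frac{d^{l-1}f}{dx^{l-1}}\big)$ and applying the $l=1$ result to $\frac{d^{l-1}f}{dx^{l-1}}\in\mathcal{S}^{+}_{\infty}$. As a preliminary reduction I would record that $\mathcal{S}^{+}\subseteq\mathcal{S}^{+}_{\infty}$ and $\mathcal{S}^{-}\subseteq\mathcal{S}^{-}_{\infty}$: for $f\in\mathcal{S}^{+}$ the two series representations show that $f$ and each of its derivatives decay like $O(e^{-\vert x\vert})$ as $x\to\pm\infty$, so $f\in\mathscr{S}(\mathbb{R})$ and hence $f=x^{0}f\in\mathcal{S}^{+,0}\subseteq\mathcal{S}^{+}_{\infty}$; the same reasoning applies to $\mathcal{S}^{-}$. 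Thus it suffices to show that $\frac{d}{dx}$ maps $\mathcal{S}^{+}_{\infty}$ into $\mathcal{S}^{+}_{\infty}$ and $\mathcal{S}^{-}_{\infty}$ into $\mathcal{S}^{-}_{\infty}$.

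The heart of the argument is the behaviour of a single generator under one derivative. Let $f\in\mathcal{S}^{+}\cap\mathscr{S}(\mathbb{R})$. For $x<0$ one has $f(x)=A(e^{x})$, where $A(z)=\sum_{j\geq0}a_{j}z^{2j+1}$ is analytic on $\mathbb{D}$; since $e^{x}\in(0,1)$ the chain rule gives $f'(x)=e^{x}A'(e^{x})=\widetilde{A}(e^{x})$ with $\widetilde{A}(z)\coloneqq zA'(z)=\sum_{j\geq0}(2j+1)a_{j}z^{2j+1}$. The function $\widetilde{A}$ is again analytic on $\mathbb{D}$ and still contains only odd powers of $z$, so $f'(x)=\sum_{j\geq0}(2j+1)a_{j}e^{(2j+1)x}$ has exactly the form required on $x<0$. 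A symmetric computation on $x>0$ with $B(z)=\sum_{j\geq0}b_{j}z^{2j+1}$ yields $f'(x)=-e^{-x}B'(e^{-x})$, whose associated series $-zB'(z)$ is analytic on $\mathbb{D}$. As $f\in C^{\infty}(\mathbb{R})$ the function $f'$ is smooth across $x=0$, and the derivative of a Schwartz function is Schwartz, so altogether $f'\in\mathcal{S}^{+}\cap\mathscr{S}(\mathbb{R})$.

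With this in hand I would finish by the Leibniz rule on a general element. Writing $F=\sum_{n=0}^{N}x^{n}f_{n}$ with $f_{n}\in\mathcal{S}^{+}\cap\mathscr{S}(\mathbb{R})$, differentiation gives
\begin{equation*}
F'(x)=\sum_{n=0}^{N}\big(n\,x^{n-1}f_{n}(x)+x^{n}f_{n}'(x)\big),
\end{equation*}
where each term $x^{n}f_{n}'(x)$ lies in $\mathcal{S}^{+,n}$ by the previous paragraph and each $n\,x^{n-1}f_{n}(x)$ lies in $\mathcal{S}^{+,n-1}$; hence $F'\in\bigoplus_{n}\mathcal{S}^{+,n}=\mathcal{S}^{+}_{\infty}$. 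The case of $\mathcal{S}^{-}$ is identical, replacing $A,B$ by the even series $C(z)=\sum_{j\geq1}c_{j}z^{2j}$ and $Q(z)=\sum_{j\geq1}q_{j}z^{2j}$ and using that $z\mapsto zC'(z)=\sum_{j\geq1}2j\,c_{j}z^{2j}$ preserves analyticity on $\mathbb{D}$ together with the even-power structure whose lowest power is $z^{2}$, so that the admissible exponents $2j$ with $j\geq1$ are maintained. The one point deserving care is the legitimacy of term-by-term differentiation and the analyticity of the transformed series on $\mathbb{D}$; both are resolved cleanly by the substitution $w=e^{\pm x}$, which sends $x\neq0$ into the open interval $(0,1)$ where $A,B,C,Q$ are genuinely analytic, so that $\frac{d}{dz}$ and multiplication by $z$ act within the ring of functions analytic on $\mathbb{D}$. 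I do not anticipate any serious obstacle beyond this bookkeeping.
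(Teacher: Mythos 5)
Your proof is correct, and it is exactly the elementary verification the paper has in mind: the paper states Lemma \ref{deriv} without proof as a direct consequence of Definitions \ref{s11} and \ref{S}, and your argument (chain rule through the substitution $z=e^{\pm x}$, which turns differentiation into $z\,\frac{d}{dz}$ acting on the analytic series $A,B,C,Q$, followed by the Leibniz rule on $x^{n}f_{n}$ and induction on $l$) is precisely that verification. The preliminary observation that $\mathcal{S}^{+}\subset\mathscr{S}(\mathbb{R})$, so that $\mathcal{S}^{+}=\mathcal{S}^{+,0}\subset\mathcal{S}^{+}_{\infty}$, correctly handles the union $\mathcal{S}^{+}\cup\mathcal{S}^{+}_{\infty}$ in the hypothesis, so nothing is missing.
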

\begin{lemma}\label{alg}
    For any $l\in\mathbb{N},$ if $f_{1},\,f_{2},\,f_{3}\in  \mathcal{S}^{+}_{\infty}$ and $g_{1},\,g_{2}\in \mathcal{S}^{-}_{\infty},$ then  
    \begin{equation*}
        f_{1}(x)f_{2}(x)f_{3}(x)\in \mathcal{S}^{+}_{\infty}, \, g_{1}(x)g_{2}(x)\in \mathcal{S}^{-}_{\infty}.
    \end{equation*}
    In particular, if $f\in \mathcal{S}^{+}_{\infty},$ then
    for any natural number $l\geq 1$
    \begin{equation*}
        f(x)^{2l-1}\in \mathcal{S}^{+}_{\infty}, \, f(x)^{2l}\in \mathcal{S}^{-}_{\infty}.
    \end{equation*}
\end{lemma}

Similarly to the proof of Proposition $2.4$ from the article \cite{second}, we can verify the following proposition.
\begin{lemma}\label{separation}
If $f \in \mathcal{S}^{+},\,g \in \mathcal{S}^{-}$ and $l,m\in\mathbb{N},$ then there exist a unique sequence of pairs $(h_{n},d_{n})_{n\geq 1}$ and a set $\Delta\subset \mathbb{N}$ such that $(d_{n})_{n\geq 1}\subset \mathbb{N}$ is a strictly increasing sequence, for all $n\in \Delta,\,h_{n}(-x)$ is in $\mathcal{S}^{+}\cap \mathscr{S}(\mathbb{R}),$ for all $n\in \Omega=\mathbb{N}\setminus \Delta,$ $h_{n}(x)$ is in $\mathcal{S}^{+}\cap \mathscr{S}(\mathbb{R}),$ and for any $\mathcal{M} \in \mathbb{N}$ and any $\zeta\geq 1$  
\begin{equation}\label{idsepl}
    f(x-\zeta)g(x)=\sum_{\substack{1 \leq n\leq \mathcal{M},\\ n\in  \Delta}}h_{n}(x-\zeta)e^{-d_{n}\zeta}+\sum_{\substack{1 \leq n\leq \mathcal{M},\\ n\in \mathbb{N}\setminus \Delta}}h_{n}(x)e^{-d_{n}\zeta}+e^{-d_{\mathcal{M}} \zeta}f_{\mathcal{M}}(x-\zeta)g_{\mathcal{M}}(x),
\end{equation}
where either $f_{\mathcal{M}} \in \mathcal{S}^{+} \cap\mathscr{S}(\mathbb{R}),\,g_{\mathcal{M}}\in \mathcal{S}^{-}$ or $f_{\mathcal{M}} \in \mathcal{S}^{-} \cap\mathscr{S}(\mathbb{R}),\,g_{\mathcal{M}}\in \mathcal{S}^{+}.$
\end{lemma}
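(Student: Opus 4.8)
The plan is to work directly with the convergent exponential series that define $\mathcal{S}^{+}$ and $\mathcal{S}^{-}$ in Definition \ref{s11}, and to build the decomposition \eqref{idsepl} by peeling off one interaction term at a time, inducting on $\mathcal{M}$. The guiding principle is that, for $\zeta\geq 1$ large, the product $f(x-\zeta)g(x)$ is governed by two regions: near $x=0$, where $g$ is of order one and $f(x-\zeta)=A(e^{x-\zeta})=\sum_{j\geq 0}a_{j}e^{(2j+1)x}e^{-(2j+1)\zeta}$; and near $x=\zeta$, where $f$ is of order one and $g(x)=\sum_{j\geq 1}q_{j}e^{-2j(x-\zeta)}e^{-2j\zeta}$. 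Thus the natural expansion parameter is $e^{-\zeta}$, the $f$-side contributing the odd integer orders $d=1,3,5,\dots$ localized near $0$, and the $g$-side the even orders $d=2,4,6,\dots$ localized near $\zeta$. I would therefore let $\Omega$ index the odd orders (terms $h_{n}(x)$ centered at $0$) and $\Delta$ the even orders (terms $h_{n}(x-\zeta)$ centered at $\zeta$), which already forces $(d_{n})$ to be strictly increasing in $\mathbb{N}$.

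The heart of the argument is a single-step extraction showing each peeled function lies in the correct space and, crucially, is Schwartz on all of $\mathbb{R}$. At leading order $d_{1}=1$ I would extract $h_{1}(x)=a_{0}e^{x}g(x)$, so that $a_{0}e^{x}g(x)e^{-\zeta}=a_{0}e^{x-\zeta}g(x)$ is exactly the leading near-$0$ term. That $h_{1}\in\mathcal{S}^{+}\cap\mathscr{S}(\mathbb{R})$ is the key point and is where the asymmetry of the two spaces enters: since $g\in\mathcal{S}^{-}$ decays at the rate $e^{-2|x|}$, strictly faster than the factor $e^{x}$, the product decays at both ends, and multiplying the even-exponent series of $g$ by $e^{x}$ produces precisely odd-exponent series at $\pm\infty$, the defining form of $\mathcal{S}^{+}$. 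After subtracting, the remainder is $[f(x-\zeta)-a_{0}e^{x-\zeta}]g(x)$, and the next term, of even order $d_{2}=2$, is obtained by expanding $g$ near $\zeta$: I would take $h_{2}(y)=[f(y)-a_{0}e^{y}]e^{-2y}$, which one checks is again Schwartz, now because the remaining left tail of $f$ begins at $e^{3y}$ while its growth like $e^{y}$ at $+\infty$ is dominated by the factor $e^{-2y}$. Since $\mathcal{S}^{+}$ is closed under the reflection $x\mapsto -x$ (reflection merely swaps the analytic data $A\leftrightarrow B$ while preserving the odd-exponent structure), this yields $h_{2}(-\cdot)\in\mathcal{S}^{+}\cap\mathscr{S}(\mathbb{R})$, as required for $n\in\Delta$. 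Iterating, each stage peels the slowest remaining exponential, alternating between the two centers, and the rate comparison ``$\mathcal{S}^{-}$ decays at rate $\geq 2$ versus $\mathcal{S}^{+}$ at rate $\geq 1$'' guarantees at every step that the extracted function is genuinely Schwartz.

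To keep the tail in product form I would track, alongside the extracted terms, a remainder of shape $e^{-d_{\mathcal{M}}\zeta}f_{\mathcal{M}}(x-\zeta)g_{\mathcal{M}}(x)$, where $f_{\mathcal{M}},g_{\mathcal{M}}$ arise from the analytic tails $\sum_{j\geq N}a_{j}z^{2j+1}=z^{2N+1}\tilde{A}(z)$ of $A,B,C,Q$; as these tails remain analytic on the unit disk, $f_{\mathcal{M}}$ and $g_{\mathcal{M}}$ inherit membership in $\mathcal{S}^{+}$ or $\mathcal{S}^{-}$, with exactly one of them carrying the extra Schwartz decay according to the parity of the last extracted order, which is precisely the dichotomy in the statement. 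Uniqueness of the pairs $(h_{n},d_{n})$ then follows by matching the asymptotic expansion of $f(x-\zeta)g(x)$ in powers of $e^{-\zeta}$ near each center: the coefficient of $e^{-d\zeta}$ in each region is fixed by the product, hence so are $d_{n}$ and $h_{n}$.

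The main obstacle is achieving the two structural requirements at once: that every peeled $h_{n}$ be \emph{globally} Schwartz (not merely correct near its own center), and that the leftover be a \emph{single clean product} carrying a factored $e^{-d_{\mathcal{M}}\zeta}$. A one-sided truncation of either series fails both, since the subtracted pure exponentials grow at the far end; the resolution is exactly the interleaving of odd and even orders, which arranges that the growth of each partial remainder at one center is always dominated by the faster decay of the complementary factor there. Carefully propagating the analyticity of the disk data through this interleaving, so that $f_{\mathcal{M}},g_{\mathcal{M}}$ stay in $\mathcal{S}^{\pm}$, is the delicate bookkeeping, and is the step that mirrors the proof of Proposition $2.4$ in \cite{second}.
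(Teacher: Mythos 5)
Your proposal takes essentially the same route as the paper: the paper's entire proof of Lemma \ref{separation} is a deferral to Proposition $2.4$ of \cite{second}, and that proposition is established by exactly the interleaved peeling induction you describe, alternating between the odd orders coming from the left tail of $f$ (terms centered at $0$) and the even orders coming from the right tail of $g$ (terms centered at $\zeta$). Your central computations are the right ones: $a_{0}e^{x}g(x)$ is Schwartz with odd-exponent tails because $g$ decays at rate $2$; the second extraction $[f(y)-a_{0}e^{y}]e^{-2y}$ (you dropped the coefficient $q_{1}$ coming from the expansion of $g$) is Schwartz because the left tail of $f-a_{0}e^{y}$ starts at $e^{3y}$; the partial remainders keep an exact global product structure by pure algebra; and uniqueness follows by matching the coefficient of each $e^{-d\zeta}$ at fixed $x$ and at fixed $x-\zeta$.

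One step of your sketch does not survive a literal reading of Definition \ref{s11}, and it is worth recording because it is really a defect of the lemma's statement rather than of your scheme. After one peel the remainder is $[f(x-\zeta)-a_{0}e^{x-\zeta}]g(x)$, and the requirement that it equal $e^{-\zeta}f_{1}(x-\zeta)g_{1}(x)$ for all $x$ and all $\zeta\geq 1$ determines the factors up to reciprocal constants: $f_{1}(y)=c\left[f(y)-a_{0}e^{y}\right]e^{-y}$ and $g_{1}(x)=c^{-1}e^{x}g(x)$. The factor $f_{1}$ tends to $-ca_{0}\neq 0$ as $y\to+\infty$, so it is neither Schwartz nor an element of $\mathcal{S}^{-}$, whose expansions in Definition \ref{s11} start at $j=1$; by the uniqueness just noted, no choice of factorization can do better. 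Hence your claim that $f_{\mathcal{M}}$ and $g_{\mathcal{M}}$ ``inherit membership in $\mathcal{S}^{+}$ or $\mathcal{S}^{-}$'' is only true if $\mathcal{S}^{-}$ is enlarged to admit a constant ($j=0$) term, as the analogous space in \cite{second} effectively does; moreover the Schwartz factor then alternates with the parity of $\mathcal{M}$, sitting on $g_{\mathcal{M}}$ after an odd number of peels and on $f_{\mathcal{M}}$ after an even number, whereas the dichotomy in Lemma \ref{separation} places it on $f_{\mathcal{M}}$ in both cases. What your interleaving genuinely delivers --- one Schwartz factor and one bounded factor with one-sided exponential decay, which is all that Remark \ref{remaindersep} and the later applications of the lemma use --- is correct, so the gap lies in the statement, and your argument is the right one for the statement as intended.
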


\begin{remark}\label{remaindersep}
    Using Lemma \ref{interactt} and interpolation, we can verify that
    \begin{equation*}
        \norm{f_{\mathcal{M}}(x-\zeta)g_{\mathcal{M}}(x)}_{H^{s}_{x}(\mathbb{R})}\lesssim_{s} (1+\zeta)e^{-{\zeta}},
    \end{equation*}
for any real number $s.$
\end{remark}
From now on, we are going to study the properties of the following linear operator
\begin{align}\label{sw}\tag{$S_{\omega}$}
    S_{\omega}(\rho)\coloneqq &-\rho^{''}+\omega \rho-F^{'}(\phi_{\omega}^{2})\rho-F^{''}(\phi_{\omega}^{2})\phi_{\omega}^{2}\left[\rho+\bar{\rho}\right]
\end{align}
on the function space $\mathcal{S}^{+}.$ Indeed, it is not difficult to verify that ${-}e^{i\omega t}S_{\omega}(\rho)$ is the linear part of the expression   \begin{equation*}
iu_{t}+u_{xx}+F^{'}(\vert u\vert^{2})u
\end{equation*}
around $\rho$ for $u(t,x)=\left(\phi_{\omega}(x)+\rho(x)\right)e^{i\omega t}.$
Similarly to the approach in the article \cite{holmerlin}, we can verify the following property of $S.$
\begin{lemma}\label{kersw}
Assuming that $\omega>0$ and $\phi_{\omega}$ satisfy Theorem \ref{ordt}, the kernel of the map $S_{\omega}$ is the following subspace of $L^{2}(\mathbb{R})$
\begin{equation*}
\ker{S_{\omega}}=\sppp{ \left\{\phi^{'}_{\omega},\,i\phi_{\omega}\right\}}.
\end{equation*}
\end{lemma}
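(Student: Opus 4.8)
The plan is to exploit the fact that, although $S_\omega$ is only $\mathbb{R}$-linear (because of the term $\rho+\bar\rho=2\Ree\rho$), it decouples completely into two real scalar Schrödinger operators once we split into real and imaginary parts. Writing $\rho=p+iq$ with $p,q$ real and using that $\phi_\omega$ is real, we have $\rho+\bar\rho=2p$, so
\begin{equation*}
S_\omega(\rho)=L_{+}p+iL_{-}q,
\end{equation*}
where $L_{+}p=-p''+\omega p-F'(\phi_\omega^2)p-2F''(\phi_\omega^2)\phi_\omega^2 p$ and $L_{-}q=-q''+\omega q-F'(\phi_\omega^2)q$ are the usual self-adjoint linearized operators for the amplitude and phase directions. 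Hence $\rho\in\ker S_\omega$ if and only if $p\in\ker L_{+}$ and $q\in\ker L_{-}$, and it suffices to compute each real kernel separately and then recombine.

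Next I would exhibit one element in each kernel directly from the profile equation \eqref{odeord}. The identity $-\phi_\omega''+\omega\phi_\omega-F'(\phi_\omega^2)\phi_\omega=0$ is precisely $L_{-}\phi_\omega=0$, so $\phi_\omega\in\ker L_{-}$; differentiating the same identity in $x$ and using $\tfrac{d}{dx}[F'(\phi_\omega^2)\phi_\omega]=F'(\phi_\omega^2)\phi_\omega'+2F''(\phi_\omega^2)\phi_\omega^2\phi_\omega'$ gives $L_{+}\phi_\omega'=0$, so $\phi_\omega'\in\ker L_{+}$.

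The core of the argument is then to show that each of these real kernels, computed in $L^{2}(\mathbb{R})$, is exactly one-dimensional. Both $L_{\pm}$ are one-dimensional Schrödinger operators $-\tfrac{d^{2}}{dx^{2}}+V_{\pm}$ whose potentials $V_{\pm}$ tend to $\omega>0$ as $|x|\to\infty$ (since $F'(\phi_\omega^2)$ and $F''(\phi_\omega^2)\phi_\omega^2$ vanish at infinity by \eqref{H1} and the exponential decay of $\phi_\omega$ in Remark \ref{asyrem}). At the energy $0<\omega$, any solution of $L_{\pm}u=0$ is, away from a compact set, a combination of $e^{\pm\sqrt{\omega}\,x}$; an $L^{2}$ solution cannot carry the growing exponential and therefore decays exponentially at both ends. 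Since the Wronskian of two solutions of a second-order linear ODE is constant and must vanish when both solutions decay at infinity, any two $L^{2}$ solutions are linearly dependent, so $\dim_{\mathbb{R}}\ker L_{\pm}|_{L^{2}}\leq 1$. Combined with the previous step this forces $\ker L_{-}=\sppp\{\phi_\omega\}$ and $\ker L_{+}=\sppp\{\phi_\omega'\}$.

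Finally, reassembling $\rho=p+iq$ with $p=a\phi_\omega'$ and $q=b\phi_\omega$ for $a,b\in\mathbb{R}$ yields $\ker S_\omega=\sppp\{\phi_\omega',i\phi_\omega\}$, the claimed real two-dimensional space. The only genuinely delicate point is the one-dimensionality of the $L^{2}$ kernels; if one prefers to avoid the asymptotic and Wronskian analysis, the same conclusion follows from Sturm oscillation theory, noting that $\phi_\omega>0$ is nodeless and hence the ground state of $L_{-}$, while $\phi_\omega'$ has a single node at $x=0$, where the even profile $\phi_\omega$ attains its maximum, and is hence the first excited state of $L_{+}$; in either case the eigenvalue $0$ is simple.
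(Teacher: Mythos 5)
Your proof is correct, and it is genuinely more than the paper itself provides: the paper never proves Lemma \ref{kersw}, it only asserts it ``similarly to the approach in \cite{holmerlin}'' and later, inside the proof of Lemma \ref{inverhol}, quotes $\ker S_{\omega}=\sppp\{i\phi_{\omega},\,\phi^{'}_{\omega}\}$ from Theorem $3.3$ of \cite{solisch}. Your argument is the standard self-contained one underlying those references. The decoupling $S_{\omega}(p+iq)=L_{+}p+iL_{-}q$ is valid because $\phi_{\omega}$ is real, so $L_{+}p$ and $L_{-}q$ are real-valued and the kernel condition splits into the two real problems; the inclusions $\phi_{\omega}\in\ker L_{-}$ and $\phi^{'}_{\omega}\in\ker L_{+}$ follow from \eqref{odeord} and its $x$-derivative exactly as you compute; and the simplicity of the zero eigenvalue of each $L_{\pm}$, which lies below $\sigma_{ess}(L_{\pm})=[\omega,{+}\infty)$ (consistent with the Weyl-theorem remark in the proof of Lemma \ref{inverhol}), gives the reverse inclusion, so $\ker S_{\omega}=\sppp\{\phi^{'}_{\omega},\,i\phi_{\omega}\}$ as a real two-dimensional subspace.

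Two points deserve tightening. First, the phrase ``any solution of $L_{\pm}u=0$ is, away from a compact set, a combination of $e^{\pm\sqrt{\omega}x}$'' is not literally true; the precise statement is that, because $V_{\pm}-\omega$ decays exponentially (by \eqref{H1} and Remark \ref{asyrem}), each operator admits at $+\infty$ (and symmetrically at $-\infty$) a fundamental system of Jost-type solutions \emph{asymptotic} to $e^{\sqrt{\omega}x}$ and $e^{-\sqrt{\omega}x}$, so an $L^{2}$ solution must be proportional to the decaying one at each end; it is this that feeds the Wronskian step. Second, the Wronskian argument needs decay of $u^{'}$ as well as of $u$: this follows from the equation itself, since $u^{''}=V_{\pm}u$ decays exponentially and hence $u^{'}$ has a finite limit at infinity which must vanish, but it is worth a sentence. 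Finally, your closing claim that Sturm oscillation theory lets you ``avoid the asymptotic and Wronskian analysis'' is slightly off: oscillation theory locates $0$ as the ground-state eigenvalue of $L_{-}$ and the second eigenvalue of $L_{+}$, but the simplicity of an $L^{2}$ eigenvalue of a one-dimensional Schrödinger operator is itself the Wronskian/uniqueness fact, so that step is relabeled rather than bypassed.
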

Moreover, from the article \cite{solisch}, we also have the coercivity property satisfied by the operator $S_{\omega}.$
\begin{lemma}\label{coer}
 There exists a constant $c>0$ such that if $g\in H^{1}(\mathbb{R},\mathbb{C})$ is orthogonal in $L^{2}(\mathbb{R},\mathbb{C})$ to the functions $\phi^{'}_{\omega},\,i\phi_{\omega},\,\partial_{\omega}\phi_{\omega},$ then
 \begin{equation*}
     \left\langle S_{\omega}(g),g\right\rangle\geq c\norm{g}_{H^{1}}^{2}. 
 \end{equation*}
Moreover, for any $g\in H^{1}(\mathbb{R},\mathbb{C}),$ we have
\begin{equation*}
 \left\langle S_{\omega}(g),g\right\rangle\geq  c\norm{g}_{H^{1}}^{2}-C\left[\left\langle g,\phi^{'}_{\omega}\right\rangle^{2}+\left\langle g,i\phi_{\omega}\right\rangle^{2}+\left\langle g,\partial_{\omega}\phi_{\omega}\right\rangle^{2}\right],
\end{equation*}
for some constant $C>1.$
\end{lemma}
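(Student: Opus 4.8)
The plan is to reduce the complex quadratic form to two real Schrödinger forms and then invoke the Vakhitov--Kolokolov mechanism. Writing $g=u+iv$ with $u,v$ real and using $\rho+\bar\rho=2u$, one checks directly that
\begin{equation*}
S_{\omega}(g)=L_{+}u+iL_{-}v,\qquad L_{+}=-\partial_{x}^{2}+\omega-F'(\phi_{\omega}^{2})-2F''(\phi_{\omega}^{2})\phi_{\omega}^{2},\quad L_{-}=-\partial_{x}^{2}+\omega-F'(\phi_{\omega}^{2}),
\end{equation*}
so that, since the product \eqref{dotproduct} keeps the real part, the cross terms cancel and $\langle S_{\omega}(g),g\rangle=\langle L_{+}u,u\rangle+\langle L_{-}v,v\rangle$. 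The three orthogonality conditions split cleanly along this decomposition: $\langle g,\phi_{\omega}'\rangle=\langle u,\phi_{\omega}'\rangle$ and $\langle g,\partial_{\omega}\phi_{\omega}\rangle=\langle u,\partial_{\omega}\phi_{\omega}\rangle$ constrain only $u$, whereas $\langle g,i\phi_{\omega}\rangle=\langle v,\phi_{\omega}\rangle$ constrains only $v$. Hence it suffices to prove $L^{2}$-coercivity of $L_{-}$ on $\{v:\langle v,\phi_{\omega}\rangle=0\}$ and of $L_{+}$ on $\{u:\langle u,\phi_{\omega}'\rangle=\langle u,\partial_{\omega}\phi_{\omega}\rangle=0\}$, and then to trade the $L^{2}$ bound for an $H^{1}$ bound.

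The operator $L_{-}$ is the easy case: the defining equation \eqref{odeord} is exactly $L_{-}\phi_{\omega}=0$, and since $\phi_{\omega}>0$ it is the ground state, so $L_{-}\ge 0$ with simple kernel $\sppp\{\phi_{\omega}\}$ and a spectral gap below the essential spectrum $[\omega,+\infty)$. Removing the kernel through $v\perp\phi_{\omega}$ therefore yields $\langle L_{-}v,v\rangle\ge c\norm{v}_{L^{2}}^{2}$, which is the whole of the imaginary-part contribution.

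The operator $L_{+}$ is the heart of the matter. Differentiating \eqref{odeord} in $x$ gives $L_{+}\phi_{\omega}'=0$; as $\phi_{\omega}$ is even, $\phi_{\omega}'$ is odd with a single zero, hence it is the first excited state, and $L_{+}$ possesses exactly one negative eigenvalue (an even ground state), kernel $\sppp\{\phi_{\omega}'\}$, and otherwise positive spectrum. The condition $u\perp\phi_{\omega}'$ deletes the kernel, and the delicate step is to neutralize the single negative direction. Here I would use hypothesis \eqref{H4}: differentiating \eqref{odeord} in $\omega$ produces the key identity $L_{+}(\partial_{\omega}\phi_{\omega})=-\phi_{\omega}$, whence
\begin{equation*}
\langle L_{+}(\partial_{\omega}\phi_{\omega}),\partial_{\omega}\phi_{\omega}\rangle=-\langle\phi_{\omega},\partial_{\omega}\phi_{\omega}\rangle=-\tfrac{1}{2}\frac{d}{d\omega}\int_{\mathbb{R}}\vert\phi_{\omega}\vert^{2}<0,
\end{equation*}
so that $\partial_{\omega}\phi_{\omega}$ realizes the unique negative direction of $L_{+}$. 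The expected mechanism, following \cite{solisch}, is that a self-adjoint operator with a single negative eigenvalue cannot be negative definite on a two-dimensional subspace: if $\langle L_{+}u,u\rangle<0$ held for an admissible $u$, then on $\sppp\{u,\partial_{\omega}\phi_{\omega}\}$ the form would be negative definite once the cross term $\langle L_{+}u,\partial_{\omega}\phi_{\omega}\rangle=-\langle u,\phi_{\omega}\rangle$ is taken into account, contradicting the spectral count. Making this rigorous with a quantitative gap $c>0$ (rather than mere non-negativity), and matching the admissible direction $\partial_{\omega}\phi_{\omega}$ with the relation $S_{\omega}\partial_{\omega}\phi_{\omega}=-\phi_{\omega}$, is the main obstacle and the precise place where \eqref{H4} is indispensable.

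Granting the two $L^{2}$ estimates, the $H^{1}$ bound follows by a convex-combination argument: since
\begin{equation*}
\langle S_{\omega}(g),g\rangle=\norm{g'}_{L^{2}}^{2}+\int_{\mathbb{R}}\bigl(\omega-F'(\phi_{\omega}^{2})\bigr)\vert g\vert^{2}-2\int_{\mathbb{R}}F''(\phi_{\omega}^{2})\phi_{\omega}^{2}u^{2}\ge\norm{g'}_{L^{2}}^{2}-C\norm{g}_{L^{2}}^{2},
\end{equation*}
the potential terms being bounded and concentrated, one combines this with $\langle S_{\omega}(g),g\rangle\ge c\norm{g}_{L^{2}}^{2}$ to obtain $\langle S_{\omega}(g),g\rangle\ge c'\norm{g}_{H^{1}}^{2}$. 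Finally, the second (penalized) inequality is obtained for arbitrary $g\in H^{1}$ by splitting $g=g_{\parallel}+g_{\perp}$, with $g_{\parallel}$ the $L^{2}$-projection onto $\sppp\{\phi_{\omega}',i\phi_{\omega},\partial_{\omega}\phi_{\omega}\}$; applying the coercive estimate to $g_{\perp}$, bounding the finitely many cross terms by Cauchy--Schwarz and Young's inequality, and using that $\norm{g_{\parallel}}$ is comparable to $\bigl(\langle g,\phi_{\omega}'\rangle^{2}+\langle g,i\phi_{\omega}\rangle^{2}+\langle g,\partial_{\omega}\phi_{\omega}\rangle^{2}\bigr)^{1/2}$, one reaches the stated lower bound with the three penalty terms.
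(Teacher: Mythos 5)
Most of your proposal is sound and matches the standard Grillakis--Shatah--Strauss/Weinstein machinery (note the paper itself gives no proof of this lemma; it is quoted from \cite{solisch}, so that machinery is the only benchmark): the splitting $g=u+iv$ with $\left\langle S_{\omega}(g),g\right\rangle=\left\langle L_{+}u,u\right\rangle+\left\langle L_{-}v,v\right\rangle$, the clean separation of the orthogonality conditions, the treatment of $L_{-}$ (positive ground state, kernel $\sppp\{\phi_{\omega}\}$, gap below $\sigma_{ess}=[\omega,+\infty)$), the convex-combination upgrade from $L^{2}$- to $H^{1}$-coercivity, and the projection/Cauchy--Schwarz derivation of the penalized inequality are all correct.

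The genuine gap is exactly the step you flag, and it cannot be closed by the spectral-count mechanism you propose. If $\left\langle L_{+}u,u\right\rangle<0$ for some $u\perp\{\phi_{\omega}^{'},\partial_{\omega}\phi_{\omega}\},$ the quadratic form on $\sppp\{u,\partial_{\omega}\phi_{\omega}\}$ has matrix
\begin{equation*}
\begin{pmatrix} \left\langle L_{+}u,u\right\rangle & -\left\langle u,\phi_{\omega}\right\rangle \\ -\left\langle u,\phi_{\omega}\right\rangle & -\tfrac{1}{2}\tfrac{d}{d\omega}\norm{\phi_{\omega}}_{L^{2}}^{2}\end{pmatrix},
\end{equation*}
and since $u$ is \emph{not} assumed orthogonal to $\phi_{\omega},$ the off-diagonal entry $\left\langle L_{+}u,\partial_{\omega}\phi_{\omega}\right\rangle={-}\left\langle u,\phi_{\omega}\right\rangle$ need not vanish; once $\left\langle u,\phi_{\omega}\right\rangle^{2}$ exceeds the product of the two (negative) diagonal entries, this matrix is indefinite, no two-dimensional negative subspace arises, and there is no contradiction with $n(L_{+})=1.$ A finite-dimensional toy model shows the obstruction is real: for $L=\mathrm{diag}({-}1,\tfrac{1}{2})$ and $\psi=(1,1)$ one has $\left\langle L\psi,\psi\right\rangle<0,$ yet $u=(1,{-}1)\perp\psi$ satisfies $\left\langle Lu,u\right\rangle<0.$ What \eqref{H4} actually yields---by your very argument, which becomes valid precisely because the cross term then vanishes---is coercivity of $L_{+}$ on $\{u:\,u\perp\phi_{\omega},\,u\perp\phi_{\omega}^{'}\}.$ Equivalently, for a constraint direction $f\perp\ker L_{+},$ non-negativity of $L_{+}$ on $\{f\}^{\perp}$ amounts to $\left\langle L_{+}^{-1}f,f\right\rangle\leq 0$: for $f=\phi_{\omega}$ this is exactly \eqref{H4}, whereas for $f=\partial_{\omega}\phi_{\omega}$ it reads $\left\langle L_{+}^{-3}\phi_{\omega},\phi_{\omega}\right\rangle\leq 0,$ an independent spectral condition that \eqref{H4} does not imply. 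So your argument proves coercivity under orthogonality to $\{\phi_{\omega}^{'},\,i\phi_{\omega},\,\phi_{\omega}\}$ (which is the statement established in \cite{solisch}, and is what the later applications need, e.g.\ Remark \ref{coerremark}, whose real-part constraint is $\left\langle g,\phi_{\omega}\right\rangle=0$), but not the lemma as literally stated with $\partial_{\omega}\phi_{\omega}$; to reach the stated version you would have to either trade the constraint $\partial_{\omega}\phi_{\omega}$ for $\phi_{\omega}$ (or keep $\left\langle g,\phi_{\omega}\right\rangle^{2}$ as a penalization term, in both inequalities), or verify the extra inequality $\left\langle L_{+}^{-1}\partial_{\omega}\phi_{\omega},\partial_{\omega}\phi_{\omega}\right\rangle<0$ by hand for the nonlinearities under consideration.
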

\begin{remark}\label{coerremark}
Moreover, we also can verify for some constant $c>0$ that if $g\in H^{1}(\mathbb{R},\mathbb{C})$ is orthogonal in $L^{2}(\mathbb{R},\mathbb{C})$ to $x\phi_{\omega}(x),\,i\partial_{\omega}\phi_{\omega}$ and $\phi_{\omega},$ then
\begin{equation}\label{efinal}
     \left\langle S_{\omega}(g),g\right\rangle\geq c\norm{g}_{H^{1}}^{2}. 
 \end{equation}
 \par This follows because for $g_{1}$ the orthogonal projection of $g$ onto $\sppp\{\phi^{'}_{\omega},\,i\phi_{\omega},\,\partial_{\omega}\phi_{\omega}\}^{\perp},$ we have that
\begin{equation}\label{eqh}
    g=g_{1}+a_{1}\phi^{'}_{\omega}+a_{2}i\phi_{\omega}+a_{3}\partial_{\omega}\phi_{\omega},
\end{equation}
and $\left\langle S_{\omega}(g),g\right\rangle\geq c\norm{g_{1}}_{H^{1}}^{2}.$
\par Consequently, using equation \eqref{eqh} and the fact that $\phi_{\omega},\,\partial_{\omega}\phi_{\omega}$ are even functions, we deduce for some $C>0$ that
\begin{align*}
    \left\vert\left\langle g, x\phi_{\omega}(x)\right\rangle\right\vert^{2}+\norm{g_{1}}_{H^{1}}^{2}\geq &C a_{1}^{2},\\
    \left\vert\left\langle g, i\partial_{\omega}\phi_{\omega}(x)\right\rangle\right\vert^{2}+\norm{g_{1}}_{H^{1}}^{2}\geq &Ca_{2}^{2},\\
    \left\vert\left\langle g, \phi_{\omega}(x)\right\rangle\right\vert^{2}+\norm{g_{1}}_{H^{1}}^{2}\geq &Ca_{3}^{2}.
\end{align*}
Therefore, we have for some $K>0$ that
\begin{equation*}
    \norm{g}_{H^{1}}^{2}\leq K\left[\norm{g_{1}}_{H^{1}}^{2}+\norm{\Pi_{1}(g)}_{L^{2}}^{2}\right]=K\norm{g_{1}}_{H^{1}}^{2},
\end{equation*}
where $\Pi_{1}$ is the orthogonal projection of $L^{2}(\mathbb{R},\mathbb{C})$ into $\sppp\{x\phi_{\omega}(x),\,\phi_{\omega}(x),\,i\partial_{\omega}\phi_{\omega}(x)\}.$
\par In conclusion, using Lemma \ref{coer} implies that
\begin{equation}\label{impoco}
\left\langle S_{\omega}(g),g\right\rangle\geq c\norm{g_{1}}_{H^{1}}^{2}\geq \frac{c}{K} \norm{g}_{H^{1}}^{2}-\norm{\Pi_{1}(g)}_{L^{2}}^{2}=\frac{c}{K} \norm{g}_{H^{1}}^{2}.
\end{equation}

\end{remark}
\par Next, we consider the following proposition which can be verified directly using the differential equations' formulas \eqref{NLS3} and \eqref{odeord}, see also Section $4$ of the article in \cite{holmerlin} for a proof restricted to \eqref{cubicnls}.
\begin{lemma}\label{invertexpfor}
Let $\rho\in\mathscr{S}(\mathbb{R};\mathbb{C})$ and $h(x)=S_{\omega}(\rho)(x).$ Then, for any real smooth functions $f,\, \zeta,\,v,\,\gamma$ such that the functions
\begin{align*}
    \rho_{\theta}(t,x)\coloneq & f(t)\rho(v(t),\zeta(t),\gamma(t)),\\
    \rho_{\sigma}(t,x)\coloneq & \rho(v(t),\zeta(t),\gamma(t)),\\
    \phi_{\omega,\sigma}(t,x)\coloneq &\phi_{\omega}\left(x-\zeta(t)\right)e^{i\left(\gamma (t)+\frac{v(t)}{2}\left(x-\frac{\zeta(t)}{2}\right)+\omega t\right)}
\end{align*}
satisfy
\begin{multline*}
    i\partial_{t}\rho_{\theta}+\partial^{2}_{x}\rho_{\theta}+F^{'}(\vert \phi_{\omega,\sigma}\vert^{2})\rho_{\theta}+F^{''}(\vert \phi_{\omega,\sigma}\vert^{2}) \left\vert\phi_{\omega,\sigma}\right\vert^{2}\rho_{\theta}+F^{''}(\vert \phi_{\omega,\sigma}\vert^{2})\phi_{\omega,\sigma}^{2}\overline{\rho_{\theta}}
    \\=i\dot f(t)\rho_{\sigma}(t,x)-f(t)h\left(v(t),\zeta(t),\gamma(t)\right)
    {+}if(t)\dot\gamma(t)\partial_{\gamma}\rho_{\sigma}(t,x)\\{+}i\dot v(t) f(t)\partial_{v}\rho_{\sigma}(t,x){+}if(t)(\dot \zeta(t)-v(t))\partial_{\zeta}\rho_{\zeta}(t,x).\end{multline*}
\end{lemma}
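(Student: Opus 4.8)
The plan is to verify the identity by a direct computation, unfolding the modulation notation $\rho(v,\zeta,\gamma)=e^{i\theta}\rho(x-\zeta)$ with $\theta=\frac{v}{2}\left(x-\frac{\zeta}{2}\right)+\gamma$, and then organizing the resulting terms so that the purely spatial part collapses onto $S_{\omega}(\rho)$ while the genuinely time-dependent part reproduces the modulation contributions. First I would record the three elementary parameter derivatives of $\rho_{\sigma}$: since $\gamma$ enters the phase additively, $\partial_{\gamma}\rho_{\sigma}=i\rho_{\sigma}$; since $v$ enters through $\frac{v}{2}\left(x-\frac{\zeta}{2}\right)$, we get $\partial_{v}\rho_{\sigma}=i\frac{x-\frac{\zeta}{2}}{2}\rho_{\sigma}$; and since $\zeta$ enters both the translation and the phase, $\partial_{\zeta}\rho_{\sigma}=-\frac{iv}{4}\rho_{\sigma}-e^{i\theta}\tau_{\zeta}\rho'$. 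These formulas are the backbone of the whole argument.

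Next I would expand $i\partial_{t}\rho_{\theta}$ using the product rule in $f$ together with the chain rule in the parameters, namely $i\partial_{t}\rho_{\theta}=i\dot f\rho_{\sigma}+if\left(\dot\gamma\,\partial_{\gamma}\rho_{\sigma}+\dot v\,\partial_{v}\rho_{\sigma}+\dot\zeta\,\partial_{\zeta}\rho_{\sigma}\right)$. This immediately produces the terms $i\dot f\rho_{\sigma}$, $if\dot\gamma\,\partial_{\gamma}\rho_{\sigma}$ and $if\dot v\,\partial_{v}\rho_{\sigma}$ of the right-hand side, leaving a single term $if\dot\zeta\,\partial_{\zeta}\rho_{\sigma}$ still to be reconciled. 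Separately I would compute $\partial_{x}^{2}\rho_{\theta}$: because $\rho_{\theta}$ carries the Galilean phase $e^{i\frac{v}{2}x}$, differentiating twice in $x$ yields, besides $fe^{i\theta}\tau_{\zeta}\rho''$, the two boost corrections $ifv\,e^{i\theta}\tau_{\zeta}\rho'$ and $-\frac{fv^{2}}{4}e^{i\theta}\tau_{\zeta}\rho$.

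Then I would treat the potential terms. Using that $|\phi_{\omega,\sigma}|^{2}=\phi_{\omega}(x-\zeta)^{2}$ is real, the coefficients $F'(|\phi_{\omega,\sigma}|^{2})$ and $F''(|\phi_{\omega,\sigma}|^{2})|\phi_{\omega,\sigma}|^{2}$ are just the $\zeta$-translates of $F'(\phi_{\omega}^{2})$ and $F''(\phi_{\omega}^{2})\phi_{\omega}^{2}$, while the conjugate term $F''(|\phi_{\omega,\sigma}|^{2})\phi_{\omega,\sigma}^{2}\overline{\rho_{\theta}}$ must be followed carefully through the phases of $\phi_{\omega,\sigma}^{2}$ and $\overline{\rho_{\theta}}$. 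Combining these three potential contributions with the second-order piece $fe^{i\theta}\tau_{\zeta}\rho''$ extracted from $\partial_{x}^{2}\rho_{\theta}$, and invoking the definition \eqref{sw} of $S_{\omega}$ (which pairs $\rho$ with $\overline{\rho}$ in exactly this combination), reassembles the modulated operator $fe^{i\theta}S_{\omega}(\rho)(x-\zeta)=fh(v,\zeta,\gamma)$. The decisive last step is to notice the exact algebraic coincidence
\[
fe^{i\theta}\left[\,iv\,\tau_{\zeta}\rho'-\frac{v^{2}}{4}\tau_{\zeta}\rho\,\right]=-ifv\,\partial_{\zeta}\rho_{\sigma},
\]
so that the leftover boost corrections combine with the surviving $if\dot\zeta\,\partial_{\zeta}\rho_{\sigma}$ from the time derivative to give precisely $if(\dot\zeta-v)\,\partial_{\zeta}\rho_{\sigma}$; this is the mechanism by which the transport speed $v$ is subtracted from $\dot\zeta$ and the identity closes.

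The step I expect to be the main obstacle is the phase bookkeeping in the conjugate term: one must track how complex conjugation reverses the sign of the phase exponent in $\overline{\rho_{\theta}}$ against the doubled phase carried by $\phi_{\omega,\sigma}^{2}$, so that the $\overline{\rho}$ contribution lands with exactly the phase $e^{i\theta}$ required to fit the definition of $S_{\omega}$. Every other manipulation is a routine use of the product and chain rules together with the decay estimate of Remark \ref{phis+}; the real content of the lemma lies in the correct reconstitution of $S_{\omega}(\rho)$ and in the cancellation that converts the Galilean boost terms into the single modulation term $if(\dot\zeta-v)\,\partial_{\zeta}\rho_{\sigma}$.
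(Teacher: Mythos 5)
Your approach is the one the paper intends: the paper offers no proof of Lemma \ref{invertexpfor} beyond asserting that it ``can be verified directly'' (pointing to Section 4 of \cite{holmerlin} for the cubic case), so a direct unfolding of the modulation notation is exactly the right strategy. Moreover, much of your execution checks out: the three parameter-derivative formulas for $\rho_\sigma$ are correct, the chain-rule expansion of $i\partial_t\rho_\theta$ is correct, and so is the algebraic identity $fe^{i\theta}\bigl[iv\,\tau_\zeta\rho'-\tfrac{v^2}{4}\tau_\zeta\rho\bigr]=-ifv\,\partial_\zeta\rho_\sigma$, which converts the Galilean boost corrections plus the surviving $if\dot\zeta\,\partial_\zeta\rho_\sigma$ into $if(\dot\zeta-v)\,\partial_\zeta\rho_\sigma$.

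The gap is at the step you yourself flagged as the main obstacle, and it does not close the way you assert. With the paper's definition \eqref{sw}, $S_\omega(\rho)=-\rho''+\omega\rho-F'(\phi_\omega^2)\rho-F''(\phi_\omega^2)\phi_\omega^2[\rho+\bar\rho]$, the terms you propose to combine satisfy
\[
\rho''+F'(\phi_\omega^2)\rho+F''(\phi_\omega^2)\phi_\omega^2\bigl(\rho+\bar\rho\bigr)=-S_\omega(\rho)+\omega\rho,
\]
so the reassembly actually produces $-f\,h(v,\zeta,\gamma)+\omega f\rho_\sigma$, not $+f\,h(v,\zeta,\gamma)$: the sign is opposite, and there is a leftover term $\omega f\rho_\sigma$ with no counterpart in your accounting. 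Relatedly, under your conventions (your $\theta$ excludes $\omega t$, while $\phi_{\omega,\sigma}$ carries $e^{i\omega t}$), the conjugate term $F''(|\phi_{\omega,\sigma}|^2)\phi_{\omega,\sigma}^2\overline{\rho_\theta}$ has phase $e^{i\theta}e^{2i\omega t}$, not $e^{i\theta}$, so the phase bookkeeping you promised in fact fails as stated. Both defects have a single repair that a complete proof must make explicit: carry the factor $e^{i\omega t}$ in \emph{all} modulated functions (as in $\phi_{\omega,\sigma}$); then the conjugate phases match, and $i\partial_t$ acting on $e^{i\omega t}$ produces the $-\omega\rho_\theta$ that cancels the leftover $\omega f\rho_\sigma$. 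The computation then closes, but with $-f\,h$ where the statement has $+f\,h$ --- consistent with the paper's own remark that the linearization of \eqref{NLS3} about the standing wave is $-e^{i\omega t}S_\omega(\rho)$, not $+e^{i\omega t}S_\omega(\rho)$. In short, an honest version of your computation reveals that the stated identity requires a sign/phase correction; your writeup instead arrives at the stated form by asserting two intermediate identities that are false under the given definitions, which is precisely the kind of slip this lemma's downstream use (e.g.\ in \eqref{general formula}) cannot tolerate.
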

%put remark from the separation lemma
\par We also need to consider the following statement.
\begin{lemma}\label{inverhol}
 There exist $C,\,c,\,\delta>0$ such that for any $f\in L^{2}$ satisfying
 \begin{align}\label{ee1}
     \left\langle f,\frac{\partial \phi_{\omega}(v,\zeta,\gamma)}{\partial \zeta}\big\vert_{v=\zeta=\gamma=0} \right\rangle=0,\\ \label{ee2}
     \left\langle f,\frac{\partial \phi_{\omega}(v,\zeta,\gamma)}{\partial\gamma}\big\vert_{v=\zeta=\gamma=0} \right\rangle=0,
 \end{align}
$S_{\omega}^{{-}1}f$ is defined and satisfies
$\norm{S_{\omega}^{{-}1}f}_{H^{1}}<C\norm{f}_{L^{2}},$ and
\begin{align*}
     \left\langle S_{\omega}^{{-}1}f,\frac{\partial \phi_{\omega}(v,\zeta,\gamma)}{\partial \zeta}\big\vert_{v=\zeta=\gamma=0} \right\rangle=0,\\
     \left\langle S_{\omega}^{{-}1}f,\frac{\partial \phi_{\omega}(v,\zeta,\gamma)}{\partial\gamma}\big\vert_{v=\zeta=\gamma=0} \right\rangle=0.
 \end{align*}
Furthermore, if $\omega>\delta,$ $f\in H^{1}_{x}(\mathbb{R},\mathbb{C})$ and
 \begin{align}\label{ll1}
 \left\langle f,\frac{\partial \phi_{\omega}(v,\zeta,\gamma)}{\partial\omega}\big\vert_{v=\zeta=\gamma=0} \right\rangle=0,\\ \label{ll2}\left\langle f,\frac{\partial \phi_{\omega}(v,\zeta,\gamma)}{\partial v}\big\vert_{v=\zeta=\gamma=0} \right\rangle=0,
\end{align}
 then
\begin{align}\label{ff1}
\left\langle S_{\omega}^{{-}1}f,i\frac{\partial \phi_{\omega}(v,\zeta,\gamma)}{\partial\zeta}\big\vert_{v=\zeta=\gamma=0} \right\rangle=0,\\ \label{ff2}\left\langle S_{\omega}^{{-}1}f,i\frac{\partial \phi_{\omega}(v,\zeta,\gamma)}{\partial \gamma}\big\vert_{v=\zeta=\gamma=0} \right\rangle=0.
\end{align}
\end{lemma}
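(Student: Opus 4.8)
The plan is to treat $S_{\omega}$ as a real-linear self-adjoint operator on $L^{2}(\mathbb{R},\mathbb{C})$ equipped with the real inner product \eqref{dotproduct}, and to read off the four modulation directions appearing in the statement as the values at $v=\zeta=\gamma=0$ of the derivatives of $\phi_{\omega}(v,\zeta,\gamma)$: namely $\partial_{\zeta}\mapsto-\phi_{\omega}^{'}$, $\partial_{\gamma}\mapsto i\phi_{\omega}$, $\partial_{v}\mapsto\tfrac{i}{2}x\phi_{\omega}$ and $\partial_{\omega}\mapsto\partial_{\omega}\phi_{\omega}$. With this dictionary, hypotheses \eqref{ee1} and \eqref{ee2} say exactly that $f$ is orthogonal to $\phi_{\omega}^{'}$ and $i\phi_{\omega}$, i.e. $f\perp\ker S_{\omega}$ by Lemma \ref{kersw}. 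Writing $\rho=u+iv$ I would record the splitting $S_{\omega}(\rho)=L_{+}u+iL_{-}v$ with $L_{+}=-\partial_{x}^{2}+\omega-F^{'}(\phi_{\omega}^{2})-2F^{''}(\phi_{\omega}^{2})\phi_{\omega}^{2}$ and $L_{-}=-\partial_{x}^{2}+\omega-F^{'}(\phi_{\omega}^{2})$; this makes the self-adjointness of $S_{\omega}$ transparent and exhibits $S_{\omega}$ as $-\partial_{x}^{2}+\omega$ plus a smooth, exponentially decaying potential (using Remark \ref{phis+}).

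First I would construct $S_{\omega}^{-1}$ on $\{\phi_{\omega}^{'},i\phi_{\omega}\}^{\perp}$. Since the potential is a relatively compact perturbation of $-\partial_{x}^{2}+\omega$, the essential spectrum of $S_{\omega}$ is $[\omega,\infty)$, so $0<\omega$ lies below it and is therefore an isolated eigenvalue of finite multiplicity; by Lemma \ref{kersw} that multiplicity is $2$. Self-adjointness then gives that the range of $S_{\omega}$ is closed and equals $(\ker S_{\omega})^{\perp}=\{\phi_{\omega}^{'},i\phi_{\omega}\}^{\perp}$, so for $f$ satisfying \eqref{ee1} and \eqref{ee2} there is a unique $g\perp\ker S_{\omega}$ with $S_{\omega}g=f$; I define $S_{\omega}^{-1}f\coloneqq g$, which is orthogonal to the kernel by construction. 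The spectral gap (the distance from $0$ to the nearest nonzero eigenvalue, positive by isolation) yields $\norm{g}_{L^{2}}\le C\norm{f}_{L^{2}}$, and then the elliptic identity $-g^{''}=f-\omega g+F^{'}(\phi_{\omega}^{2})g+F^{''}(\phi_{\omega}^{2})\phi_{\omega}^{2}(g+\bar g)$ bootstraps this to $\norm{g}_{H^{2}}\lesssim\norm{f}_{L^{2}}$, in particular the claimed $H^{1}$ bound. Here the coercivity of Lemma \ref{coer} is what guarantees, under \eqref{H4}, that this finite-dimensional spectral picture is nondegenerate.

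For the ``furthermore'' part I would argue by duality, using two relations obtained by differentiating the soliton equation \eqref{odeord}: $L_{+}(\partial_{\omega}\phi_{\omega})=-\phi_{\omega}$ and $L_{-}(x\phi_{\omega})=-2\phi_{\omega}^{'}$, equivalently $S_{\omega}(\partial_{\omega}\phi_{\omega})=-\phi_{\omega}$ and $S_{\omega}(\tfrac{i}{2}x\phi_{\omega})=-i\phi_{\omega}^{'}$. Since $S_{\omega}^{-1}f\in H^{2}=\operatorname{domain}(S_{\omega})$ and $\phi_{\omega},\,x\phi_{\omega}\in\mathscr{S}(\mathbb{R})$, self-adjointness gives
\[
\left\langle S_{\omega}^{-1}f,\phi_{\omega}\right\rangle=-\left\langle S_{\omega}^{-1}f,S_{\omega}(\partial_{\omega}\phi_{\omega})\right\rangle=-\left\langle f,\partial_{\omega}\phi_{\omega}\right\rangle=0
\]
by \eqref{ll1}, and likewise $\langle S_{\omega}^{-1}f,i\phi_{\omega}^{'}\rangle=-\langle f,\tfrac{i}{2}x\phi_{\omega}\rangle=0$ by \eqref{ll2}; recalling that $i\partial_{\zeta}\phi_{\omega}(v,\zeta,\gamma)|_{0}=-i\phi_{\omega}^{'}$ and $i\partial_{\gamma}\phi_{\omega}(v,\zeta,\gamma)|_{0}=-\phi_{\omega}$, these are precisely \eqref{ff1} and \eqref{ff2}. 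The hypotheses $f\in H^{1}$ and $\omega>\delta$ serve to upgrade the regularity of $S_{\omega}^{-1}f$ and to keep the spectral gap and the coercivity constants of Lemma \ref{coer} and Remark \ref{coerremark} uniform, so that all pairings are justified and the constants $C,c$ are independent of the admissible $\omega$.

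The main obstacle I expect is the inversion step rather than the duality: coercivity as stated in Lemma \ref{coer} only controls $S_{\omega}$ on the codimension-three subspace orthogonal to $\{\phi_{\omega}^{'},i\phi_{\omega},\partial_{\omega}\phi_{\omega}\}$, whereas I must invert on the larger codimension-two space $\{\phi_{\omega}^{'},i\phi_{\omega}\}^{\perp}$, which still contains the single negative direction of $L_{+}$. Handling this one indefinite direction, that is, showing that $0$ remains an isolated eigenvalue so that $S_{\omega}$ restricted to $(\ker S_{\omega})^{\perp}$ is boundedly invertible despite being indefinite, is exactly where the stability condition \eqref{H4} and the Grillakis--Shatah--Strauss analysis underlying Lemma \ref{coer} enter, and is the delicate point of the argument.
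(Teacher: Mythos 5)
Your proposal is correct, and its overall skeleton coincides with the paper's: both proofs identify $\sigma_{ess}(S_{\omega})=[\omega,{+}\infty)$ via Weyl's theorem, take $\ker S_{\omega}=\sppp\{\phi^{'}_{\omega},i\phi_{\omega}\}$ from Lemma \ref{kersw} so that \eqref{ee1}--\eqref{ee2} become orthogonality to the kernel, and both obtain \eqref{ff1}--\eqref{ff2} by exactly the duality computation you wrote, through the identities $S_{\omega}(\partial_{\omega}\phi_{\omega})={-}\phi_{\omega}$ and $S_{\omega}(\tfrac{i}{2}x\phi_{\omega})={-}i\phi^{'}_{\omega}$ (the paper's \eqref{somega1}--\eqref{somega2}). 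Where you genuinely diverge is the invertibility step. The paper invokes Theorem $3$ of Grillakis--Shatah--Strauss under \eqref{H4} to produce the unique negative eigenvalue with eigenvector $\rho_{\omega}$, proves the quadratic-form coercivity $\left\langle S_{\omega}g_{1},g_{1}\right\rangle\geq c\norm{g_{1}}_{H^{1}}^{2}$ on $\left(\ker S_{\omega}\oplus B_{\omega}\right)^{\perp}$, and assembles $S_{\omega}^{{-}1}$ from that estimate together with the trivially invertible $B_{\omega}$ direction. You instead apply the abstract spectral theorem: since $0$ is an eigenvalue lying strictly below the essential spectrum, it is automatically an isolated point of $\sigma(S_{\omega})$ of finite multiplicity, so $S_{\omega}$ restricted to $(\ker S_{\omega})^{\perp}$ has a spectral gap and a bounded $L^{2}$ inverse, and the $H^{1}$ (in fact $H^{2}$) bound follows by elliptic bootstrap. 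Your route is slightly more economical and buys a stronger conclusion: it never needs to count negative eigenvalues, and it shows that \eqref{H4} is not needed for the invertibility statement itself. This also means your closing paragraph is too pessimistic about your own argument: the ``single indefinite direction of $L_{+}$'' is no obstacle in your setup, because isolation of $0$ (hence the gap) is automatic for discrete spectrum below $\sigma_{ess}$; the stability condition \eqref{H4} is what powers the coercivity statements of Lemma \ref{coer} and the paper's version of the inversion, but your proof bypasses it entirely.
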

\begin{proof}[Proof of Lemma \ref{inverhol}]
First, the operator $S_{\omega}$ is self-adjoint in the Hilbert Space $L^{2}(\mathbb{R},\mathbb{C})$ having the dot product given in \eqref{dotproduct}. 
\par Since 
\begin{equation*}
    \frac{d}{d\omega}\norm{\phi_{\omega}}_{L^{2}}^{2}>0,
\end{equation*}
we have that $S_{\omega}$ has a unique negative eigenvalue $\lambda_{\omega}$ with its eigenspace being a one-dimensional subspace $B_{\omega}=\{m\rho_{\omega}\vert\,m\in\mathbb{R}\}$ of $L^{2}(\mathbb{R},\mathbb{C})$ satisfying$\ \left\langle \rho_{\omega},\partial_{\omega}\phi_{\omega} \right\rangle\neq 0,$ this is a consequence of Theorem $3$ from
\cite{solisch}.
\par Furthermore, we have that
\begin{equation*}
    \ker S_{\omega}=\sppp\{i\phi_{\omega},\,\phi^{'}_{\omega}\},
\end{equation*}
see Theorem $3.3$ from \cite{solisch}. Moreover, since $\phi_{\omega}$ has exponential decay, we have from Weyl's Theorem that $\sigma_{ess}(S_{\omega})=[\omega,{+}\infty),$ see Theorem $14.6$ from the book \cite{specbook}. Therefore, for all $f\in H^{1}(\mathbb{R},\mathbb{C})\cap \left(\ker S_{\omega}\right)^{\bot},$ there exists $f_{1}\in\mathbb{R}$ such that $f=a\rho_{\omega}+f_{1},\,\left\langle f_{1},\rho_{\omega}\right\rangle=0.$ 
\par Next, for any $g_{1}\in H^{1}(\mathbb{R},\mathbb{C})\cap \left(\ker S_{\omega} \bigoplus B_{\omega}\right)^{\perp},$ we have
\begin{equation}\label{pppq}
    \left\langle S_{\omega} g_{1},g_{1}\right\rangle\geq c\norm{g_{1}}_{L^{2}}^{2},
\end{equation}
for some constant $c>0$ depending only on $\omega.$ Moreover, since $\phi_{\omega}\in L^{\infty},$ we have that there exists $C>1$ satisfying
\begin{equation}\label{pppq2}
    \left\langle S_{\omega} g,g\right\rangle\geq \norm{g^{'}}_{L^{2}}^{2}-C\norm{g}_{L^{2}}^{2},
\end{equation}
for any $g\in H^{1}(\mathbb{R},\mathbb{C}).$
\par As a consequence, we deduce from inequalities \eqref{pppq} and \eqref{pppq2} that there exists $c>0$ depending only on $\omega$ satisfying
\begin{equation*}
    \left\langle S_{\omega} g_{1},g_{1}\right\rangle\geq c\norm{g_{1}}_{H^{1}}^{2},
\end{equation*}
for all $g_{1}\in H^{1}(\mathbb{R},\mathbb{C})\cap \left(\ker S_{\omega}\bigoplus B_{\omega}\right)^{\perp},$ and so  $\norm{g_{1}}_{H^{1}}\leq \frac{1}{c}\norm{S_{\omega}g_{1}}\leq \frac{1}{c}\norm{S_{\omega}g}.$ 
\par Therefore, we can verify the existence of a constant $C>1$ and a bounded linear map $S_{\omega}^{{-}1}$ satisfying $S_{\omega}^{{-}1}\left(\left(\ker S_{\omega}\right)^{\bot}\right)\subset \left(\ker S_{\omega}\right)^{\bot}$ and
\begin{equation*}
\norm{S_{\omega}^{{-}1} f}_{H^{1}}\leq C\norm{f}_{L^{2}},
\end{equation*}
for all $f\in \left(\ker S_{\omega}\right)^{\bot}.$
\par The identities \eqref{ff1} and \eqref{ff2} follow from the fact that $S_{\omega}$ is a self-adjoint operator and from the following identities
\begin{align}\label{somega1}
    S_{\omega}\left(\frac{\partial \phi_{\omega}(v,\zeta,\gamma)}{\partial\omega}\big\vert_{v=\zeta=\gamma=0}\right)=&S_{\omega}(\partial_{\omega}\phi_{\omega})={-}\phi_{\omega},\\ \label{somega2}
    S_{\omega}\left(\frac{\partial \phi_{\omega}(v,\zeta,\gamma)}{\partial v}\big\vert_{v=\zeta=\gamma=0}\right)=&S_{\omega}\left(i\frac{x}{2}\phi_{\omega}(x)\right)={-}i\phi^{'}_{\omega}.
\end{align}
\end{proof}

Moreover, we also consider the following standard lemmas to simplify our reasoning in the next sections.
\begin{lemma}\label{ProP}
    There exist unique orthogonal projections $\Pi,\,\Pi_{1}:L^{2}_{x}(\mathbb{R})\to L^{2}_{x}(\mathbb{R})$ such that the range of  $\Pi$ is  equal to $\sppp\{\phi^{ \prime}_{\omega},ix\phi_{\omega},i\phi_{\omega},\partial_{\omega}\phi_{\omega}\},$ the range of  $\Pi_{1}$ is  equal to $\sppp\{i\phi^{ \prime}_{\omega},x\phi_{\omega},\phi_{\omega},i\partial_{\omega}\phi_{\omega}\},$ and for any $f\in L^{2}_{x}(\mathbb{R},\mathbb{C})$  
    \begin{align*}
        \left\langle f-\Pi(f)(x),\phi_{\omega}^{'}(x)\right\rangle=&0,\,\left\langle  f-\Pi(f)(x),i\phi_{\omega}(x)\right\rangle=0,\\
        \left\langle f-\Pi(f)(x),ix\phi_{\omega}(x)\right\rangle=&0,\,\left\langle f-\Pi(f)(x),\partial_{\omega}\phi_{\omega}(x)\right\rangle=0,
    \end{align*}
 and
 \begin{align*}
        \left\langle f-\Pi_{1}(f)(x),i\phi_{\omega}^{'}(x)\right\rangle=&0,\,\left\langle f-\Pi_{1}(f)(x),\phi_{\omega}(x)\right\rangle=0,\\
        \left\langle f-\Pi_{1}(f)(x),x\phi_{\omega}(x)\right\rangle=&0,\,\left\langle f-\Pi_{1}(f)(x),i\partial_{\omega}\phi_{\omega}(x)\right\rangle=0.
    \end{align*}
\end{lemma}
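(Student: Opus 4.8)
The plan is to reduce the existence and uniqueness of each projection to a single algebraic fact: that the four functions spanning its range form a linearly independent family in the \emph{real} Hilbert space $(L^{2}(\mathbb{R},\mathbb{C}),\langle\cdot,\cdot\rangle)$ with the inner product \eqref{dotproduct}. Set
\[
V=\sppp\{\phi'_{\omega},\,i\phi_{\omega},\,ix\phi_{\omega},\,\partial_{\omega}\phi_{\omega}\},\qquad V_{1}=\sppp\{i\phi'_{\omega},\,i\phi_{\omega},\,ix\phi_{\omega},\,i\partial_{\omega}\phi_{\omega}\}.
\]
Once $V$ and $V_{1}$ are known to be genuine four-dimensional (hence closed) subspaces, the Hilbert space projection theorem furnishes unique orthogonal projections $\Pi$ onto $V$ and $\Pi_{1}$ onto $V_{1}$, and the orthogonality relations in the statement are exactly $f-\Pi(f)\perp V$ and $f-\Pi_{1}(f)\perp V_{1}$. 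Each generator lies in $L^{2}$: by Remark \ref{phis+} the functions $\phi_{\omega},\phi'_{\omega}$ and their $x$-weighted versions decay exponentially, and $\partial_{\omega}\phi_{\omega}$ is well-defined and in $L^{2}$; this is where the threshold $\delta$ enters, namely $\omega>\delta$ guarantees that $\phi_{\omega}$ depends smoothly on $\omega$ and that the nondegeneracy \eqref{H4} holds. Note also that \eqref{H4} gives $2\langle\partial_{\omega}\phi_{\omega},\phi_{\omega}\rangle=\tfrac{d}{d\omega}\norm{\phi_{\omega}}_{L^{2}}^{2}>0$, so in particular $\partial_{\omega}\phi_{\omega}\neq 0$, and the remaining generators are visibly nonzero.

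For $\Pi$ I would first observe that its four generators are in fact pairwise orthogonal, which makes $V$ four-dimensional at once and gives $\Pi$ by the explicit formula $\Pi(f)=\sum_{j}\langle f,e_{j}\rangle e_{j}/\norm{e_{j}}_{L^{2}}^{2}$ over the generators $e_{j}$. The orthogonality is a parity/reality computation: $\phi_{\omega}$ and $\partial_{\omega}\phi_{\omega}$ are real and even, $\phi'_{\omega}$ is real and odd, while $i\phi_{\omega}$ and $ix\phi_{\omega}$ are purely imaginary. Since $\langle\cdot,\cdot\rangle=\Ree\int\cdot\,\overline{\cdot}$ vanishes between a real and a purely imaginary function, the two real generators are orthogonal to the two imaginary ones; within the real pair $\langle\phi'_{\omega},\partial_{\omega}\phi_{\omega}\rangle=\int\phi'_{\omega}\,\partial_{\omega}\phi_{\omega}=0$ because the integrand is odd, and within the imaginary pair $\langle i\phi_{\omega},ix\phi_{\omega}\rangle=\int x\phi_{\omega}^{2}=0$ for the same reason.

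For $\Pi_{1}$ the four generators are all purely imaginary, so after factoring out $i$ their independence is equivalent to the real-linear independence of $\{\phi'_{\omega},\phi_{\omega},x\phi_{\omega},\partial_{\omega}\phi_{\omega}\}$. Here orthogonality genuinely fails (for instance $\langle i\phi'_{\omega},ix\phi_{\omega}\rangle=\int x\phi'_{\omega}\phi_{\omega}=-\tfrac12\norm{\phi_{\omega}}_{L^{2}}^{2}\neq 0$), so I must argue independence directly, splitting by parity into the odd pair $\{\phi'_{\omega},x\phi_{\omega}\}$ and the even pair $\{\phi_{\omega},\partial_{\omega}\phi_{\omega}\}$. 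For the odd pair, a relation $a\phi'_{\omega}+bx\phi_{\omega}\equiv 0$ with $b\neq0$ would force $x\phi_{\omega}/\phi'_{\omega}$ to be constant, contradicting Remark \ref{asyrem}, from which this ratio behaves like $-x/\sqrt{\omega}\to-\infty$; hence $a=b=0$. For the even pair I would use the operator $S_{\omega}$: the profile equation \eqref{odeord} gives $S_{\omega}(\phi_{\omega})=-2F''(\phi_{\omega}^{2})\phi_{\omega}^{3}$, while \eqref{somega1} gives $S_{\omega}(\partial_{\omega}\phi_{\omega})=-\phi_{\omega}$, so a proportionality $\partial_{\omega}\phi_{\omega}=c\phi_{\omega}$ would yield $\phi_{\omega}=2cF''(\phi_{\omega}^{2})\phi_{\omega}^{3}$, which is impossible as $x\to+\infty$ where the right-hand side is of size $O(\phi_{\omega}^{3})$ and negligible compared with the left-hand side. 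Thus $V_{1}$ is also four-dimensional.

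With $V$ and $V_{1}$ four-dimensional, the projections $\Pi,\Pi_{1}$ exist and are unique; concretely one inverts the real, symmetric, positive-definite Gram matrices of the two generating families to write the projections explicitly and read off the four orthogonality identities in each case. I expect the main obstacle to be the $\Pi_{1}$ case: the loss of orthogonality forces the asymptotic and spectral arguments above, and the genuine content of the lemma is the statement that $\partial_{\omega}\phi_{\omega}$ is not a multiple of $\phi_{\omega}$, which is precisely where \eqref{H4}, the $\omega$-differentiated asymptotics of Remark \ref{asyrem}, and the identity \eqref{somega1} come into play.
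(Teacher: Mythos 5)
Your proof is correct, but there is no argument of the paper to compare it against: Lemma \ref{ProP} is introduced there as a ``standard lemma'' and stated without any proof, so your write-up supplies a justification the paper omits. Your formalization is the right one: interpret $\Pi$ and $\Pi_{1}$ as the orthogonal projections of the real Hilbert space $\left(L^{2}(\mathbb{R},\mathbb{C}),\left\langle\cdot,\cdot\right\rangle\right)$ with inner product \eqref{dotproduct} onto the real spans $V$ and $V_{1}$ of the two listed families, and reduce everything to showing these spans are four-dimensional (hence closed). The two halves check out: for $\Pi$ the generators are pairwise orthogonal by the reality/parity computations you give, so the projection is explicit; for $\Pi_{1}$ orthogonality genuinely fails (your computation $\left\langle i\phi^{'}_{\omega},ix\phi_{\omega}\right\rangle=-\tfrac{1}{2}\norm{\phi_{\omega}}_{L^{2}}^{2}$ is correct), and your independence argument --- parity splitting into odd and even pairs, the asymptotics $\phi^{'}_{\omega}\sim-\sqrt{\omega}\,\phi_{\omega}$ from Remark \ref{asyrem} and \eqref{ode} for the odd pair, and the identities $S_{\omega}(\partial_{\omega}\phi_{\omega})=-\phi_{\omega}$ from \eqref{somega1} together with $S_{\omega}(\phi_{\omega})=-2F^{''}(\phi_{\omega}^{2})\phi_{\omega}^{3}$ from \eqref{odeord} for the even pair --- is sound. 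You also correctly identify where the real content lies: \eqref{H4} alone only gives $\left\langle\partial_{\omega}\phi_{\omega},\phi_{\omega}\right\rangle>0$, which is compatible with $\partial_{\omega}\phi_{\omega}$ being a positive multiple of $\phi_{\omega}$, so some pointwise or spectral argument such as your $S_{\omega}$ computation is genuinely needed to rule out proportionality.

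One caveat worth recording: uniqueness as literally stated in the lemma is false, since any orthogonal projection whose range merely contains the four functions (for instance the identity operator) also satisfies the displayed orthogonality relations; uniqueness holds only under the reading --- which you implicitly and correctly adopt, and which is how the paper uses $\Pi$ and $\Pi_{1}$ in Sections 3 and 4 --- that the range of each projection is exactly the span of the corresponding four functions.
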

\begin{remark}
    The orthogonal projection $\Pi$ is going to be used in Section $3$ to construct the approximate solutions. The projection $\Pi_{1}$ is going to be used only in Section $4$ to estimate the energy norm of the remainder of the approximate solution during a large time interval.
\end{remark}
Furthermore, similarly to the proof of Lemma $2.11$ from \cite{second}, we can verify the following statement.
\begin{lemma}\label{invertan}
    For any $n\in\mathbb{N}\cup\{0\},$ there exists a natural number $m_{n}\geq n$ such that if $S_{\omega}(\rho)\left(\frac{\cdot}{\sqrt{\omega}}\right)\in \mathcal{S}^{+}_{m},$ then $\rho\left(\frac{\cdot}{\sqrt{\omega}}\right)\in \mathcal{S}^{+}_{m_{n}}.$
\end{lemma}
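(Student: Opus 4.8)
The plan is to reduce the statement to a scalar ODE that is constant-coefficient at infinity, and then to track how the inversion of $S_\omega$ manufactures polynomial factors only through a single resonance. First I would carry out the rescaling that is built into the statement: setting $\tilde\rho(x)=\rho(x/\sqrt\omega)$ and $\tilde\phi(x)=\phi_\omega(x/\sqrt\omega)$, a direct computation turns $S_\omega(\rho)(\cdot/\sqrt\omega)$ into
\[
\omega\big({-}\tilde\rho''+\tilde\rho\big)-F'(\tilde\phi^2)\tilde\rho-F''(\tilde\phi^2)\tilde\phi^2\big[\tilde\rho+\overline{\tilde\rho}\big]=:\tilde S(\tilde\rho),
\]
so that the characteristic exponents of the free part $-\partial_x^2+1$ become $\pm1$ and line up with the exponential rates in Definition \ref{s11}; this is precisely the purpose of the factor $\sqrt\omega$. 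By Remark \ref{phis+} we have $\tilde\phi\in\mathcal{S}^+$, and since $F$ is a polynomial with $F(0)=F'(0)=0$, Lemma \ref{alg} shows that the coefficients $F'(\tilde\phi^2)$ and $F''(\tilde\phi^2)\tilde\phi^2$ lie in $\mathcal{S}^-$ and decay like $e^{-2|x|}$. Splitting $\tilde\rho=\tilde\rho_1+i\tilde\rho_2$ into real and imaginary parts, the factor $[\tilde\rho+\overline{\tilde\rho}]=2\tilde\rho_1$ decouples $\tilde S$ into two real scalar Schrödinger operators with exponentially decaying even potentials, so it suffices to treat a single equation $-\psi''+\psi-W\psi=g$ with $W\in\mathcal{S}^-$ and $g\in\mathcal{S}^+_n$ for a decaying solution $\psi$.

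Next I would pass to the exponential variable: for $x>0$ put $z=e^{-x}$, so that $-\partial_x^2+1$ becomes the Euler operator $L_0=-z^2\partial_z^2-z\partial_z+1$, for which $L_0 z^k=(1-k^2)z^k$. Because $W$ is even in the exponential it is an analytic function of $z$ vanishing to order $\geq2$ at $z=0$ with radius of convergence $\geq1$ (inherited from $\tilde\phi\in\mathcal{S}^+$), while $g\in\mathcal{S}^+_n$ becomes $\sum_{j=0}^n(\ln z)^j$ times an odd $\mathbb{D}$-analytic series. Thus the equation is a linear ODE with a regular singular point at $z=0$ whose only resonance among the odd exponents is $k=1$, where $1-k^2$ vanishes. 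I would then solve order by order (a Frobenius-type recursion): for $k\geq3$ the factor $1-k^2$ is invertible, so those coefficients are produced without new logarithms, whereas each time the forcing feeds the resonant frequency $z^1$ the solution absorbs it as $z(\ln z)^{p+1}$, i.e. as one extra power of $x$ in the original variable. Since $W=O(z^2)$, the coupling only shifts frequencies upward ($z^{2j+1}\mapsto z^{2j+3}$ and higher), so the resonance is never fed back from higher orders; hence the logarithmic degree at $z^1$ is bounded by that of $g$ (at most $n$) plus the single unit gained in solving the resonant equation, giving $\psi\in\mathcal{S}^+_{m_n}$ with, for instance, $m_n=n+1$. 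The analysis at $x\to-\infty$ (with $z=e^{x}$) is identical and supplies the $\mathcal{S}^+$ structure on the left.

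The step I expect to be the genuine obstacle is turning the formal order-by-order solution into an honest element of $\mathcal{S}^+_{m_n}$, that is, proving that the coefficient series actually converge on the open unit disk and not merely as formal power series. For this I would invoke the theory of linear ODEs at a regular singular point, equivalently a Picard–Lindelöf argument for the holomorphic system obtained after subtracting the finitely many resonant $z(\ln z)^p$ terms, exactly in the spirit of the analytic continuation used for $\phi_\omega$ itself in the remark following Theorem \ref{ordt}: the radius of convergence of $\psi$ is controlled by the nearest singularity of the analytic data $W$ and $g$, both analytic on $\mathbb{D}$, so the series for $\psi$ converges on $\mathbb{D}$ as well. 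Smoothness and decay of $\psi$ on bounded regions, together with $\psi\in\mathscr{S}(\mathbb{R})$, follow from the invertibility and elliptic regularity already recorded in Lemma \ref{inverhol}; matching this interior solution with the two one-sided analytic expansions then places $\rho(\cdot/\sqrt\omega)$ in $\mathcal{S}^+_{m_n}$ and completes the proof.
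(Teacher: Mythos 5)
Your first two steps are correct and coincide with the route the paper itself takes: the real/imaginary splitting that decouples $S_\omega$ into the two real scalar operators $L_{+}=\omega({-}\partial_x^2+1)-F'(\tilde\phi^2)-2F''(\tilde\phi^2)\tilde\phi^2$ and $L_{-}=\omega({-}\partial_x^2+1)-F'(\tilde\phi^2)$, where $\tilde\phi=\phi_\omega(\cdot/\sqrt\omega)$, is exactly the content of Remark \ref{oddinve}, after which the paper reduces to the scalar inversion lemma of \cite{second} (Lemma $2.11$ there). Your Frobenius analysis in the variable $z=e^{-x}$ — indicial roots $\pm1$, a single resonance at the exponent $k=1$ which raises the polynomial-in-$x$ degree by one, no feedback into $z^{1}$ because the potential is $O(z^{2})$, and convergence on $\mathbb{D}$ from regular-singular-point theory — is a correct account of the one-sided asymptotics on each half-line.

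The gap is in your final sentence, where "matching the interior solution with the two one-sided analytic expansions" is asserted to give membership in $\mathcal{S}^{+}_{m_n}$. By Definition \ref{S}, this membership means there exist functions $f_0,\dots,f_{m_n}$, each lying in $\mathcal{S}^{+}\cap\mathscr{S}(\mathbb{R})$ and in particular each globally $C^{\infty}$, with $\rho(\cdot/\sqrt\omega)=\sum_{p}x^{p}f_p(x)$ on all of $\mathbb{R}$; and by Definition \ref{s11} each $f_p$ must equal its odd-exponential series on the \emph{whole} half-lines $x>0$ and $x<0$. Such log-polynomial decompositions are unique: if $\sum_p x^p S_p(e^{-x})=0$ on $(0,\infty)$ with every $S_p$ analytic on $\mathbb{D}$, then every $S_p\equiv 0$ (compare the growth of the powers of $\ln z$ as $z\to 0$). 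Hence the candidate $f_p$ is forced: it must be the piecewise function equal to your right-hand Frobenius coefficient $R_p(e^{-x})$ for $x>0$ and your left-hand coefficient $L_p(e^{x})$ for $x<0$, and there is no freedom to redistribute terms to repair a mismatch. Two independent one-sided constructions give no control on the smoothness (or even continuity) of these glued coefficients at $x=0$: only the full sums $\sum_p x^pR_p(e^{-x})=\rho(\cdot/\sqrt\omega)=\sum_p x^pL_p(e^{x})$ are known to agree there. This gluing is precisely the nontrivial content of the scalar lemma the paper cites, and it is why the argument in \cite{second} is organized around globally defined objects — the kernel element $\phi'_\omega$, a second homogeneous solution obtained by reduction of order, and variation-of-parameters integrals against the globally defined forcing — so that each power $x^{p}$ appears multiplying a function already smooth on all of $\mathbb{R}$. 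To close your argument you would either have to redo the inversion globally in that fashion, or prove directly that your one-sided coefficients match to infinite order at $x=0$, for instance by exploiting that on each half-line they solve the triangular hierarchy $L_{\pm}f_p=g_p+2(p+1)f_{p+1}'+(p+1)(p+2)f_{p+2}$ with globally smooth data $g_p$; neither step is in your proposal, and neither is a formality.
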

\begin{remark}\label{oddinve}
    Furthermore, since $\phi_{\omega}$ is a real function, we can verify that if $\rho$ is a real function in the domain of $S^{{-}1}_{\omega},$ then $S^{{-}1}_{\omega}(\rho)$ is a real function. If $i\rho$ is a real function, and $\rho$ is in the domain of $S^{{-}1}_{\omega},$ then $iS^{{-}1}_{\omega}(\rho)$ is a real function. From this fact, we can verify Lemma \ref{invertan} similarly to the approach in the proof of Lemma $2.11$ from \cite{second}. 
\end{remark}
\begin{remark}
Similarly to the Lemma $2.9$ from \cite{second}, we can verify that the sum of the sets $x^{l}\mathcal{S}^{+}$ has to be a direct sum.
\end{remark}

\section{Approximate solutions}\label{refapp}
Following the approach made in \cite{second}, we are going to construct a sequence of approximate solutions of the following form
\begin{align}\label{uk}
    \varphi_{k}=&\left[e^{i\frac{v_{k}}{2}(x-\frac{\zeta_{k}}{2})+i\gamma_{k}}\phi_{\omega}(x-\zeta_{k})-e^{{-}i\frac{v_{k}}{2}(x+\frac{\zeta_{k}}{2})+i\gamma_{k}}\phi_{\omega}({-}x-\zeta_{k})\right]\ \\ \nonumber
    &{+}f_{\omega,k}(t)\left[e^{i\frac{v_{k}}{2}(x-\frac{\zeta_{k}}{2})+i\gamma_{k}}\partial_{\omega}\phi_{\omega}(x-\zeta_{k})-e^{{-}i\frac{v_{k}}{2}(x+\frac{\zeta_{k}}{2})+i\gamma_{k}}\partial_{\omega}\phi_{\omega}({-}x-\zeta_{k})\right]
    \\
    &{+}\sum_{j\in J} g_{j}(t)\left[e^{i\frac{v_{k}}{2}(x-\frac{\zeta_{k}}{2})+i\gamma_{k}}p_{j,\omega}(x-\zeta_{k})-e^{{-}i\frac{v_{k}}{2}(x+\frac{\zeta_{k}}{2})+i\gamma_{k}}p_{j,\omega}({-}x-\zeta_{k})\right],
\end{align}
where the functions $p_{1,\omega}\left(\frac{\cdot}{\sqrt{\omega}}\right),\, p_{2,\omega}\left(\frac{\cdot}{\sqrt{\omega}}\right),\, p_{3,\omega}\left(\frac{\cdot}{\sqrt{\omega}}\right)$ and $p_{j,\omega}\left(\frac{\cdot}{\sqrt{\omega}}\right)$ are in $\bigoplus_{l=0}^{{+}\infty} x^{l}\mathcal{S}^{+}.$ Moreover, all the functions $g_{j}$ and $f_{\omega,k}$ are Schwartz functions with exponential decay and $J_{k}$ is a finite set with size depending on $k.$ 
\par 
More precisely:
\begin{theorem}\label{app lemma}
    There exist a constant $c>0,$ and finite subsets $J,\, I$ of $\mathbb{N},$ and for each $k\in\mathbb{N},$ there are finite sets $J_{k},\, I_{k}$ and $\delta_{k}\in(0,1),\,c(k)\in\mathbb{R}$ such that if $0<v<\delta_{k},$ then $e^{{-}2\zeta_{k}(v,t)}\leq c(k)v^{2}$ for all $t$ and there exist functions $\sigma_{k}(t)=(\zeta_{k},\,v_{k},\,\gamma_{k},f_{\omega,k}):(0,1)\times \mathbb{R}\to\mathbb{R}^{4},\,g_{odd,j},\,g_{even,j}:(0,1)\times\mathbb{R}\to\mathbb{R}$ and real functions $p_{j,\omega}\in \mathcal{S}^{+}_{\infty}$ such that all functions $g_{even,j}(v,\cdot)$ and $g_{odd,j}(v,\cdot)$ are Schwartz, $\zeta_{k},\,f_{\omega,k}$ are even on $t,\,$ $v_{k},\,\gamma_{k}$ are on odd $t,$
    \begin{equation*} 
        \left\vert \frac{\partial^{l} f_{\omega,k}(v,t)}{\partial t^{l}} \right\vert\leq C(l)v^{2+l}\left(\ln{\left(\frac{1}{v}\right)}+\vert t\vert\right)^{c}e^{{-}2\sqrt{\omega}\vert t\vert v},
    \end{equation*}
for a positive constants $c,$ and the function $\varphi_{k}(t,x)=P_{k}(t,x,\sigma_{k}(t))$ defined by
\begin{align}\label{phik}  
P_{k}(t,x,\sigma_{k}(t))\coloneqq&\left[e^{i\frac{v_{k}}{2}(x-\frac{\zeta_{k}}{2})+i\gamma_{k}}\phi_{\omega}(x-\zeta_{k})-e^{{-}i\frac{v_{k}}{2}(x+\frac{\zeta_{k}}{2})+i\gamma_{k}}\phi_{\omega}({-}x-\zeta_{k})\right]\ \\ \nonumber
    &{+}f_{\omega,k}(v,t)\left[e^{i\frac{v_{k}}{2}(x-\frac{\zeta_{k}}{2})+i\gamma_{k}}\partial_{\omega}\phi_{\omega}(x-\zeta_{k})-e^{{-}i\frac{v_{k}}{2}(x+\frac{\zeta_{k}}{2})+i\gamma_{k}}\partial_{\omega}\phi_{\omega}({-}x-\zeta_{k})\right]
     \\\nonumber
&{+}e^{{-}2\sqrt{\omega}\zeta_{0}}\left[e^{i\frac{v_{k}}{2}(x-\frac{\zeta_{k}}{2})+i\gamma_{k}}p_{1,\omega}(x-\zeta_{k})-e^{{-}i\frac{v_{k}}{2}(x+\frac{\zeta_{k}}{2})+i\gamma_{k}}p_{1,\omega}({-}x-\zeta_{k})\right]
    \\ \nonumber
    &{+}\sum_{j\in J} ig_{odd,j}(v,t)\left[e^{i\frac{v_{k}}{2}(x-\frac{\zeta_{k}}{2})+i\gamma_{k}}p_{j,\omega}(x-\zeta_{k})-e^{{-}i\frac{v_{k}}{2}(x+\frac{\zeta_{k}}{2})+i\gamma_{k}}p_{j,\omega}({-}x-\zeta_{k})\right]\\ 
    \nonumber
    &{+}\sum_{j\in I} g_{even,j}(v,t)\left[e^{i\frac{v_{k}}{2}(x-\frac{\zeta_{k}}{2})+i\gamma_{k}}p_{j,\omega}(x-\zeta_{k})-e^{{-}i\frac{v_{k}}{2}(x+\frac{\zeta_{k}}{2})+i\gamma_{k}}p_{j,\omega}({-}x-\zeta_{k})\right]\\ 
    \nonumber
    &{+}\sum_{j\in J_{k}}i g_{odd,j}(v,t)\left[e^{i\frac{v_{k}}{2}(x-\frac{\zeta_{k}}{2})+i\gamma_{k}}p_{j,\omega}(x-\zeta_{k})-e^{{-}i\frac{v_{k}}{2}(x+\frac{\zeta_{k}}{2})+i\gamma_{k}}p_{j,\omega}({-}x-\zeta_{k})\right]\\ \nonumber
    &{+}\sum_{j\in I_{k}} g_{even,j}(v,t)\left[e^{i\frac{v_{k}}{2}(x-\frac{\zeta_{k}}{2})+i\gamma_{k}}p_{j,\omega}(x-\zeta_{k})-e^{{-}i\frac{v_{k}}{2}(x+\frac{\zeta_{k}}{2})+i\gamma_{k}}p_{j,\omega}({-}x-\zeta_{k})\right],
\end{align}
all functions $g_{even,j}$ are even, all functions $g_{odd,j}$ are odd, and 
\begin{equation}\label{ukest}
\norm{\frac{\partial^{l}}{\partial t^{l}}\Lambda(\varphi_{k})(t,x)}_{H^{s}_{x}(\mathbb{R})}\leq C(s,l)v^{2k+2+l}\left(\vert t\vert v+\ln{\frac{1}{v}}\right)^{c(k)}e^{{-}2\sqrt{\omega}\vert t\vert v},
\end{equation}
for all $l\in\mathbb{N}$ and $t,\,s\in\mathbb{R},$ and there is $t_{k}=O\left(\frac{1}{v}\ln{\frac{1}{v}}\right)$ such that
\begin{equation*}
   \lim_{t\to{+}\infty}\norm{\varphi_{k}(t+t_{k})-\phi_{\omega}(x-vt)e^{iv(x-vt)+i\omega t+i\frac{v^{2}t}{2}}+\phi_{\omega}(x+vt)e^{{-}iv(x+vt)+i\omega  t+i\frac{v^{2}t}{2}}}_{H^{1}_{x}(\mathbb{R})}=0,
\end{equation*}
Moreover, there exist $c>0,$ and   $c_{k}>0$ depending on $k$ such that if $v>0$ is small enough, then
\begin{align*}
    \max_{j\in J}\left\vert \frac{\partial^{l}}{\partial l^{l}} g_{odd,j}(v,t)\right\vert+\max_{j\in I}\left\vert \frac{\partial^{l}}{\partial l^{l}} g_{even,j}(v,t)\right\vert\lesssim_{k,l} &v^{2+l}\left(\vert t\vert v+\ln{\frac{1}{v}}\right)^{c}e^{{-}2\sqrt{\omega}\vert t\vert v},\\
    \max_{j\in J_{k}}\left\vert \frac{\partial^{l}}{\partial l^{l}} g_{odd,j}(v,t)\right\vert+\max_{j\in I_{k}}\left\vert \frac{\partial^{l}}{\partial l^{l}} g_{even,j}(v,t)\right\vert\lesssim_{k,l} &v^{4+l}\left(\vert t\vert v+\ln{\frac{1}{v}}\right)^{c_{k}}e^{{-}2\sqrt{\omega}\vert t\vert v}.
\end{align*}
\end{theorem}
\begin{remark}\label{induest}
 Furthermore, we are going to verify that the functions $f_{\omega,k},\,v_{k},\,\gamma_{k}$ and $\zeta_{k}$ satisfy for some $s_{k}>0$ the following asymptotic expansion
\begin{align*}
   f_{\omega,k}(v,t)=&f_{\omega,k-1}(v,t)+O\left(v^{2k+2}\left(\ln{\frac{1}{v}}\right)^{s_{k}}\right),\,v_{k}(t)=v_{k-1}(v,t)+O\left(v^{2k+1}\left(\ln{\frac{1}{v}}\right)^{s_{k}}\right),\\
   \gamma_{k}(v,t)=&\gamma_{k-1}(v,t)+O\left(v^{2k+1}\left(\ln{\frac{1}{v}}\right)^{s_{k}}\right),\,\zeta_{k}(v,t)=\zeta_{k-1}(v,t)+O\left(v^{2k}\left(\ln{\frac{1}{v}}\right)^{s_{k}}\right),
\end{align*}    
for any $k\in\mathbb{N}_{\geq 1}$ if $v>0$ is small enough.
\end{remark}
\begin{remark}\label{remestimate}
 From the proof of Theorem \ref{app lemma}, we have that there is $c>0$ such that $\zeta_{k}(t)=d(t)+O(v^{2}(\ln{\frac{1}{v}})^{c}),$
 for $d(t)$ satisfying \eqref{d0},
 and    \begin{equation}\label{popopopo}
   v^{4}\left\vert \frac{\partial^{l}}{\partial t^{l}}\zeta_{k}(v,t)\right\vert+v^{3}\left\vert \frac{\partial^{l}}{\partial t^{l}}v_{k}(v,t)\right\vert+v^{3}\left\vert \frac{\partial^{l}}{\partial t^{l}}\left[\gamma_{k}(v,t)-\omega t\right]\right\vert\leq Cv^{4+l}\left(\vert t\vert v+\ln{\frac{1}{v}}\right)^{c}e^{{-}2\sqrt{\omega}\vert t\vert v},
\end{equation}
for any $l\in\mathbb{N}_{\geq 1}$ if $v>0$ is sufficiently small.
\end{remark}
\begin{remark}
    In the Section \ref{s}, we are going to use the notation $P_{k}(t,x,\sigma_{k}(t))$  from the statement of Theorem \ref{app lemma} instead of $\varphi_{k}(t,x).$
\end{remark}
In the notation of Theorem \ref{app lemma}, to simplify our reasoning, we consider for any smooth function $\rho$ that 
\begin{equation}\label{trk}
    \rho_{k}(t,x)\coloneqq e^{i\left(\gamma_{k}(t)+\frac{v_{k}(t)}{2}\left[x-\frac{\zeta_{k}(t)}{2}\right]\right)}\rho(x-\zeta_{k}(t))=e^{i\alpha_{k}(t,x)}\rho(x-\zeta_{k}(t)).
\end{equation}
\par Next, before we start the proof of Theorem \ref{app lemma}, we state a useful proposition for us to estimate of $\Lambda(u_{k})$ with high precision for every $k\in\mathbb{N}.$ More precisely:
\begin{lemma}
    \label{represent1} In notation of Theorem \ref{app lemma}, if there is $c>0$ such that $\norm{\partial^{l}_{t}\Lambda(\varphi_{k})(t,x)}_{H^{1}(\mathbb{R})}=O\left(v^{2\mathcal{M}+l}\left(\ln{\left(\frac{1}{v}\right)+\vert t\vert v}\right)^{c}e^{{-}2\sqrt{\omega}v\vert t\vert}\right)$ for all $v>0$ sufficiently small,then there exist natural numbers $N_{1},N_{2}$ satisfying, for any $0<v\ll 1,$ the following estimate
\begin{equation*}
    \Lambda(\varphi_{k})(t,x)=\sum_{i=1}^{N_{1}}s_{i}(v,t)Sym\left(\left(\mathcal{R}_{i}\right)_{k}\right)(t,x)+Rest_{v}(t,x),
\end{equation*}
such that $Rest_{v}(t,x)$ satisfies for some $c_{2}>0$ and any $l\in\mathbb{N}$
\begin{equation*}
    \norm{\partial^{l}_{t}Rest_{v}(t,x)}_{H^{1}(\mathbb{R})}=O\left(v^{2\mathcal{M}+4+l}\left(\ln{\left(\frac{1}{v}\right)+\vert t\vert v}\right)^{c_{2}}e^{{-}2\sqrt{\omega}v\vert t\vert}\right),
\end{equation*}
and for all $1\leq i,j\leq N_{1}$ we have $\left\langle \mathcal{R}_{i},\,\mathcal{R}_{j}\right\rangle=\delta_{i,j},\,\mathcal{R}_{i}\in \mathcal{S}^{+}_{\infty},$  $s_{i,v}\in C^{\infty}(\mathbb{R})$ satisfies, for all $l\in\mathbb{N},$ $\left\vert\frac{\partial^{l}}{\partial t^{l}}s_{i}(v,t)\right\vert\lesssim_{l}v^{2\mathcal{M}}\left[v\vert t \vert+\ln{\left(\frac{1}{v^{2}}\right)}\right]^{n_{\mathcal{M}}}e^{-2\sqrt{\omega}v\vert t\vert},$ for all $t\in\mathbb{R}$ and $v\in(0,1),$ if $0<v\ll 1.$ 
\end{lemma}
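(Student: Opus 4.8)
The plan is to compute $\Lambda(\varphi_{k})$ directly from the explicit formula \eqref{phik} and then reorganize the output into single-center profiles by means of the separation Lemma \ref{separation}. Writing $\varphi_{k}$ as the antisymmetric difference of a bump centered at $+\zeta_{k}$ and a bump centered at $-\zeta_{k}$, each modulated by the phase appearing in \eqref{trk}, I would first observe that the linear operator $i\partial_{t}+\partial_{x}^{2}$ applied to any single term, together with the self-interaction part of $F^{'}(\vert\varphi_{k}\vert^{2})\varphi_{k}$ supported near the same center, nearly cancels by virtue of the soliton equation \eqref{odeord}. What survives from this part is governed by Lemma \ref{invertexpfor}: it is a linear combination of the profiles $\partial_{\omega}\phi_{\omega},\,i\phi_{\omega},\,\phi^{'}_{\omega},\,\tfrac{x}{2}\phi_{\omega}$ and the correction profiles $p_{j,\omega}$, with coefficients built from the time derivatives $\dot\zeta_{k}-v_{k},\,\dot\gamma_{k}-\omega,\,\dot v_{k},\,\dot f_{\omega,k}$, each of which carries the decay recorded in Remark \ref{remestimate}.

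The genuinely interaction-driven part of $\Lambda(\varphi_{k})$ is produced by the cross terms in $F^{'}(\vert\varphi_{k}\vert^{2})\varphi_{k}$ that couple the bump at $+\zeta_{k}$ to the bump at $-\zeta_{k}$. Since $F$ is a polynomial, Lemma \ref{taylorlemma} expands $F^{'}(\vert\varphi_{k}\vert^{2})$ into a finite sum whose terms are products of an odd power of $\phi_{\omega}$ localized near one center, which lies in $\mathcal{S}^{+}$ after extracting phases by Lemmas \ref{deriv} and \ref{alg}, and an even power localized near the opposite center, which lies in $\mathcal{S}^{-}$. Each such product has precisely the form treated by Lemma \ref{separation}, with separation distance $2\zeta_{k}$, so it decomposes into finitely many profiles localized at a single center, each weighted by $e^{-d_{n}(2\zeta_{k})}$ with $d_{n}$ strictly increasing, plus a remainder whose $H^{s}$ norm is controlled by Remark \ref{remaindersep}.

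I would then truncate every such separation series at an order $\mathcal{M}'$ large enough that $e^{-d_{\mathcal{M}'}(2\zeta_{k})}=O(v^{2\mathcal{M}+4})$, which is possible because $e^{-2\sqrt{\omega}\zeta_{k}}\lesssim v^{2}$ by Theorem \ref{app lemma}. The finitely many retained terms collect, using the antisymmetry of $\varphi_{k}$, into expressions of the shape $r(v,t)\,Sym\bigl((\mathcal{R})_{k}\bigr)$ with $\mathcal{R}\in\mathcal{S}^{+}_{\infty}$, while all the discarded pieces, together with the surviving modulation terms below order $v^{2\mathcal{M}+4}$, assemble into $Rest_{v}$. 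A finite Gram–Schmidt procedure applied to the resulting family of profiles inside $\mathcal{S}^{+}_{\infty}$ produces the orthonormal system $(\mathcal{R}_{i})_{i=1}^{N_{1}}$ with $\langle\mathcal{R}_{i},\mathcal{R}_{j}\rangle=\delta_{i,j}$ and rewrites each $s_{i}(v,t)$ as a finite linear combination of the coefficients $r(v,t)$.

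The quantitative bounds then follow by bookkeeping: the factor $v^{2\mathcal{M}}$ in the estimate for $s_{i}$ comes from the hypothesis on $\Lambda(\varphi_{k})$ combined with the powers of $v$ supplied by the weights $e^{-d_{n}(2\zeta_{k})}$, while the polynomial factor $[v\vert t\vert+\ln(1/v^{2})]^{n_{\mathcal{M}}}$ and the exponential decay $e^{-2\sqrt{\omega}v\vert t\vert}$ are inherited from the bounds on $\zeta_{k},\,v_{k},\,\gamma_{k},\,f_{\omega,k}$ and their time derivatives in Remark \ref{remestimate}. The derivative estimates on $s_{i}$ and $Rest_{v}$ follow from the chain rule, since $\partial_{t}$ preserves $\mathcal{S}^{+}_{\infty}$ by Lemma \ref{deriv} and only generates extra factors that are derivatives of the modulation parameters, each carrying its own decay. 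I expect the main obstacle to be exactly this separation-and-truncation bookkeeping: one must simultaneously verify membership of every retained profile in $\mathcal{S}^{+}_{\infty}$, track the precise power of $v$ attached to each term so that the truncation leaves a remainder of order $v^{2\mathcal{M}+4+l}$ for \emph{all} $l$, and confirm that both the orthonormalization and all time differentiations preserve these estimates.
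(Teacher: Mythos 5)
Your proposal is correct and follows essentially the same route as the paper, which proves this lemma by citation to Lemma 5.3 of \cite{second} and which deploys exactly this machinery (explicit computation of $\Lambda(\varphi_{k})$, separation of cross terms via Lemma \ref{separation} with truncation justified by $e^{-2\sqrt{\omega}\zeta_{k}}\lesssim v^{2}$, decay bookkeeping through Remark \ref{remestimate}, and orthonormalization of the resulting finite family of profiles) in the proofs of Lemma \ref{oddeven} and of estimate \eqref{pophik}. One imprecision worth noting: the oscillatory factors $e^{imv_{k}x}$ produced by the cross terms are removed not by Lemmas \ref{deriv} and \ref{alg} but by writing $\cos(v_{k}x)=\cos(v_{k}(x-\zeta_{k}))\cos(v_{k}\zeta_{k})-\sin(v_{k}(x-\zeta_{k}))\sin(v_{k}\zeta_{k})$ and Taylor expanding $\sin,\,\cos$, which is precisely what places the retained profiles in $\mathcal{S}^{+}_{\infty}$ (polynomial weights times Schwartz functions of $\mathcal{S}^{+}$) with coefficients gaining the powers of $v$ needed for the truncation, as done at the end of the proof of Lemma \ref{oddeven}.
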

\begin{proof}
    The proof of Lemma \ref{represent1} is completely analogous to the proof of Lemma $5.3$ from \cite{second}.
\end{proof}

From now on, to simplify more our notation, we are going to consider $v\in(0,1)$ small enough and denote the functions $\zeta_{k}(v,t),\,v_{k}(v,t),\,\gamma_{k}(v,t),\,f_{\omega,k}(v,t)$ by $\zeta_{k}(t),\,v_{k}(t),\,\gamma_{k}(t),\,f_{\omega,k}(t)$ respectively. 
\par Next, we consider the following result which is going to be essential in the estimate of $\Lambda(u_{k})(t,x)$ with high precision.
\begin{lemma}\label{oddeven}
In notation of Theorem \ref{app lemma}, let $\rho:\mathbb{R}^{2}\to\mathbb{C}$ be the following function
\begin{multline}
    \rho(t,x)=Sym(\phi_{\omega,k})(t,x)+i\sum_{j=1}^{m}f_{odd,j}(v,t)Sym((q_{j})_{k})(t,x)+\sum_{j=1}^{n}f_{even,j}(v,t)Sym((p_{j})_{k})(t,x),
\end{multline}
where all the real functions $q_{j}\left(\frac{\cdot}{\sqrt{\omega}}\right),\,p_{j}\left(\frac{\cdot}{\sqrt{\omega}}\right)$ are in $\mathcal{S}^{+}_{\infty},$ all the real functions $f_{odd,j}$ are smooth and odd on the variable $t,$ all the real functions $f_{even,j}$ are smooth and even on the variable $t,$ and 
\begin{align}\label{odd}
    \left\vert \frac{\partial^{l}}{\partial t^{l}}f_{odd,j}(v,t)\right\vert\lesssim_{l} &v^{2+l}\left(\vert t\vert v+\ln{\frac{1}{v}}\right)^{c_{j}}e^{{-}2\sqrt{\omega}\vert t\vert v},\\ \label{even}
    \left\vert \frac{\partial^{l}}{\partial t^{l}}f_{even,j}(v,t)\right\vert\lesssim_{l} &v^{2+l}\left(\vert t\vert v+\ln{\frac{1}{v}}\right)^{d_{j}}e^{{-}2\sqrt{\omega}\vert t\vert v},
\end{align}
for any $l\in\mathbb{N}$ and $v\in(0,1)$ small enough, and $c_{j},\,d_{j}$ are positive numbers.
Then, if $e^{{-}2\sqrt{\omega}\zeta_{k}}=O(v^{2}),\,v_{k}=O(v),$ for any $l\in\mathbb{N}$
\begin{equation*}
   v^{4}\left\vert \frac{\partial^{l}}{\partial t^{l}}\zeta_{k}(v,t)\right\vert+v^{3}\left\vert \frac{\partial^{l}}{\partial t^{l}}v_{k}(v,t)\right\vert+v^{3}\left\vert \frac{\partial^{l}}{\partial t^{l}}[\gamma_{k}(v,t)-\omega t]\right\vert\leq Cv^{4+l}\left(\vert t\vert v+\ln{\frac{1}{v}}\right)^{c}e^{{-}2\sqrt{\omega}\vert t\vert v},
\end{equation*}
and the function $\zeta_{k}$ is even on $t$ and $v_{k},\,\gamma_{k}$ are odd on $t,$ then, for any $M\in\mathbb{N},$ there exist numbers $M_{k},\,N_{k}\in\mathbb{N},$ and a finite set of Schwartz functions $P_{j},\,Q_{j}\in C^{\infty}_{x}(\mathbb{R},\mathbb{R})$ such that
\begin{equation*}
    \Lambda(\rho_{k})(t,x)=\sum_{j=1}^{M_{k}}ig_{odd,j}(v,t)Sym((Q_{j})_{k})(t,x)+\sum_{j=1}^{N_{k}}g_{even,j}(v,t)Sym((P_{j})_{k})(t,x)+O(v^{2M}),
\end{equation*}
where each real function $g_{odd,j}$ satisfies \eqref{odd}, each real function $g_{even}$ satisfies \eqref{even}, $P{j}\left(\frac{\cdot}{\sqrt{\omega}}\right)$ and $Q_{j}\left(\frac{\cdot}{\sqrt{\omega}}\right)$ belong to $\mathcal{S}^{+}_{\infty},$ and the term $O(v^{2\mathcal{M}})$ means a smooth function $r(t,x)$ satisfying
\begin{equation}\label{remmmm}
    \norm{\frac{\partial^{l}}{\partial t^{l}}r(t,x)}_{H^{s}_{x}(\mathbb{R})}\lesssim_{s,l} v^{2M+l}\left(\vert t\vert v+\ln{\frac{1}{v}}\right)^{\sigma}e^{{-}2\sqrt{\omega}\vert t\vert v},
\end{equation}
for some constant $\sigma>0,$ and any $s\geq 0,$ any $l\in\mathbb{N}.$
\end{lemma}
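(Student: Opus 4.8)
The plan is to compute $\Lambda(\rho_k)=i\partial_t\rho_k+\partial^2_x\rho_k+F^{'}(\vert\rho_k\vert^2)\rho_k$ directly and to check that every term it produces already belongs to the target class, namely a coefficient times the $sym$ of a modulated real profile from $\mathcal{S}^{+}_{\infty}$. I would first split $\rho_k=\rho_R+\rho_L$ into its right part $\rho_R$ (all profiles centered at $+\zeta_k$, moving with speed $+v_k$) and its mirror part $\rho_L(t,x)=-\rho_R(t,-x)$, so that $\rho_k(t,-x)=-\rho_k(t,x)$ and $\Lambda(\rho_k)$ is automatically spatially odd. For the linear part $i\partial_t\rho_k+\partial^2_x\rho_k$, differentiating each modulated profile $e^{i\alpha_k(t,x)}g(x-\zeta_k)$, with $\alpha_k=\gamma_k+\tfrac{v_k}{2}(x-\tfrac{\zeta_k}{2})$, through the chain rule produces a principal piece, the modulation contributions proportional to $\dot\gamma_k-\omega+\tfrac{v_k^2}{4}$, $\dot v_k$ and $\dot\zeta_k-v_k$, and derivatives of $g$. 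By the hypotheses on the modulation bounds these coefficients already have the size demanded in \eqref{odd}--\eqref{even}, while Lemmas \ref{deriv} and \ref{alg} keep every surviving spatial factor inside $\mathcal{S}^{+}_{\infty}$.

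The heart of the matter is the nonlinearity. Near the right soliton I would write $F^{'}(\vert\rho_k\vert^2)\rho_k$ as a \emph{self} part, governed by $\vert\phi_{\omega,k}\vert=\phi_\omega$, plus a \emph{cross} part coming from the far soliton and the small corrections. The self part combines with $\partial^2_x\phi_{\omega,k}$ through the exact solitary-wave identity \eqref{odeord}, leaving only modulation terms; the small correction profiles $q_j,p_j$ enter the nonlinearity linearly to leading order, and by Lemma \ref{invertexpfor} this linearization reassembles precisely into $sym$ of $(S_\omega q_j)_k$ and $(S_\omega p_j)_k$. The remaining cross contributions are genuine interaction terms: products $g(\cdot-\zeta_k)h(\cdot+\zeta_k)$ of a profile centered at $+\zeta_k$ against one centered at $-\zeta_k$, which after Taylor expansion of $F^{'}$ via Lemma \ref{taylorlemma} I would feed into the separation Lemma \ref{separation}. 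This rewrites each product as a finite sum of single-soliton-localized profiles in $\mathcal{S}^{+}_{\infty}$ weighted by powers of $e^{-2\sqrt{\omega}\zeta_k}$, plus a remainder estimated by Remark \ref{remaindersep}. Since $e^{-2\sqrt{\omega}\zeta_k}=O(v^2)$ by hypothesis, each such weight becomes a power of $v$; truncating the separation at a sufficiently high order $\mathcal{M}=\mathcal{M}(M)$ leaves a remainder whose $H^s_x$-norm and time derivatives are $O(v^{2M})$ in the sense of \eqref{remmmm}, and the polynomial-in-$\zeta_k$ factors from the separation are absorbed into the $\left(\vert t\vert v+\ln\frac1v\right)^{c}$ prefactor because $\zeta_k\sim v\vert t\vert+\ln\frac1v$.

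The even/odd structure of the coefficients I would obtain not term by term but from one symmetry of the equation. The assumptions that $\zeta_k$ is even and $v_k,\gamma_k$ odd in $t$, together with the reality of $\phi_\omega,q_j,p_j$ and the parities of $f_{odd,j},f_{even,j}$, give $\overline{\rho_k(-t,x)}=\rho_k(t,x)$. As $iu_t+u_{xx}+F^{'}(\vert u\vert^2)u=0$ is invariant under $u(t,x)\mapsto\overline{u(-t,x)}$, a short computation shows $\Lambda(\overline{u(-t,\cdot)})(t,x)=\overline{\Lambda(u)(-t,x)}$, so the identity passes to the output: $\overline{\Lambda(\rho_k)(-t,x)}=\Lambda(\rho_k)(t,x)$. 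Each basis element $sym((G)_k)$ with $G$ real satisfies the same symmetry, hence once $\Lambda(\rho_k)=\sum_j a_j(t)\,sym((G_j)_k)$ is established I may replace $a_j$ by $\tfrac12\big(a_j(t)+\overline{a_j(-t)}\big)$ without changing the sum, after which $\overline{a_j(-t)}=a_j(t)$; writing $a_j=g_{even,j}+i\,g_{odd,j}$ then forces $g_{even,j}$ even and $g_{odd,j}$ odd, which is exactly the claimed decomposition.

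The coefficient bounds \eqref{odd}--\eqref{even} follow because every $a_j$ is a finite product of the input coefficients $f_{odd,i},f_{even,i}$, modulation-parameter derivatives and interaction weights $e^{-2\sqrt{\omega}\zeta_k}$, each $O\!\big(v^{2}(\vert t\vert v+\ln\frac1v)^{c}e^{-2\sqrt{\omega}\vert t\vert v}\big)$ or smaller, and multiplying such factors only improves the power of $v$. The whole scheme is the analogue for \eqref{NLS3} of the computation carried out in \cite{second}. The main obstacle is the bookkeeping in the nonlinear step: one must verify that after Taylor expansion and repeated use of Lemma \ref{separation} all remainders are genuinely $O(v^{2M})$ in the strong norm of \eqref{remmmm} \emph{uniformly in the time-derivative order} $l$, while simultaneously keeping each surviving profile inside $\mathcal{S}^{+}_{\infty}$ and each coefficient within the parity classes above.
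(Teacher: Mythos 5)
There is one genuine gap in your plan, and it sits exactly at the step that makes this lemma harder for \eqref{NLS3} than for the real-valued wave equation treated in \cite{second}: the nonlinear cross terms are not simply products $g(\cdot-\zeta_{k})h(\cdot+\zeta_{k})$ of real profiles, because they carry oscillatory relative phases. Write $\rho_{k}=A-B$ with $A$ the part centered at $+\zeta_{k}$ (phase $e^{i\alpha_{k}(t,x)}$) and $B$ the mirror part (phase $e^{i\alpha_{k}(t,-x)}$). Since $\alpha_{k}(t,x)-\alpha_{k}(t,-x)=v_{k}x$, the monomials of the polynomial $F^{'}(\vert\rho_{k}\vert^{2})\rho_{k}$ contain factors $A\bar{B},\,(A\bar{B})^{2},\dots$, so a typical cross contribution has the shape $\phi_{\omega}(x-\zeta_{k})^{2}\phi_{\omega}({-}x-\zeta_{k})\cos(mv_{k}x)e^{i\alpha_{k}(t,x)}$ with $m\geq 1$. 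Lemma \ref{separation} handles only the real profile product; the leftover factor $\cos(mv_{k}x)$ (or $\sin(mv_{k}x)$) is neither a function of $x\mp\zeta_{k}$ alone nor small, so after separation you are still not inside the span of $sym((Q)_{k})$ with $Q$ a real element of $\mathcal{S}^{+}_{\infty}$, and your assertion that the cross contributions reassemble into the required class fails at this point. The paper closes exactly this hole: it writes $\cos(v_{k}x)=\cos(v_{k}(x-\zeta_{k}))\cos(v_{k}\zeta_{k})-\sin(v_{k}(x-\zeta_{k}))\sin(v_{k}\zeta_{k})$ and Taylor-expands the entire functions $\sin,\,\cos$ in the variable $v_{k}(x-\zeta_{k})$; each term gains a power of $v_{k}=O(v)$, the powers of $(x-\zeta_{k})$ are absorbed because $\mathcal{S}^{+}_{\infty}$ allows polynomial factors, the purely temporal factors $\cos(v_{k}\zeta_{k}),\,\sin(v_{k}\zeta_{k})$ join the coefficients with the correct parity ($v_{k}\zeta_{k}$ is odd in $t$), and truncation at a high order produces the $O(v^{2M})$ remainder in the sense of \eqref{remmmm}. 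This fix is fully compatible with the rest of your scheme.

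Apart from this, your architecture coincides with the paper's: chain rule on the modulated profiles for the linear part, self/cross splitting of the polynomial nonlinearity, Lemma \ref{separation} plus truncation for the interaction terms, and Lemmas \ref{deriv} and \ref{alg} to remain in $\mathcal{S}^{+}_{\infty}$. Your parity argument, however, is genuinely different and arguably cleaner than the paper's: the paper verifies evenness and oddness of the coefficients term by term through an explicit list of admissible forms, while you obtain it in one stroke from the invariance $u(t,x)\mapsto\overline{u({-}t,x)}$ of \eqref{NLS3}, the identity $\overline{\rho_{k}({-}t,x)}=\rho_{k}(t,x)$ (which your hypotheses do guarantee), and symmetrization of the coefficients. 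Once the oscillatory factors above have been expanded, so that the structural decomposition into real $\mathcal{S}^{+}_{\infty}$ profiles is actually available, that symmetry argument applies verbatim and yields the claimed parities.
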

\begin{proof}
\par First, we can verify by the definition \eqref{trk} that
\begin{align*}
i\frac{\partial}{\partial t} \rho_{k}(t,x)=&{-}ie^{i\left(\gamma_{k}(t)+\frac{v_{k}(t)}{2}\left[x-\frac{\zeta_{k}(t)}{2}\right]\right)}\dot \zeta_{k}(t)\rho^{'}\left(x-\zeta_{k}(t)\right)\\&{-}(\dot \gamma_{k}(t)-\omega)e^{i\left(\gamma_{k}(t)+\frac{v_{k}(t)}{2}\left[x-\frac{\zeta_{k}(t)}{2}\right]\right)}\rho\left(x-\zeta_{k}(t)\right)\\&{-}\frac{\dot v_{k}(t)}{2}e^{i\left(\gamma_{k}(t)+\frac{v_{k}(t)}{2}\left[x-\frac{\zeta_{k}(t)}{2}\right]\right)}(x-\zeta_{k}(t))\rho\left(x-\zeta_{k}(t)\right)\\
&{-}\frac{\dot v_{k}(t) \zeta_{k}(t) }{4}e^{i\left(\gamma_{k}(t)+\frac{v_{k}(t)}{2}\left[x-\frac{\zeta_{k}(t)}{2}\right]\right)}\rho\left(x-\zeta_{k}(t)\right).
\end{align*}
Consequently, using Lemma \ref{invertexpfor} for $\rho=\phi_{\omega},$ and the chain rule of the derivative, we can verify that
$\left[i\partial_{t}+\partial^{2}_{x}\right]\rho_{k}(t,x)+F^{\prime}(\vert \phi_{\omega,k}(t,x)\vert^{2})\phi_{\omega,k}(t,x)-F^{\prime}(\vert \phi_{\omega,k}(t,{-}x)\vert^{2})\phi_{\omega,k}(t,{-}x)$ is equal to the following finite sum 
\begin{equation}\label{Msf}
    i\sum_{j=1}^{N}g_{odd,j}(v,t)Sym((Q_{j})_{k})(t,x)+\sum_{j=1}^{n}g_{even,j}(v,t)Sym((P_{j})_{k})(t,x),
\end{equation}
for some functions $g_{odd,j},\,g_{even,j},\,\mathcal{Q}_{j},\,\mathcal{P}_{j}$ satisfying all the properties of Lemma \ref{oddeven}.
\par Moreover, $F^{'}(\phi^{2})\phi$ is an odd polynomial. Consequently, since $\rho_{k}$ is defined by \eqref{trk} and 
\begin{equation*}
    \alpha_{k}(t,x)-\alpha_{k}(t,{-}x)=v_{k}x,
\end{equation*} 
 we can verify using Lemma \ref{alg} and the hypotheses satisfied by $f_{odd,j}$ and $f_{even,j}$ that $F^{'}(\vert\rho_{k}\vert^{2})\rho_{k}-F^{\prime}(\vert \phi_{\omega,k}(t,x)\vert^{2})\phi_{\omega,k}(t,x)+F^{\prime}(\vert \phi_{\omega,k}(t,{-}x)\vert^{2})\phi_{\omega,k}(t,{-}x)$ is a finite sum of terms having one of the following forms
\begin{align*}
    i \alpha_{odd,j}(t)Sym\Big(&(\mathcal{Q}_{1})_{k}\Big)(t,x),\\
    \alpha_{even,j}(t)Sym\Big(&(\mathcal{Q}_{1})_{k}\Big)(t,x),\\
    i\alpha_{odd}(t) \Big[&\mathcal{Q}_{1}(x-\zeta_{k}(t))\mathcal{Q}_{2}(x+\zeta_{k}(t))e^{i\alpha_{k}(t,x)}-\mathcal{Q}_{1}({}-x-\zeta_{k}(t))\mathcal{Q}_{2}({-}x+\zeta_{k}(t))e^{i\alpha_{k}(t,{-}x)}\Big],\\
    \alpha_{even}(t) \Big[&\mathcal{Q}_{1}(x-\zeta_{k}(t))\mathcal{Q}_{2}(x+\zeta_{k}(t))e^{i\alpha_{k}(t,x)}-\mathcal{Q}_{1}({}-x-\zeta_{k}(t))\mathcal{Q}_{2}({-}x+\zeta_{k}(t))e^{i\alpha_{k}(t,{-}x)}\Big],\\
i\alpha_{odd}(t)\Big[&\mathcal{Q}_{1}(x-\zeta_{k}(t))\mathcal{Q}_{2}(x+\zeta_{k}(t))e^{i\alpha_{k}(t,x)}\cos{(mv_{k}x)}\\&{-}\mathcal{Q}_{1}(-x-\zeta_{k}(t))\mathcal{Q}_{2}({-}x+\zeta_{k}(t))e^{i\alpha_{k}(t,{-}x)}\cos{(mv_{k}x)}\Big],\\
\alpha_{even}(t)\Big[&\mathcal{Q}_{1}(x-\zeta_{k}(t))\mathcal{Q}_{2}(x+\zeta_{k}(t))e^{i\alpha_{k}(t,x)}\cos{(mv_{k}x)}\\&{-}\mathcal{Q}_{1}(-x-\zeta_{k}(t))\mathcal{Q}_{2}({-}x+\zeta_{k}(t))e^{i\alpha_{k}(t,{-}x)}\cos{(mv_{k}x)}\Big],\\
\alpha_{odd}(t)\Big[&\mathcal{Q}_{1}(x-\zeta_{k}(t))\mathcal{Q}_{2}(x+\zeta_{k}(t))e^{i\alpha_{k}(t,x)}\sin{(mv_{k}x)}\\&{+}\mathcal{Q}_{1}(-x-\zeta_{k}(t))\mathcal{Q}_{2}({-}x+\zeta_{k}(t))e^{i\alpha_{k}(t,{-}x)}\sin{(mv_{k}x)}\Big],\\
i\alpha_{even}(t)\Big[&\mathcal{Q}_{1}(x-\zeta_{k}(t))\mathcal{Q}_{2}(x+\zeta_{k}(t))e^{i\alpha_{k}(t,x)}\sin{(mv_{k}x)}\\&{+}\mathcal{Q}_{1}(-x-\zeta_{k}(t))\mathcal{Q}_{2}({-}x+\zeta_{k}(t))e^{i\alpha_{k}(t,{-}x)}\sin{(mv_{k}x)}\Big]
\end{align*}
such that $m$ and $l$ are integers, $\mathcal{Q}_{1}\left(\frac{\cdot}{\sqrt{\omega}}\right)\in \mathcal{S}^{+}_{\infty},\,\mathcal{Q}_{2}\left(\frac{\cdot}{\sqrt{\omega}}\right)\in \mathcal{S}^{-}_{\infty},$ $\alpha_{even}$ is an even function, $\alpha_{odd}$ is an odd function both satisfying estimates \eqref{odd} and \eqref{even} respectively. 
\par Furthermore, the functions $\cos,\,\sin$ are entire and we have
\begin{align*}
\cos{\left(mv_{k}x\right)}=&\cos{\left(mv_{k}(x-\zeta_{k})\right)}\cos{\left(mv_{k}\zeta_{k}\right)}-\sin{\left(mv_{k}(x-\zeta_{k})\right)}\sin{\left(mv_{k}\zeta_{k}\right)},\\
\sin{\left(mv_{k}x\right)}=&\sin{\left(mv_{k}(x-\zeta_{k})\right)}\cos{\left(mv_{k}\zeta_{k}\right)}{+}\cos{\left(mv_{k}(x-\zeta_{k})\right)}\sin{\left(mv_{k}\zeta_{k}\right)}
\end{align*}
\par Consequently, for any $M\in\mathbb{N}, $we can verify from
Lemma \ref{separation} and Taylor's Expansion Theorem for the functions $\sin,\,\cos$ that $F^{'}(\vert \rho_{k}(t,x) \vert^{2})\rho_{k}(t,x)$ is equal to a expression of the form \eqref{Msf} plus a remainder $r(t,x)$ satisfying \eqref{remmmm}.
\end{proof}
From now on, we consider\begin{equation}\label{phiomegak}
    \phi_{\omega,k}(t,x)\coloneqq \phi_{\omega}(x-\zeta_{k}(t))e^{i\frac{v_{k}(t)}{2}\left(x-\frac{\zeta_{k}(t)}{2}\right)}e^{i\gamma_{k}(t)}.
\end{equation}We also use the following notation 
\begin{equation}
    \alpha_{k}(t)\coloneqq\frac{v_{k}(t)}{2}\left[x-\frac{\zeta_{k}(t)}{2}\right]+\gamma_{k}(t).
\end{equation}
Using the identity
\begin{equation*}
  \phi^{''}_{\omega}-\omega \phi_{\omega}+F^{'}(\phi^{2}_{\omega})\phi_{\omega}=0,
\end{equation*}
it is not difficult to verify that
\begin{align}\label{lambdasol}
    \Lambda\left(\phi_{\omega,k}\right)(t,x)=&i\dot v_{k}(t)\partial_{v_{k}}\phi_{\omega,k}+i\left(\dot\zeta_{k}(t)-v_{k}(t)\right)\partial_{\zeta_{k}}\phi_{\omega,k}\\ \nonumber &{+}i\left(\dot\gamma_{k}(t)-\omega\right)\partial_{\gamma_{k}}\phi_{\omega,k}.
\end{align}
Next, before we start to prove Theorem \ref{app lemma}, we are going to consider the following proposition which is equivalent to the Theorem \ref{app lemma} when $k=1.$
\begin{lemma}\label{k=1}
There exist constants $C,\,\sigma>0\in\mathbb{R}$ such that for $c_{\omega}=\frac{\sqrt{C}}{\omega^{\frac{1}{4}}}$ if $0<v\ll 1$ and $d$ is the unique solution of
\begin{equation}\label{ODE}
\begin{cases}
    \ddot d(t)=Ce^{-2\sqrt{\omega} d(t)},\\
    \lim_{t\to{+}\infty}\left\vert d(t)-vt-\frac{1}{\sqrt{\omega}} \ln{\frac{c_{\omega}}{v}}\right\vert=0,\, \lim_{t\to{+}\infty}\left\vert \dot d(t)-v \right\vert=0,
\end{cases}
\end{equation}
then
\begin{equation}\label{u11}
   u_{1}(t,x)= e^{i\omega t}e^{\frac{i\dot d(t)}{2}\left(x-\frac{d(t)}{2}\right)}\phi_{\omega}(x-d(t))-e^{i\omega t}e^{{-}\frac{i\dot d(t) }{2}\left(x+\frac{d(t)}{2}\right)}\phi_{\omega}(x+d(t))
\end{equation}
satisfies 
\begin{align*}
\norm{\frac{\partial^{l}}{\partial t^{l}}\Lambda(u_{1})(t,x)}_{H^{s}_{x}(\mathbb{R})}\leq & C(s,l)v^{2+l}e^{{-}2\sqrt{\omega}\vert t\vert v},\\ \left\vert\left\langle\frac{\partial^{l}}{\partial t^{l}}\Lambda(u_{1})(t,x), \phi^{\prime}_{\omega}(x\pm d(t))e^{i\alpha(t,\mp x)}\right\rangle\right\vert\leq & C(l)v^{4+l}(1+\vert t\vert v)^{\sigma}e^{{-}2\sqrt{\omega}\vert t\vert v}.
\end{align*}
\end{lemma}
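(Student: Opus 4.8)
The plan is to view $u_1=R_+-R_-$ as a difference of two accelerating boosted solitons
\[
R_\pm(t,x)=\phi_\omega(x\mp d(t))\,e^{i\Phi_\pm(t,x)},\qquad \Phi_+=\omega t+\tfrac{\dot d}{2}\bigl(x-\tfrac d2\bigr),\quad \Phi_-=\omega t-\tfrac{\dot d}{2}\bigl(x+\tfrac d2\bigr),
\]
and to compute $\Lambda(u_1)$ exactly before estimating it. First I would differentiate $R_\pm$ directly. Since $\phi_\omega$ solves the profile equation \eqref{odeord}, all terms carrying $\omega$ cancel against the nonlinearity, and the $\dot d^2$ terms produced by $\partial_t\Phi_\pm$ and by $\partial_{xx}$ cancel each other, leaving only the acceleration:
\[
iR_{\pm,t}+R_{\pm,xx}=-F'\bigl(\phi_\omega(x\mp d)^2\bigr)R_\pm\mp\tfrac{\ddot d}{2}\bigl(x\mp\tfrac d2\bigr)\phi_\omega(x\mp d)\,e^{i\Phi_\pm}.
\]
In particular each $R_\pm$ is an exact solitary wave when $\ddot d\equiv0$, consistent with the Galilean invariance recorded in the introduction.

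Next I would add and subtract the self-nonlinearities. Writing $\Lambda(u_1)=(iR_{+,t}+R_{+,xx})-(iR_{-,t}+R_{-,xx})+F'(|u_1|^2)u_1$ and using the identity above, the $F'(\phi_\omega(x\mp d)^2)R_\pm$ pieces regroup into an interaction remainder, giving
\[
\Lambda(u_1)=N-\tfrac{\ddot d}{2}\Bigl[\bigl(x-\tfrac d2\bigr)R_++\bigl(x+\tfrac d2\bigr)R_-\Bigr],\quad N:=F'(|u_1|^2)u_1-F'(\phi_\omega(x-d)^2)R_++F'(\phi_\omega(x+d)^2)R_-.
\]
To control $N$ I would Taylor-expand $F'$ about each single profile with Lemma \ref{taylorlemma}, so that every term of $N$ contains a product of the two profiles centred at $+d$ and $-d$; using the exponential decay $|\partial^l\phi_\omega(x)|\lesssim e^{-\sqrt\omega|x|}$ from Remark \ref{phis+} together with the overlap bounds of Lemma \ref{interactt}, this yields $\|\partial_t^l N\|_{H^s_x}\lesssim_{s,l}e^{-2\sqrt\omega d}$ times a power of $d$, for every $s,l$.

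The whole estimate then reduces to controlling $e^{-2\sqrt\omega d(t)}$, which I would do by solving \eqref{ODE} explicitly. Multiplying $\ddot d=Ce^{-2\sqrt\omega d}$ by $\dot d$ and using $\dot d\to v$, $d\to\infty$ gives the conserved energy $\tfrac12\dot d^2+\tfrac{C}{2\sqrt\omega}e^{-2\sqrt\omega d}\equiv\tfrac{v^2}{2}$, and the even solution is
\[
d(t)=\tfrac{1}{\sqrt\omega}\ln\Bigl(\tfrac{\sqrt C}{\omega^{1/4}v}\cosh(\sqrt\omega v t)\Bigr),\qquad e^{-2\sqrt\omega d(t)}=\tfrac{\sqrt\omega v^2}{C}\sech^2(\sqrt\omega v t).
\]
This fixes $c_\omega=1/\sqrt\omega$, gives $|\dot d|\le v$, and since $\sech^2(s)\le4e^{-2|s|}$ it yields $e^{-2\sqrt\omega d(t)}\lesssim v^2e^{-2\sqrt\omega v|t|}$ uniformly in $t$. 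Moreover every time derivative hits only the slow data $d,\dot d$ and produces an extra factor $\lesssim v$ (for instance $\dddot d=-2\sqrt\omega\dot d\,\ddot d$), so $\partial_t^l$ of the error gains $v^l$, accounting for the exponent $2+l$.

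Combining the two pieces, $N$ and the acceleration term are both $\lesssim|\ddot d|\lesssim v^2e^{-2\sqrt\omega v|t|}$ in every $H^s_x$, and summing over the two (disjointly supported) solitons finishes the proof. The point that I expect to require the most care is the linearly growing weight $x\mp\tfrac d2$ multiplying $\phi_\omega(x\mp d)$ in the self-error: its $L^2$ size is of order $d\sim v|t|+\ln\tfrac1v$, so the honest bound naturally carries the polynomial factor $(v|t|+\ln\tfrac1v)$ of the general estimate \eqref{ukest}, which one must absorb into the exponential $e^{-2\sqrt\omega v|t|}$ when reducing to the clean form stated here. Apart from this, the main obstacle is the uniform-in-time ODE analysis itself: existence of the solution of \eqref{ODE} with the prescribed asymptotics and the sharp bounds $e^{-2\sqrt\omega d(t)}\lesssim v^2e^{-2\sqrt\omega v|t|}$ on $d$ and all its derivatives, since both the size $v^2$ and the rate $2\sqrt\omega v$ asserted by the lemma are dictated entirely by the behaviour of $d$.
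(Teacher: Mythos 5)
Your proposal is correct in substance and follows the same core route as the paper's proof: the exact identity for $\Lambda(u_1)$ obtained from the profile equation (the paper's \eqref{lambdau}), Taylor expansion of the nonlinearity via Lemma \ref{taylorlemma} combined with the overlap bounds of Lemma \ref{interactt}, and the explicit $\cosh$ solution \eqref{d0} of \eqref{ODE} together with the decay bounds \eqref{decd} (your conserved-energy derivation and the relation $\dddot d=-2\sqrt{\omega}\,\dot d\,\ddot d$ are exactly how the paper obtains \eqref{decd}). The one genuine divergence is that you leave the constant $C$ in \eqref{ODE} arbitrary, whereas most of the paper's proof is devoted to computing the projection $\left\langle \Lambda(u_1),\phi'_{\omega}(x-d(t))e^{i\alpha(t,x)}\right\rangle$ in \eqref{lambdauphi1} and choosing $C$ as in \eqref{d edo} precisely so that this projection is $O(v^{4})$ rather than $O(v^{2})$. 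You are right that this choice is not needed for the bound stated in the lemma --- for any fixed $C>0$ the acceleration term and the interaction term are each of size $v^{2}e^{-2\sqrt{\omega}v\vert t\vert}$ up to polynomial factors --- but the specific choice is the whole point of the lemma in the paper's scheme: it is the $k=1$ instance of \eqref{dotsmall}, and without it the residue of $\Lambda(u_1)$ along the kernel direction $\phi'_{\omega}$ could not be removed by the corrections built from $S_{\omega}^{-1}\Pi^{\perp}$ in the construction of $\varphi_{1}$ and in the induction of Theorem \ref{app lemma}. So your argument proves the statement as literally written, but to serve its role in the paper you would need to add the projection computation that pins down $C$. Finally, your caveat about the linearly growing weight $x\mp\frac{d}{2}$ is accurate: the honest bound carries a factor $\left(v\vert t\vert+\ln\frac{1}{v}\right)^{c}$ exactly as in \eqref{ukest}, and the $\ln\frac{1}{v}$ part of that factor cannot be absorbed into $e^{-2\sqrt{\omega}v\vert t\vert}$ (consider $t=0$); the paper's own proof has the same feature and simply suppresses the polynomial factor in the statement of Lemma \ref{k=1}, so this is a defect of the statement rather than of your argument.
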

\begin{proof}[Proof of Lemma \ref{k=1}]
First, to simplify the notation used in the proof, we consider for all $(t,x)\in\mathbb{R}^{2}$
\begin{equation*}
    \alpha(t,x)=\omega t+\frac{\dot d(t)}{2}\left(x-\frac{d(t)}{2}\right),
\end{equation*}
using the identity
\begin{equation*}
    \phi_{\omega}^{''}(x)-\omega \phi_{\omega}(x)+F^{'}\left(\phi_{\omega}(x)^{2}\right)\phi_{\omega}(x)=0,
\end{equation*}
we can verify that
\begin{align}\label{lambdau}
    \Lambda\left(u_{1}\right)(t,x)=&-\frac{\ddot d(t)}{2}\left[\left(x-\frac{d(t)}{2}\right)\phi_{\omega}\left(x-d(t)\right)e^{i\alpha(t,x)}+\left(x+\frac{d(t)}{2}\right)\phi_{\omega}\left(x+d(t)\right)e^{i\alpha(t,{-}x)}\right]\\ \nonumber
    &{+}F^{'}\left(\vert u_{1}(t,x)\vert^{2}\right)u_{1}(t,x)-F^{'}\left(\phi_{\omega}(x-d(t))^{2}\right)\phi_{\omega}(x-d(t))e^{i\alpha(t,x)}\\ \nonumber &{+}F^{'}\left(\phi_{\omega}(x+d(t))^{2}\right)\phi_{\omega}(x+d(t))e^{i\alpha(t,{-}x)}.
\end{align}
The ordinary differential equation that $d$ will satisfy is similar to the one obtained in the statement of Lemma $3.1$ from \cite{holmerlin}. Indeed, the ordinary differential equation obtained in \cite{holmerlin} is equivalent to \eqref{ODE} when $F^{'}(z)\equiv z.$
\par Before the construction of the ordinary differential equation \eqref{ODE}, we need to estimate the contribution coming from the nonlinearity of \eqref{NLS3}. From Taylor's Expansion Theorem, we deduce that
\begin{multline}\label{interF}
    F^{'}\left(\vert u_{1}(t,x)\vert^{2}\right)u_{1}(t,x)-F^{'}\left(\phi_{\omega}(x-d(t))^{2}\right)\phi_{\omega}(x-d(t))e^{i\alpha(t,x)}+F^{'}\left(\phi_{\omega}(x+d(t))^{2}\right)\phi_{\omega}(x+d(t))e^{i\alpha(t,{-}x)}\\
    \begin{aligned}
    =&{-}2\cos{(\dot d(t)x)}F^{''}\left(\phi_{\omega}(x+d(t))^{2}+\phi_{\omega}(x-d(t))^{2}\right)u_{1}(t,x)\phi_{\omega}(x-d(t))\phi_{\omega}(x+d(t))
    \\&{+}F^{'}\left(\phi_{\omega}(x+d(t))^{2}+\phi_{\omega}(x-d(t))^{2}\right)u_{1}(t,x)-F^{'}\left(\phi_{\omega}(x-d(t))^{2}\right)\phi_{\omega}(x-d(t))e^{i\alpha(t,x)}\\&{+}F^{'}\left(\phi_{\omega}(x+d(t))^{2}\right)\phi_{\omega}(x+d(t))e^{i\alpha(t,{-}x)}\\
    &{+}O\left(\phi_{\omega}(x+d(t))^{2}\phi_{\omega}(x-d(t))^{2}\right),
\end{aligned}
\end{multline}
where the expression $O\left( \phi_{\omega}(x+d(t))^{2}\phi_{\omega}(x-d(t))^{2}\right)$ means a smooth function $p(t,x)$ satisfying
\begin{equation*}
    \vert p(t,x) \vert\lesssim \ \phi_{\omega}(x+d(t))^{2}\phi_{\omega}(x-d(t))^{2}.
\end{equation*}
Moreover, using estimate \eqref{taylor1} from Lemma \ref{taylorlemma}, we obtain from \eqref{interF} that %the mistake is here
\begin{multline}\label{interF0}
    F^{'}\left(\vert u_{1}(t,x)\vert^{2}\right)u_{1}(t,x)-F^{'}\left(\phi_{\omega}(x-d(t))^{2}\right)\phi_{\omega}(x-d(t))e^{i\alpha(t,x)}
    {+}F^{'}\left(\phi_{\omega}(x+d(t))^{2}\right)\phi_{\omega}(x+d(t))e^{i\alpha(t,{-}x)}
    \\
    \begin{aligned}
    =&{-}2\cos{(\dot d(t)x)}F^{''}\left(\phi_{\omega}(x+d(t))^{2}+\phi_{\omega}(x-d(t))^{2}\right)u_{1}(t,x)\phi_{\omega}(x-d(t))\phi_{\omega}(x+d(t))
    \\&{-}F^{'}\left(\phi_{\omega}(x-d(t))^{2}\right)\phi_{\omega}(x+d(t))e^{i\alpha(t,{-}x)}+F^{'}\left(\phi_{\omega}(x+d(t))^{2}\right)\phi_{\omega}(x-d(t))e^{i\alpha(t,x)}
   \\
    &{+}O\left(\phi_{\omega}(x+d(t))^{2}\phi_{\omega}(x-d(t))^{2}\right),
\end{aligned}
\end{multline}
\par Therefore, from Lemma \ref{interactt}, identity \eqref{lambdau}, estimate \eqref{interF0} and the inequalities 
\begin{align*}
    \left\vert \cos{\left(\dot d(t) x\right)}-1 \right\vert\lesssim \vert \dot d(t) \vert^{2} \vert x\vert^{2},\,\,\left\vert F^{'}(\phi_{\omega}(x)^{2})\right\vert\lesssim \left\vert \phi_{\omega}(x)^{2}\right\vert,
\end{align*}
we obtain using inequality \eqref{taylor1} of Lemma \ref{taylorlemma} that there exists $\sigma>0$ satisfying
\begin{multline*}
  \begin{aligned}
  \left\langle\Lambda(u_{1})(t,x),\, \phi^{'}_{\omega}(x-d(t))e^{i\alpha(t,x)}\right\rangle=&\frac{\ddot d(t)}{4}\left[\norm{\phi_{\omega}}_{L^{2}}^{2}+O\left((1+\vert d(t) \vert)^{2}e^{{-}2\sqrt{\omega}d(t)}\right)\right]
\\&{-}2\int_{\mathbb{R}}F^{''}\left(\phi_{\omega}(x)^{2}\right)\phi_{\omega}(x)^{2}\phi^{'}_{\omega}(x)\phi_{\omega}(x+2d)\,dx\\
&{-}\int_{\mathbb{R}}F^{'}\left(\phi_{\omega}(x)^{2}\right)\phi^{'}_{\omega}(x)\phi_{\omega}(x+2d)\,dx\\
  &{+}O\left(e^{{-}4\sqrt{\omega}d}\left(1+\vert d\vert\right)^{\sigma}+\dot d^{2}e^{{-}2\sqrt{\omega}d}\left(1+\vert d\vert\right)^{\sigma}\right).
\end{aligned}
\end{multline*}
In conclusion, we obtain using integration by parts that
\begin{multline}\label{lambdauphi1}
  \begin{aligned}
  \left\langle\Lambda(u_{1})(t,x),\, \phi^{'}_{\omega}(x-d(t))e^{i\alpha(t,x)}\right\rangle =&\frac{\ddot d(t)}{4}\left[\norm{\phi_{\omega}}_{L^{2}}^{2}+O\left((1+\vert d(t) \vert)^{2}e^{{-}2\sqrt{\omega}d(t)}\right)\right]\\&{+}\int_{\mathbb{R}}F^{'}\left(\phi_{\omega}(x)^{2}\right)\phi_{\omega}(x)\phi^{'}_{\omega}(x+2d)\,dx\\
  &{+}O\left(e^{{-}4\sqrt{\omega}d}\left(1+\vert d\vert\right)^{\sigma}+\dot d^{2}e^{{-}2\sqrt{\omega}d}\left(1+\vert d\vert\right)^{\sigma}\right).
  \end{aligned}
\end{multline}
for a constant $\sigma>0.$
\par Furthermore, since $\phi_{\omega}\left(\frac{\cdot}{\sqrt{\omega}}\right)\in\mathcal{S}^{+},$ we have for the $a_{{+}\infty}>0$ defined on Remark \ref{asyrem}
\begin{equation*}
    \left\vert \phi^{'}_{\omega}(x)+a_{{+}\infty}\sqrt{\omega}e^{{-}\sqrt{\omega}x} \right\vert\lesssim \min\left(e^{{-}\sqrt{\omega}x},\,e^{{-}2\sqrt{\omega}x}\right).
\end{equation*}
Therefore, we can deduce from the estimate \eqref{lambdauphi1} that
\begin{multline*}
\begin{aligned}
  \left\langle\Lambda(u_{1})(t,x),\, \phi^{'}_{\omega}(x-d(t))e^{i\alpha(t,x)}\right\rangle=&\frac{\ddot d(t)}{4}\left[\norm{\phi_{\omega}}_{L^{2}}^{2}+O\left((1+\vert d(t) \vert)^{2}e^{{-}2\sqrt{\omega}d(t)}\right)\right]\\&{-}a_{{+}\infty}\sqrt{\omega}e^{{-}2\sqrt{\omega}d}\int_{\mathbb{R}}F^{'}\left(\phi_{\omega}(x)^{2}\right)\phi_{\omega}(x)e^{{-}\sqrt{\omega}x}\,dx\\
  &{+}O\left(e^{{-}4\sqrt{\omega}d}\left(1+\vert d\vert\right)^{\sigma}+\dot d^{2}e^{{-}2\sqrt{\omega}d}\left(1+\vert d\vert\right)^{\sigma}\right),
  \end{aligned}
\end{multline*}
\par Concerning the estimate above, we choose the following ordinary differential equation
\begin{equation}\label{d edo}
    \ddot d(t)=\frac{4a_{{+}\infty}}{\norm{\phi_{\omega}}_{L^{2}}^{2}}\sqrt{\omega}e^{{-}2\sqrt{\omega}d}\int_{\mathbb{R}}F^{'}\left(\phi_{\omega}(x)^{2}\right)\phi_{\omega}(x)e^{{-}\sqrt{\omega}x}\,dx=Ce^{{-}2\sqrt{\omega}d}.
\end{equation}
Moreover, since $\phi_{\omega}^{''}-\omega \phi_{\omega}={-}F^{'}(\phi_{\omega}^{2})\phi_{\omega},$ we have from integration by parts that
\begin{equation*}
    \int_{\mathbb{R}}F^{'}\left(\phi_{\omega}(x)^{2}\right)\phi_{\omega}(x)e^{{-}\sqrt{\omega}x}\,dx=\lim_{x\to{-}\infty}\phi^{'}_{\omega}(x)e^{{-}\sqrt{\omega}x}+\sqrt{\omega}\phi_{\omega}(x)e^{{-}\sqrt{\omega}x}=2a_{{+}\infty}\sqrt{\omega},
\end{equation*}
so $C$ is positive and depends only on $\omega.$
Indeed, the function 
\begin{equation}\label{d0}
    d(t)=\frac{1}{\sqrt{\omega}}\ln{\left[\frac{\sqrt{C}\left(\cosh{\left(\sqrt{\omega}vt\right)}\right)}{\omega^{\frac{1}{4}}v}\right]},
\end{equation}
satisfies \eqref{ODE} for $c_{\omega}=\frac{\sqrt{C}}{\omega^{\frac{1}{4}}}.$ Similarly to the proof Lemma $3.1$ from \cite{second}, we can verify by induction that if $v>0$ is small enough, then
\begin{equation}\label{decd}
    \vert \dot d(t) \vert=O(v),\,\left\vert d^{(l)}(t) \right\vert\lesssim_{l} v^{l} e^{{-}2\sqrt{\omega}v\vert t\vert} \text{ for all $l\geq 2.$} 
\end{equation}
\par In conclusion, similarly to the proof of Theorem $4.1$ from \cite{second}, we can verify using Lemma \ref{interactt}, identity \eqref{lambdau} and estimates \eqref{decd} that $u_{1}$ defined by \eqref{u11} satisfies Theorem \ref{app lemma} for $k=0.$
\end{proof}
\begin{remark}\label{ffff}
 Furthermore, applying Lemmas \ref{interactt}, \ref{separation} in estimate \eqref{interF0}, we can verify that
 \begin{multline}\label{interF00}
    F^{'}\left(\vert u_{1}(t,x)\vert^{2}\right)u_{1}(t,x)-F^{'}\left(\phi_{\omega}(x-d(t))^{2}\right)\phi_{\omega}(x-d(t))e^{i\alpha(t,x)}
    {+}F^{'}\left(\phi_{\omega}(x+d(t))^{2}\right)\phi_{\omega}(x+d(t))e^{i\alpha(t,{-}x)}
    \\
    \begin{aligned}
=&{-}2a_{{+}\infty}e^{{-}2\sqrt{\omega} d(t)} Sym\left[F^{''}\left(\phi_{\omega}(\cdot-d(t))^{2}\right)\phi_{\omega}(\cdot-d(t))^{2}e^{i\alpha(t,\cdot)}e^{{-}\sqrt{\omega}(\cdot-d(t))}\right](x)
    \\&{-}a_{{+}\infty}e^{{-}2\sqrt{\omega}d(t)}Sym\left[F^{'}\left(\phi_{\omega}(\cdot-d(t))^{2}\right)e^{{-}\sqrt{\omega}(\cdot-d(t))}e^{i\alpha(t,{-}\cdot)}\right](x) \\
&{+}O\left(\phi_{\omega}(x+d(t))^{2}\phi_{\omega}(x-d(t))^{2}\right)\\
&=Forc(t,x)+O\left(\phi_{\omega}(x+d(t))^{2}\phi_{\omega}(x-d(t))^{2}\right).
\end{aligned}
\end{multline}
Consequently, for 
\begin{equation*}
    \alpha_{0}(t,x)=\gamma_{1}(t)+\frac{\dot d(t)}{2}\left(x-\frac{d(t)}{2}\right)
\end{equation*}
we can verify that similarly to the proof of estimate \eqref{lambdau} for any smooth functions $f_{\omega,{1}}(t),\,f_{\zeta}$ satisfying
\begin{align}\label{standeasy}
    \left\vert \frac{d^{l}}{dt^{l}}f_{\zeta}(t) \right\vert+\left\vert \frac{d^{l}}{dt^{l}}f_{\omega,{1}}(t) \right\vert=O\left(v^{2+l}\left(\vert t\vert v+\ln{\frac{1}{v}}\right)^{\sigma}e^{{-}\sqrt{2}\vert t\vert v}\right)
\end{align} that
\begin{align*}
u_{2}(t,x)=&e^{i\alpha_{0}(t,x)}\phi_{\omega}(x-d(t)-f_{\zeta}(t))-e^{i\alpha_{0}(t,{-}x)}\phi_{\omega}(x+d(t)+f_{\zeta}(t))\\&{+}f_{\omega,1}(t)\left[e^{i\alpha_{0}(t,x)}\partial_{\omega}\phi_{\omega}(x-d(t)-f_{\zeta}(t))-e^{i\alpha_{0}(t,{-}x)}\partial_{\omega}\phi_{\omega}(x+d(t)+f_{\zeta}(t))\right]
\end{align*}
satisfies
\begin{multline}\label{main22}
    \Lambda(u_{2})(t,x)\\
    \begin{aligned}
        =&{-}\frac{\ddot d(t)}{2}Sym\left[\left(\cdot-\frac{d(t)}{2}\right)\phi_{\omega}\left(\cdot-d(t)-f_{\zeta}(t)\right)e^{i\alpha_{0}(t,\cdot)}\right](x)\\&{+}i\dot f_{\omega,1}(t)Sym\left[\partial_{\omega}\phi_{\omega}(\cdot-d(t)-f_{\zeta}(t))e^{i\alpha_{0}(t,\cdot)}\right](x)\\
        &{-}f_{\omega,1}(t)Sym\left[S_{\omega}(\partial_{\omega}\phi_{\omega})(\cdot-d(t)-f_{\zeta}(t))e^{i\alpha_{0}(t,\cdot)}\right](x)
        {-}(\dot \gamma_{1}(t)-\omega)Sym\left[\phi_{\omega}(\cdot-d(t)-f_{\zeta}(t))e^{i\alpha_{0}(t,\cdot)}\right](x)\\
&{-}i\dot f_{\zeta}(t)Sym\left[\phi^{\prime}_{\omega}(\cdot-d(t)-f_{\zeta}(t))e^{i\alpha_{0}(t,\cdot)}\right](x)+Forc(t,x)+O\left(v^{4}\left(\vert t\vert v+\ln{\frac{1}{v}}\right)^{\sigma}e^{{-}\sqrt{2}\vert t\vert v}\right),
    \end{aligned}
\end{multline}
where the remainder above means a smooth function $p(t,x)$ satisfying for any $l\in\mathbb{N}$
\begin{equation*}
\norm{\partial_{l}p(t,x)}_{H^{1}}=O\left(v^{4+l}\left(\vert t\vert v+\ln{\frac{1}{v}}\right)^{\sigma}e^{{-}\sqrt{2}\vert t\vert v}\right),
\end{equation*}
the estimates above follow from Lemmas \ref{interactt} and \ref{separation}.
Considering the following ordinary differential system
\begin{align*}
    \dot f_{\omega,1}(t)\langle\partial_{\omega}\phi_{\omega},\phi_{\omega}\rangle=&{-}a_{{+}\infty}e^{{-}2\sqrt{\omega}d(t)}\dot d(t)\left\langle F^{\prime}(\phi_{\omega}(x)^{2})e^{{-}\sqrt{\omega}x},x\phi_{\omega}(x) \right\rangle\\&{-}a_{{+}\infty}d(t)\dot d(t)e^{{-}2\sqrt{\omega}d(t)}\left\langle F^{\prime}(\phi_{\omega}(x)^{2})e^{{-}\sqrt{\omega}x},\phi_{\omega} \right\rangle\\
    (\dot\gamma_{1}(t)-\omega+\frac{\ddot d(t)d(t)}{4})\left\langle\partial_{\omega}\phi_{\omega},\phi_{\omega}\right\rangle=&{-}f_{\omega,1}(t)\left\langle S_{\omega}(\partial_{\omega}\phi_{\omega}),\phi_{\omega}\right\rangle\\&{-}2a_{{+}\infty}e^{{-}2\sqrt{\omega}d(t)}\left\langle F^{''}(\phi_{\omega}(x)^{2})\phi_{\omega}(x)^{2}e^{{-}\sqrt{\omega}x},\partial_{\omega}\phi_{\omega} \right\rangle\\
&{-}a_{{+}\infty}e^{{-}2\sqrt{\omega}d(t)}\left\langle F^{\prime}(\phi_{\omega}(x)^{2})e^{{-}\sqrt{\omega}x},\partial_{\omega}\phi_{\omega}\right\rangle
    ,\\
    \frac{1}{2}\dot f_{\zeta}(t)\norm{\phi_{\omega}}_{L^{2}}^{2}=&{-}a_{{+}\infty}\dot d(t)e^{{-}2\sqrt{\omega}d(t)}\left\langle F^{ \prime}(\phi_{\omega}(x)^{2})e^{{-}\sqrt{\omega}x},x^{2}\phi_{\omega}(x)\right\rangle\\&{-}a_{{+}\infty}d(t)\dot d(t)e^{{-}2\sqrt{\omega}d(t)}\left\langle F^{ \prime}(\phi_{\omega}(x)^{2})e^{{-}\sqrt{\omega}x}, x\phi_{\omega}(x)\right\rangle,\\
\lim_{t\to{+}\infty}f_{\omega,1}(t)=\lim_{t\to{+}\infty}f_{\zeta}(t)=&
\lim_{t\to{+}\infty} \gamma_{1}(t)-\omega t=0,
\end{align*}
 it is not difficult to verify that \eqref{standeasy} holds for some $\sigma,$ because of \eqref{decd} and the Fundamental Theorem of Calculus.
 \par Furthermore,
 since we have from the estimate \eqref{main22} that
\begin{align*}
    \left\langle \Lambda(u_{2})(t,x),i\phi_{\omega}(x-d(t)-f_{\zeta}(t))e^{i\alpha_{0}(t,x)}\right\rangle=&\dot f_{\omega,1}(t)\langle\partial_{\omega}\phi_{\omega},\phi_{\omega}\rangle\\&{+}\langle Forc(t,x),i\phi_{\omega}(x-d(t))e^{i\alpha_{0}(t,x)} \rangle\\&{+}O\left(v^{4}\left(\vert t\vert v+\ln{\frac{1}{v}}\right)^{\sigma}e^{{-}2\sqrt{\omega}\vert t\vert v}\right),\\
    \left\langle \Lambda(u_{2})(t,x),\partial_{\omega}\phi_{\omega}(x-d(t)-f_{\zeta}(t))e^{i\alpha_{1}(t,x)}\right\rangle=& {-}(\dot\gamma_{1}(t)-\omega+\frac{\ddot d(t)d(t)}{4})\left\langle\partial_{\omega}\phi_{\omega},\phi_{\omega}\right\rangle\\&{-}f_{\omega,1}(t)\left\langle S_{\omega}(\partial_{\omega}\phi_{\omega}),\phi_{\omega}\right\rangle\\
    &{+}\left\langle Forc(t,x),\partial_{\omega}\phi_{\omega}(x-d(t)-f_{\zeta}(t))e^{i\alpha_{0}(t,x)} \right\rangle\\
    &{+}O\left(v^{4}\left(\vert t\vert v+\ln{\frac{1}{v}}\right)^{\sigma}e^{{-}2\sqrt{\omega}\vert t\vert v}\right),\\
    \left\langle \Lambda(u_{2})(t,x),i(x-d(t)-f_{\zeta}(t))\phi_{\omega}(x-d(t)-f_{\zeta}(t))e^{i\alpha_{0}(t,x)}\right\rangle=&\frac{1}{2}\dot f_{\zeta}(t)\norm{\phi^{\prime}_{\omega}}_{L^{2}}^{2}
    \\
    &{+}\left\langle Forc(t,x),i(x-d(t))\phi_{\omega}(x-d(t))e^{i\alpha_{0}(t,x)} \right\rangle\\
    &{+}O\left(v^{4}\left(\vert t\vert v+\ln{\frac{1}{v}}\right)^{\sigma}e^{{-}2\sqrt{\omega}\vert t\vert v}\right),
\end{align*}
we can verify from the ordinary differential system above and Lemma \ref{k=1} that
\begin{align*}
    \left\vert \frac{d^{l}}{dt^{l}}\left\langle \Lambda(u_{2})(t,x),i(x-d(t))\phi_{\omega}(x-d(t))e^{i\alpha_{0}(t,x)}\right\rangle\right\vert=&O\left(v^{4+l}\left(\vert t\vert v+\ln{\frac{1}{v}}\right)^{\sigma}e^{{-}2\sqrt{\omega}\vert t\vert v}\right),\\
    \left\vert \frac{d^{l}}{dt^{l}}\left\langle \Lambda(u_{2})(t,x),i\phi_{\omega}(x-d(t))e^{i\alpha_{0}(t,x)}\right\rangle\right\vert=&O\left(v^{4+l}\left(\vert t\vert v+\ln{\frac{1}{v}}\right)^{\sigma}e^{{-}2\sqrt{\omega}\vert t\vert v}\right),\\
    \left\vert \frac{d^{l}}{dt^{l}}\left\langle \Lambda(u_{2})(t,x),\partial_{\omega}\phi_{\omega}(x-d(t))e^{i\alpha_{0}(t,x)}\right\rangle\right\vert=&O\left(v^{4+l}\left(\vert t\vert v+\ln{\frac{1}{v}}\right)^{\sigma}e^{{-}2\sqrt{\omega}\vert t\vert v}\right),\\
    \left\vert \frac{d^{l}}{dt^{l}}\left\langle \Lambda(u_{2})(t,x),\phi^{\prime}_{\omega}(x-d(t))e^{i\alpha_{0}(t,x)}\right\rangle\right\vert=&O\left(v^{4+l}\left(\vert t\vert v+\ln{\frac{1}{v}}\right)^{\sigma}e^{{-}2\sqrt{\omega}\vert t\vert v}\right),
\end{align*}
for any $l\in\mathbb{N}.$
\end{remark}
\begin{proof}[Proof of Theorem \ref{app lemma}]

\par \textbf{Step 1.}(Construction of $\varphi_{1}.$) In notation of Lemma \ref{k=1} and Remark \ref{ffff}, we consider
\begin{equation*}
    \alpha_{1}(t,x)=\gamma_{1}(t)+\frac{\dot d(t)+\dot f_{\zeta}(t)}{2}\left(x-\frac{d(t)+f_{\zeta}(t)}{2}\right),
\end{equation*}
and we recall the space translation function $\tau_{d+f_{\zeta}}$ defined by
\begin{equation*}
    \tau_{d+f_{\zeta}}\rho(t,x)\coloneqq \rho(t,x-d-f_{\zeta}).
\end{equation*}
In notation of Lemma \ref{ProP}, we consider $\Pi^{\perp}\coloneqq Id -\Pi,$ which is the projection in the orthogonal complement of $\{\phi_{\omega},i\phi^{'}_{\omega},\,i\partial_{\omega}\phi_{\omega},x\phi_{\omega}\}$ in $L^{2}(\mathbb{R},\mathbb{C}).$ Moreover, we consider
\begin{align*}
    \varphi_{1,0}(t,x)=&e^{i\alpha_{1}(t,x)}\phi_{\omega}(x-d(t)-f_{\zeta}(t))-e^{i\alpha_{1}(t,{-}x)}\phi_{\omega}(x+d(t)+f_{\zeta}(t))\\&{+}f_{\omega,1}(t)\left[e^{i\alpha_{1}(t,x)}\partial_{\omega}\phi_{\omega}(x-d(t)-f_{\zeta}(t))-e^{i\alpha_{1}(t,{-}x)}\partial_{\omega}\phi_{\omega}(x+d(t)+f_{\zeta}(t))\right]
\end{align*}
From Lemma \ref{k=1} and Remark \ref{ffff}, we obtain for any $l\in \mathbb{N}$ that there exists $\sigma>0$ satisfying
\begin{equation*}
\norm{\partial^{l}_{t}\Lambda(\varphi_{1,0})(t,x)}_{H^{1}} \lesssim v^{2+l}\left(\vert t\vert v+\ln{\frac{1}{v}}\right)^{\sigma}e^{{-}2\sqrt{\omega}\vert t\vert v}.
\end{equation*}
\par Furthermore, using Remark \ref{ffff}, estimates \eqref{standeasy} and Lemma \ref{oddeven}, 
it is not difficult to verify that there exist real functions $p_{j}\in\mathcal{S}^{+}_{\infty},$ and real functions $f_{even,j},\,f_{odd,j}$ satisfying

\begin{align}\label{pp1}
\Lambda(\varphi_{1,0}(t,x))=&\sum_{j\in K_{0}}f_{even,j}(t)Sym\left[\tau_{d(t)+f_{\zeta}(t)}p_{j}(\cdot)e^{i\alpha_{1}(t,\cdot)}\right](x)\\ \nonumber &{+}\sum_{j\in M_{0}}f_{odd,j}(t)Sym\left[\tau_{d(t)+f_{\zeta}(t)}ip_{j}(\cdot)e^{i\alpha_{1}(t,\cdot)}\right](x)\\ \nonumber
&{+}O\left(v^{20}\left(\ln{\frac{1}{v}}+\vert t\vert v\right)^{\sigma}e^{{-}2\sqrt{\omega}\vert t\vert v}\right),
\end{align}
such that $\vert \frac{d^{l}}{dt^{l}}f_{even,j}(t) \vert+\vert\frac{d^{l}}{dt^{l}}f_{odd,j}(t) \vert\lesssim_{l}v^{2+l}\left(\vert t\vert v+\ln{\frac{1}{v}}\right)^{\sigma}e^{{-}2\sqrt{\omega}\vert t\vert v},$ and
\begin{equation*}
    \norm{\sum_{j\in K_{0}}\frac{d^{l}}{dt^{l}}f_{even,j}(t)\Pi\left[p_{j}\right](x)+\sum_{j\in M_{0}}\frac{d^{l}}{dt^{l}}f_{odd,j}(t)\Pi\left[ip_{j}\right](x)}_{H^{1}}\lesssim_{l}v^{4+l}\left(\vert t\vert v+\ln{\frac{1}{v}}\right)^{\sigma}e^{{-}2\sqrt{\omega}\vert t\vert v},
\end{equation*}
because of Remark \ref{ffff}.
\par Next, using Lemma \ref{inverhol}, we can consider the following additional terms
\begin{align*}
   Corr_{1,1}(t,x)=& \sum_{j\in K_{0}}f_{even,j}(t)Sym\left[e^{\alpha_{1}(t,\cdot)}\tau_{d(t)+f_{\zeta}}S^{{-}1}_{\omega}\left\{\Pi^{\perp}\left[p_{j}\right]\right\}(\cdot)\right](x)\\&{+}\sum_{j\in M_{0}}f_{odd,j}(t)Sym\left[e^{\alpha_{1}(t,\cdot)}\tau_{d(t)+f_{\zeta}}S^{{-}1}_{\omega}\left\{\Pi^{\perp}\left[ip_{j}\right]\right\}(\cdot)\right](x),
\end{align*}
 and
 \begin{align*}
   Corr_{2,1}(t,x)=& \sum_{j\in K_{0}}\dot f_{even,j}(t)Sym\left[e^{\alpha_{1}(t,\cdot)}\tau_{d(t)+f_{\zeta}}S^{{-}1}_{\omega}i S^{{-}1}_{\omega}\left\{\Pi^{\perp}\left[p_{j}\right]\right\}(\cdot)\right](x)\\&{+}\sum_{j\in M_{0}}\dot f_{odd,j}(t)Sym\left[e^{\alpha_{1}(t,\cdot)}\tau_{d(t)+f_{\zeta}}S^{{-}1}_{\omega}i S^{{-}1}_{\omega}\left\{\Pi^{\perp}\left[ip_{j}\right]\right\}(\cdot)\right](x).
\end{align*}
 For any function $\rho$ with domain $\mathbb{R},$ let
 \begin{equation}
     \rho_{1}(t,x)\coloneqq e^{i\alpha_{1}(t,x)}\rho(x-d(t)-f_{\zeta}(t)) \text{, for any $(t,x)\in\mathbb{R}^{2}.$}
 \end{equation}
 Using Lemma \ref{invertexpfor}, the estimates from Remark \ref{ffff} and \eqref{decd}, we can verify for any $j\in M_{0}\cup K_{0}$ that
\begin{multline}\label{Lcorr}
  i\partial_{t}\left[f_{j}(t)\left(S^{{-}1}_{\omega}(p_{j})\right)_{1}(t,x)\right]+\partial^{2}_{x}\left[f_{j}(t)\left(S^{{-}1}_{\omega}(p_{j})\right)_{1}(t,x)\right]{+}f_{j}(t)F^{''}\left(\vert\phi_{\omega,1}\vert^{2}\right)f_{j}(t)\left(S^{{-}1}_{\omega}(p_{j})\right)_{1}(t,x)\\\begin{aligned}
  {+}f_{j}(t)F^{''}(\vert\phi_{\omega,k}\vert^{2})\vert\phi_{\omega,k}\vert^{2}e^{2i\alpha_{1}(t)}\overline{\left(S^{{-}1}_{\omega}(p_{j})\right)_{1}}(t,x)=&{-}f_{j}(t)(p_{j})_{1}(t,x)+i\dot f_{j}(t)\left(S^{{-}1}_{\omega}(p_{j})\right)_{1}(t,x)\\
    &{+}O\left(v^{4}\left(\vert t\vert v+\ln{\left(\frac{1}{v}\right)}\right)^{\sigma}e^{{-}2\sqrt{\omega}v\vert t\vert}\right),
  \end{aligned}
\end{multline}
the remainder above means a smooth function $\rho(t,x)$ satisfying
\begin{equation*}
\norm{\partial^{l}_{t}\rho(t,x)}_{H^{1}}\lesssim_{l}v^{4+l}\left(\vert t\vert v+\ln{\left(\frac{1}{v}\right)}\right)^{\sigma}e^{{-}2\sqrt{\omega}v\vert t\vert}.
\end{equation*}

\par Furthermore, Lemmas \ref{interactt}, \ref{inverhol} and \eqref{decd} imply for any $i,\,j\in\{1,2,3\}$ that there exists a constant $\sigma>0$ satisfying
\begin{equation*}
    \norm{p_{j,\omega}(x-d(t))p_{i,\omega}(x+d(t))}_{H^{1}_{x}(\mathbb{R})}+\norm{\phi_{\omega}(x\pm d)p_{j,\omega}(x\mp d)}_{H^{1}_{x}(\mathbb{R})}=O\left(v^{2}\left(\vert t\vert v+\ln{\left(\frac{1}{v}\right)}\right)^{\sigma}e^{{-}2\sqrt{\omega}\vert t\vert v}\right).
\end{equation*}
From now on, we choose the expression from equation \eqref{phik} in the statement of Theorem \ref{app lemma}
\begin{multline}\label{correq}
    \sum_{j\in J} ig_{odd,}(t)\left[e^{i\alpha_{1}(t,x)}p_{j,\omega}(x-d(t)-f_{\zeta}(t))-e^{{-}\alpha_{1}(t,x)}p_{j,\omega}({-}x-d(t)-f_{\zeta}(t))\right]\\ 
    {+}\sum_{j\in I}  g_{even,j}(t)\left[e^{i\alpha_{1}(t,x)}p_{j,\omega}(x-d(t)-f_{\zeta}(t))-e^{{-}\alpha_{1}(t,x)}p_{j,\omega}({-}x-d(t)-f_{\zeta}(t))\right]
\end{multline}
to be equal to
$Corr_{1}(t.x)+Corr_{2}(t,x).$ 
\par Consequently, we deduce from estimates \eqref{lambdau}, \eqref{interF0}, \eqref{pp1} and \eqref{Lcorr} that
the function $\varphi_{1}$ defined by
\begin{align*} 
 \varphi_{1}(t,x)\coloneqq &\varphi_{1,0}(t,x)
     {+}Corr_{1}(t,x)+Corr_{2}(t,x)
\end{align*}
satisfies Theorem \ref{app lemma} for $k=1.$ The oddness and evenness of each real function $f_{even,j},\,f_{odd,j}$ in $Corr_{1}$ and $Corr{2}$ follows from Remark \ref{oddinve}.
 
\par Furthermore, the estimates of Remark \ref{ffff} imply for some $\sigma>0$ that
\begin{align*}    \left\vert\frac{d^{l}}{dt^{l}}\left\langle \Lambda(\varphi_{1})(t,x),\, e^{i\alpha_{1}(t,\pm x)}\phi^{'}_{\omega}(\pm x-d(t)-f_{\zeta}) \right\rangle\right\vert=&O\left(v^{4+l}\left(\vert t\vert v+\ln{\left(\frac{1}{v}\right)}\right)^{\sigma}e^{{-}2\sqrt{\omega}v\vert t\vert}\right),\\
    \left\vert\frac{d^{l}}{dt^{l}}\left\langle \Lambda(\varphi_{1})(t,x),\, i e^{i\alpha_{1}(t,\pm x)}\phi_{\omega}(\pm x-d(t)-f_{\zeta}(t)) \right\rangle\right\vert=&O\left(v^{4+l}\left(\vert t\vert v+\ln{\left(\frac{1}{v}\right)}\right)^{\sigma}e^{{-}2\sqrt{\omega}v\vert t\vert}\right),\\
    \left\vert\frac{d^{l}}{dt^{l}}\left\langle \Lambda(\varphi_{1})(t,x),\, e^{i\alpha_{1}(t,\pm x)}\partial_{\omega}\phi_{\omega}(\pm x-d(t)-f_{\zeta}) \right\rangle\right\vert=&O\left(v^{4+l}\left(\vert t\vert v+\ln{\left(\frac{1}{v}\right)}\right)^{\sigma}e^{{-}2\sqrt{\omega}v\vert t\vert}\right),\\
    \left\vert\frac{d^{l}}{dt^{l}}\left\langle \Lambda(\varphi_{1})(t,x),\, e^{i\alpha_{1}(t,\pm x)}(\pm x-d(t)-f_{\zeta})\phi_{\omega}(\pm x-d(t)-f_{\zeta}) \right\rangle\right\vert=&O\left(v^{4+l}\left(\vert t\vert v+\ln{\left(\frac{1}{v}\right)}\right)^{\sigma}e^{{-}2\sqrt{\omega}v\vert t\vert}\right)
\end{align*} 
for all $l\in\mathbb{N},$ if $v>0$ is small enough. 
\\
\par \textbf{Step 2.}(Estimate of $\Lambda(\varphi_{k}).$) From now on, we consider the existence of functions $v_{k},\,\zeta_{k},\,\gamma_{k}$ and a smooth function $\varphi_{k}:\mathbb{R}^{2}\to\mathbb{R}$ satisfying all the properties of Theorem \ref{app lemma} until $k=k_{0}\geq 1.$  
 To simplify our notation, we consider
\begin{equation}\label{alpha}
    \alpha_{k}(t,x)=\gamma_{k}(t)+\frac{v_{k}(t)}{2}\left(x-\frac{\zeta_{k}(t)}{2}\right).
\end{equation}
\par First, using Lemma \ref{invertexpfor}, we can obtain the following identity for any smooth function $\rho$
\begin{multline*}
   \left[\frac{\partial^{2}}{\partial x^{2}}-\omega\right]\rho_{k}(t,x)+\left(F^{'}\left(\vert \phi_{\omega,k}\vert^{2}\right)+F^{''}\left(\vert\phi_{\omega,k}\vert^{2}\right)\vert \phi_{\omega,k}\vert^{2}\right)\rho_{k}(t,x)+F^{''}(\vert\phi_{\omega,k}\vert^{2})\vert\phi_{\omega,k}\vert^{2}e^{2i\alpha_{k}(t)}\overline{\rho}_{k}(t,x)\\
    ={-}\left(S_{\omega}\rho\right)_{k}(t,x)-\frac{v_{k}(t)^{2}}{4}\rho_{k}(t,x)+ i v_{k}(t)(\rho^{'})_{k}(t,x).
\end{multline*}
Consequently, we obtain that
\begin{multline}\label{general formula}
i\frac{\partial \rho_{k}(t,x)}{\partial t}+\frac{\partial^{2}\rho_{k}(t,x)}{\partial x^{2}}+\left(F^{'}\left(\vert\phi_{\omega,k}\vert^{2}\right)+F^{''}\left(\vert\phi_{\omega,k}\vert^{2}\right)\vert\phi_{\omega,k}\vert^{2}\right)\rho_{k}(t,x){+}F^{''}(\phi_{\omega,k}^{2})\phi_{\omega,k}^{2}\overline{\rho}_{k}(t,x)\\
\begin{aligned}
=&{-}\left(S_{\omega}\rho\right)_{k}(t,x)-i\left[\dot\zeta_{k}(t)-v_{k}(t)\right](\rho^{'})_{k}(t,x)\\
&{-}\frac{\dot v_{k}(t)}{2}\left[x-\frac{\zeta_{k}(t)}{2}\right]\rho_{k}(t,x)+\left[\dot \zeta_{k}(t)-v_{k}(t)\right]\frac{v_{k}(t)}{4}\rho_{k}(t,x)\\&{-}(\dot\gamma_{k}(t)-\omega)\rho_{k}(t,x).
\end{aligned}
\end{multline}
Therefore, since $F$ is a polynomial satisfying \eqref{H1}, using the estimate \eqref{Lcorr} from Step $1,$ the fact that $Corr_{1}(t,x)+Corr_{2}(t,x)$ is equal to \eqref{correq}, the estimate \eqref{interF00} and Lemmas \ref{deriv}, \ref{alg}, and \ref{separation}, we can verify the existence of a finite set of numbers $a_{l}\in\mathbb{N},\,j_{l}\in\mathbb{Z}$ independent of $v,\,t,\,x$ such that for any $(t,x)\in\mathbb{R}^{2}$
\begin{multline}\label{lambdauk}
\begin{aligned}
    \Lambda(\varphi_{k})(t,x)= &i\dot v_{k}(t) \left(\partial_{v}\phi_{\omega,k}(t,x)-\partial_{v}\phi_{\omega,k}(t,-x)+\sum_{j}f_{j}(t)\partial_{v} Sym\left((p_{j,\omega})_{k}\right)(t,x)\right)\\
    &{+}i\dot v_{k}(t)f_{\omega,k}(t)\partial_{v}Sym\left(\partial_{\omega}\phi_{\omega}\right)(t,x)\\
    &{+}i(\dot \zeta_{k}(t)-v_{k}(t))\left(\partial_{\zeta}\phi_{\omega,k}(t,x)-\partial_{\zeta}\phi_{\omega,k}(t,-x)+\sum_{j}f_{j}(t)\partial_{\zeta} Sym\left((p_{j,\omega})_{k}\right)(t,x)\right)\\
    &{+}i(\dot \zeta_{k}(t)-v_{k}(t))f_{\omega,k}(t)\partial_{\zeta}Sym\left(\partial_{\omega}\phi_{\omega}\right)(t,x)\\
    &{+}i(\dot \gamma_{k}(t)-\omega)\left(\partial_{\gamma}\phi_{\omega,k}(t,x)-\partial_{\gamma}\phi_{\omega,k}(t,{-}x)+\sum_{j}f_{j}(t)\partial_{\gamma} Sym\left((p_{j,\omega})_{k}\right)(t,x)\right)\\
    &{+}i(\dot\gamma_{k}(t)-\omega)f_{\omega,k}(t)\partial_{\gamma}Sym\left(\partial_{\omega}\phi_{\omega}\right)(t,x)
    \\
   &{+}i \dot f_{\omega,k}(t)\left[\partial_{\omega}\phi_{\omega,k}(t,x)-\partial_{\omega}\phi_{\omega,k}(t,{-}x)\right]-f_{\omega,k}(t)\left[\left(S_{\omega}(\partial_{\omega}\phi_{\omega})\right)_{k}(t,x)-\left(S_{\omega}(\partial_{\omega}\phi_{\omega})\right)_{k}(t,{-}x)\right]
    \\
&{-}2F^{''}\left(\left\vert \phi_{\omega,k}(t,x)\right\vert^{2}+\left\vert\phi_{\omega,k}(t,{-}x)\right\vert^{2}\right)\left[Sym\left(\phi_{\omega,k}\right)(t,x)\right]\left\vert \phi_{\omega,k}(t,x)\phi_{\omega,k}(t,{-}x)\right\vert
    \\ &{+}F^{'}\left(\left\vert\phi_{\omega,k}(t,{-}x)\right\vert^{2}\right)\phi_{\omega,k}(t,x)-F^{'}\left(\left\vert\phi_{\omega,k}(t,x)\right\vert^{2}\right)\phi_{\omega,k}(t,{-}x)\\
    &{-}\sum_{j\in K_{0}}f_{even,j}(t)Sym\left[e^{i\alpha_{k}(t,\cdot)}\tau_{\zeta_{k}(t)}\Pi^{\perp}p_{j}(\cdot)\right](x)\\&{-}\sum_{j\in M_{0}}f_{odd,j}(t)Sym\left[e^{i\alpha_{k}(t,\cdot)}\tau_{\zeta_{k}(t)}\Pi^{\perp}p_{j}(\cdot)\right](x)
    \\&{+}\sum_{n}f_{n}(v,t)f_{\omega,k}(t)^{a_{n}}e^{{-}2\sqrt{\omega}\zeta_{k}l_{n}}Sym(e^{i j_{n} v_{k}(t) (\cdot)}r_{n,k})(t,x)+O(e^{{-}20k\sqrt{\omega}\zeta_{k}}),
\end{aligned}
\end{multline}
%correct notation re_{l,k} p_{l,k} later
where $l_{n}\in\mathbb{N}\setminus \{0\},$ and all the sums above are finite and all the functions $f_{n}$ depend only on $v$ and $t$ and satisfy
\begin{equation}\label{estre}
    \max_{n}v^{2a_{n}+2l_{n}}\left\vert f^{(j)}_{n}(v,t) \right\vert\lesssim_{j} v^{4+j}\left(v\vert t\vert+\ln{\frac{1}{v}}\right)^{c(k)}e^{{-}2\sqrt{\omega}\vert t\vert v},
\end{equation}
for all $t\in\mathbb{R}$ and $\in\mathbb{R}$ if $v>0$ is sufficiently 
small.
\par Furthermore, to simplify more our notation, we consider 
\begin{multline}\label{mod}
 \begin{aligned}   
    Mod_{k}(t,x)=&i\dot v_{k} \left(\partial_{v}\phi_{\omega,k}(t,x)-\partial_{v}\phi_{\omega,k}(t,-x)+\sum_{j}f_{j}(t)\partial_{v} Sym\left((p_{j,\omega})_{k}\right)(t,x)\right)\\
    &{+}i\dot v_{k}(t)f_{\omega,k}(t)\partial_{v}Sym\left(\partial_{\omega}\phi_{\omega}\right)(t,x)\\
    &{+}i(\dot \zeta_{k}-v_{k})\left(\partial_{\zeta}\phi_{\omega,k}(t,x)-\partial_{\zeta}\phi_{\omega,k}(t,-x)+\sum_{j}f_{j}(t)\partial_{\zeta} Sym\left((p_{j,\omega})_{k}\right)(t,x)\right)\\
    &{+}i(\dot \zeta_{k}-v_{k})f_{\omega,k}(t)\partial_{\zeta}Sym\left(\partial_{\omega}\phi_{\omega}\right)(t,x)\\
    &{+}i(\dot \gamma_{k}(t)-\omega)\left(\partial_{\gamma}\phi_{\omega,k}(t,x)-\partial_{\gamma}\phi_{\omega,k}(t,{-}x)+\sum_{j}f_{j}(t)\partial_{\gamma} Sym\left((p_{j,\omega})_{k}\right)(t,x)\right)\\
    &{+}i(\dot\gamma_{k}(t)-\omega)f_{\omega,k}(t)\partial_{\gamma}Sym\left(\partial_{\omega}\phi_{\omega}\right)(t,x)
    \\
   &{+}i \dot f_{\omega,k}(t)\left[\partial_{\omega}\phi_{\omega,k}(t,x)-\partial_{\omega}\phi_{\omega,k}(t,{-}x)\right]\\&{-}f_{\omega,k}(t)\left[\left(S_{\omega}(\partial_{\omega}\phi_{\omega})\right)_{k}(t,x)-\left(S_{\omega}(\partial_{\omega}\phi_{\omega})\right)_{k}(t,{-}x)\right],
\end{aligned}
\end{multline}
for any $(t,x)\in\mathbb{R}^{2},$ and $k\in\mathbb{N}.$
\par Next, since $F$ is a polynomial, we deduce applying Lemma \ref{separation} and using Remarks \ref{asyrem}, \ref{phis+} that
\begin{multline}\label{m}
    {-}2F^{''}\left(\left\vert \phi_{\omega,k}(x)\right\vert^{2}+\left\vert\phi_{\omega,k}({-}x)\right\vert^{2}\right)\left[Sym\left(\phi_{\omega,k}\right)(t,x)\right]\left\vert \phi_{\omega,k}(t,x)\phi_{\omega,k}(t,{-}x)\right\vert\\
    \begin{aligned}
        &={-}2 a_{{+}\infty}e^{{-}2\sqrt{\omega}\zeta_{k}}Sym\left[e^{i\alpha_{k}(t,\cdot)}F^{''}(\phi_{\omega}(\cdot)^{2})\phi_{\omega}(\cdot)^{2}e^{{-}\sqrt{\omega}(\cdot)}\right](x-\zeta_{k})+O\left(e^{{-}4\sqrt{\omega}\zeta_{k}}\right),
    \end{aligned}
\end{multline}
where the term of order $O\left(e^{{-}4\sqrt{\omega}\zeta_{k}}\right)$ above means a smooth function $f(t,x)$ satisfying 
\begin{equation*}
\norm{\frac{\partial^{l}}{\partial t^{l}}f(t,x)}_{H^{s}_{x}(\mathbb{R})}\lesssim_{l,s} v^{l}e^{{-}4\sqrt{\omega}\zeta_{k}}
\end{equation*}
for any $l\in\mathbb{N}$ and $s\geq 0.$ Indeed, from Lemma \ref{separation}, we can also describe this function $f(t,x)$ with more precision as a finite sum of elements of the form $e^{{-}2d_{m}\sqrt{\omega}\zeta_{k}}r_{m}(\pm x-\zeta_{k})$ such that $r_{m}(\pm\cdot) \in \mathcal{S}^{+}$ plus a remainder function $g(t,x)$ satisfying $\norm{\frac{\partial^{l}}{\partial t^{l}}g(t,x)}_{H^{1}}\lesssim_{l} v^{l+20k}e^{{-}4\sqrt{\omega}\zeta_{k}}$ for any $l\in\mathbb{N}.$ \par Similarly, we can verify that
\begin{multline}\label{po2}
    F^{'}\left(\left\vert\phi_{\omega,k}(t,{-}x)\right\vert^{2}\right)\phi_{\omega,k}(t,x)-F^{'}\left(\left\vert\phi_{\omega,k}(t,x)\right\vert^{2}\right)\phi_{\omega,k}(t,{-}x)\\
    \begin{aligned}
     &={-}a_{{+}\infty}e^{{-}2\sqrt{\omega}\zeta_{k}(t)}Sym\left[e^{{-}iv_{k}(\cdot)}\tau_{\zeta_{k}(t)}e^{i\alpha_{k}(t,\cdot)}F^{'}(\phi_{\omega}(\cdot)^{2})e^{{-}\sqrt{\omega}(\cdot)}\right](x)+O\left(e^{{-}4\sqrt{\omega}\zeta_{k}}\right)
    \end{aligned}
\end{multline}
such that the term $O\left(e^{{-}4\sqrt{\omega}\zeta_{k}}\right)$ also means a smooth function $f(t,x)$ satisfying $\norm{\frac{\partial^{l}}{\partial t^{l}}f(t,x)}_{H^{1}}\lesssim_{l} v^{l}e^{{-}4\sqrt{\omega}\zeta_{k}}$ for any $l\in\mathbb{N}$.  Analogously, using Lemma \ref{separation}, we also can verify that $f(t,x)$ can be estimated as a finite sum of functions $e^{{-}2d_{m}\sqrt{\omega}\zeta_{k}}r_{m}(\pm x-\zeta_{k}(t))$ plus a remainder $g(t,x)$ such that one  of the functions $r_{m}(\pm \cdot)$ is in $\mathcal{S}^{+}$ and $\norm{\frac{\partial^{l}}{\partial t^{l}}g(t,x)}_{H^{s}_{x}(\mathbb{R})}\lesssim_{l,s} v^{l+20k}e^{{-}4\sqrt{\omega}\zeta_{k}}$ for any $l\in\mathbb{N}$ and $s\geq 0.$\\
\par Consequently, using estimates \eqref{m} and \eqref{po2},it is not difficult to verify from \eqref{lambdauk} that
\begin{multline}\label{pophik}
\begin{aligned}
\Lambda(\varphi_{k})(t,x)=& Mod_{k}(t,x)
    \\&{-}2 a_{{+}\infty}e^{{-}2\sqrt{\omega}\zeta_{k}}Sym\left[e^{i\alpha_{k}(t,\cdot)}\tau_{\zeta_{k}(t)}F^{''}(\phi_{\omega}(\cdot)^{2})\phi_{\omega}(\cdot)^{2}e^{{-}\sqrt{\omega}(\cdot)}\right](x)\\ &{-}a_{{+}\infty}e^{{-}2\sqrt{\omega}\zeta_{k}(t)}Sym\left[e^{{-}iv_{k}(\cdot)}e^{i\alpha_{k}(t,\cdot)}\tau_{\zeta_{k}(t)}F^{'}(\phi_{\omega}(\cdot)^{2})e^{{-}\sqrt{\omega}(\cdot)}\right](x)\\
    &{-}\sum_{j\in K_{0}}f_{even,j}(t)Sym\left[e^{i\alpha_{k}(t,\cdot)}\tau_{\zeta_{k}(t)}\Pi^{\perp}p_{j}(\cdot)\right](x)\\&{-}\sum_{j\in M_{0}}f_{odd,j}(t)Sym\left[e^{i\alpha_{k}(t,\cdot)}\tau_{\zeta_{k}(t)}\Pi^{\perp}p_{j}(\cdot)\right](x)
    \\
    &{+}\sum_{n}f_{n}(v,t)f_{\omega,k}(t)^{a_{n}}e^{{-}2l_{n}\sqrt{\omega}\zeta_{k}}Sym(e^{i j_{n}v_{k}(t) (\cdot)}r_{n,k})(t,x)+O(e^{{-}20k\sqrt{\omega}\zeta_{k}})\\
    =&\sum_{j\in J_{k}} r_{j}(v,t)Sym\left(\left(\rho_{j}\right)_{k}\right)(t,x)+O(e^{{-}20k\sqrt{\omega}\zeta_{k}}),
\end{aligned}    
\end{multline}
such that the estimate \eqref{estre} holds, $J_{k}$ is a finite set depending only on $k$ and all the functions $\rho_{j}\left(\frac{\cdot}{\sqrt{\omega}}\right)$ are in $\mathcal{S}^{+}_{\infty},$ see Definition \ref{S}.
The expression $O(e^{{-}20k\sqrt{\omega}\zeta_{k}})$ in \eqref{pophik} means a smooth function $g(t,x)$ satisfying
\begin{equation*}
\norm{\partial^{l}_{t}g(t,x)}_{H^{s}_{x}}\lesssim_{s,l}v^{l}e^{{-}20k\sqrt{\omega}\zeta_{k}(t)},
\end{equation*}
for any $t\in\mathbb{R},$ if $v>0$ is small enough.
\par Furthermore, since if $\varphi_{k}$ satisfies the hypotheses of Theorem \ref{app lemma},  using Lemmas \ref{represent1} and \ref{oddeven}, we can restrict to the case where all the functions $r_{j}$ are real and 
\begin{align}\label{oddrho}
   r_{j} \text{ is even on $t,$ if $\rho_{j}$ is a real function,}\\ \nonumber 
   r_{j} \text{ is odd on $t,$ if $i\rho_{j}$ is a real function.}
\end{align}
 
 \textbf{Step 3.}(Construction of parameters $\zeta_{k_{0}+1},\,v_{k_{0}+1},\, \gamma_{k_{0}+1},\,f_{\omega,k_{0}+1}.$)
 Furthermore, we are going to verify that for
\begin{align*}
f_{\omega,k+1}(v,t)=& f_{\omega,k}(v,t)+\delta f_{\omega,k}(v,t),\,v_{k+1}(v,t)=v_{k}(v,t)+\delta v_{k}(v,t),\\
\zeta_{k+1}(v,t)=&\zeta_{k}(v,t)+\delta\zeta_{k}(v,t),\,\gamma_{k+1}(v,t)=\gamma_{k}(v,t)+\delta\gamma_{k}(v,t),
\end{align*}
the functions $\delta f_{\omega,k},\,\delta v_{k},\,\delta\zeta_{k},\,\delta\gamma_{k}$ shall satisfy for some $c(k)>0$ the following decay  
\begin{multline}\label{deltadecay}
   v \left\vert \frac{\partial^{l}}{\partial t^{l}}\delta f_{\omega,k}(v,t)\right\vert+v\left\vert \frac{\partial^{l}}{\partial t^{l}}\delta v_{k}(v,t)\right\vert+v^{2}\left\vert \frac{\partial^{l}}{\partial t^{l}}\delta\zeta_{k}(v,t)\right\vert+v^{2}\left\vert \frac{\partial^{l}}{\partial t^{l}}\delta\gamma_{k}(v,t)\right\vert\\{+} \left\vert \frac{\partial^{l+1}}{\partial t^{l+1}} \delta\gamma_{k}(v,t)-\frac{\partial^{l}}{\partial t^{l}}f_{\omega,k}(v,t))\right\vert\leq C(l) v^{2k+2+l}\left(\ln{\frac{1}{v}}+\vert t\vert v\right)^{c(k)}e^{{-}2\vert t\vert\sqrt{\omega} v},
\end{multline}
for every $l\in\mathbb{N},\,t\in\mathbb{R},$ $C(l)>0$ depending only on $l$ and $0<v\ll 1.$ 
From Step $1,$ we have considered $f_{\omega,1},\,\zeta_{\omega,1}=d(t)+f_{\zeta}(t),\,\gamma_{1}(t)$ and $v_{1}(t)$ to be the same as the ones defined at Remark \ref{ffff}. 

\par Moreover, considering $f_{\omega,0}(v,t)\equiv 0,\,\gamma_{0}(v,t)=\omega t,\, v_{0}(v,t)=\dot d(t) ,\,\zeta_{0}(v,t)=d(t),$ we deduce from Remark \ref{ffff} that the estimate \eqref{deltadecay} is true for $k=0.$ Therefore, we can assume that the existence of $v_{k},\, \gamma_{k},\, \zeta_{k},\,f_{\omega,k}$ satisfying \eqref{deltadecay} until $k=k_{0}-1\in\mathbb{N}.$

 \par From now on, we consider $f_{\omega,k_{0}+1},\,v_{k_{0}+1},\,\zeta_{k_{0}+1}$ and $\gamma_{k_{0}+1}$ to be functions such that the estimates \eqref{deltadecay} are true for $k=k_{0}$ for some $c_{k_{0}}>0,$ these functions will be chosen carefully later for the construction of $\varphi_{k_{0}+1}$ satisfying Theorem \ref{app lemma}. 
 \par Next, using Taylor's Expansion Theorem, we can verify for any $l\in\mathbb{N}$ and $v>0$ sufficiently small the following estimate
 \begin{multline}\label{taylorint}
   {-} a_{{+}\infty}e^{{-}2\sqrt{\omega}\zeta_{k_{0}+1}(t)}={-}a_{{+}\infty}e^{{-}2\sqrt{\omega}\zeta_{k_{0}}(t)}+2a_{{+}\infty}\sqrt{\omega}\delta \zeta_{k_{0}}(t)e^{{-}2\sqrt{\omega}\zeta_{k_{0}}(t)}\\{+}O\left( v^{4k_{0}}\left(\ln{\frac{1}{v}}+\vert t\vert v\right)^{2c(k_{0})}e^{{-}4\vert t\vert\sqrt{\omega} v}\right),
 \end{multline}
 and the term $O\left( v^{4k_{0}}\left(\ln{\left(\frac{1}{v}\right)}+\vert t\vert v\right)^{2c(k_{0})}e^{{-}4\vert t\vert\sqrt{\omega} v}\right)$ means a smooth function $f(t,x)$ satisfying
 \begin{equation*}
     \norm{\frac{\partial^{l}}{\partial t^{l}}f(t,x)}_{H^{s}_{x}(\mathbb{R})}\lesssim_{l,s} v^{4k_{0}+l}\left(\ln{\left(\frac{1}{v}\right)}+\vert t\vert v\right)^{2c(k_{0})}e^{{-}4\vert t\vert\sqrt{\omega} v},
 \end{equation*}
for any $l\in\mathbb{N},$ if $v>0$ is sufficiently small. Indeed, using the estimates \eqref{deltadecay}, identity $\zeta_{0}(t)=d(t)$ and estimate \eqref{decd}, we can also obtain from \eqref{taylorint} the existence of a new constant $c_{k_{0}}$ such that
\begin{multline*}
    {-} a_{{+}\infty}e^{{-}2\sqrt{\omega}\zeta_{k_{0}+1}(t)}={-}a_{{+}\infty}e^{{-}2\sqrt{\omega}\zeta_{k_{0}}(t)}+2a_{{+}\infty}\sqrt{\omega}\delta \zeta_{k_{0}}(t)e^{{-}2\sqrt{\omega}d(t)}\\{+}O\left( v^{2k_{0}+4}\left(\ln{\frac{1}{v}}+\vert t\vert v\right)^{2c(k_{0})}e^{{-}2\vert t\vert\sqrt{\omega} v}\right).
\end{multline*}
From now on, using $f_{\omega,k_{0}+1},\,\zeta_{k_{0+1}},\,v_{k_{0}+1},$ we consider from \eqref{trk} the following notation
\begin{equation*}
    \rho_{k_{0}+1}(t,x)\coloneqq e^{i\left(\gamma_{k_{0}+1}(t)+\frac{v_{k_{0}+1}(t)}{2}\left[x-\frac{\zeta_{k_{0}+1}(t)}{2}\right]\right)}\rho\left(x-\zeta_{k_{0}+1}(t)\right),
\end{equation*}
 for any real function $\rho$
 with domain $\mathbb{R}.$
 \par   Next, recalling the definition of $Sym$ on \eqref{symoo} and using the definition of $\varphi_{k}$ on \eqref{phik}, we consider the following function  
\begin{align}\nonumber 
    \varphi_{k_{0},0}(t,x)=&\left[e^{i\frac{v_{k_{0}+1}}{2}\left(x-\frac{\zeta_{k_{0}+1}}{2}\right)+i\gamma_{k_{0}+1}}\phi_{\omega}(x-\zeta_{k_{0}+1})-e^{i\frac{v_{k_{0}+1}}{2}\left({-}x-\frac{\zeta_{k_{0}+1}}{2}\right)+i\gamma_{k_{0}+1}}\phi_{\omega}({-}x-\zeta_{k_{0}+1})\right]\ \\ \label{phik00} 
    &{+}f_{\omega,k_{0}+1}(t)Sym\left[\tau_{\zeta_{k_{0}+1}}\left(e^{iv_{k_{0}+1}\frac{(\cdot)}{2}+i\gamma_{k_{0}+1}}\partial_{\omega}\phi_{\omega}(\cdot)\right)\right](x)
    \\  \nonumber
    &{+}\sum_{j\in J} ig_{odd,j}(t)Sym\left[(p_{j,\omega})_{k_{0}+1}(t,\cdot)\right](x)+\sum_{j\in I} g_{even,j}(t)Sym\left[(p_{j,\omega})_{k_{0}+1}(t,\cdot)\right](x)\\ 
    \nonumber
    &{+}\sum_{j\in J_{k_{0}}}i g_{odd,j}(t)Sym\left[(p_{j,\omega})_{k_{0}+1}(t,\cdot)\right](x) +\sum_{j\in I_{k_{0}}} g_{even,j}(t)Sym\left[(p_{j,\omega})_{k_{0}+1}(t,\cdot)\right](x)
\end{align}
Consequently, using the estimate \eqref{pophik} for $\Lambda(\varphi_{k_{0}}),$ estimates \eqref{deltadecay} and estimate  \eqref{taylorint},
we can verify for some $c(k_{0})>0$ that
\begin{multline}\label{finalestimate}
    \begin{aligned}
     \Lambda(\varphi_{k_{0},0})(t,x)= &i (\delta v_{k_{0}})^{(\prime)}(t)Sym\left(\partial_{v}\phi_{\omega,k_{0}+1}(t,\cdot)+\sum_{j}f_{j}(t)\partial_{v} p_{j,k_{0}+1}(t,\cdot)\right)(x)\\
     &{+}i\left[( \delta\zeta_{k_{0}})^{(\prime)}(t)-\delta v_{k_{0}}(t)\right]Sym\left(\partial_{\zeta}\phi_{\omega,k_{0}+1}(t,\cdot)+\sum_{j}f_{j}(t)\partial_{\zeta} p_{j,k_{0}+1}(t,\cdot)\right)(x)\\
     &{+}i (\delta\gamma_{k_{0}})^{(\prime)}(t)Sym\left(\partial_{\gamma}\phi_{\omega,k_{0}+1}(t,\cdot)+\sum_{j}f_{j}(t)\partial_{\gamma} p_{j,k_{0}+1}(t,\cdot)\right)(x)\\
    &{+}i ( \delta f_{\omega,k_{0}})^{(\prime)}(t)Sym\left(\partial_{\omega}\phi_{\omega,k_{0}+1}(t,\cdot)\right)(x)- \delta f_{\omega,k_{0}}(t)Sym\left(S_{\omega}\left(\partial_{\omega}\phi_{\omega}\right)_{k_{0}+1}(t,\cdot)\right)(x)\\
 &{+}4 a_{{+}\infty}\delta\zeta_{k_{0}}(t)\sqrt{\omega} e^{{-}2\sqrt{\omega}d(t)}Sym\left[e^{i\alpha_{k_{0}+1}(t,\cdot)}\tau_{\zeta_{k_{0}+1}(t)}F^{''}(\phi_{\omega}(\cdot)^{2})\phi_{\omega}(\cdot)^{2}e^{{-}\sqrt{\omega}(\cdot)}\right](x)\\  &{+}2a_{{+}\infty}\delta\zeta_{k_{0}}(t)\sqrt{\omega}e^{{-}2\sqrt{\omega}d(t)}Sym\left[e^{{-}iv_{k_{0}+1}(\cdot)}e^{i\alpha_{k_{0}+1}(t,\cdot)}\tau_{\zeta_{k_{0}+1}(t)}F^{'}(\phi_{\omega}(\cdot)^{2})e^{{-}\sqrt{\omega}(\cdot)}\right](x)
         \\&{+}\sum_{j} r_{j}(v,t)Sym\left(\left(\rho_{j}\right)_{k_{0}+1}\right)(t,x)+O\left(v^{2k_{0}+4}\left[\vert t \vert v+\ln{\frac{1}{v}}\right]^{c(k_{0})}e^{{-}2\sqrt{\omega}\vert t\vert v}\right),
    \end{aligned}
\end{multline}    
where the term above associated to $O$ means a smooth function $r(t,x)$ satisfying
\begin{equation*}
\norm{\frac{\partial^{l}r(t,x)}{\partial t^{l}}}_{H^{s}_{x}(\mathbb{R})}\lesssim_{s,l} v^{2k_{0}+4+l}\left[\vert t \vert v+\ln{\frac{1}{v}}\right]^{c(k_{0})}e^{{-}2\sqrt{\omega}\vert t\vert v},
\end{equation*}
for any $l\in\mathbb{N}$ and $s\geq 0.$
\par Furthermore, from Lemma \ref{represent1}, we can restrict to the case where the functions $\rho_{j}$ in estimate \eqref{finalestimate} satisfy $\left\langle \rho_{j},\,\rho_{l} \right\rangle=\delta^{j}_{l}.$ Consequently, using the estimate \eqref{pophik} with $\eqref{ukest}$ for $k=k_{0}$ and Lemma \ref{interactt}, we can deduce that 
\begin{equation}\label{estrj}
    \max_{j}\norm{\frac{\partial^{l}}{\partial t^{l}}r_{j}(t,v)}\lesssim_{l} v^{2k_{0}+2+l}\left(\vert t\vert v+\ln{\left(\frac{1}{v}\right)}\right)^{c(k_{0})}e^{{-}2\sqrt{\omega}\vert t\vert v}, 
\end{equation}
for all $l\in\mathbb{N},$ if $v>0$ is sufficiently small.
\par Moreover, if $\dot\delta \zeta_{k_{0}}(t)=\delta v_{k_{0}}(t),$ using estimates \eqref{decd}, \eqref{deltadecay}, \eqref{finalestimate} and \eqref{estrj}, we can verify the existence of a constant $c(k_{0})>0$ satisfying
\begin{equation}\label{decayphik0}
    \norm{\frac{\partial^{l}}{\partial t^{l}}\Lambda\left(\varphi_{k_{0},0}\right)(t,x)}_{H^{s}_{x}(\mathbb{R})}\lesssim_{s}v^{2k_{0}+2+l}\left[ \vert t\vert v+\ln{\left(\frac{1}{v}\right)}\right]^{c(k_{0})}e^{{-}2\sqrt{\omega}\vert t\vert v},
\end{equation}
for all $l\in\mathbb{N}.$
\\
\textbf{Step 4.}(Ordinary differential equations for $\delta\zeta_{k_{0}},\,\delta v_{k_{0}},\, \delta\gamma_{k_{0}},\,\delta f_{\omega,k_{0}}.$)\\
Similar to the reasoning used in Theorem $4.1$ from \cite{second}, using \eqref{finalestimate}, we are going to obtain a system of ordinary differential equations for the parameters $\delta\zeta_{k_{0}},\,\delta v_{k_{0}},\, \delta\gamma_{k_{0}},\,\delta f_{\omega,k_{0}}$ such that 
\begin{align}\label{estd1}
   \left\vert\frac{\partial^{l}}{\partial t^{l}}\left\langle \Lambda(\varphi_{k_{0},0})(t,x),\left(\phi^{'}\right)_{k_{0}+1}(t,x) \right\rangle\right\vert=&O\left(v^{2k_{0}+4+l}\left(\vert t\vert v+\ln{\left(\frac{1}{v}\right)}\right)^{c(k_{0})}e^{{-}2\sqrt{\omega}\vert t\vert v}\right),\\ \label{estd2}
   \left\vert\frac{\partial^{l}}{\partial t^{l}}\left\langle \Lambda(\varphi_{k_{0},0})(t,x),\partial_{\gamma_{k_{0}+1}}\phi_{k_{0}+1}(t,x) \right\rangle\right\vert=&O\left(v^{2k_{0}+4+l}\left(\vert t\vert v+\ln{\left(\frac{1}{v}\right)}\right)^{c(k)}e^{{-}2\sqrt{\omega}\vert t\vert v}\right),\\ \label{estd3}
   \left\vert\frac{\partial^{l}}{\partial t^{l}}\left\langle \Lambda(\varphi_{k_{0},0})(t,x),i(x-\zeta_{k_{0}+1}(t))\phi_{k_{0}+1}(t,x) \right\rangle\right\vert=&O\left(v^{2k_{0}+4+l}\left(\vert t\vert v+\ln{\left(\frac{1}{v}\right)}\right)^{c(k_{0})}e^{{-}2\sqrt{\omega}\vert t\vert v}\right),\\ \label{estd4} \left\vert\frac{\partial^{l}}{\partial t^{l}}\left\langle \Lambda(\varphi_{k_{0},0})(t,x),\partial_{\omega}\phi_{k_{0}+1}(t,x) \right\rangle\right\vert=&O\left(v^{2k_{0}+4+l}\left(\vert t\vert v+\ln{\left(\frac{1}{v}\right)}\right)^{c(k_{0})}e^{{-}2\sqrt{\omega}\vert t\vert v}\right),
\end{align}
for any $l\in\mathbb{N}.$ 
\par First, using Lemma \ref{interactt} and estimates \eqref{deltadecay}, we obtain from the estimate \eqref{finalestimate} that estimate \eqref{estd1} is equivalent to
\begin{align*}
    \frac{(\delta v_{k_{0}})^{(\prime)}(t)}{4}\norm{\phi_{\omega}}_{L^{2}}^{2}=&{-}2a_{{+}\infty}\omega \delta\zeta_{k_{0}}(t) \left[\int_{\mathbb{R}}F^{'}\left(\phi_{\omega}(x)^{2}\right)\phi_{\omega}(x)e^{{-}\sqrt{\omega}x}\,dx\right]e^{{-}2\sqrt{\omega}d(t)}\\ \nonumber
    &{-}\sum_{j}r_{j}(v,t)\left\langle \rho_{j}(x),\phi^{'}_{\omega}(x) \right\rangle+O\left(v^{2k_{0}+4}\left(\vert t\vert v+\ln{(\frac{1}{v})}\right)^{c(k)}e^{{-}2\sqrt{\omega}\vert t\vert v}\right),
\end{align*}
and using $C>0$ defined in \eqref{d edo}, this is equivalent to  
\begin{align}\label{EDO1}
   \frac{(\delta v_{k_{0}})^{(\prime)}(t)}{4}\norm{\phi_{\omega}}_{L^{2}}^{2}=&{-}\frac{C\sqrt{\omega}\norm{\phi_{\omega}}_{L^{2}}^{2}}{2} \delta\zeta_{k_{0}}(t)e^{{-}2\sqrt{\omega}d(t)}-\sum_{j} r_{j}(v,t)\left\langle\rho_{j},\phi^{'}_{\omega}\right\rangle\\ \nonumber &{+}O\left(v^{2k_{0}+4}\left(\vert t\vert v+\ln{(\frac{1}{v})}\right)^{c(k_{0})}e^{{-}2\sqrt{\omega}\vert t\vert v}\right).
   \end{align}
\par Next, we consider 
\begin{align}\label{p000}
    \delta\zeta_{k_{0}}(t)\coloneqq & \delta_{\zeta_{k_{0}},0}(t)+f_{\zeta_{k_{0}}}(t)-\frac{2}{\norm{\phi_{\omega}}_{L^{2}}^{2}}\sum_{j}\int_{{-}\infty}^{t}r_{j}(v,s)\langle p_{j}(x),ix\phi_{\omega}(x)\rangle\,ds\\ \nonumber
&=\delta_{\zeta_{k_{0}},0}(t)+f_{\zeta_{k_{0}}}(t)+R(t),
\end{align}
such that we are going to define $\delta_{\zeta_{k_{0}},0},\,$ and $f_{\zeta_{k_{0}}}(t)$ in the next paragraphs. 
We observe from \eqref{oddrho}, \eqref{estrj} and the inductive hypothesis on $k_{0}$ that there exists $\sigma>0$ satisfying
\begin{equation*}
    \left\vert \frac{d^{l}}{dt^{l}} R(t)\right\vert=O\left(v^{2k_{0}+1+l}\left(\vert t\vert v+\ln{\frac{1}{v}}\right)^{\sigma}e^{{-}2\sqrt{\omega}\vert t\vert v}\right) \text{, for any $l\in\mathbb{N}.$}
\end{equation*}
\par Next, we define
$\delta_{\zeta_{k_{0}},0}$ as the unique solution of
\begin{align}\label{ddeltav}
    \frac{(\delta\zeta_{k_{0},0})^{\prime\prime}(t)}{4}\norm{\phi_{\omega}}_{L^{2}}^{2}=&{-}\frac{C\sqrt{\omega}\norm{\phi_{\omega}}_{L^{2}}^{2}}{2}\delta\zeta_{k_{0},0}(t) e^{{-}2\sqrt{\omega}d(t)}-\frac{C\sqrt{\omega}\norm{\phi_{\omega}}_{L^{2}}^{2}}{2}R(t) e^{{-}2\sqrt{\omega}d(t)}\\ \nonumber &{-}\sum_{j}r_{j}(v,t)\left\langle \rho_{j}(x),\phi^{'}_{\omega}(x) \right\rangle,\\ \nonumber
    \lim_{t\to{+}\infty} (\delta\zeta_{k_{0},0})^{(\prime)}(t)=0,&\,\lim_{t\to{+}\infty}\delta\zeta_{k_{0},0}(t)=0,
\end{align}
and $f_{\zeta_{k_{0}}}$ an even function to be chosen later satisfying
\begin{equation}\label{fzz}
    \left\vert \frac{d^{l}}{dt^{l}}f_{\zeta_{0}}(t) \right\vert=O\left(v^{2k_{0}+2+l}\left(\vert t\vert v+\ln{\frac{1}{v}}\right)^{\sigma}e^{{-}2\sqrt{\omega}\vert t\vert v}\right),
\end{equation}
for some constant $\sigma>0$ and any $l\in\mathbb{N}.$ 
In particular $\delta \zeta_{k_{0},0},$  is an even function because of \eqref{oddrho}, and for any $l\in\mathbb{N}$
\begin{equation}\label{delta00z}
    \left\vert \frac{d^{l}}{dt^{l}} \delta\zeta_{k_{0},0}(t) \right\vert=O\left(v^{2k_{0}+l}\left(\vert t\vert v+\ln{\frac{1}{v}}\right)^{c(k_{0})}e^{{-}2\sqrt{\omega}\vert t\vert v}\right),
\end{equation}
because of the variation of the parameters method and estimate \eqref{estrj}.
\par Consequently, choosing $\delta v_{k_{0}}=\dot\delta \zeta_{k_{0},0},$ we deduce from the properties of $\delta\zeta_{k_{0},0}$ and estimate \eqref{EDO1} that \eqref{estd1} holds and that $\delta v_{k_{0}}$ is an odd function.  
\par Furthermore, using the identity \eqref{p000}, estimates \eqref{finalestimate}, \eqref{delta00z} and Lemma \ref{interactt}, we can verify that both estimates \eqref{estd3} and \eqref{fzz} would be true if $f_{\zeta_{k_{0}}}(t)$ is the unique solution of 
\begin{align}\label{fzetaODE}
    \frac{\dot f_{\zeta_{k_{0}}}(t)}{2}\norm{\phi_{\omega}}_{L^{2}}^{2}=2a_{{+}\infty}\delta \zeta_{k_{0},0}\sqrt{\omega}e^{{-}2\sqrt{\omega}d(t)}\dot d(t)\left\langle (x+d(t))F^{\prime }(\phi_{\omega}(x)^{2})e^{{-}\sqrt{\omega}x},x\phi_{\omega}(x) \right\rangle,
\end{align}
satisfying $\lim_{t\to{+}\infty}f_{\zeta_{k_{0}}(t)}=0,$
which is an even function satisfying \eqref{p000}. The equation 
\eqref{fzetaODE} comes from the asymptotic estimate of \eqref{estd3} using the decays of $\delta \zeta_{k_{0},0},f_{\zeta_{k_{0}}}.$ 
\par Moreover, using formula \eqref{trk} for $\phi_{\omega},$ we can verify that
\begin{equation*}
   \partial_{v}\phi_{\omega,k}(t,x)=\frac{i}{2}(x-\zeta_{k}(t))\phi_{\omega,k}(t,x)+i\frac{\zeta_{k}(t)}{4}\phi_{\omega,k}(t,x),
\end{equation*}
we also observe that $\phi_{\omega}$ and $\partial_{\omega}\phi_{\omega}$ are even functions, and $x\phi_{\omega}(x)$ is an odd function.
Consequently, using estimates \eqref{phik00} and \eqref{deltadecay}, we can verify that the estimate \eqref{estd4} is true only if
\begin{multline*}\label{edo3almost}
\left(\left(\delta\gamma_{k_{0}}\right)^{(\prime)}(t)+\frac{d(t)}{4}(\delta v_{k_{0}})^{(\prime)}(t)-\delta f_{\omega,k_{0}}(t)\right)\frac{d}{d\omega}\left[\int_{\mathbb{R}}\frac{\phi_{\omega}(x)^{2}}{2}\,dx\right]\\
\begin{aligned}
=& 2a_{{+}\infty}\delta\zeta_{k_{0}}(t)\sqrt{\omega}e^{{-}2\sqrt{\omega}d(t)}\left\langle F^{\prime }(\phi_{\omega}(x)^{2})e^{{-}\sqrt{\omega}x},\partial_{\omega}\phi_{\omega}(x) \right\rangle
\\&{+}4a_{{+}\infty}\delta\zeta_{k_{0}}(t)\sqrt{\omega}e^{{-}2\sqrt{\omega}d(t)}\left\langle F^{\prime \prime}(\phi_{\omega}(x)^{2})\phi_{\omega}(x)^{2}e^{{-}\sqrt{\omega}x},\partial_{\omega}\phi_{\omega}(x) \right\rangle
\\ &{+}\sum_{j}r_{j}(v,t)\left\langle \rho_{j},\partial_{\omega}\phi_{\omega} \right\rangle\\
&{+}O\left(v^{2k_{0}+4+l}\left(\vert t\vert v+\ln{\left(\frac{1}{v}\right)}\right)^{c(k_{0})}e^{{-}2\sqrt{\omega}\vert t\vert v}\right).
\end{aligned}
\end{multline*}
Consequently, since $\delta v_{k_{0}}=\dot\delta\zeta_{k_{0},0}$, we can verify from the ordinary equation satisfied by $\delta\zeta_{k_{0},0}$ that there exist real values $C_{2,\omega},\,C_{3,\omega},\,C_{4,\omega},\,C_{5,\omega}$ depending only on $\omega$ such that \eqref{estd4} is true when $\delta\gamma_{k_{0}}$ is the unique solution of 
\begin{multline*}
   (\delta\gamma_{k_{0}})^{(\prime)}(t)= \delta f_{\omega,k_{0}}(t)+ C_{2,\omega}\delta\zeta_{k_{0}}(t)e^{{-}2\sqrt{\omega}d(t)}+C_{3,\omega}\frac{d(t)}{4} \delta \zeta_{k_{0}}(t)e^{{-}2\sqrt{\omega}d(t)}\\ {+}C_{4,\omega}\sum_{j}r_{j}(v,t)\left\langle \rho_{j},\partial_{\omega}\phi_{\omega} \right\rangle
   +C_{5,\omega}\frac{d(t)}{4}\sum_{j}r_{j}(v,t)\left\langle \rho_{j},\phi^{'}_{\omega} \right\rangle,
\end{multline*}
\begin{equation}\label{EDO3}
    \delta\gamma_{k_{0}}(0)=0,
\end{equation}
and when the estimates \eqref{deltadecay} hold.
\par Next, it remains to find the appropriate ordinary differential estimate that will imply estimate \eqref{estd2}. We also observe that
$
    \partial_{\gamma}\phi_{\omega,k_{0}}(t,x)=i\gamma_{0}\phi_{\omega,k_{0}}(t,x), \,\left\langle  \phi^{'}_{\omega},\,\phi_{\omega}\right\rangle=0,
$
and the real part of $\partial_{v_{k_{0}}}\phi_{\omega,k_{0}}(t,x)$ is zero. Consequently, using Lemmas \ref{interactt}, and estimates \eqref{deltadecay}, we deduce that \eqref{estd2} is true if
\begin{align}\label{fwode}
    \left( \delta f_{\omega,k_{0}}\right)^{(\prime)}(t)\left\langle \partial_{\omega}\phi_{\omega},\phi_{\omega} \right\rangle=&{-}2a_{{+}\infty}\delta \zeta_{k_{0}}(t)\sqrt{\omega}e^{{-}2\sqrt{\omega}d(t)}\dot d(t)
    \left\langle (x+d(t))F^{\prime}(\phi_{\omega}(x)^{2})e^{{-}\sqrt{\omega}x},\phi_{\omega}(x) \right\rangle\\  \nonumber &{-}\sum_{j}r_{j}(v,t)\left\langle \rho_{j}(x),i\phi_{\omega}(x) \right\rangle,\\ \nonumber
     \lim_{t\to{+}\infty} \delta f_{\omega,k_{0}}(t)=0,
     %small change i zeta
\end{align}
because of the Fundamental Theorem of Calculus and the fact that $\delta\zeta_{k_{0}}$ defined at \eqref{ddeltav} satisfies \eqref{deltadecay}, the function $\delta\zeta_{k_{0}}$ is also even because of the equation above.
\par Consequently, using \eqref{EDO3} and the fact that $f_{\omega,k_{0}}$ satisfies \eqref{deltadecay}, we deduce that $\delta\gamma_{k_{0}}$ satisfies all decays in \eqref{deltadecay} and it is an odd function.
 In conclusion, all the functions $\delta\zeta_{k_{0}},\,\delta v_{k_{0}},\,\delta \gamma_{k_{0}}$ and $\delta f_{\omega,k_{0}}$ constructed in this Step satisfy \eqref{deltadecay}.
\\
\textbf{Step 5.}(Construction of $\varphi_{k+1}$ and conclusion.)
\par First, we recall that the function $\varphi_{k_{0},0}$ defined in \eqref{phik00} satisfies estimate \eqref{decayphik0} and the hypotheses of Lemma \ref{oddeven} for $k=k_{0}.$
\par Therefore, using Lemma \ref{oddeven}, we can find a natural number $n_{0}\geq 0$ and real functions $Q_{j},\, P_{j},\,f_{even,j},\,f_{odd,j}$ such that all functions $g_{even,j}$ are even, all functions $g_{odd,j}$ are odd, and 
\begin{align}\label{eqfinal}
    \Lambda(\varphi_{k_{0},0})(t,x)=&\sum_{j=1}^{M_{k_{0}}}if_{odd,j}(v,t)Sym\left(\left(Q_{j}\right)_{k_{0}}\right)(t,x)+\sum_{j=1}^{N_{k_{0}}}f_{even,j}(v,t)Sym\left(\left(P_{j}\right)_{k_{0}}\right)(t,x)\\ \nonumber {+}&O\left(v^{20 k_{0}}\left(\vert t\vert v+\ln{\frac{1}{v}}\right)^{n_{0}}e^{{-}2\sqrt{\omega}\vert t\vert v}\right),
\end{align}
all the functions $P_{j}\left(\frac{\cdot}{\sqrt{\omega}}\right),\,Q_{j}\left(\frac{\cdot}{\sqrt{\omega}}\right)$ are in $\mathcal{S}^{+}_{\infty},$
and the term $O(v^{20 k_{0}})$ in the equation means a smooth function $r(t,x)$ satisfying
\begin{equation*}
\norm{\frac{\partial^{l}r(t,x)}{\partial t^{l}}}_{H^{s}_{x}(\mathbb{R})}\lesssim_{l,s}v^{20 k_{0}+l}\left(\vert t\vert v+\ln{\frac{1}{v}}\right)^{n_{0}}e^{{-}2\sqrt{\omega}\vert t\vert v}.
\end{equation*}
Moreover, using the decay estimate \eqref{decayphik0}, we can restrict to the case where all the functions $f_{odd,j}$ and $f_{even,j}$ satisfy
\begin{equation}\label{cordecay}
    \left\vert \frac{\partial^{l} f_{even,j}(v,t)}{\partial t^{l}} \right\vert+\left\vert \frac{\partial^{l} f_{odd,j}(v,t)}{\partial t^{l}} \right\vert \lesssim_{l} v^{2k_{0}+2+l}\left[ \vert t\vert v+\ln{\left(\frac{1}{v}\right)}\right]^{c(k_{0})}e^{{-}2\sqrt{\omega}\vert t\vert v}.
\end{equation}
\par Furthermore, using estimates \eqref{estd1}, \eqref{estd2}, \eqref{estd3} and \eqref{estd4} in Step $4,$ we can deduce from \eqref{eqfinal} using Lemma \ref{interactt} that
\begin{multline}
\norm{\frac{\partial^{l}}{\partial t^{l}}\left[\sum_{j=1}^{M_{k}}f_{odd,j}(v,t)\Pi\left(i Q_{j}\right)(x)+\sum_{j=1}^{N_{k}}f_{even,j}(v,t)\Pi\left(P_{j}\right)(x)\right]}_{H^{s}_{x}(\mathbb{R})}\\ \lesssim_{s} v^{2k_{0}+4+l}\left[ \vert t\vert v+\ln{\left(\frac{1}{v}\right)}\right]^{c_{1}(k_{0})}e^{{-}2\sqrt{\omega}\vert t\vert v},
\end{multline}
for all $l\in\mathbb{N}$ and $c_{1}(k_{0})>0$ is a constant depending only on $k_{0}.$
\par Next, using Lemma \ref{inverhol}, we can consider the following functions of correction
\begin{align*}
    Cor_{1}(t,x)\coloneq &S^{{-}1}_{\omega}\left[\sum_{j=1}^{M_{k}}f_{odd,j}(v,t)\Pi^{\perp}\left(i Q_{j}\right)(x)+\sum_{j=1}^{N_{k}}f_{even,j}(v,t)\Pi^{\perp}\left(P_{j}\right)(x)\right],\\
    Cor_{2}(t,x)\coloneq & S^{{-}1}_{\omega}\left[i S^{{-}1}_{\omega}\left[\sum_{j=1}^{M_{k}}\partial_{t}f_{odd,j}(v,t)\Pi^{\perp}\left(i Q_{j}\right)(x)+\sum_{j=1}^{N_{k}}\partial_{t}f_{even,j}(v,t)\Pi^{\perp}\left(P_{j}\right)(x)\right]\right]
\end{align*}
whose main motivation is to remove the expressions in the sums on the left-hand side of\eqref{eqfinal}, this approach was made similarly in Step $1$ for $k=1.$
Based on this observation, we consider the following function
\begin{equation}
\label{thephik0+1}
   \varphi_{k_{0}+1}(t,x)\coloneqq  \varphi_{k_{0},0}(t,x)+Sym\left[\tau_{\zeta_{k_{0}+1}}e^{i\alpha_{k_{0}+1}(t,\cdot)}\left(Cor_{1}(t,\cdot)+Cor_{2}(t,\cdot)\right)\right](x),
\end{equation}
which is a function of the same form as the right-hand side of the equation \eqref{phik}.
\par In conclusion, similarly to the approach made in Step $1,$ we can verify using Lemmas \ref{interactt}, \ref{invertexpfor}, Remark \ref{oddinve} and Taylor's Expansion Theorem that the function \eqref{thephik0+1} satisfies Theorem \ref{app lemma} for $k=k_{0}+1.$
\end{proof}

\section{Energy Estimate}\label{s}
\subsection{Dynamics of the Modulation Parameters}
\par First, to simplify our ansatz, we recall the notation \eqref{trk}
\begin{equation*}
    \rho_{k}(t,x)\coloneqq e^{i\left(\gamma_{k}(t)+\frac{v_{k}(t)}{2}\left[x-\frac{\zeta_{k}(t)}{2}\right]\right)}\rho(x-\zeta_{k}(t)).
\end{equation*}
\par In notation of Theorem \ref{app lemma}, we recall the function $\alpha_{k}$ denoted in \eqref{alpha} which is given by
\begin{equation*}
    \alpha_{k}(t,x)=\alpha(t,x,(\zeta_{k},v_{k},\gamma_{k},f_{\omega,k}))\coloneqq \gamma_{k}(t)+\frac{v_{k}(t)}{2}\left(x-\frac{\zeta_{k}(t)}{2}\right).
\end{equation*}
\par From now on, for each $k\in\mathbb{N}_{\geq 2},$ we are going to construct by an inductive argument the modulation parameters $\sigma_{u}=(\zeta_{k}+p_{\zeta},v_{k}+p_{v},\gamma_{k}+p_{\gamma},\omega+p_{\omega})$ such that for \begin{align}\label{dphigamma}
    \varphi_{\gamma,k}(t,x)=&ie^{i\alpha_{k}(t, x,\sigma_{u}(t))}\phi_{\omega}(x-(\zeta_{k}+p_{\zeta})),\\ \label{dphizeta}
    \varphi_{\zeta,k}(t,x)=&e^{i\alpha_{k}(t, x,\sigma_{u}(t))}\phi^{'}_{\omega}(x-(\zeta_{k}+p_{\zeta})),\\ \label{dphiomega}
    \varphi_{\omega,k}(t,x)=&e^{i\alpha_{k}(t,x,\sigma_{u}(t))}\partial_{\omega}\phi_{\omega}(x-(\zeta_{k}+p_{\zeta})),\\ \label{dphiv}
    \varphi_{v,k}(t,x)= & \frac{i}{2}e^{i\alpha_{k}(t, x,\sigma_{u}(t))}(x- (\zeta_{k}+p_{\zeta}))\phi_{\omega}(x-(\zeta_{k}+p_{\zeta})),
\end{align}
the solution $u(t,x)$ of \eqref{NLS3} with initial condition at $t={+}\infty$ given by \eqref{initialcondition} satisfies
\begin{align*}
    \left\langle u(t,x)-P_{k}(t,x,\sigma_{u}),\varphi_{l,k}(t,x)\right\rangle=0,\, \norm{u(t,x)-P_{k}(t,x,\sigma_{u})}_{H^{1}_{x}(\mathbb{R})}<v^{2},
\end{align*}
for any $t\in[{-}M_{v},M_{v}]$ such that $M_{v}>0$ is a large value having size of order $O\left(\frac{\left(\ln{\frac{1}{v}}\right)^{\frac{4}{3}}}{v}\right).$

Moreover, using that $\phi_{\omega}$ is an even smooth function and the dot product defined in \eqref{dotproduct}, it is not difficult to verify that
\begin{equation}\label{orthoderiv}
    \left\langle \varphi_{j,k},\varphi_{l,k} \right\rangle=\delta^{j}_{l},
\end{equation}
for any $j\neq l\in\{\zeta,v,\gamma,\omega\}.$
\par Furthermore, using the Implicit Function Theorem for Banach Spaces and the identities \eqref{dphigamma}, \eqref{dphizeta}, \eqref{dphiomega}, \eqref{dphiv}, we can verify the following proposition.
\begin{lemma}[Modulation Lemma]\label{Modulation}
 Let $\varphi_{k}(t,x)$ be the same as in Theorem \ref{app lemma}. There are constants $K,\,c>0$ and a number $\delta_{k}\in(0,1)$ such that if $0<v<\delta_{k}$ and $u(t,x)$ is an odd function on $x$ in $C\left([{-}T,T],H^{1}_{x}(\mathbb{R})\right)$ for any $T\in \left[0,\frac{\left(\ln{\frac{1}{v}}\right)^{\frac{4}{3}}}{v}\right]$ satisfying 
 \begin{equation}\label{upop}
   \norm{u(t)-\varphi_{k}(t)}_{L^{\infty}\left([{-}T,T],H^{1}_{x}(\mathbb{R})\right)}<v^{2},  
 \end{equation}
then there exist continuous functions $p_{\zeta},\,p_{v},\,p_{\gamma},\,p_{\omega}$ such that
for
\begin{equation*}
    \sigma_{u}(t)=\sigma_{k}(t)+(p_{\zeta},p_{v},p_{\gamma},p_{\omega}),
\end{equation*}
the function $P_{k}(t,x,\sigma_{u}(t))$ satisfies for any $t\in[{-}T,T]$
\begin{align}\label{orthoid1}
    \left\langle u(t,x)-P_{k}(t,x,\sigma_{u}(t)),i\varphi_{\gamma,k}(t,\pm x)\right\rangle=&0,\\ \label{orthoid2}\left\langle u(t,x)-P_{k}(t,x,\sigma_{u}(t)),i\varphi_{\zeta,k}(t,\pm x) \right\rangle=&0,\\ \label{orthoid3}
\left\langle u(t,x)-P_{k}(t,x,\sigma_{u}(t)), i\varphi_{\omega,k}(t,\pm x)\right\rangle=&0,\\  \label{orthoid4}\left\langle u(t,x)-P_{k}(t,x,\sigma_{u}(t)),i\varphi_{v,k}(t,\pm x)\right\rangle=&0,
\end{align}
and 
\begin{align*}
\left\vert p_{\zeta}(t) \right\vert+\left\vert p_{v}(t) \right\vert+\left\vert p_{\omega}(t) \right\vert+\left\vert p_{\gamma}(t) \right\vert\leq &K\left(\ln{\frac{1}{v}}\right)^{c}\norm{u(t)-P_{k}(t,x,\sigma_{u})}_{H^{1}_{x}(\mathbb{R})},\\
\norm{u(t)-P_{k}(t,x,\sigma_{u}(t))}_{H^{1}_{x}(\mathbb{R})}\leq &K \left(\ln{\frac{1}{v}}\right)^{c}\norm{u(t)-\varphi_{k}(t,x)}_{H^{1}_{x}(\mathbb{R})}.
\end{align*}
\end{lemma}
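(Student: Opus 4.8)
The plan is to realize the four identities \eqref{orthoid1}--\eqref{orthoid4} as the vanishing of an $\mathbb{R}^{4}$-valued functional and to solve for the corrections $(p_{\zeta},p_{v},p_{\gamma},p_{\omega})$ by the Implicit Function Theorem for Banach spaces. First I would observe that since $u$ is odd in $x$ and, by its $Sym$-structure in \eqref{phik}, $P_{k}(t,\cdot,\sigma)$ is also odd, the difference $u-P_{k}(t,\cdot,\sigma)$ is odd; hence $\langle u-P_{k},\,i\varphi_{j,k}(t,-\cdot)\rangle=-\langle u-P_{k},\,i\varphi_{j,k}(t,\cdot)\rangle$, so the eight conditions at $\pm x$ collapse to the four at $+x$. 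Thus it suffices to study the map
\begin{equation*}
G(p,u)=\Big(\langle u-P_{k}(t,\cdot,\sigma_{k}+p),\,i\varphi_{\gamma,k}\rangle,\,\langle u-P_{k},\,i\varphi_{\zeta,k}\rangle,\,\langle u-P_{k},\,i\varphi_{\omega,k}\rangle,\,\langle u-P_{k},\,i\varphi_{v,k}\rangle\Big),
\end{equation*}
where the directions $\varphi_{\gamma,k},\dots$ from \eqref{dphigamma}--\eqref{dphiv} are evaluated at $\sigma_{k}+p$. At the base point $p=0$, $u=\varphi_{k}=P_{k}(\sigma_{k})$ one has $G=0$, and I would apply the IFT there.

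Second, the crux is the invertibility of the partial differential $\partial_{p}G(0,\varphi_{k})$, whose $(l,j)$-entry is $-\langle \partial_{p_{j}}P_{k}(\sigma_{k}),\,i\varphi_{l,k}\rangle$ (the term differentiating the directions drops out because $u-P_{k}=0$ at the base point). Differentiating \eqref{phik} I would show $\partial_{p_{\gamma}}P_{k}=\varphi_{\gamma,k}$, $\partial_{p_{\omega}}P_{k}=\varphi_{\omega,k}$, $\partial_{p_{\zeta}}P_{k}=-\varphi_{\zeta,k}-\tfrac{v_{k}}{4}\varphi_{\gamma,k}$ and, decisively, $\partial_{p_{v}}P_{k}=\varphi_{v,k}+\tfrac{\zeta_{k}}{4}\varphi_{\gamma,k}$, all modulo the correction terms of \eqref{phik} which carry a prefactor $O(v^{2})$ by Theorem \ref{app lemma} and Remark \ref{remestimate}. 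Feeding these into the orthonormality relations \eqref{orthoderiv} and the elementary pairings $\langle \varphi_{j,k},i\varphi_{l,k}\rangle$ --- which, using $\int\phi_{\omega}\phi_{\omega}'=0$, $\int y\,\phi_{\omega}\phi_{\omega}'=-\tfrac12\norm{\phi_{\omega}}_{L^{2}}^{2}$ and $\int\phi_{\omega}\partial_{\omega}\phi_{\omega}=\tfrac12\frac{d}{d\omega}\norm{\phi_{\omega}}_{L^{2}}^{2}>0$ from \eqref{H4} --- produce a leading ``anti-diagonal'' matrix coupling $\gamma\leftrightarrow\omega$ and $\zeta\leftrightarrow v$ that is invertible uniformly in $v$. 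The only large correction is the entry of size $\sim\zeta_{k}$ produced by the $\tfrac{\zeta_{k}}{4}\varphi_{\gamma,k}$ piece of $\partial_{p_{v}}P_{k}$, sitting in the $(\omega,v)$ slot; the remaining errors are $O(v)$ from the $\tfrac{v_{k}}{4}$ terms and $O(v^{2})$ from the interaction between the two bumps (controlled by $e^{-2\sqrt{\omega}\zeta_{k}}=O(v^{2})$, Theorem \ref{app lemma}) and from the correction profiles $p_{j,\omega}$.

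Third, with the differential invertible I would invoke the quantitative IFT to obtain continuous $p_{\zeta},p_{v},p_{\gamma},p_{\omega}$ on $[-T,T]$ solving $G(p,u)=0$, valid on the whole ball \eqref{upop} once $v$ is small. Reading off $p=-[\partial_{p}G]^{-1}G(0,u)+O(|p|^{2})$ and using $\norm{G(0,u)}\lesssim\norm{u-\varphi_{k}}_{H^{1}}$: the three ``uncoupled'' components inherit $|p_{\zeta}|+|p_{v}|+|p_{\omega}|\lesssim\norm{u-\varphi_{k}}_{H^{1}}$, whereas inverting the $(\omega,v)$ coupling forces into $p_{\gamma}$ an extra factor $\zeta_{k}$, giving $|p_{\gamma}|\lesssim\zeta_{k}\norm{u-\varphi_{k}}_{H^{1}}$. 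Since by Remark \ref{remestimate} $\zeta_{k}(t)=d(t)+O(v^{2}(\ln\tfrac1v)^{c})$ and $d(t)\lesssim v|t|+\ln\tfrac1v\lesssim(\ln\tfrac1v)^{4/3}$ on $|t|\le\frac{(\ln 1/v)^{4/3}}{v}$, this yields the stated bound on $p_{\gamma}$ and, expanding $P_{k}(\sigma_{u})-\varphi_{k}=\sum_{j}p_{j}\partial_{p_{j}}P_{k}+O(|p|^{2})$ (where again the $\tfrac{\zeta_{k}}{4}\varphi_{\gamma,k}$ term dominates), the bound $\norm{u-P_{k}(\sigma_{u})}_{H^{1}}\lesssim(\ln\tfrac1v)^{4/3}\norm{u-\varphi_{k}}_{H^{1}}$.

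The main obstacle I expect is making the invertibility of $\partial_{p}G$ and the resulting bounds uniform over the long interval $|t|\le\frac{(\ln 1/v)^{4/3}}{v}$ while honestly tracking the $\zeta_{k}$-coupling: one must verify that the $O(\zeta_{k})$ off-diagonal entry is the \emph{only} source of growth, that all other corrections remain subdominant (in particular that the two-soliton interaction stays $O(v^{2})$ throughout, which relies on the lower bound $\zeta_{k}\ge\frac{1}{2\sqrt{\omega}}\ln(\tfrac{1}{v\omega^{1/4}})$), and that inverting the near-anti-diagonal matrix channels the amplification precisely into $p_{\gamma}$ and into the $H^{1}$-distance, rather than into $p_{\zeta},p_{v},p_{\omega}$. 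This quantitative bookkeeping, rather than the soft IFT argument, is the delicate part.
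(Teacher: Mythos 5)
Your proposal is correct and follows essentially the same route as the paper: the paper also invokes the Implicit Function Theorem for Banach spaces on the orthogonality functional built from \eqref{dphigamma}--\eqref{dphiv} (citing the analogous Lemma $8.1$ of \cite{asympt0}), and its accompanying remark attributes the $\left(\ln\frac{1}{v}\right)^{\frac{4}{3}}$ losses to exactly the mechanism you isolate, namely the identity $\partial_{p_{v}}P_{k}=\frac{1}{2}\varphi_{v,k}+i\frac{\zeta_{k}+p_{\zeta}}{4}P_{k}$ together with $\zeta_{k}(t)\lesssim\left(\ln\frac{1}{v}\right)^{\frac{4}{3}}$ on the relevant time interval. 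Your explicit tracking of the near-anti-diagonal matrix and of how the $O(\zeta_{k})$ entry feeds only into $p_{\gamma}$ and the $H^{1}$ distance is a correct, more detailed rendering of the same argument.
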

\begin{proof}[Proof of Lemma \ref{Modulation}.]
The proof is completely analogous to the proof of Lemma $8.1$ from \cite{asympt0}, see also \cite{modstability}.    
\end{proof}
\begin{remark}
    The terms with $\left(\ln{\frac{1}{v}}\right)^{\frac{4}{3}}$ in the right-hand side of the inequalities of Lemma \ref{Modulation} are obtained using the identity
    \begin{equation*}
        \partial_{p_{v}}P_{k}(t,x,\sigma_{u}(t)) =\frac{1}{2}\varphi_{v,k}(t,x)+i\frac{\zeta_{k}(t)+p_{\zeta}(t)}{4}P_{k}(t,x,\sigma_{u}(t)),
    \end{equation*}
 with \eqref{remestimate} and from the fact that 
 \begin{equation*}
     \left\vert P_{k}(t,x,\sigma_{k}(t))- P_{k}(t,x,\sigma_{u}(t))\right\vert=DP_{k}(t,x,\sigma_{k}(t))(p_{\zeta},p_{v},p_{\gamma},p_{\omega})+O\left(\left(\ln{\frac{1}{v}}\right)^{\sigma}\max_{j\in\{\zeta,v,\gamma,\omega\}}\vert P_{j}(t,r) \vert^{2}\right).
 \end{equation*}
\end{remark}
 In the notation of Lemma \ref{Modulation}, we consider from now on the following representation for a solution $u(t,x)$ of \eqref{NLS3} 
\begin{equation}\label{uformula}
u(t,x)= P_{k}(t,x,\sigma_{u}(t))+r(t,x)e^{i(\gamma_{k}(t)+p_{\gamma}(t))},
\end{equation}
such that $\sigma_{u}(t)$ and the function $r$ satisfy all the properties of Lemma \ref{Modulation}. Since $u$ is a strong solution of \eqref{NLS3}, we can verify the following lemma.
 \par Furthermore, using the formula \eqref{uformula} for the solution $u(t,x)$ of the nonlinear Schrödinger equation and Taylor's Expansion Theorem, we can verify from Taylor's Expansion Theorem, Lemma \ref{interactt} and estimates \eqref{popopopo} of Theorem \ref{app lemma} that
\begin{multline}\label{lambdar}
\left[i\partial_{t}r(t,x)+\partial^{2}_{x}r(t,x)-\omega r(t,x)\right]e^{i(\gamma_{k}(t)+p_{\gamma}(t))}\\
\begin{aligned}
    =&{-}F^{'}\left(\phi_{\omega}(x-(\zeta_{k}+p_{\zeta}))^{2}+\phi_{\omega}(x+(\zeta_{k}+p_{\zeta}))^{2}\right)r(t,x)e^{i(\gamma_{k}(t)+p_{\gamma}(t))}\\
&{-}F^{''}\left(\phi_{\omega}(x-(\zeta_{k}+p_{\zeta}))^{2}\right)\phi_{\omega}(x-(\zeta_{k}+p_{\zeta}))^{2}r(t,x)e^{i(\gamma_{k}(t)+p_{\gamma}(t))}\\
&{-}F^{''}\left(\phi_{\omega}(x+(\zeta_{k}+p_{\zeta}))^{2}\right)\phi_{\omega}(x+(\zeta_{k}+p_{\zeta}))^{2}r(t,x)e^{i(\gamma_{k}(t)+p_{\gamma}(t))}\\
&{-}F^{''}\left(\phi_{\omega}(x-(\zeta_{k}+p_{\zeta}))^{2}\right)\phi_{\omega}(x-(\zeta_{k}+p_{\zeta}))^{2}\overline{r(t,x)}e^{2i\alpha(t,x,\sigma_{u})-i(\gamma_{k}(t)+p_{\gamma}(t))}\\
&{-}F^{''}\left(\phi_{\omega}(x+(\zeta_{k}+p_{\zeta}))^{2}\right)\phi_{\omega}(x+(\zeta_{k}+p_{\zeta}))^{2}\overline{r(t,x)}e^{2i\alpha(t,{-}x,\sigma_{u})-i(\gamma_{k}(t)+p_{\gamma}(t))}
\\
&{-}\Lambda\left(P_{k}\right)(t,x,\sigma_{u})+O\left(\norm{r(t)}_{H^{1}_{x}(\mathbb{R})}^{2}+\norm{r(t)}_{H^{1}_{x}(\mathbb{R})}v^{2}\left(\ln{\frac{1}{v}}\right)^{c}+\norm{r(t)}_{H^{1}_{x}(\mathbb{R})}\vert \dot p_{\gamma}(t) \vert \right),
\end{aligned}
\end{multline}
for some constant $c>0$ not depending on $k.$ Moreover, from estimates \eqref{m}, \eqref{po2} and Lemmas \ref{represent1}, \ref{Modulation}, and 
%estimate \eqref{pophik} from Step $2$ of the proof of Theorem \ref{app lemma}, 
Theorem \ref{app lemma}, we can verify using Taylor's Expansion Theorem that
\begin{multline}\label{lambdapk}
  \begin{aligned}
    \Lambda\left(P_{k}\right)(t,x,\sigma_{u})=& Mod_{k,0}(t,x)\\
    &{+}4 a_{{+}\infty}p_{\zeta}(t)\sqrt{\omega} e^{{-}2\sqrt{\omega}d(t)} Sym\left[e^{i\alpha(t,\cdot,\sigma_{u}(t))}\tau_{\zeta_{k}(t)+p_{\zeta}(t)}F^{''}(\phi_{\omega}(\cdot)^{2})\phi_{\omega}(\cdot)^{2}e^{{-}\sqrt{\omega}(\cdot)}\right](x)\\  &{+}2a_{{+}\infty}p_{\zeta}(t)\sqrt{\omega}e^{{-}2\sqrt{\omega}d(t)}Sym\Big[e^{{-}i(v_{k}+p_{v})(\cdot)}e^{i\alpha(t,\cdot,\sigma_{u})}\tau_{\zeta_{k}(t)+p_{\zeta}(t)}F^{'}(\phi_{\omega}(\cdot)^{2})e^{{-}\sqrt{\omega}(\cdot)}\Big](x)
       \\&{+}\sum_{j\in J_{k}} r_{j}(v,t)Sym\left(e^{\alpha(t,\cdot,\sigma_{u}(t))}\tau_{\zeta_{k}(t)+p_{\zeta}(t)}\rho_{j}(\cdot)\right)(x)\\&{+}O\left(e^{{-}20k\sqrt{\omega}\zeta_{k}}+\norm{r(t)}_{H^{1}}v^{2}\left(\ln{\frac{1}{v}}\right)^{c}\right),\end{aligned}
\end{multline}
such that all functions $\rho_{j}$ are in $\mathcal{S}^{+}_{\infty},$
 all functions $r_{j}$ satisfy
\begin{equation}\label{rjjjjj}
\left\vert\frac{\partial^{l}}{\partial t^{l}}r_{j}(v,t)\right\vert\lesssim_{l} v^{2k+2+l}\left(\vert t\vert v+\ln{\frac{1}{v}}\right)^{c(k)}e^{{-}2\sqrt{\omega}\vert t\vert v},
\end{equation}
for all $l\in\mathbb{N},$ and $Mod_{k,0}(t,x)$ is given by
\begin{multline}\label{Modestim}
   \begin{aligned} 
    Mod_{k,0}(t,x)=&i\dot p_{v}(t) \partial_{v}P_{k}(t,x,\sigma_{u})+i(\dot p_{\zeta}(t) -p_{v}(t))\partial_{\zeta}P_{k}(t,x,\sigma_{u})\\&{+}i\dot p_{\gamma}(t)\partial_{\gamma}P_{k}(t,x,\sigma_{u})+i\dot p_{\omega}(t)Sym\left(e^{i\alpha(t,\cdot,\sigma_{u})}\tau_{\zeta_{k}+p_{\zeta}}\partial_{\omega}\phi_{\omega}(t,\cdot)\right)(x)\\
    &{-}p_{\omega}(t)Sym\left(e^{i\alpha(t,\cdot,\sigma_{u})}\tau_{\zeta_{k}(t)+p_{\zeta}(t)}S_{\omega}\left(\partial_{\omega}\phi_{\omega}\right)(\cdot)\right).
\end{aligned}
\end{multline}

\par Moreover, to simplify our notation, we consider the following self-adjoint operator on the Hilbert Space $L^{2}(\mathbb{R},\mathbb{C})$ for the dot product \eqref{dotproduct}
\begin{multline}\label{linearr}
  \begin{aligned}  
    \mathcal{L}(\rho)(t,x)=&{-}\partial^{2}_{x}\rho(t,x)+\omega \rho(t,x)\\
    &{-}\left[F^{'}\left(\phi_{\omega}(x-(\zeta_{k}+p_{\zeta}))^{2}\right)+F^{'}\left(\phi_{\omega}(x+(\zeta_{k}+p_{\zeta}))^{2}\right)\right]\rho(t,x)\\
&{-}F^{''}\left(\phi_{\omega}(x-(\zeta_{k}+p_{\zeta}))^{2}\right)\phi_{\omega}(x-(\zeta_{k}+p_{\zeta}))^{2}\rho(t,x)\\
&{-}F^{''}\left(\phi_{\omega}(x+(\zeta_{k}+p_{\zeta}))^{2}\right)\phi_{\omega}(x+(\zeta_{k}+p_{\zeta}))^{2}\rho(t,x)\\
&{-}F^{''}\left(\phi_{\omega}(x-(\zeta_{k}+p_{\zeta}))^{2}\right)\phi_{\omega}(x-(\zeta_{k}+p_{\zeta}))^{2}\overline{\rho(t,x)}e^{2i\alpha(t,x,\sigma_{u})}\\
&{-}F^{''}\left(\phi_{\omega}(x+(\zeta_{k}+p_{\zeta}))^{2}\right)\phi_{\omega}(x+(\zeta_{k}+p_{\zeta}))^{2}\overline{\rho(t,x)}e^{2i\alpha(t,{-}x,\sigma_{u})}.
\end{aligned} 
\end{multline}
\par Consequently, we have the following estimate 
\begin{multline}\label{eqfinal1}
     ie^{i(\gamma_{k}(t)+p_{\gamma}(t))}\partial_{t}r(t,x)-\mathcal{L}\left(e^{i(\gamma_{k}+p_{\gamma})}r(t,x)\right)
    \\
    \begin{aligned}
    =&{-}Mod_{k,0}(t,x)\\&{-}4 a_{{+}\infty}p_{\zeta}(t)\sqrt{\omega} e^{{-}2\sqrt{\omega}d(t)}Sym\left[e^{i\alpha(t,\cdot,\sigma_{u}(t))}\tau_{\zeta_{k}(t)+p_{\zeta}(t)}F^{''}(\phi_{\omega}(\cdot)^{2})\phi_{\omega}(\cdot)^{2}e^{{-}\sqrt{\omega}(\cdot)}\right](x)\\  &{-}2a_{{+}\infty}p_{\zeta}(t)\sqrt{\omega}e^{{-}2\sqrt{\omega}d(t)}Sym\left[e^{{-}i(v_{k}+p_{v})(\cdot)}e^{i\alpha(t,\cdot,\sigma_{u})}\tau_{\zeta_{k}(t)+p_{\zeta}(t)}F^{'}(\phi_{\omega}(\cdot)^{2})e^{{-}\sqrt{\omega}(\cdot)}\right](x)
       \\&{+}O\left(\norm{r(t)}_{H^{1}_{x}(\mathbb{R})}^{2}+\norm{r(t)}_{H^{1}_{x}(\mathbb{R})}v^{2}\left(\ln{\frac{1}{v}}\right)^{c}+\norm{r(t)}_{H^{1}_{x}(\mathbb{R})}\vert \dot p_{\gamma}(t) \vert+v^{2k+2}\left(\vert t\vert v+\ln{\frac{1}{v}}\right)^{c(k)}e^{{-}2\sqrt{\omega}v\vert t\vert }\right).
\end{aligned}
\end{multline}

\par Therefore, from Taylor's Expansion Theorem and Lemma \ref{ProP}, we deduce using identities \eqref{somega1}, \eqref{Modestim} that $ ie^{i(\gamma_{k}(t)+p_{\gamma}(t))}\partial_{t}r(t,x)-\mathcal{L}\left(e^{i(\gamma_{k}(t)+p_{\gamma}(t))}r(t,x)\right)$ is equal to %put thr estimate of Pi_{1} of F^{'} e^{{-}\sqrt{\omega}} before what is written below ok
\begin{multline}\label{derivativenergy}
   \begin{aligned} 
    &{-}\left(i\dot p_{\gamma}(t)+\frac{\zeta_{k}(t)}{4}i\dot p_{v}(t)\right)Sym\left(\varphi_{\gamma,k}(t,\cdot)\right)(x)-i\dot p_{v}(t)Sym\left(\varphi_{v,k}(t,\cdot)\right)(x)\\&{+}i(\dot p_{\zeta}(t)-p_{v}(t))\left(\varphi_{\zeta,k}(t,\cdot)\right)(x)-i\dot p_{\omega}(t)Sym\left(\varphi_{\omega,k}(t,\cdot)\right)(x)+ip_{\omega}(t)Sym\left(\varphi_{\gamma,k}(t,\cdot)\right)(x)\\&{-}4 a_{{+}\infty}p_{\zeta}(t)\sqrt{\omega} e^{{-}2\sqrt{\omega}d(t)}Sym\left[e^{i\alpha(t,\cdot,\sigma_{u}(t))}\tau_{\zeta_{k}(t)+p_{\zeta}(t)}\left[F^{''}(\phi_{\omega}(\cdot)^{2})\phi_{\omega}(\cdot)^{2}e^{{-}\sqrt{\omega}(\cdot)}\right]\right](x)\\ &{-}2a_{{+}\infty}p_{\zeta}(t)\sqrt{\omega}e^{{-}2\sqrt{\omega}d(t)}Sym\left[e^{i\alpha(t,\cdot,\sigma_{u}(t))}\tau_{\zeta_{k}(t)+p_{\zeta}(t)}\left[F^{'}(\phi_{\omega}(\cdot)^{2})e^{{-}\sqrt{\omega}(\cdot)}\right]\right](x)\\
    \\&{+}2a_{{+}\infty}p_{\zeta}(t)\sqrt{\omega}e^{{-}2\sqrt{\omega}d(t)}Sym\left[e^{i\alpha(t,\cdot,\sigma_{u}(t))}\tau_{\zeta_{k}(t)+p_{\zeta}(t)}\left[(i(v_{k}(t)+p_{v}(t))(\cdot))F^{'}(\phi_{\omega}(\cdot)^{2})e^{{-}\sqrt{\omega}(\cdot)}\right]\right](x)\\
&{+}2a_{{+}\infty}p_{\zeta}(t)\sqrt{\omega}e^{{-}2\sqrt{\omega}d(t)}v_{k}(t)\zeta_{k}(t)Sym\left[e^{i\alpha(t,\cdot,\sigma_{u}(t))}\tau_{\zeta_{k}(t)+p_{\zeta}(t)}\left[iF^{'}(\phi_{\omega}(\cdot)^{2})e^{{-}\sqrt{\omega}(\cdot)}\right]\right](x)
\\&{+}O\left(\norm{r(t)}_{H^{1}_{x}(\mathbb{R})}^{2}+\norm{r(t)}_{H^{1}_{x}(\mathbb{R})}v^{2}\left(\ln{\frac{1}{v}}\right)^{c}+\norm{r(t)}_{H^{1}_{x}(\mathbb{R})}\vert \dot p_{\gamma}(t) \vert+v^{2k+2}\left(\vert t\vert v+\ln{\frac{1}{v}}\right)^{c(k)}e^{{-}2\sqrt{\omega}v\vert t\vert }\right).
\end{aligned}
\end{multline}
Next, we are going to use the expression \eqref{eqfinal1} and Lemma \ref{Modulation} to obtain high precision in the estimates of the derivatives of the modulation parameters $p_{\zeta},\,p_{v},\,p_{\gamma}$ and $p_{\omega}.$

\begin{lemma}\label{dynamicsmod}
 Let $u,\,v\in (0,1)$ and $p_{\zeta},\,p_{v},\,p_{\gamma},\,p_{\omega},\,T$ be the same as in the statement of Lemma \ref{Modulation}, and $r$ be the unique function satisfying \eqref{uformula}. There exist a positive constants $K,\,c_{0}$ and values $C_{1,\omega}>0,\,C_{2,\omega},\,C_{3,\omega}$ depending on $\omega>0$ such that if $\norm{r(t)}_{H^{1}}\leq v^{2}$ and $\vert t\vert \leq T$ then, for
 \begin{equation*}
    D_{k}(t)=\left(\norm{r(t)}_{H^{1}_{x}(\mathbb{R})}v^{2}\left(\ln{\frac{1}{v}}\right)^{c_{0}}+v^{2k+2}\left(\ln{\frac{1}{v}}\right)^{c(k)}\right)
 \end{equation*}
 and any $0<v<\delta_{k},$ we have
 \begin{align} \label{modd1}
     \left\vert\dot p_{v}(t)+C_{1,\omega} p_{\zeta}(t)e^{{-}2\sqrt{\omega}d(t)}\right\vert\leq& K D_{k}(t),\\
     \left\vert \dot p_{\zeta}(t)-p_{v}(t) \right\vert\leq & K D_{k}(t),\\
     \left\vert \dot p_{\gamma}(t)-p_{\omega}(t)-C_{2,\omega}p_{\zeta}(t)e^{{-}2\sqrt{\omega}d(t)}-C_{3,\omega} d(t)p_{\zeta}(t)e^{{-}\sqrt{2}d(t)} \right\vert\leq & KD_{k}(t),\\
     \left\vert \dot p_{\omega}(t) \right\vert\leq & K D_{k}(t).
 \end{align}
The constants $c_{0},\,K$ do not depend on $k,$ while $c(k)$ depends only on $k.$
\end{lemma}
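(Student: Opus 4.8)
The plan is to obtain the four estimates of the lemma by testing the identity \eqref{derivativenergy} for $ie^{i(\gamma_k+p_\gamma)}\partial_t r-\mathcal L(e^{i(\gamma_k+p_\gamma)}r)$ against the symmetrized profiles $Sym\,\varphi_{\zeta,k}$, $Sym\,\varphi_{v,k}$, $Sym\,\varphi_{\omega,k}$ and $Sym\,\varphi_{\gamma,k}$, exactly as the four projections \eqref{estd1}--\eqref{estd4} were used in Step $4$ of the proof of Theorem \ref{app lemma}. Since the dot product \eqref{dotproduct} is the real $L^2$ pairing, the factor $i$ in the modulation terms $i\dot p_j\,Sym\,\varphi_{j,k}$ of \eqref{derivativenergy} converts the relevant pairings into the symplectic ones, and by \eqref{orthoderiv} the only surviving cross pairings couple $\zeta$ with $v$ and $\gamma$ with $\omega$. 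Consequently, testing against $\varphi_{\zeta,k}$ isolates $\dot p_v$ and yields the velocity equation; testing against $\varphi_{v,k}$ isolates the coefficient of the $\phi'_\omega$--transport direction, which by the structure of $\Lambda$ on a boosted soliton recorded in \eqref{lambdasol} is the translation--velocity mismatch $(\dot\zeta_k+\dot p_\zeta)-(v_k+p_v)$, whose background part $\dot\zeta_k-v_k$ is absorbed by \eqref{popopopo}, leaving $\dot p_\zeta-p_v$; testing against $\varphi_{\gamma,k}$ isolates $\dot p_\omega$; and testing against $\varphi_{\omega,k}$ isolates $\dot p_\gamma+p_\omega$. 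The interaction terms of \eqref{derivativenergy} carrying the factor $p_\zeta(t)e^{-2\sqrt\omega d(t)}$, paired against the projected profiles $\Pi_1[\cdots]$, contribute precisely $C_{1,\omega}$, $C_{2,\omega}$, $C_{3,\omega}$, which are the same definite integrals of $F'(\phi_\omega^2)$ and $F''(\phi_\omega^2)\phi_\omega^2$ against $\phi'_\omega e^{-\sqrt\omega x}$ and $\phi_\omega e^{-\sqrt\omega x}$ that produced the constant $C$ in \eqref{d edo}; in particular the $d(t)p_\zeta$ term in the phase equation arises from the coupling $\tfrac{\zeta_k}{4}\dot p_v$ visible in \eqref{derivativenergy}, after substituting the velocity equation and using $\zeta_k\approx d(t)$.

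Next I would dispose of the left-hand side $\langle ie^{i(\gamma_k+p_\gamma)}\partial_t r,\,Sym\,\varphi_{j,k}\rangle$. Differentiating the orthogonality identities \eqref{orthoid1}--\eqref{orthoid4} in time gives $\langle\partial_t(e^{i(\gamma_k+p_\gamma)}r),\,i\,Sym\,\varphi_{j,k}\rangle=-\langle e^{i(\gamma_k+p_\gamma)}r,\,i\,\partial_t(Sym\,\varphi_{j,k})\rangle$, and since $\partial_t\varphi_{j,k}$ is again a single soliton profile modulated by the slow rates $\dot\zeta_k+\dot p_\zeta,\ \dot v_k+\dot p_v,\ \dot\gamma_k+\dot p_\gamma,\ \dot p_\omega$, Lemma \ref{Modulation} together with the bounds \eqref{popopopo} control every such inner product by $\norm{r(t)}_{H^1_x(\mathbb R)}$ times a factor of size $v$ or one of $\dot p_\zeta,\dot p_v,\dot p_\gamma,\dot p_\omega$, which is absorbed into $D_k(t)$. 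The term $\langle\mathcal L(e^{i(\gamma_k+p_\gamma)}r),\,Sym\,\varphi_{j,k}\rangle$ is treated by the self-adjointness of $\mathcal L$ from \eqref{linearr}: moving $\mathcal L$ onto the test profile and using that, localized near each soliton, $\mathcal L$ agrees with the boosted, translated and phase-rotated operator $S_\omega$ up to interaction tails of size $O(e^{-2\sqrt\omega d})$, the relations $\phi'_\omega,\,i\phi_\omega\in\ker S_\omega$ from Lemma \ref{kersw} and the identities \eqref{somega1}--\eqref{somega2} make these pairings either vanish to leading order or reproduce the already isolated modulation terms, while the interaction tail contributes a quantity of order $\norm{r(t)}_{H^1_x(\mathbb R)}e^{-2\sqrt\omega d}\lesssim D_k(t)$.

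The main obstacle is the bookkeeping required to show that every off-diagonal, interaction and quadratic contribution indeed collapses into $K D_k(t)$. The two points I expect to be most delicate are, first, the mismatch between $\mathcal L$ and the exact operator $S_\omega$ on the genuine two-soliton configuration, which generates tails of size $e^{-2\sqrt\omega d(t)}$ weighted by polynomials in $d(t)$ and in the small velocities, and second, the quadratic remainder $O(\norm{r(t)}_{H^1_x(\mathbb R)}^2)$ already present on the right of \eqref{derivativenergy}. Both must be shown to lie below $KD_k(t)=K\big(\norm{r(t)}_{H^1_x(\mathbb R)}v^2(\ln\tfrac1v)^c+\norm{r(t)}_{H^1_x(\mathbb R)}^2+v^{2k+2}(\ln\tfrac1v)^{c(k)}\big)$, which follows from Lemma \ref{interactt}, the pointwise bounds \eqref{popopopo} and \eqref{rjjjjj}, and the a priori smallness \eqref{upop} through Lemma \ref{Modulation}. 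Assembling the four projections then gives the stated system, the sign $C_{1,\omega}>0$ being inherited from the definite integral in \eqref{d edo}; this mirrors the derivation of the parameter system in Theorem $4.1$ of \cite{second} and the modulation computation of \cite{asympt0}.
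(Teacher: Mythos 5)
Your proposal is correct and follows essentially the same route as the paper's proof: the paper likewise pairs the expression \eqref{derivativenergy} against the four profiles $\varphi_{j,k}$, uses the time-differentiated orthogonality conditions (its identity \eqref{ultimate}) together with the approximate linearized equations \eqref{id1} satisfied by the profiles (self-adjointness of $\mathcal{L}$ combined with the $S_{\omega}$-identities \eqref{somega1}--\eqref{somega2}), and absorbs the interaction tails and quadratic remainders into $D_{k}(t)$ exactly as you describe. The only step you leave implicit that the paper records explicitly is the preliminary $C^{1}$ regularity of the parameters $p_{\zeta},\,p_{v},\,p_{\gamma},\,p_{\omega}$, which it cites as a standard argument.
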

\begin{remark}\label{formrrem}
Furthermore, since the expression $ ie^{i(\gamma_{k}+p_{\gamma})}\partial_{t}r(t,x)-\mathcal{L}\left(e^{i(\gamma_{k}+p_{\gamma})}r(t,x)\right)$ is equal to \eqref{derivativenergy},we can verify that the real numbers $C_{1,\omega}>0,\,C_{2,\omega},\,C_{3,\omega}$ denoted in the statement of Lemma \ref{dynamicsmod} satisfy
\begin{multline*}
   ie^{i(\gamma_{k}+p_{\gamma})}\partial_{t}r(t,x)-\mathcal{L}\left(e^{i(\gamma_{k}+p_{\gamma})}r(t,x)\right)\\
    \begin{aligned}
    =&{-}\left[\dot p_{v}(t)+C_{1,\omega}p_{\zeta}(t)e^{{-}2\sqrt{\omega}d(t)}\right]iSym\left(\varphi_{v,k}(t,\cdot)\right)(x)\\
    &{+}\left[\dot p_{\zeta}(t)-p_{v}(t)\right]iSym\left(\varphi_{\zeta,k}(t,\cdot)\right)(x)\\
    &{-}\left[\dot p_{\gamma}(t)-p_{\omega}(t)-C_{2,\omega}p_{\zeta}(t)e^{{-}2\sqrt{\omega}d(t)}-C_{3,\omega} d(t)p_{\zeta}(t)e^{{-}\sqrt{2}d(t)} \right]iSym\left(\varphi_{\gamma,k}(t,\cdot)\right)(x)\\
    &{-}\dot p_{\omega}(t)iSym\left(\varphi_{\omega,k}(t,\cdot)\right)(x)+O(D_{k}(t)).
    \end{aligned}
\end{multline*}
The expression inside $O$ means a function $\rho(t,x)$ satisfying $\norm{\rho(t,x)}_{H^{1}_{x}(\mathbb{R})}\leq K D_{k}(t),$ for some $K>0.$ 
This estimate will be useful in the energy estimate of the norm of the remainder of the approximate solution in the next subsection. 
\end{remark}
\begin{proof}[Proof of Lemma \ref{dynamicsmod}]
The proof that the parameters $p_{\zeta},\,p_{v},\,p_{\gamma},\,p_{\omega}$ are of class $C^{1}$ is similar to the proof of Theorem $11$ from \cite{first}, this follows from the time derivative of the equations \eqref{orthoid1}, \eqref{orthoid2}, \eqref{orthoid3}, \eqref{orthoid4} plus the fact that $u$ is a strong solution of \eqref{NLS3} satisfying \eqref{upop}.

\par Next, from Lemma \ref{invertexpfor} and Remark \ref{remestimate},
we can verify using identities \eqref{somega1}, \eqref{somega2} and Lemma \ref{interactt} the existence of a $c>0$ satisfying 
\begin{align}\label{id1}
    i\partial_{t}\varphi_{\gamma,k}(t,x)-\mathcal{L}\left(\varphi_{\gamma,k}(t,x)\right)+\omega \varphi_{\gamma,k}(t,x) =&O\left(v^{2}\left(\ln{\frac{1}{v}}\right)^{c}+\max_{ j}\vert \dot p_{ j}(t)\vert\right),\\
    i\partial_{t}\varphi_{\zeta,k}(t,x)-\mathcal{L}\left(\varphi_{\zeta,k}(t,x)\right)+\omega \varphi_{\zeta,k}(t,x) =&O\left(v^{2}\left(\ln{\frac{1}{v}}\right)^{c}+\max_{ j}\vert \dot p_{ j}(t)\vert\right),\\
    i\partial_{t}\varphi_{\omega,k}(t,x)-\mathcal{L}\left(\varphi_{\omega,k}(t,x)\right)+\omega \varphi_{\omega,k}(t,x) =&{-}i\varphi_{\gamma,k}(t,x)+O\left(v^{2}\left(\ln{\frac{1}{v}}\right)^{c}+\max_{ j}\vert \dot p_{ j}(t)\vert\right),\\
    i\partial_{t}\varphi_{v,k}(t,x)-\mathcal{L}\left(\varphi_{v,k}(t,x)\right)+\omega \varphi_{v,k}(t,x) =&i\varphi_{\zeta,k}(t,x)+O\left(v^{2}\left(\ln{\frac{1}{v}}\right)^{c}+\max_{ j}\vert \dot p_{ j}(t)\vert\right),
\end{align}
for all $v>0$ sufficiently small.
\par Consequently, using integration by parts, Remark \ref{remestimate}, and the modulation equations \eqref{orthoid1}, \eqref{orthoid2}, we obtain the following estimate
\begin{multline}\label{ultimate}
   \begin{aligned}
   \frac{d}{dt}\left\langle e^{i(\gamma_{k}+p_{\gamma})}r(t), i\varphi_{j,k}(t,x)\right\rangle=&\left\langle {-}ie^{i(\gamma_{k}+p_{\gamma})}\partial_{t}r(t)+\mathcal{L}\left(e^{i(\gamma_{k}+p_{\gamma})}r(t)\right), \varphi_{j,k}(t,x)\right\rangle \\&{+}O\left(\norm{r(t)}_{H^{1}_{x}(\mathbb{R})}v^{2}\left(\ln{\frac{1}{v}}\right)^{c}+\norm{r(t)}_{H^{1}_{x}(\mathbb{R})}\max_{ j}\vert \dot p_{ j}(t)\vert\right),
\end{aligned}
\end{multline}
for any $j\in\{\zeta,v,\gamma,\omega\}.$
\par Next, we recall the following estimate obtained from integration by parts 
\begin{multline}
\int_{\mathbb{R}} 4\sqrt{\omega}F^{''}(\phi_{\omega}(x)^{2})\phi_{\omega}(x)^{2}\phi^{'}_{\omega}(x)e^{{-}\sqrt{\omega}x}\,dx+2\sqrt{\omega}F^{'}(\phi_{\omega}(x)^{2})\phi^{'}_{\omega}(x)e^{{-}\sqrt{\omega}x}\,dx\\=2\omega\int_{\mathbb{R}} F^{'}(\phi_{\omega}(x)^{2})\phi_{\omega}(x)e^{{-}\sqrt{\omega} x}\,dx=C_{\omega}>0.
\end{multline}
Consequently, using the identity above, the estimate \eqref{derivativenergy}, and \eqref{ultimate} for $j=\zeta$ with Lemma \ref{interactt}, we deduce the existence of positive constants $C_{1,\omega}$ and $c$ satisfying
\begin{equation}\label{modode1}
    \dot p_{v}(t)+C_{1,\omega}p_{\zeta}(t)e^{{-}2\sqrt{\omega}d(t)}=O\left(v^{2}\left(\ln{\frac{1}{v}}\right)^{c}\max_{j}\vert \dot p_{j}(t) \vert +v^{4}\left(\ln{\frac{1}{v}}\right)^{c}\vert p_{\zeta}(t) \vert+D_{k}(t)\right).
\end{equation}
\par Similarly, we can verify using the estimate \eqref{derivativenergy} or using the estimate \ref{formrrem} of $ ie^{i(\gamma_{k}+p_{\gamma})}\partial_{t}r(t,x)-\mathcal{L}\left(e^{i(\gamma_{k}+p_{\gamma})}r(t,x)\right)$ from Remark \ref{formrrem}, and the identity \eqref{ultimate} for $j=v$ with Lemma \ref{interactt} that
\begin{equation}\label{modode2}
    \dot p_{\zeta}(t)-p_{v}(t)=O\left(v^{2}\left(\ln{\frac{1}{v}}\right)^{c}\max_{j}\vert \dot p_{j}(t) \vert +v^{3}\left(\ln{\frac{1}{v}}\right)^{c}\vert p_{\zeta}(t) \vert+D_{k}(t)\right),
\end{equation}
and the term of order $v^{3}\left(\ln{\frac{1}{v}}\right)^{c}\vert p_{\zeta}(t) \vert$ in the estimate above was obtained using the elementary inequality
\begin{equation}\label{eiest}
\Ree\int_{\mathbb{R}}e^{i(v_{k}+p_{v})x}ih(x)\,dx=O\left(v+\norm{r(t)}_{H^{1}_{x}(\mathbb{R})}\left(\ln{\frac{1}{v}}\right)^{c} \right) \text{, for any real-valued Schwartz function $h,$}
\end{equation}
which follows from Lemma \ref{Modulation} and the following estimate
\begin{align*}
\Ree\int_{\mathbb{R}}e^{i(v_{k}+p_{v})x}ih(x)\,dx=&\Ree\int_{\mathbb{R}}ih(x)\,dx-\Ree\int_{\mathbb{R}}(v_{k}+p_{v})xh(x)\,dx+O\left(\left[v_{k}^{2}+p_{v}^{2}\right]\norm{h(x)x^{2}}_{L^{1}_{x}(\mathbb{R})}\right)\\
=&{-}(v_{k}+p_{v})\Ree\int_{\mathbb{R}}xh(x)\,dx+O\left(\left[v_{k}^{2}+p_{v}^{2}\right]\norm{h(x)x^{2}}_{L^{1}_{x}(\mathbb{R})}\right),
\end{align*}
obtained from Taylor's Expansion Theorem.
\par Furthermore, using estimate \eqref{derivativenergy} of $ie^{i(\gamma_{k}+p_{\gamma})}\partial_{t}r(t,x)-\mathcal{L}\left(e^{i(\gamma_{k}+p_{\gamma})}r(t,x)\right),$ equation \eqref{ultimate} for $j=\omega$ and Lemma \ref{interactt}, we obtain the existence of a constant $c>0$ satisfying
\begin{multline}\label{modode3}
    \dot p_{\gamma}(t)-p_{\omega}(t)-C_{2,\omega}p_{\zeta}(t)e^{{-}2\sqrt{\omega}d(t)}-C_{3,\omega} d(t)p_{\zeta}(t)e^{{-}\sqrt{2}d(t)}\\
    =O\left(v^{2}\left(\ln{\frac{1}{v}}\right)^{c}\max_{j}\vert \dot p_{j}(t) \vert +v^{4}\left(\ln{\frac{1}{v}}\right)^{c}\vert p_{\zeta}(t) \vert+D_{k}(t)\right).
\end{multline}
\par Finally, by similar reasoning to the estimate \eqref{modode1}, using \eqref{ultimate} for $j=\gamma$ and estimate \eqref{derivativenergy} of $ ie^{i(\gamma_{k}+p_{\gamma})}\partial_{t}r(t,x)-\mathcal{L}\left(e^{i(\gamma_{k}+p_{\gamma})}r(t,x)\right),$ we can verify the existence of a constant $c>0$ satisfying
\begin{equation}\label{modode4}
    \dot p_{\omega}(t)=O\left(v^{2}\left(\ln{\frac{1}{v}}\right)^{c}\max_{j}\vert \dot p_{j}(t) \vert +v^{3}\left(\ln{\frac{1}{v}}\right)^{c}\vert p_{\zeta}(t) \vert+D_{k}(t)\right).
\end{equation}
\par In conclusion, the result of Lemma \ref{dynamicsmod} follow from estimates \eqref{modode1}, \eqref{modode2}, \eqref{modode3}, \eqref{modode4} and Lemma \ref{Modulation}.
\end{proof}

\subsection{Energy Estimate Functional}
\par The main result of this subsection is the following theorem.
\begin{theorem}\label{energyestimatetheor}
 There exist $C>1,$ for any $l\in\mathbb{N},\,k_{0}\in\mathbb{N}$ there exist $C_{l}>1,$ and for any $k\in\mathbb{N}_{\geq k_{0}},$ there exist $c(k),\,c(k,l)>0$ and $\delta_{k,l}\in(0,1)$ such that if $0<v<\delta_{k,l}$ and for $  \frac{30 k\ln{\frac{1}{v}}}{\sqrt{\omega}v}<T_{0,k}<\frac{\left(\ln{\frac{1}{v}}\right)^{\frac{5}{4}}}{v}$ 
 \begin{equation}\label{hypoo}
     \norm{(1+ x^{2l})^{\frac{1}{2}}\left[u(T_{0,k})-P_{k}(T_{0,k},x,\sigma_{k}(T_{0,k}))\right]}_{H^{2l}}<v^{10k},
 \end{equation}
then the unique solution of 
\begin{equation*}
     iu_{t}+u_{xx} +F^{'}(\vert u\vert^{2})u=0
\end{equation*}
satisfies
\begin{equation}\label{theenergyestimate}
    \norm{u(t)-P_{k}(t,x,\sigma_{k}(t))}_{H^{1}}\leq C v^{2k}\left(\ln{\frac{1}{v}}\right)^{c(k)+1}\exp\left(\frac{Cv\left\vert t-T_{0,k} \right\vert}{\ln{\frac{1}{v}}}\right),
\end{equation}
\begin{equation}\label{rrr}
    v^{2l}\norm{x^{l}\left[u(t)-P_{k}(t,x,\sigma_{k}(t))\right]}_{H^{1}}\leq C_{l} v^{2k}\left(\ln{\frac{1}{v}}\right)^{c(k,l)}\exp\left(\frac{C_{l}v\left\vert t-T_{0,k} \right\vert}{\ln{\frac{1}{v}}}\right),
\end{equation}
for all $t$ satisfying $\left\vert t -T_{0,k}\right\vert\leq \frac{\left(\ln{\frac{1}{v}}\right)^{\frac{4}{3}}}{v}.$
\end{theorem}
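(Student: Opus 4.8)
The plan is to run a bootstrap argument built around a coercive, corrected energy functional for the remainder, closing it with a Gronwall inequality whose growth rate is only $\frac{v}{\ln\frac1v}$, so that over the window $|t-T_{0,k}|\le \frac{(\ln\frac1v)^{4/3}}{v}$ the accumulated factor stays subpolynomial in $\frac1v$. First I would invoke the Modulation Lemma \ref{Modulation}: as long as $u$ remains within $v^2$ of $\varphi_k$ in $H^1$, write $u=P_k(t,x,\sigma_u(t))+e^{i(\gamma_k+p_\gamma)}r$ with the four orthogonality conditions \eqref{orthoid1}--\eqref{orthoid4} and with the modulation parameters obeying Lemma \ref{dynamicsmod}. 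The equation satisfied by $R\coloneqq e^{i(\gamma_k+p_\gamma)}r$ is \eqref{eqfinal1}; equivalently, by Remark \ref{formrrem}, $ie^{i(\gamma_k+p_\gamma)}\partial_t r-\mathcal{L}(R)$ splits into modulation directions plus an error of size $D_k(t)=\norm{r}_{H^1}v^2(\ln\frac1v)^c+\norm{r}_{H^1}^2+v^{2k+2}(\ln\frac1v)^{c(k)}$, where $\mathcal{L}$ is the linearized operator \eqref{linearr}.

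Next I would build the energy, whose natural quadratic part is $\frac12\langle \mathcal{L}(R),R\rangle$. Because $\mathcal{L}$ is, away from the overlap region, two translated and Galilean--modulated copies of $S_\omega$, and because the orthogonality conditions on $r$ match (after localization) the orthogonality required in Remark \ref{coerremark}, the coercivity of $S_\omega$ from Lemma \ref{coer} transfers to $\mathcal{L}$ up to interaction errors of size $O(e^{-2\sqrt\omega\zeta_k}\norm{r}_{H^1}^2)=O(v^2\norm{r}_{H^1}^2)$. Hence the quadratic form controls $\norm{r}_{H^1}^2$ from below. The delicate point is the time derivative: differentiating and using self-adjointness of $\mathcal{L}$ together with the cancellation $\langle \mathcal{L}R,-i\mathcal{L}R\rangle=0$ leaves $\langle(\partial_t\mathcal{L})R,R\rangle$, the modulation contributions, and the source pairing $\langle \mathcal{L}R, i\Lambda(P_k)\rangle$. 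The coefficients of $\mathcal{L}$ move at speed $\dot\zeta_k=O(v)$, so $\langle(\partial_t\mathcal{L})R,R\rangle$ is a priori only $O(v)\norm{r}_{H^1}^2$, which is too large. To improve this I would add momentum-type correction terms to the functional, proportional to $v_k$ and to the modulation parameters, designed so that the leading $O(v)$ contributions cancel against the $p_\zeta e^{-2\sqrt\omega d}$ and $\dot p_\ast$ terms supplied by Lemma \ref{dynamicsmod}; the residue then carries an extra factor $\frac{1}{\zeta_k}\sim\frac{v}{\ln\frac1v}$, exactly as in the energy estimates of \cite{holmerlin} and \cite{third}.

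With the corrected functional $E(t)\simeq \norm{r(t)}_{H^1}^2$ in hand, I would derive the differential inequality
\begin{equation*}
\frac{d}{dt}E(t)\lesssim \frac{v}{\ln\frac1v}E(t)+v^{2k+2}\left(\ln\tfrac1v\right)^{c(k)}\sqrt{E(t)},
\end{equation*}
where the source term is bounded by $\norm{\Lambda(P_k)}_{H^1}\norm{r}_{H^1}$ using estimate \eqref{ukest} of Theorem \ref{app lemma} and the interaction decay of Lemma \ref{interactt}. Integrating the induced inequality for $\sqrt{E}$ from $T_{0,k}$, where \eqref{hypoo} gives $E(T_{0,k})\lesssim v^{20k}\ll v^{4k}$, yields $\sqrt{E(t)}\lesssim v^{2k}(\ln\frac1v)^{c(k)+1}\exp(\frac{Cv|t-T_{0,k}|}{\ln\frac1v})$; on the stated window the exponential is $\exp(C(\ln\frac1v)^{1/3})$, which beats any power of $\frac1v$, so the bound stays below $v^2$ and the continuity argument closes, giving \eqref{theenergyestimate}.

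For the weighted bound \eqref{rrr} I would repeat the scheme with the functional applied to $x^l r$. The commutator $[\partial_x^2, x^l]$ produces terms of the form $l\,x^{l-1}\partial_x r$ and $l(l-1)x^{l-2}r$, i.e.\ lower-weight quantities whose cost is exactly compensated by the prefactor $v^{2l}$; one then closes an inductive hierarchy in $l$, feeding the unweighted estimate and the lower weighted estimates into the source of the next level, again via Gronwall. The main obstacle throughout is the construction of the correction terms that upgrade the energy-dissipation rate from $O(v)$ to $O(\frac{v}{\ln\frac1v})$: without this gain the integrated growth would be $\exp(C(\ln\frac1v)^{4/3})$, which would overwhelm the $v^{2k}$ smallness, so the whole theorem hinges on exploiting the orthogonality conditions and the modulation dynamics of Lemma \ref{dynamicsmod} precisely enough to cancel the leading drift.
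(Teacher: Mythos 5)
Your proposal follows essentially the same route as the paper's proof: modulation via Lemmas \ref{Modulation} and \ref{dynamicsmod}, the quadratic form $L(t,r)$ of the linearized operator made coercive by localizing near each soliton and invoking Remark \ref{coerremark}, localized momentum corrections weighted by the relative velocity (the paper takes $E(t,r)=L(t,r)-\dot d(t)P_{2}(t,r)+\dot d(t)P_{1}(t,r)$ with the cutoffs $\chi_{1},\chi_{2}$) so that the $O(v)$ transport terms cancel and the dissipation rate improves to $\frac{v}{\ln\frac{1}{v}}$, then Gronwall over the window, and an inductive scheme for the weighted bounds. However, one step is genuinely missing: your Gronwall argument controls $\norm{r(t)}_{H^{1}}=\norm{u(t)-P_{k}(t,x,\sigma_{u}(t))}_{H^{1}}$, the remainder relative to the \emph{modulated} profile, whereas \eqref{theenergyestimate} is stated relative to $P_{k}(t,x,\sigma_{k}(t))$. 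You cannot pass from one to the other via the inequalities in Lemma \ref{Modulation}, since those bound $\vert p_{j}(t)\vert$ by $\norm{u(t)-\varphi_{k}(t)}_{H^{1}}$, which is precisely the quantity to be estimated. The paper closes this by integrating the modulation ODEs of Lemma \ref{dynamicsmod} from $T_{0,k}$ (where \eqref{hypoo} forces the $p_{j}$ to be tiny), feeding the already-established bound on $\norm{r}_{H^{1}}$ into $D_{k}(t)$ and treating the coupled $(p_{\zeta},p_{v})$ system by variation of parameters, to get $\max_{j}\vert p_{j}(t)\vert\lesssim v^{2k}\left(\ln\frac{1}{v}\right)^{c(k)}\exp\left(\frac{Cv\vert t-T_{0,k}\vert}{\ln\frac{1}{v}}\right)$ before concluding with Minkowski; your write-up jumps from the bound on $E$ directly to \eqref{theenergyestimate}.

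A second, smaller gap concerns the weighted estimates: an induction ``in $l$'' alone cannot close, because the commutator term $l\,x^{l-1}\partial_{x}r$ is lower in weight but \emph{higher} in derivatives, so controlling $\norm{x^{l}r}_{H^{1}}$ requires $\norm{x^{l-1}r}_{H^{2}}$, and so on down the ladder. The hierarchy must therefore run over both the weight and the derivative count — the paper's quantities $r_{j,m}=\partial_{x}^{m}\left[x^{j}r\right]$ with a double induction, preceded by unweighted higher-Sobolev bounds $\norm{r}_{H^{l_{1}}}$ in its Step 2 — and this is exactly why hypothesis \eqref{hypoo} demands $H^{2l}$ regularity with weight $\left(1+\vert x\vert^{l}\right)$ rather than merely weighted $H^{1}$ smallness. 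With these two repairs (the integration of the modulation system, and the joint weight-derivative induction), your argument coincides with the paper's proof, including the accounting that each power of $x$ costs $v^{-2}$ and each derivative costs $v^{-1}$, which is what makes the prefactor $v^{2l}$ in \eqref{rrr} the right normalization.
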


\begin{remark}
 The choosing of $\frac{4}{3}$ in the exponent of $\ln{\frac{1}{v}}$ for the time interval is arbitrary, the statement would still be true for any $\theta$ slightly larger than $1$ after selecting an appropriate small $\delta_{k}>0.$ Moreover, the constant $c(k)$ depends only on the function $\varphi_{k}$ and it is the same for all $v$ sufficiently small. 
\end{remark}
From now on, we denote the solution $u(t,x)$ of the Schrödinger equation \eqref{NLS3} satisfying the initial condition \eqref{initialcondition} by 
\begin{equation}\label{rmod}
u(t,x)=P_{k}\left(t,x,\sigma_{u}(t)\right)+e^{i\gamma_{k}(t)}r(t,x)
\end{equation}
for all $t $ satisfying $\left\vert t-T_{0,k} \right\vert\leq \frac{\left(\ln{\frac{1}{v}}\right)^{\frac{4}{3}}}{v}.$ such that $r(t)$ satisfies the orthogonality conditions from \eqref{orthoid1} through \eqref{orthoid4}.
\par The proof of Theorem \ref{energyestimatetheor} will follow from the study of a Lyapunov function obtained from a perturbation of the quadratic form

\begin{align}\label{Quadratic energy}
    L(t,r)=&\int_{\mathbb{R}}\left\vert \partial_{x}r(t,x)\right\vert^{2}+\omega \left\vert r(t,x)\right\vert^{2}\,dx\\ \nonumber
    &{-}\int_{\mathbb{R}}F^{'}\left(\phi_{\omega}(x-\zeta_{k}(t))^{2}\right)\left\vert  r(t,x)\right\vert^{2}+F^{'}\left(\phi_{\omega}(x+\zeta_{k}(t))^{2}\right)\left\vert  r(t,x)\right\vert^{2}\,dx\\ \nonumber
    &{-}\Ree\int_{\mathbb{R}} F^{''}\left(\phi_{\omega}(x-\zeta_{k})^{2}\right)\phi_{\omega}(x-\zeta_{k})^{2}e^{2i\alpha_{k}(t,x)}\overline{r(t,x)^{2}}e^{{-}2i\gamma_{k}(t)}\,dx\\ \nonumber
    &{-}\Ree\int_{\mathbb{R}}F^{''}\left(\phi_{\omega}(x+\zeta_{k})^{2}\right)\phi_{\omega}(x+\zeta_{k})^{2}e^{2i\alpha_{k}(t,{-}x)}\overline{r(t,x)^{2}}e^{{-}2i\gamma_{k}(t)}\,dx\\ \nonumber
    &{-}\int_{\mathbb{R}} F^{''}\left(\phi_{\omega}(x-\zeta_{k})^{2}\right)\phi_{\omega}(x-\zeta_{k})^{2}\vert r(t,x)\vert^{2}\,dx\\ \nonumber
    &{-}\int_{\mathbb{R}} F^{''}\left(\phi_{\omega}(x+\zeta_{k})^{2}\right)\phi_{\omega}(x+\zeta_{k})^{2}\vert r(t,x)\vert^{2}\,dx.
\end{align}
Since $r$ satisfies the orthogonality conditions of Lemma \ref{Modulation}, we can verify that the operator $L$ is positive on $r\in H^{1}_{x}(\mathbb{R},\mathbb{C}).$
\par Furthermore, to study the coercivity of $L$ and the growth of its derivative, we consider from now on a cut-off smooth function $\chi$ satisfying
\begin{equation}\label{chi}
    \chi(x)=\begin{cases}
        0,\text{ $x\geq \frac{6}{10},$}\\
        1, \text{ $x\leq \frac{1}{2},$}
    \end{cases}
    \,\text{ and } 0< \chi(x)< 1 \text{, if $\frac{1}{2}<x<\frac{6}{10},$} 
\end{equation}
and we define the following two functions
\begin{equation}\label{chi12}
    \chi_{1}(t,x)\coloneqq\chi\left(\frac{x+\zeta_{k}(t)}{2\zeta_{k}(t)}\right),\, \chi_{2}(t,x)\coloneqq 1-\chi_{1}(t,x),
\end{equation}
for all $(t,x)\in\mathbb{R}^{2}.$
\begin{lemma}[Coercivity Lemma]\label{coerc}
There exist constants $\delta,\,c>0$ such that if $0<v<\delta_{k}$ and $\norm{r}_{H^{1}_{x}(\mathbb{R})}\leq \delta$ and the function $r\in H^{1}_{x}(\mathbb{R},\mathbb{C})$ satisfies the identities \eqref{orthoid1}, \eqref{orthoid2}, \eqref{orthoid3}, \eqref{orthoid4}, then
\begin{equation}\label{coercstrong}
    L(t,r)\geq c\norm{r(t)}_{H^{1}}^{2}.
\end{equation}
    
\end{lemma}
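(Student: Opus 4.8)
The plan is to reduce the coercivity of the full quadratic form $L(t,r)$ to the single-soliton coercivity estimate already established in Lemma \ref{coer} (and sharpened in Remark \ref{coerremark}), by exploiting the spatial separation of the two solitons. The key geometric fact is that $\phi_\omega$ has exponential decay (Remark \ref{phis+}), and $\zeta_k(t) > \frac{1}{2\sqrt\omega}\ln\left(\frac{1}{v\omega^{1/4}}\right)$ is large when $v$ is small, so the two bumps $\phi_\omega(x-\zeta_k)$ and $\phi_\omega(x+\zeta_k)$ live on essentially disjoint regions. The cutoff functions $\chi_1,\chi_2$ from \eqref{chi12} are designed precisely to localize near the left and right soliton respectively, with the transition region placed at $x\approx 0$ where both solitons are exponentially small.

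Let me sketch the main steps. First I would write $r = \chi_1 r + \chi_2 r =: r_1 + r_2$, and split $L(t,r)$ into contributions localized around each soliton plus cross terms. The cross terms, as well as all terms in $L$ where the potential is centered at one soliton but the mass of $r$ is weighted toward the other, are controlled by $O\!\left(e^{-c\sqrt\omega\zeta_k}\norm{r}_{H^1}^2\right)$ using Lemma \ref{interactt} together with the exponential decay of $F'(\phi_\omega^2)$ and $F''(\phi_\omega^2)\phi_\omega^2$. Since $e^{-2\sqrt\omega\zeta_k}=O(v^2)$, these error terms are negligible for small $v$. This leaves, up to such errors,
\begin{equation*}
L(t,r)\geq L_1(t,r_1)+L_2(t,r_2)-C e^{-c\sqrt\omega\zeta_k}\norm{r}_{H^1}^2,
\end{equation*}
where $L_j$ is the quadratic form built from a single soliton centered at $\mp\zeta_k$. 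After undoing the translation and the phase $e^{i\alpha_k}$, each $L_j$ becomes exactly the single-soliton quadratic form $\langle S_\omega(\cdot),\cdot\rangle$ studied in Lemma \ref{coer}.

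The delicate point is the orthogonality transfer. The functional $L_j$ is coercive only on the orthogonal complement of $\sppp\{\phi'_\omega, i\phi_\omega,\partial_\omega\phi_\omega\}$ (or the variant in Remark \ref{coerremark}), whereas the hypotheses \eqref{orthoid1}--\eqref{orthoid4} give orthogonality of the \emph{full} $r$ against the symmetrized functions $\Sym\varphi_{j,k}$, not orthogonality of the localized pieces $r_j$ against the corresponding single-soliton directions. So I would argue that, because $\varphi_{\gamma,k},\varphi_{\zeta,k},\varphi_{\omega,k},\varphi_{v,k}$ are (up to phase) translates of $\phi_\omega$ and its derivatives concentrated at one soliton, testing $r$ against $i\Sym\varphi_{j,k}$ and testing $r_j$ against the single-soliton direction differ only by terms involving the far-away soliton's tail, which are again $O(e^{-c\sqrt\omega\zeta_k}\norm r_{H^1})$ by Lemma \ref{interactt}. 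Hence each $r_j$ is orthogonal to the required three-dimensional space up to an error that can be absorbed, and I would apply Remark \ref{coerremark} (which uses the projection $\Pi_1$) to obtain $L_j(t,r_j)\geq c\norm{r_j}_{H^1}^2$.

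Finally I would reassemble: $\norm{r_1}_{H^1}^2+\norm{r_2}_{H^1}^2\geq (1-Ce^{-c\sqrt\omega\zeta_k})\norm{r}_{H^1}^2$, again since the overlap of $\chi_1$ and $\chi_2$ is supported where both solitons are small, giving $L(t,r)\geq c'\norm{r}_{H^1}^2$ after choosing $v$ small enough that all the $e^{-c\sqrt\omega\zeta_k}=O(v^2)$ errors are dominated by the positive main term. \textbf{The main obstacle} I anticipate is making the orthogonality transfer rigorous: one must ensure that the small orthogonality defects of $r_1,r_2$ against the single-soliton kernel directions do not destroy coercivity, which requires the quantitative version of coercivity (the second inequality in Lemma \ref{coer}, controlling the form below by $\norm{r}_{H^1}^2$ minus the squares of the inner products against the bad directions) rather than the qualitative one, so that the $O(e^{-c\sqrt\omega\zeta_k}\norm r_{H^1})$ defects enter only quadratically and are absorbed.
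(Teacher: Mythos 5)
Your proposal follows essentially the same route as the paper's proof: decomposition $r=\chi_{1}r+\chi_{2}r$, reduction of each localized piece (after undoing translation and the phase $e^{i\alpha_{k}}$) to the single-soliton form $\left\langle S_{\omega}\cdot,\cdot\right\rangle$, approximate transfer of the orthogonality conditions \eqref{orthoid1}--\eqref{orthoid4} to the localized pieces, and absorption of the small orthogonality defects via the quantitative coercivity of Remark \ref{coerremark} together with Young's inequality. The only inaccuracy is your claimed $O(e^{-c\sqrt{\omega}\zeta_{k}})$ size for the splitting errors: the commutator terms produced by $\partial_{x}\chi_{j}$ are only $O\left(\frac{1}{\ln{\frac{1}{v}}}\norm{r}_{H^{1}}^{2}\right)$, since the cutoffs vary on the scale $\zeta_{k}\sim\ln{\frac{1}{v}}$ (the paper keeps the nonnegative overlap term $2\int\chi_{1}\chi_{2}\left[\vert\partial_{x}r\vert^{2}+\vert r\vert^{2}\right]dx$ explicitly), but this error still vanishes as $v\to 0$, so your argument is unaffected.
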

\begin{proof}
\par First, we consider the functions
\begin{align*}
r_{1}(t,x)=r(t,x)\chi_{1}(t,x),\,\,r_{2}(t,x)=r(t,x)\chi_{2}(t,x).
\end{align*}
Moreover, using Remark \ref{remestimate}, we also observe for any $j\in\{1,2\}$ that
\begin{equation*}
   \left\vert \partial_{x}\chi_{j}(t,x) \right\vert=O\left(\frac{1}{\ln{\frac{1}{v}}}\right).
\end{equation*}
\par Next, since the soliton $\phi_{\omega}$ satisfies the inequality
\begin{equation*}
    \vert \phi_{\omega}(x) \vert=O\left( e^{{-}\sqrt{\omega}\vert x\vert}\right),
\end{equation*}
there is a constant $K>0$ such that
\begin{equation}\label{intchi}
    \left\vert\chi_{1}(t,x)\phi_{\omega}(x-\zeta_{k}(t))\right\vert+\left\vert \chi_{2}(t,x)\phi_{\omega}(x+\zeta_{k}(t))\right\vert\leq K e^{{-}\sqrt{\omega}\frac{4d(t)}{5}},
\end{equation}
because $\zeta_{k}(t)$ satisfies the Remark \ref{remestimate} when $v>0$ is small enough. Therefore, we can verify that the function $L(t,r)$ can be estimated as
\begin{equation}\label{sepL}
    L(t,r)=L_{1}(t)+L_{2}(t)+2\int_{\mathbb{R}}\chi_{1}(t,x)\chi_{2}(t,x)\left[\vert \partial_{x}r(t,x) \vert^{2}+\vert r(t,x) \vert^{2}\right]\,dx+O\left(\frac{1}{\ln{\frac{1}{v}}}\norm{r(t)}_{H^{1}}^{2}\right),
\end{equation}
such that 
\begin{multline*}
\begin{aligned}  L_{j}(t)=&\int_{\mathbb{R}}\left\vert \partial_{x}r_{j}(t,x)\right\vert^{2}+\omega \left\vert r_{j}(t,x)\right\vert^{2}\,dx\\ \nonumber
&{-}\int_{\mathbb{R}}F^{'}\left(\phi_{\omega}(x-({-}1)^{j}\zeta_{k}(t))^{2}\right)\left\vert  r_{j}(t,x)\right\vert^{2}\,dx\\ \nonumber
    &{-}\Ree\int_{\mathbb{R}} F^{''}\left(\phi_{\omega}(x-({-}1)^{j}\zeta_{k})^{2}\right)\phi_{\omega}(x-\zeta_{k})^{2}e^{2i\alpha_{k}(t,({-}1)^{j}x)}\overline{r_{j}(t,x)^{2}}e^{{-}2i\gamma_{k}(t)}\,dx\\ \nonumber
&{-}\int_{\mathbb{R}} F^{''}\left(\phi_{\omega}(x-({-}1)^{j}\zeta_{k})^{2}\right)\phi_{\omega}(x-({-}1)^{j}\zeta_{k})^{2}\vert r_{j}(t,x)\vert^{2}\,dx.
\end{aligned}
\end{multline*}
Moreover, since $\chi_{j}\geq 0$ for any $j\in\{1,2\},$ we have the following inequality
\begin{equation*}
   \int_{\mathbb{R}}\chi_{1}(t,x)\chi_{2}(t,x)\left[\vert \partial_{x}r(t,x) \vert^{2}+\vert r(t,x) \vert^{2}\right]\geq 0.
\end{equation*}
\par Furthermore, using the definition \eqref{alpha} of $\alpha,$ Remark \ref{remestimate} when $v>0$ is small enough, and estimate \eqref{intchi}, it is not difficult to verify for any $j\in\{1,2\}$ that the function
\begin{equation*}
    \hat{r}_{j}(t,x-({-}1)^{j}\zeta_{k}(t))\coloneqq e^{{-}i\alpha(t,({-}1)^{j}x)+i\gamma_{k}(t)}r_{j}(t,x)
\end{equation*}
satisfies $L_{j}(t)=\left\langle S_{\omega}(\hat{r}_{j}(t,x)),\hat{r}_{j}(t,x) \right\rangle+O(v\norm{r_{j}}_{H^{1}}^{2}),\,\norm{\hat{r}_{j}(t)}_{H^{1}}^{2}=\norm{r_{j}(t)}_{H^{1}}^{2}+O(v\norm{r_{j}}_{H^{1}}^{2}),$ and
\begin{align*}
\left\langle \hat{r}_{j}(t,x),x\phi_{\omega}(x)\right\rangle=&O\left(v^{\frac{3}{4}}\norm{r(t)}_{H^{1}}\right),\,
\left\langle \hat{r}_{j}(t,x),i\partial_{\omega}\phi_{\omega}(x)\right\rangle=O\left(v^{\frac{3}{4}}\norm{r(t)}_{H^{1}}\right).
\end{align*}

Therefore, we obtain from Remark \ref{coerremark} of Lemma \ref{coer} using Young Inequality the existence of constants $c,\,K>0$ such that if $v>0$ is sufficiently small, then
\begin{equation}\label{cooo}
    L_{j}(t)\geq c\norm{r_{j}(t)}_{H^{1}}^{2}-Kv^{\frac{3}{2}}\norm{r(t)}_{H^{1}}^{2}, 
\end{equation}
for any $j\in\{1,2\}.$
\par In conclusion, since $\chi_{1}+\chi_{2}=1,$ \eqref{sepL} and \eqref{cooo} imply \eqref{coercstrong}.
\end{proof}

\par Moreover, using the inequality \eqref{impoco} from Remark \ref{coerremark}, we deduce the existence of a constant $C>1$ such that
\begin{equation}\label{cooo2}
    L_{j}(t)\geq c\norm{r_{j}(t)}_{H^{1}}^{2}-C\norm{\Pi_{1}(r_{j}(t))}_{L^{2}}^{2}.
\end{equation}
Consequently, using \eqref{cooo2} in the place of \eqref{cooo} in the proof of Lemma \ref{coerc}, we obtain the following proposition.
    \begin{lemma}\label{coerc2}
There exist constants $\delta,\,c>0,C>1$ such that if $0<v<\delta_{k}$ and $\norm{r(t)}_{L^{\infty}[T_{1},T_{2}],H^{1}_{x}(\mathbb{R})}\leq \delta,$ then for any $t\in [T_{1},T_{2}]$  \begin{align*}
    L(t,r)\geq& c\norm{r(t)}_{H^{1}}^{2}-C\sum_{\pm}\left\langle r(t),i\varphi_{\gamma,k}(t,\pm x)\right\rangle^{2}-C\sum_{\pm}\left\langle r(t),i\varphi_{\zeta,k}(t,\pm x) \right\rangle^{2}\\ 
&{-}C\sum_{\pm}\left\langle r(t), i\varphi_{\omega,k}(t,\pm x)\right\rangle^{2}-C\sum_{\pm}\left\langle r(t),i\varphi_{v,k}(t,\pm x)\right\rangle^{2}.
\end{align*}
 
\end{lemma}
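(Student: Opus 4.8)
The plan is to re-run the proof of Lemma~\ref{coerc} almost verbatim, replacing the clean local coercivity bound \eqref{cooo} by the weaker projected bound \eqref{cooo2} that comes from \eqref{impoco}. First I would introduce the same cutoffs $\chi_1,\chi_2$ of \eqref{chi12} and set $r_1=r\chi_1$, $r_2=r\chi_2$. Exactly as in the derivation of \eqref{sepL}, using $|\partial_x\chi_j|=O(1/\ln\frac1v)$ together with the exponential decay \eqref{intchi} of $\phi_\omega$ against the opposite cutoff, one obtains
$$L(t,r)=L_1(t)+L_2(t)+2\int_{\mathbb{R}}\chi_1\chi_2\left[|\partial_x r|^2+|r|^2\right]dx+O\!\left(\tfrac{1}{\ln\frac1v}\,\norm{r(t)}_{H^1}^2\right),$$
where the cross term is nonnegative and the $O(1/\ln\frac1v)$ error is absorbed into $c\norm{r}_{H^1}^2$ once $v$ is small.

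Next, in place of \eqref{cooo} I would apply \eqref{cooo2} to each piece (translated so that the corresponding soliton is centered at the origin), giving $L_j(t)\ge c\norm{r_j(t)}_{H^1}^2-C\norm{\Pi_1(r_j(t))}_{L^2}^2$. Since $r=r_1+r_2$ and $\partial_x r=\partial_x r_1+\partial_x r_2$, we have $\norm{r}_{H^1}^2\le 2\norm{r_1}_{H^1}^2+2\norm{r_2}_{H^1}^2$, so summing and readjusting $c$ yields
$$L(t,r)\ge c\,\norm{r(t)}_{H^1}^2-C\sum_{j\in\{1,2\}}\norm{\Pi_1(r_j(t))}_{L^2}^2.$$

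The decisive remaining step is to convert $\norm{\Pi_1(r_j)}_{L^2}^2$ into the explicit squared inner products of the statement. Recall that $\Pi_1$ of Remark~\ref{coerremark} projects onto $\sppp\{x\phi_\omega,\phi_\omega,i\partial_\omega\phi_\omega\}$; these three functions are mutually orthogonal for the real product \eqref{dotproduct}, since $x\phi_\omega$ is real and odd, $\phi_\omega$ real and even, and $i\partial_\omega\phi_\omega$ purely imaginary and even, so every pairwise product vanishes by parity or by the real/imaginary split. Hence $\norm{\Pi_1(r_j)}_{L^2}^2$ is a fixed positive combination of the three squared inner products of the recentered piece against $x\phi_\omega$, $\phi_\omega$ and $i\partial_\omega\phi_\omega$. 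Undoing the translation by $(-1)^j\zeta_k$ and stripping the phase $e^{i\alpha_k}$ identifies these, up to harmless constants from the factors of $i$, with $\left\langle r,i\varphi_{v,k}(t,(-1)^jx)\right\rangle$, $\left\langle r,i\varphi_{\gamma,k}(t,(-1)^jx)\right\rangle$ and $\left\langle r,i\varphi_{\omega,k}(t,(-1)^jx)\right\rangle$ via \eqref{dphigamma}--\eqref{dphiv}. Letting $j=1,2$ produce the two signs $\pm$, and appending the $\varphi_{\zeta,k}$ terms (which only make the lower bound weaker), gives the claimed inequality.

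The main obstacle is precisely this last conversion: inside each inner product one must replace $r_j=r\chi_j$ by $r$ and remove the modulation phase while controlling the errors. Because $\phi_\omega$ and its derivatives decay like $e^{-\sqrt\omega|x|}$ and concentrate at $(-1)^{j+1}\zeta_k$, where $\chi_j\to 1$ (cf.\ \eqref{intchi} and Remark~\ref{remestimate}, with $\zeta_k\gtrsim\frac{1}{\sqrt\omega}\ln\frac1v$), each such error is $O(e^{-c\zeta_k})\norm{r}_{H^1}$ and hence absorbable into $c\norm{r}_{H^1}^2$; checking that all of these tail contributions are genuinely dominated is the one point that requires care.
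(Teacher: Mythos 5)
Your skeleton is exactly the paper's own proof: the paper proves this lemma by rerunning the proof of Lemma \ref{coerc} with \eqref{cooo2} (i.e.\ Remark \ref{coerremark}, estimate \eqref{impoco}) in place of \eqref{cooo}, and your handling of the cutoff decomposition \eqref{sepL} and of the exponential-tail errors is the intended one. However, the conversion of $\norm{\Pi_{1}(r_{j})}_{L^{2}}^{2}$ into the inner products of the statement --- the step you dismiss with ``up to harmless constants from the factors of $i$'' --- contains a genuine gap, and it is a phase issue rather than the tail issue you flag.

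The phase multiplying $\overline{r(t,x)^{2}}$ in \eqref{Quadratic energy} is $e^{2i\alpha_{k}(t,x)}e^{-2i\gamma_{k}(t)}=e^{iv_{k}(t)\left(x-\zeta_{k}(t)/2\right)}$ by \eqref{alpha}, so the substitution that turns $L_{j}(t)$ into $\left\langle S_{\omega}(\cdot),\cdot\right\rangle$ --- which is what makes \eqref{impoco} applicable --- must strip the phase $e^{i(\alpha_{k}-\gamma_{k})}$, not $e^{i\alpha_{k}}$ as you propose; stripping $e^{i\alpha_{k}}$ leaves a residual factor $e^{-2i\gamma_{k}}$ on the $F^{''}(\phi_{\omega}^{2})\phi_{\omega}^{2}\,\overline{(\cdot)^{2}}$ terms and the resulting form is no longer $\left\langle S_{\omega}(\cdot),\cdot\right\rangle$. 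With the correct dephasing, the projections produced by $\norm{\Pi_{1}(\cdot)}_{L^{2}}^{2}$ are pairings of $r$ against $e^{-i\gamma_{k}}$-rotated copies of $i\varphi_{\gamma,k},\,i\varphi_{v,k},\,i\varphi_{\omega,k}$, and for the real pairing \eqref{dotproduct} a constant phase rotation is not harmless: $\left\langle r,e^{-i\gamma_{k}}h\right\rangle$ is a $\gamma_{k}$-dependent combination of $\left\langle r,h\right\rangle$ and $\left\langle r,ih\right\rangle$, whereas the statement subtracts only the terms carrying the factor $i$. Since $\gamma_{k}(t)\approx\omega t$ sweeps all values mod $2\pi$, this loss is real: at times where $\cos\gamma_{k}(t)\approx 0$, the quantities $\left\langle r,i\varphi_{\gamma,k}\right\rangle$ and $\left\langle r,i\varphi_{v,k}\right\rangle$ carry no information on the components of the dephased remainder along $\phi_{\omega}$ and $x\phi_{\omega}$, which are two of the three directions onto which $\Pi_{1}$ projects; relatedly, your claim that the $\varphi_{\zeta,k}$ terms ``only make the lower bound weaker'' is backwards, since at such times they are precisely what controls the kernel direction $\phi_{\omega}^{'}$. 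The clean repair, consistent with how the orthogonality conditions are actually exploited in the proof of Lemma \ref{coerc}, is to run your argument for the quantities $\left\langle e^{i(\gamma_{k}+p_{\gamma})}r(t),i\varphi_{j,k}(t,\pm x)\right\rangle$ of \eqref{orthoid1}--\eqref{orthoid4}: by \eqref{uformula} the remainder enters the solution with the prefactor $e^{i(\gamma_{k}+p_{\gamma})}$, and this prefactor exactly cancels the constant-phase mismatch, after which your mutual-orthogonality computation and your tail absorptions do close the proof.
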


\par Next, following the reasoning in \cite{third}, we are going to use the estimates of the modulation parameters from Lemmas \ref{Modulation}, \ref{dynamicsmod} and equation \eqref{eqfinal1} to estimate $\left\vert\partial_{t}L(t,r)\right\vert$ during a large time interval. More precisely, we are going to verify that the function $L(t,r)$ satisfies the following proposition.
\begin{lemma}\label{energyestimate1}
Let $L(t,r)$ be the function defined at \eqref{Quadratic energy} and let $c(k)$ be the same as in the statement of Theorem \ref{app lemma}. There exists a constant $c>0$ and $\delta_{k}\in(0,1)$ such that if $0<v<\delta_{k}$ and $\norm{r(t)}_{H^{1}}\leq v^{2},$ then
\begin{multline*}
\begin{aligned}
\partial_{t}L(t,r)=&2\dot\zeta_{k}(t)\int_{\mathbb{R}}F^{''}\left(\phi_{\omega}(x-\zeta_{k}(t))^{2}\right)\phi^{'}_{\omega}(x-\zeta_{k})\phi_{\omega}(x-\zeta_{k})\vert r(t,x) \vert^{2}\,dx\\&{-}2\dot\zeta_{k}(t)\int_{\mathbb{R}}F^{''}\left(\phi_{\omega}(x+\zeta_{k}(t))^{2}\right)\phi^{'}_{\omega}(x+\zeta_{k})\phi_{\omega}(x+\zeta_{k})\vert r(t,x) \vert^{2}\\
    &{+}\dot \zeta_{k}(t)\Ree \int_{\mathbb{R}} \frac{\partial}{\partial x}\left[F^{''}\left(\phi_{\omega}(x-\zeta_{k})^{2}\right)\phi_{\omega}(x-\zeta_{k})^{2}\right]\overline{r(t,x)^{2}}\,dx\\
    &{-}\dot \zeta_{k}(t)\Ree \int_{\mathbb{R}} \frac{\partial}{\partial x}\left[F^{''}\left(\phi_{\omega}(x+\zeta_{k})^{2}\right)\phi_{\omega}(x+\zeta_{k})^{2}\right]\overline{r(t,x)^{2}}\,dx\\
    &{+}\dot \zeta_{k}(t)\int_{\mathbb{R}}\frac{\partial}{\partial x}\left[F^{''}\left(\phi_{\omega}(x-\zeta_{k})^{2}\right)\phi_{\omega}(x-\zeta_{k})^{2}\right]\vert r(t,x)\vert^{2}\,dx\\
    &{-}\dot \zeta_{k}(t)\int_{\mathbb{R}}\frac{\partial}{\partial x}\left[F^{''}\left(\phi_{\omega}(x+\zeta_{k})^{2}\right)\phi_{\omega}(x+\zeta_{k})^{2}\right]\vert r(t,x)\vert^{2}\,dx\\&{+}O\left(v^{2}\left(\ln{\frac{1}{v}}\right)^{c}\norm{r(t)}_{H^{1}}^{2}+\norm{r(t)}_{H^{1}}v^{2k+2}\left(\ln{\frac{1}{v}}\right)^{c(k)}+\norm{r(t)}_{H^{1}}^{3}\right),
 \end{aligned}
\end{multline*}
for for all $t$ satisfying $\left\vert t- \frac{30 k\ln{\frac{1}{v}}}{\sqrt{\omega}v}\right\vert\leq \frac{\left(\ln{\frac{1}{v}}\right)^{\frac{4}{3}}}{v}.$ 
\end{lemma}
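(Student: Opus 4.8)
The plan is to differentiate the quadratic form $L(t,r)$ in \eqref{Quadratic energy} and split $\partial_t L$ into an \emph{explicit} part, coming from the $t$-dependence of the coefficients, and a \emph{dynamical} part, coming from $\partial_t r$. First I would observe that $L(t,r)=\langle\mathcal{L}_k(t)r,r\rangle$ for the real product \eqref{dotproduct}, where $\mathcal{L}_k(t)$ is the self-adjoint operator obtained from \eqref{linearr} by replacing $\zeta_k+p_\zeta,\,\alpha(t,\cdot,\sigma_u)$ with the bare parameters $\zeta_k,\,\alpha_k$. Since the phase in the antilinear ($\overline{r^2}$) terms is $e^{2i\alpha_k-2i\gamma_k}=e^{iv_k(x-\zeta_k/2)}$, the angle $\gamma_k$ cancels and $L$ depends on $t$ only through $\zeta_k$ and $v_k$ (and through $r$). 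The product rule then gives $\partial_tL(t,r)=\langle\dot{\mathcal{L}}_k r,r\rangle+2\langle\mathcal{L}_k r,\partial_t r\rangle$.

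For the explicit term $\langle\dot{\mathcal{L}}_k r,r\rangle$ I would use $\partial_t[g(x\mp\zeta_k)]=\mp\dot\zeta_k\,\partial_x[g(x\mp\zeta_k)]$. Differentiating the $\zeta_k$-dependence of the two $F'$-coefficients produces the two $\dot\zeta_k\!\int F''\phi'_\omega\phi_\omega|r|^2$ integrals, differentiating the two $\overline{r^2}$-coefficients produces the two $\dot\zeta_k\Ree\!\int\partial_x[F''\phi_\omega^2]\overline{r^2}$ integrals (carrying the implicit modulation phase $e^{iv_k(x-\zeta_k/2)}$), and differentiating the last two $F''\phi_\omega^2|r|^2$-coefficients produces the remaining two $\dot\zeta_k\!\int\partial_x[F''\phi_\omega^2]|r|^2$ integrals, with exactly the signs displayed. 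The only other $t$-dependence is the phase $e^{iv_k(x-\zeta_k/2)}$; differentiating it yields factors $\dot v_k$ and $v_k\dot\zeta_k$, which by Remark \ref{remestimate} and \eqref{popopopo} are $O(v^2)$ and occur against the localized weights $\phi_\omega(x\mp\zeta_k)^2$, so by Lemma \ref{interactt} they contribute only $O\!\left(v^2(\ln\frac1v)^c\norm{r(t)}_{H^1}^2\right)$.

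The dynamical term $2\langle\mathcal{L}_k r,\partial_t r\rangle$ is the heart of the proof and I would treat it via the Hamiltonian cancellation. Setting $w=e^{i(\gamma_k+p_\gamma)}r$ as in \eqref{uformula} and using \eqref{eqfinal1} together with Remark \ref{formrrem}, the evolution reads $e^{i(\gamma_k+p_\gamma)}\partial_t r=-i\bigl[\mathcal{L}(w)+\mathrm{mod}+O(D_k)\bigr]$, where $\mathrm{mod}=\sum_j(\mathrm{mod})_j\,iSym(\varphi_{j,k})$ and each coefficient $(\mathrm{mod})_j$ is $O(D_k)$ by Lemma \ref{dynamicsmod}. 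Because the Laplacian parts of $\mathcal{L}$ and $\mathcal{L}_k$ coincide and the two differ only by multiplication operators with coefficients of size $O(|p_\zeta|+|p_v|)$, one has $e^{-i(\gamma_k+p_\gamma)}\mathcal{L}(w)=\mathcal{L}_k r+E$ with $E$ controlled by $|p_\zeta|,|p_v|$ through Lemma \ref{Modulation}. Substituting,
\[
2\langle\mathcal{L}_k r,\partial_t r\rangle=2\langle\mathcal{L}_k r,-i\mathcal{L}_k r\rangle+2\bigl\langle\mathcal{L}_k r,-i\bigl(E+e^{-i(\gamma_k+p_\gamma)}\mathrm{mod}+e^{-i(\gamma_k+p_\gamma)}O(D_k)\bigr)\bigr\rangle,
\]
and the first pairing vanishes since $\langle g,-ig\rangle=\Ree\!\int i|g|^2=0$ with $g=\mathcal{L}_k r$. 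For the remaining pairings I would integrate by parts once, so that $|\langle\mathcal{L}_k r,h\rangle|\lesssim\norm{r}_{H^1}\norm{h}_{H^1}$, and then bound $\norm{E}_{H^1}$, $\sum_j|(\mathrm{mod})_j|$ and the remainder by (a multiple of) $D_k(t)$; since each $iSym(\varphi_{j,k})$ is Schwartz of unit $H^1$-size, all these contribute $O\!\left(\norm{r(t)}_{H^1}D_k(t)\right)$.

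Finally I would collect terms. With $D_k(t)=\norm{r}_{H^1}v^2(\ln\frac1v)^c+\norm{r}_{H^1}^2+v^{2k+2}(\ln\frac1v)^{c(k)}$, the product $\norm{r}_{H^1}D_k$ is exactly $O\!\left(v^2(\ln\frac1v)^c\norm{r}_{H^1}^2+\norm{r}_{H^1}^3+\norm{r}_{H^1}v^{2k+2}(\ln\frac1v)^{c(k)}\right)$, the error allowed in the statement; adding it to the explicit $\dot\zeta_k$-integrals of the second step gives the claimed identity. I expect the main obstacle to be the third step: making the cancellation $\langle\mathcal{L}_k r,-i\mathcal{L}_k r\rangle=0$ rigorous for a merely $H^1$ remainder (handled by the strong-solution regularity of $u$, or a regularization argument as in \cite{third}), and, more delicately, keeping the mismatch $E$ between the bare operator $\mathcal{L}_k$ defining $L$ and the shifted operator $\mathcal{L}$ governing the flow within the cubic error despite the large factor $\zeta_k\sim\tfrac1v\ln\frac1v$ appearing in the modulation phases. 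This is precisely what forces the use of the sharp modulation estimates of Lemma \ref{dynamicsmod} and the orthogonality conditions \eqref{orthoid1}--\eqref{orthoid4}, rather than the cruder bounds of Lemma \ref{Modulation} alone.
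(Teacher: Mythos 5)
Your proposal is correct and follows essentially the same route as the paper's proof: the explicit $\dot\zeta_{k}$-derivatives of the coefficients of $L$ yield the stated integrals (with the phase derivatives absorbed via Remark \ref{remestimate} into the $O\left(v^{2}\left(\ln{\frac{1}{v}}\right)^{c}\norm{r(t)}_{H^{1}}^{2}\right)$ error), while the dynamical part is rewritten through the operator $\mathcal{L}$ of \eqref{linearr} exactly as in \eqref{simplifiedderivative} and then controlled by the anti-self-adjoint cancellation together with Remark \ref{formrrem} and Lemma \ref{dynamicsmod}, which is precisely the content of \eqref{almostlastestimate}. The only difference is expository: you make the operator mismatch $E$ and the cancellation $\left\langle g,-ig\right\rangle=0$ explicit, which the paper leaves implicit in its appeal to Lemmas \ref{interactt}, \ref{taylorlemma} and Remark \ref{formrrem}.
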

\begin{remark}
    The constant $c>0$ does not depend on $k,$ it comes from the first approximate solution when $k=2.$
\end{remark}
\begin{proof}[Proof of Lemma \ref{energyestimate1}]
First, using the definition of $L(t,r)$ in \eqref{Quadratic energy} and estimates \eqref{popopopo}, we deduce from the derivative of $L(t,r)$ that proving Lemma \ref{energyestimate1} is equivalent to verify that the following expression 
\begin{align}\label{Quadratic energydet}
    &2\Ree\int_{\mathbb{R}}{-}\partial^{2}_{x}r(t)\partial_{t}\overline{r(t)}+\omega r(t)\partial_{t}\overline{r(t)}\,dx\\ \nonumber
    &{-}2\int_{\mathbb{R}}F^{'}\left(\phi_{\omega}(x-\zeta_{k}(t))^{2}\right)r(t)\partial_{t}\overline{r(t)}+F^{'}\left(\phi_{\omega}(x+\zeta_{k}(t))^{2}\right)r(t)\partial_{t}\overline{r(t)}\,dx\\ \nonumber
    &{-}2\Ree\int_{\mathbb{R}} F^{''}\left(\phi_{\omega}(x-\zeta_{k})^{2}\right)\phi_{\omega}(x-\zeta_{k})^{2}e^{2i\alpha_{k}(t,x)}\overline{r(t)}\partial_{t}\overline{r(t)}e^{{-}2i\gamma_{k}(t)}\,dx\\ \nonumber
    &{-}2\Ree\int_{\mathbb{R}}F^{''}\left(\phi_{\omega}(x+\zeta_{k})^{2}\right)\phi_{\omega}(x+\zeta_{k})^{2}e^{2i\alpha_{k}(t,{-}x)}\overline{r(t)}\partial_{t}\overline{r(t)}e^{{-}2i\gamma_{k}(t)}\,dx\\ \nonumber
    &{-}2\Ree\int_{\mathbb{R}} F^{''}\left(\phi_{\omega}(x-\zeta_{k})^{2}\right)\phi_{\omega}(x-\zeta_{k})^{2}r(t)\partial_{t}\overline{r(t)}\,dx\\ \nonumber
    &{-}2\Ree\int_{\mathbb{R}} F^{''}\left(\phi_{\omega}(x+\zeta_{k})^{2}\right)\phi_{\omega}(x+\zeta_{k})^{2}r(t)\partial_{t}\overline{r(t)}\,dx
\end{align}
has a modulus of order
\begin{equation*}
    O\left(v^{2}\left(\ln{\frac{1}{v}}\right)^{c}\norm{r(t)}_{H^{1}_{x}(\mathbb{R})}^{2}+\norm{r(t)}_{H^{1}_{x}(\mathbb{R})}v^{2k+2}\left(\ln{\frac{1}{v}}\right)^{c(k)}\right).
\end{equation*}
\par Moreover, using the operator $\mathcal{L}$ defined in \eqref{linearr}, we obtain using Lemmas \ref{interactt} and \ref{taylorlemma}  that the expression \eqref{Quadratic energydet} is equal for some constant $c>0$ 
\begin{equation}\label{simplifiedderivative}
2\left\langle \mathcal{L}\left(e^{i\gamma_{k}(t)}r(t)\right),e^{i\gamma_{k}(t)}\partial_{t}r(t) \right\rangle+O\left(v^{2}\left(\ln{\frac{1}{v}}\right)^{c}\norm{r(t)}_{H^{1}}^{2}\right),
\end{equation}
if $v>0$ is small enough. 
\par Furthermore, using Remark \ref{formrrem} and integration by parts, we can verify that
\begin{multline}\label{almostlastestimate}
    \left\langle \mathcal{L}\left(e^{i\gamma_{k}(t)}r(t)\right),e^{i\gamma_{k}(t)}\partial_{t}r(t) \right\rangle\\
    \begin{aligned}
    =&{-}\left\langle \mathcal{L}\left(e^{i\gamma_{k}(t)}r(t)\right),\left[\dot p_{v}(t)+C_{1,\omega}p_{\zeta}(t)e^{{-}2\sqrt{\omega}d(t)}\right]Sym\left(\varphi_{v,k}(t,\cdot)\right)(x) \right\rangle\\
    &{+}\left\langle \mathcal{L}\left(e^{i\gamma_{k}(t)}r(t)\right),\left[\dot p_{\zeta}(t)-p_{v}(t)\right]Sym\left(\varphi_{\zeta,k}(t,\cdot)\right)(x) \right\rangle\\
    &{-}\Bigg\langle \mathcal{L}\left(e^{i\gamma_{k}(t)}r(t)\right),\big[\dot p_{\gamma}(t)-p_{\omega}(t)\\&{-}C_{2,\omega}p_{\zeta}(t)e^{{-}2\sqrt{\omega}d(t)}-C_{3,\omega}d(t)p_{\zeta}(t)e^{{-}2\sqrt{\omega}d(t)}\big]Sym\left(\varphi_{\gamma,k}(t,\cdot)\right)(x) \Bigg\rangle\\
&{-}\left\langle \mathcal{L}\left(e^{i\gamma_{k}(t)}r(t)\right),\dot p_{\omega}(t)Sym\left(\varphi_{\omega,k}(t,\cdot)\right)(x) \right\rangle
\\&{+}O\left(\norm{r(t)}_{H^{1}}^{2}v^{2}\left(\ln{\frac{1}{v}}\right)^{c}+\norm{r(t)}_{H^{1}}^{3}\right)\\
&{+}O\left(\norm{r(t)}_{H^{1}}^{2}v^{2k+2}\left(\ln{\frac{1}{v}}\right)^{c(k)}\right)\\
\end{aligned}
\end{multline}
\par In conclusion, we obtain from the estimate \eqref{almostlastestimate} and Lemma \ref{dynamicsmod} that
the expression \eqref{Quadratic energydet} has a modulus of order
\begin{equation*}
O\left(\norm{r(t)}_{H^{1}}v^{2k+2}\left(\ln{\frac{1}{v}}\right)^{c(k)}+v^{2}\left(\ln{\frac{1}{v}}\right)^{c}\norm{r(t)}_{H^{1}_{x}(\mathbb{R})}^{2}\right),
\end{equation*}
which implies the result of Lemma \ref{energyestimate1}. 
\end{proof}
\par In addition, it is possible to add functions of correction to $L(t,r)$ and obtain a new Lyapunov function having a derivative with a size smaller than $\left\vert \partial_{t}L(t,r)\right\vert,$ this approach was also used in the paper \cite{third} for study the collision between two kinks for the $\phi^{6}$ model. These functions of corrections are the following localized momentum quantities
\begin{align}\label{localmomentum}
    P_{1}(t,r)\coloneqq \I \int_{\mathbb{R}}\chi_{1}(t,x)\overline{r(t,x)}\partial_{x}r(t,x)\,dx,\,\, P_{2}(t,r)\coloneqq \I \int_{\mathbb{R}}\chi_{2}(t,x)\overline{r(t,x)}\partial_{x}r(t,x)\,dx.
\end{align}
\par However, before constructing the last Lyapunov function, we shall estimate the derivatives of $P_{1}(t,r)$ and $P_{2}(t,r).$
\begin{lemma}\label{dpj}
 Let $j\in\{1,2\},$ the function $P_{j}(t,r)$ satisfies
\begin{multline}
  \begin{aligned}
\partial_{t}P_{j}(t,r)    =&\Ree \int_{\mathbb{R}}\frac{\partial}{\partial x}\left[F^{'}\left(\phi_{\omega}(x+({-}1)^{j+1}(\zeta_{k}+p_{\zeta}))^{2}\right)\right]\left\vert r(t,x) \right\vert^{2}\,dx\\
&{+}\Ree \int_{\mathbb{R}}\frac{\partial}{\partial x}\left[F^{''}\left(\phi_{\omega}(x+({-}1)^{j+1}(\zeta_{k}+p_{\zeta}))^{2}\right)\phi_{\omega}(x+({-}1)^{j+1}(\zeta_{k}+p_{\zeta}))^{2}\right]\vert r(t,x)\vert^{2}\,dx\\
&{+}\Ree \int_{\mathbb{R}}\frac{\partial}{\partial x}\left[F^{''}\left(\phi_{\omega}(x+({-}1)^{j+1}(\zeta_{k}+p_{\zeta}))^{2}\right)\phi_{\omega}(x+({-}1)^{j+1}(\zeta_{k}+p_{\zeta}))^{2}\right]r(t,x)^{2}\,dx\\
&{+}O\left(v^{2k+2}\left(\ln{\frac{1}{v}}\right)^{c(k)}\norm{r(t)}_{H^{1}}+\frac{1}{\ln{\frac{1}{v}}}\norm{r(t)}_{H^{1}}^{2}+\norm{r(t)}_{H^{1}}^{3}\right).
\end{aligned}
\end{multline}
\end{lemma}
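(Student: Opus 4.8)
The plan is to compute the time derivative of the localized momentum $P_j(t,r)$ directly from its definition in \eqref{localmomentum}, substitute the equation satisfied by $\partial_t r$ from \eqref{eqfinal1} (equivalently Remark \ref{formrrem}), and carefully isolate the terms quadratic in $r$ that survive, discarding everything that can be absorbed into the error $O\left(v^{2k+2}\left(\ln{\frac{1}{v}}\right)^{c(k)}\norm{r(t)}_{H^{1}}+\frac{1}{\ln{\frac{1}{v}}}\norm{r(t)}_{H^{1}}^{2}+\norm{r(t)}_{H^{1}}^{3}\right)$.

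Concretely, differentiating $P_{j}(t,r)=\I\int_{\mathbb{R}}\chi_{j}(t,x)\overline{r(t,x)}\partial_{x}r(t,x)\,dx$ produces three kinds of contributions. First, the term where the time derivative hits the cutoff $\chi_j$; by Remark \ref{remestimate} we have $\left\vert\partial_{t}\chi_{j}(t,x)\right\vert=O\left(\frac{\dot\zeta_k}{\ln\frac{1}{v}}\right)=O\left(\frac{v}{\ln\frac{1}{v}}\right)$, so this piece is $O\left(\frac{1}{\ln\frac{1}{v}}\norm{r(t)}_{H^{1}}^{2}\right)$ and lands in the error. Second and third, the terms where $\partial_t$ hits $r$ or $\partial_x r$; here I substitute $i\partial_t r = \mathcal L(e^{i(\gamma_k+p_\gamma)}r)e^{-i(\gamma_k+p_\gamma)}+\text{(modulation and source terms)}$ from \eqref{eqfinal1}. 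The modulation terms $Mod_{k,0}$, the $p_\zeta e^{-2\sqrt\omega d}$ source terms, and the genuine remainder are all controlled: pairing them against $\overline{r}\,\partial_x$ or $\partial_x\overline{r}$ and using Lemma \ref{dynamicsmod} together with Lemma \ref{Modulation} gives precisely the first and the $v^{2k+2}(\ln\frac1v)^{c(k)}\norm{r}_{H^1}$ pieces of the stated error. The main algebraic content is the $\mathcal L$-contribution: after integration by parts in $x$ and the standard commutator manipulation $\I\int\chi_j(\overline{r}\,\partial_x \mathcal L r - \mathcal L\overline{r}\,\partial_x r)$, the second-order and the $\omega$ parts of $\mathcal L$ cancel up to $\partial_x\chi_j$ terms (again $O(\frac{1}{\ln\frac1v}\norm{r}_{H^1}^2)$), and only the potential parts of $\mathcal L$ survive, contributing $\int \partial_x(\text{potential})\vert r\vert^2$ and the corresponding $\Ree\int\partial_x(\cdots)\,r^2$ terms.

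The potential in $\mathcal L$ centered at $\pm(\zeta_k+p_\zeta)$ splits into a far soliton and a near soliton relative to the support of $\chi_j$; the far-soliton factor is exponentially small on $\supp\chi_j$ by the analogue of \eqref{intchi}, so only the near-soliton potential $F'\left(\phi_{\omega}(x+(-1)^{j+1}(\zeta_k+p_\zeta))^2\right)$ and $F''(\cdots)\phi_\omega(\cdots)^2$ remain, yielding exactly the three integrals displayed in the statement. I would also note that the $e^{2i\alpha}$ oscillatory potential term in $\mathcal L$ is what generates the $\Ree\int\partial_x(\cdots)r^2$ (non-modulus-squared) contribution, while the two non-oscillatory potential terms give the $\vert r\vert^2$ integrals.

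\textbf{The main obstacle} I anticipate is bookkeeping rather than conceptual: one must verify that the cross terms between the two solitons, and the discrepancy between $\zeta_k$ and $\zeta_k+p_\zeta$ in the various potentials, are genuinely negligible. The cross terms are exponentially suppressed by Lemma \ref{interactt} and the lower bound $\zeta_k>\frac{1}{2\sqrt\omega}\ln\frac{1}{v\omega^{1/4}}$, giving factors $e^{-2\sqrt\omega\zeta_k}=O(v^2)$, while replacing $\zeta_k+p_\zeta$ by $\zeta_k$ (or keeping it, as in the statement) costs at most $\left\vert p_\zeta\right\vert\,\norm{r}_{H^1}^2\lesssim\norm{r}_{H^1}^3$ by Lemma \ref{Modulation}, which is within the allowed error. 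The delicate point is ensuring all these estimates hold uniformly for $\left\vert t-\frac{30k\ln\frac1v}{\sqrt\omega v}\right\vert\le\frac{(\ln\frac1v)^{4/3}}{v}$, which is exactly the regime where $e^{-2\sqrt\omega d(t)}$ and the bounds \eqref{popopopo} remain usable; I would handle this by invoking Remark \ref{remestimate} throughout and checking that no accumulated power of $\ln\frac1v$ exceeds $c(k)$.
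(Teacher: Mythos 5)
Your proposal follows essentially the same route as the paper's proof: differentiate $P_{j}$, absorb the $\partial_{t}\chi_{j}$ and $\partial_{x}\chi_{j}$ contributions into the $O\left(\frac{1}{\ln\frac{1}{v}}\norm{r(t)}_{H^{1}}^{2}\right)$ error, substitute the evolution equation for $r$ coming from Lemma \ref{dynamicsmod} and Remark \ref{formrrem} (the paper's \eqref{mainequationr}), integrate by parts so that the kinetic and $\omega$ terms cancel up to cutoff-derivative errors, control the modulation and $p_{\zeta}e^{-2\sqrt{\omega}d}$ source terms via Lemmas \ref{Modulation} and \ref{dynamicsmod}, and use the localization estimate \eqref{intchi} to retain only the near-soliton potentials, with the oscillatory $e^{2i\alpha}$ potential producing the $r^{2}$ integral. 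The only cosmetic difference is that you phrase the substitution through $\mathcal{L}$ and a commutator identity, which is the same computation the paper performs term by term.
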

\begin{proof}[Proof of Lemma \ref{dpj}]
It is enough to verify the statement of Lemma \ref{dpj} for $j=1,$ the proof for the case when $j=2$ is completely analogous. \par First, from the definition of $P_{1}(t,r),$ we can verify using integration by parts that
\begin{equation*}
    \frac{\partial P_{1}(t,r)}{\partial t}=2\I \int_{\mathbb{R}}\chi_{1}(t,x)\overline{\partial_{t}r(t,x)}\partial_{x}r(t,x)\,dx+O\left(\frac{v\norm{r(t)}_{H^{1}}^{2}}{\ln{\frac{1}{v}}}\right).
\end{equation*}
\par Furthermore, from Lemma \ref{dynamicsmod} and Remark \ref{formrrem}, we have that
\begin{multline}\label{mainequationr}
\begin{aligned}
\partial_{t}r(t,x)=&i\partial^{2}_{x}r(t,x)-i\omega r(t,x)\\
&{+}i\left[F^{'}\left(\phi_{\omega}(x-(\zeta_{k}+p_{\zeta}))^{2}\right)+F^{'}\left(\phi_{\omega}(x+(\zeta_{k}+p_{\zeta}))^{2}\right)\right]r(t,x)\\
&{+}iF^{''}\left(\phi_{\omega}(x-(\zeta_{k}+p_{\zeta}))^{2}\right)\phi_{\omega}(x-(\zeta_{k}+p_{\zeta}))^{2}r(t,x)\\
&{+}iF^{''}\left(\phi_{\omega}(x+(\zeta_{k}+p_{\zeta}))^{2}\right)\phi_{\omega}(x+(\zeta_{k}+p_{\zeta}))^{2}r(t,x)\\
&{+}iF^{''}\left(\phi_{\omega}(x-(\zeta_{k}+p_{\zeta}))^{2}\right)\phi_{\omega}(x-(\zeta_{k}+p_{\zeta}))^{2}\overline{r(t,x)}e^{2i\alpha_{k}(t,x,\sigma_{u})-2i\gamma_{k}(t)}\\
&{+}iF^{''}\left(\phi_{\omega}(x+(\zeta_{k}+p_{\zeta}))^{2}\right)\phi_{\omega}(x+(\zeta_{k}+p_{\zeta}))^{2}\overline{r(t,x)}e^{2i\alpha_{k}(t,{-}x,\sigma_{u})-2i\gamma_{k}(t)}\\
&{+}O\left(v^{2k+2}\left(\ln{\frac{1}{v}}\right)^{c(k)}+v^{2}\left(\ln{\frac{1}{v}}\right)^{c}\norm{r(t)}_{H^{1}}+\norm{r(t)}_{H^{1}}^{2}\right).
\end{aligned}
\end{multline}
\par Consequently, using the definition of $\alpha$ in \ref{alpha} and Remark \ref{induest}, we deduce
\begin{multline*}
2\I \int_{\mathbb{R}}\chi_{1}(t,x)\overline{\partial_{t}r(t,x)}\partial_{x}r(t,x)\,dx\\
\begin{aligned}
=&{-}2\Ree \int_{\mathbb{R}}\chi_{1}(t,x)\overline{\partial^{2}_{x}r(t,x)}\partial_{x}r(t,x)\,dx\\
&{+}2\omega\Ree \int_{\mathbb{R}}\chi_{1}(t,x)\overline{r(t,x)}\partial_{x}r(t,x)\,dx\\
&{-}2\Ree \int_{\mathbb{R}}\chi_{1}(t,x)\left[F^{'}\left(\phi_{\omega}(x-(\zeta_{k}+p_{\zeta}))^{2}\right)+F^{'}\left(\phi_{\omega}(x+(\zeta_{k}+p_{\zeta}))^{2}\right)\right]\overline{r(t,x)}\partial_{x}r(t,x)\,dx\\
&{-}2\Ree \int_{\mathbb{R}}\chi_{1}(t,x)F^{''}\left(\phi_{\omega}(x-(\zeta_{k}+p_{\zeta}))^{2}\right)\phi_{\omega}(x-(\zeta_{k}+p_{\zeta}))^{2}\overline{r(t,x)}\partial_{x}r(t,x)\,dx\\
&{-}2\Ree \int_{\mathbb{R}}\chi_{1}(t,x)F^{''}\left(\phi_{\omega}(x+(\zeta_{k}+p_{\zeta}))^{2}\right)\phi_{\omega}(x+(\zeta_{k}+p_{\zeta}))^{2}\overline{r(t,x)}\partial_{x}r(t,x)\,dx\\
&{-}2 \Ree \int_{\mathbb{R}}\chi_{1}(t,x)F^{''}\left(\phi_{\omega}(x-(\zeta_{k}+p_{\zeta}))^{2}\right)\phi_{\omega}(x-(\zeta_{k}+p_{\zeta})^{2}r(t,x)\partial_{x}r(t,x)\,dx\\
&{-}2 \Ree \int_{\mathbb{R}}\chi_{1}(t,x)F^{''}\left(\phi_{\omega}(x+(\zeta_{k}+p_{\zeta}))^{2}\right)\phi_{\omega}(x+(\zeta_{k}+p_{\zeta})^{2}r(t,x)\partial_{x}r(t,x)\,dx\\
&{+}O\left(\frac{1}{\ln{\frac{1}{v}}}\norm{r(t)}_{H^{1}}^{2}+v^{2k+2}\left(\ln{\frac{1}{v}}\right)^{c(k)}\norm{r(t)}_{H^{1}}+\norm{r(t)}_{H^{1}}^{3}\right),
\end{aligned}
\end{multline*}
from which, using integration by parts and the estimates \eqref{intchi}, we obtain that
\begin{multline*}
    2\I \int_{\mathbb{R}}\chi_{1}(t,x)\overline{\partial_{t}r(t,x)}\partial_{x}r(t,x)\,dx\\
    \begin{aligned}
    =&\Ree \int_{\mathbb{R}}\frac{\partial}{\partial x}\left[F^{'}\left(\phi_{\omega}(x+(\zeta_{k}+p_{\zeta}))^{2}\right)\right]\left\vert r(t,x) \right\vert^{2}\,dx\\
&{+}\Ree \int_{\mathbb{R}}\frac{\partial}{\partial x}\left[F^{''}\left(\phi_{\omega}(x+(\zeta_{k}+p_{\zeta}))^{2}\right)\phi_{\omega}(x+(\zeta_{k}+p_{\zeta}))^{2}\right]\vert r(t,x)\vert^{2}\,dx\\
&{+}\Ree \int_{\mathbb{R}}\frac{\partial}{\partial x}\left[F^{''}\left(\phi_{\omega}(x+(\zeta_{k}+p_{\zeta}))^{2}\right)\phi_{\omega}(x+(\zeta_{k}+p_{\zeta}))^{2}\right]r(t,x)^{2}\,dx\\
&{+}O\left(v^{2k+2}\left(\ln{\frac{1}{v}}\right)^{c(k)}\norm{r(t)}_{H^{1}}+\frac{1}{\ln{\frac{1}{v}}}\norm{r(t)}_{H^{1}}^{2}+\norm{r(t)}_{H^{1}}^{3}\right).
    \end{aligned}
\end{multline*}
\par In conclusion, we have that \eqref{dpj} is true when $v>0$ is small enough.
\end{proof}
Furthermore, using the decay estimates \eqref{decd}, we can deduce from Lemma \ref{dpj} the following proposition.
\begin{corollary}\label{codpj}
 Let $j\in\{1,2\}.$ The function $P_{j}(t,r)$ satisfies
\begin{multline}\label{d(t)dpj}
  \frac{d}{dt}\left[ \dot d(t)P_{j}(t,r)\right] \\  \begin{aligned}=&\dot d(t)\Ree \int_{\mathbb{R}}\frac{\partial}{\partial x}\left[F^{'}\left(\phi_{\omega}(x+({-}1)^{j+1}(\zeta_{k}+p_{\zeta}))^{2}\right)\right]\left\vert r(t,x) \right\vert^{2}\,dx\\
&{+}\dot d(t)\Ree \int_{\mathbb{R}}\frac{\partial}{\partial x}\left[F^{''}\left(\phi_{\omega}(x+({-}1)^{j+1}(\zeta_{k}+p_{\zeta}))^{2}\right)\phi_{\omega}(x+({-}1)^{j+1}(\zeta_{k}+p_{\zeta}))^{2}\right]\vert r(t,x)\vert^{2}\,dx\\
&{+}\dot d(t)\Ree \int_{\mathbb{R}}\frac{\partial}{\partial x}\left[F^{''}\left(\phi_{\omega}(x+({-}1)^{j+1}(\zeta_{k}+p_{\zeta}))^{2}\right)\phi_{\omega}(x+({-}1)^{j+1}(\zeta_{k}+p_{\zeta}))^{2}\right]r(t,x)^{2}\,dx\\
&{+}O\left(v^{2k+3}\left(\ln{\frac{1}{v}}\right)^{c(k)}\norm{r(t)}_{H^{1}}+\frac{v}{\ln{\frac{1}{v}}}\norm{r(t)}_{H^{1}}^{2}+v\norm{r(t)}_{H^{1}}^{3}\right).
\end{aligned}
\end{multline}
\end{corollary}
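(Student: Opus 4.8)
The plan is to derive Corollary \ref{codpj} directly from Lemma \ref{dpj} by the product rule, treating the term $\ddot d(t)P_{j}(t,r)$ as a negligible correction. First I would expand
\begin{equation*}
\frac{d}{dt}\left[\dot d(t) P_j(t,r)\right] = \ddot d(t) P_j(t,r) + \dot d(t)\, \partial_t P_j(t,r),
\end{equation*}
so that the task reduces to controlling the two summands separately.

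For the second summand I would insert the formula for $\partial_t P_j(t,r)$ supplied by Lemma \ref{dpj}. Multiplying that identity by $\dot d(t)$ reproduces verbatim the three main integral terms appearing on the right-hand side of \eqref{d(t)dpj}, each now carrying the prefactor $\dot d(t)$. The remainder of Lemma \ref{dpj}, namely
\begin{equation*}
O\left(v^{2k+2}\left(\ln{\tfrac{1}{v}}\right)^{c(k)}\norm{r(t)}_{H^{1}}+\tfrac{1}{\ln{\frac{1}{v}}}\norm{r(t)}_{H^{1}}^{2}+\norm{r(t)}_{H^{1}}^{3}\right),
\end{equation*}
is multiplied by $\dot d(t)$; since $\lvert \dot d(t)\rvert = O(v)$ by the decay estimate \eqref{decd}, this contributes precisely the error terms $O\left(v^{2k+3}(\ln\frac1v)^{c(k)}\norm{r(t)}_{H^1}+\frac{v}{\ln\frac1v}\norm{r(t)}_{H^1}^2+v\norm{r(t)}_{H^1}^3\right)$ claimed in the statement.

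For the first summand I would bound $P_j(t,r)$ pointwise in $t$. Since $0\le \chi_j\le 1$ by \eqref{chi12}, the Cauchy--Schwarz inequality applied to the definition \eqref{localmomentum} gives $\lvert P_j(t,r)\rvert \le \norm{r(t)}_{L^2}\norm{\partial_x r(t)}_{L^2}\lesssim \norm{r(t)}_{H^1}^2$. Combining this with $\lvert \ddot d(t)\rvert \lesssim v^2 e^{-2\sqrt\omega v\lvert t\rvert}\le v^2$, which follows from \eqref{decd}, yields $\lvert \ddot d(t) P_j(t,r)\rvert \lesssim v^2\norm{r(t)}_{H^1}^2$. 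For $v>0$ small enough one has $v^2\le \frac{v}{\ln\frac1v}$, so this term is absorbed into the error $\frac{v}{\ln\frac1v}\norm{r(t)}_{H^1}^2$ already present in \eqref{d(t)dpj}.

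Collecting the two contributions gives the identity \eqref{d(t)dpj}. The computation is entirely routine, since the nontrivial analytic work was already carried out in Lemma \ref{dpj}; the only point requiring a moment's care is verifying that the auxiliary term $\ddot d(t)P_j(t,r)$ is genuinely subdominant, which I expect to be the main (and rather mild) obstacle. This rests on the elementary inequality $v^2\le \frac{v}{\ln\frac1v}$ for small $v$ together with the quadratic control $\lvert P_j(t,r)\rvert\lesssim\norm{r(t)}_{H^1}^2$.
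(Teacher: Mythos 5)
Your proposal is correct and follows essentially the same route as the paper, which deduces the corollary from Lemma \ref{dpj} together with the decay estimates \eqref{decd}: the product rule, the bound $\vert\dot d(t)\vert=O(v)$ applied to the identity of Lemma \ref{dpj}, and the absorption of the term $\ddot d(t)P_{j}(t,r)$ via $\vert\ddot d(t)\vert\lesssim v^{2}$ and $\vert P_{j}(t,r)\vert\lesssim \norm{r(t)}_{H^{1}}^{2}$ are exactly the intended steps. Your explicit verification that $v^{2}\leq \frac{v}{\ln\frac{1}{v}}$ for small $v$ is the only point the paper leaves implicit, and it is handled correctly.
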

\par Next, we are going to consider the following Lyapunov Functional
\begin{equation}\label{Efunctional}
E(t,r)=L(t,r)-\dot d(t)P_{2}(t,r)+\dot d(t)P_{1}(t,r),
\end{equation}and demonstrate Theorem \ref{energyestimatetheor} from the derivative of $E(t,r)$ using Lemmas \ref{coerc}, \ref{energyestimate1}, and Corollary \ref{codpj}.
\begin{proof}[Proof of Theorem \ref{energyestimatetheor}.]
\textbf{Step 1.}(Proof of the estimate of $\norm{r(t)}_{H^{1}}.$)
\par First, from Lemma \ref{Modulation}, we have that if $\norm{u(t)-P_{k}(t,x,\sigma_{k}(t))}_{H^{1}}<v^{2},$ $t$ satisfies $\left\vert t -T_{0,k}\right\vert\leq \frac{\left(\ln{\frac{1}{v}}\right)^{\frac{4}{3}}}{v},$ and $v>0$ is small enough, then there exist $K>1$ and $c>0$ satisfying
\begin{equation*}
\norm{r(t)}_{H^{1}}\leq K\left(\ln{\frac{1}{v}}\right)^{c}\norm{u(t)-P_{k}(t,x,\sigma_{k}(t))}_{H^{1}},
\end{equation*}
for some $c>0,$ which is much smaller than $v^{\frac{3}{2}}.$
We also recall from Remark \ref{remestimate} the following estimate
\begin{equation*}
    \dot \zeta_{k}(t)=\dot d(t)+O\left(v^{2}\left(\ln{\frac{1}{v}}\right)^{c}\right),
\end{equation*}
if $v>0$ is small enough. Therefore, we deduce from Lemmas \ref{Modulation}, \ref{energyestimate1} and Corollary \ref{codpj} that
\begin{equation}\label{dE}
\left\vert\dot E(t,r)\right\vert=O\left(v^{2k+2}\left(\ln{\frac{1}{v}}\right)^{c(k)}\norm{r(t)}_{H^{1}}+\frac{v}{\ln{\frac{1}{v}}}\norm{r(t)}_{H^{1}}^{2}+\norm{r(t)}_{H^{1}}^{3}\right).
\end{equation}
\par Next, since $\vert\dot d(t)\vert=O(v),$ we can verify from Cauchy-Schwarz and identity \eqref{localmomentum} the existence of a constant $C>0$ satisfying following estimate for all $v>0$ small enough
\begin{equation*}
    \left\vert \dot d(t)P_{1}(t,r) \right\vert+\left\vert \dot d(t)P_{2}(t,r) \right\vert\leq C v\norm{r(t)}_{H^{1}}^{2}.
\end{equation*}
Therefore, using Lemma \ref{coerc} and the definition of $E(t,r),$ we can find a constant $C>0$ not depending on $v$ and $k$ such that if $v>0$ is small enough, then
\begin{equation}\label{coerE}
C\norm{r(t)}_{H^{1}}^{2}\leq E(t,r).
\end{equation}
Consequently, using the quantity
\begin{equation*}
m(t)=\max\left(\norm{r(t)}_{H^{1}},v^{2k+1}\left(\ln{\frac{1}{v}}\right)^{c(k)+1}\right),
\end{equation*}
we obtain for some positive constant $C_{1}$ that if  $\norm{r(t)}_{H^{1}}>v^{2k+1}\left(\ln{\frac{1}{v}}\right)^{c(k)+1}$ and $\norm{r(t)}_{H^{1}}<\frac{v}{\ln{\frac{1}{v}}},$ then
\begin{equation}\label{gron}
 \dot E(t,r)\leq C_{1}\frac{v}{\ln{\frac{1}{v}}}m(t)^{2},\,\, E(t,r)\geq \frac{C}{2}m(t)^{2}.
\end{equation}
Consequently, using Gronwall Lemma and the assumption \eqref{initialcondition}, we can verify while $\norm{r(t)}_{H^{1}}<v^{2}$ that
\begin{equation}\label{final}
\norm{r(t)}_{H^{1}}\leq m(t)\leq K v^{2k}\left(\ln{\frac{1}{v}}\right)^{c(k)}\exp\left(\frac{2C_{1}}{C}\frac{v}{\ln{\frac{1}{v}}}\left\vert t-T_{0,k} \right\vert\right),
\end{equation}
for any $T_{0,k}$ satisfying the assumptions of Theorem \ref{energyestimatetheor}.
\par Next, from the Mean Value Theorem, the definition of $P_{k}$ in Theorem \ref{app lemma}, we can verify the existence of a $c>0$ such that if $v>0$ is sufficiently small, then
\begin{equation*}
    \norm{ P_{k}(t,x,\sigma_{k}(t)) -P_{k}(t,x,\sigma_{u}(t))}_{H^{1}}\leq K\left(\ln{\frac{1}{v}}\right)^{c}\max_{j\in\{\zeta,v,\gamma,\omega\}}\vert p_{j}(t,r) \vert,
\end{equation*}
for some constant $K>1.$ Consequently, using Lemma \ref{Modulation} and \ref{final}, we obtain estimate \eqref{theenergyestimate} from Minkowski inequality.
\par Moreover, the partial differential equation \eqref{NLS3} is locally well-Posed in the space $H^{s}(\mathbb{R})$ for any $s\geq 0,$ and we are assuming $u(T_{0,k})\in H^{l}(\mathbb{R})$ from the hypotheses of Theorem \ref{energyestimatetheor}. Consequently, we deduce that $\partial^{j}_{x}r(t,x)\in H^{1}(\mathbb{R})$ if $0\leq j\leq l-1$ for all $t\in\mathbb{R}$ satisfying $\vert t-T_{0,k} \vert\leq \frac{\left(\ln{\frac{1}{v}}\right)^{\frac{4}{3}}}{v}.$ Since $u(T_{0,k})$ has sufficient regularity, we might use the same techniques of this step to estimate the higher Sobolev norms of $r(t),$ and also the weighted norms for $r(t)$ in the next steps.\\
 \textbf{Step 2.}(Estimate of $\norm{r(t)}_{H^{l}}$ for $l>1.$)
\par First, using the partial differential equation \eqref{mainequationr} and estimate \eqref{000}, we can verify for any $m\in \mathbb{N}$ satisfying $0\leq m\leq l-1,$ that the function $r_{m}(t,x)\coloneqq \partial^{m}_{x}r(t,x)$ satisfies the following partial differential equation
\begin{multline}\label{mainequationrj}
\begin{aligned}
\partial_{t}r_{m}(t,x)=&i\partial^{2}_{x}r_{m}(t,x)-i\omega r_{m}(t,x)\\
&{+}\left[iF^{'}\left(\phi_{\omega}(x-(\zeta_{k}+p_{\zeta}))^{2}\right)+F\left(\phi_{\omega}(x+(\zeta_{k}+p_{\zeta}))^{2}\right)\right]r_{m}(t,x)\\
&{+}iF^{''}\left(\phi_{\omega}(x-(\zeta_{k}+p_{\zeta}))^{2}\right)\phi_{\omega}(x-(\zeta_{k}+p_{\zeta}))^{2}r_{m}(t,x)\\
&{+}iF^{''}\left(\phi_{\omega}(x+(\zeta_{k}+p_{\zeta}))^{2}\right)\phi_{\omega}(x+(\zeta_{k}+p_{\zeta}))^{2}r_{m}(t,x)\\
&{+}iF^{''}\left(\phi_{\omega}(x-(\zeta_{k}+p_{\zeta}))^{2}\right)\phi_{\omega}(x-(\zeta_{k}+p_{\zeta}))^{2}\overline{r_{m}(t,x)}e^{2i\alpha(t,x,\sigma_{u}(t))-2i\gamma_{k}(t)}\\
&{+}iF^{''}\left(\phi_{\omega}(x+(\zeta_{k}+p_{\zeta}))^{2}\right)\phi_{\omega}(x+(\zeta_{k}+p_{\zeta}))^{2}\overline{r_{m}(t,x)}e^{2i\alpha(t,{-}x,\sigma_{u}(t))-2i\gamma_{k}(t)}\\
&{+}O\left(\norm{r(t)}_{H^{m}}+v^{2}\left(\ln{\frac{1}{v}}\right)^{c}\norm{r(t)}_{H^{m+1}}+\norm{r(t)}_{H^{m+1}}^{2d_{0}-1}+v^{2k+2}\left(\ln{\frac{1}{v}}\right)^{c(k)}\right),
\end{aligned}
\end{multline}
such that $d_{0}\in\mathbb{N}_{\geq 1}$ is the degree of the polynomial $F$ and
the expression inside $O$ means a function $G(t,x)$ such that there exists a constant $C>1$ satisfying
\begin{equation*}
   \norm{G(t,x)}_{H^{1}}\leq C \left[\norm{r(t)}_{H^{m}}+v^{2}\left(\ln{\frac{1}{v}}\right)^{c}\norm{r(t)}_{H^{m+1}}+\norm{r(t)}_{H^{m+1}}^{2d_{0}-1}\right],
\end{equation*}
for all $\vert t-T_{0,k}\vert\leq \frac{\left(\ln{\frac{1}{v}}\right)^{\frac{4}{3}}}{v},$ see the notation section in the introduction.
\par Moreover, using integration by parts and estimate \eqref{final}, we can verify the existence of constants $K,\,C>1$ and parameter a $c(k)>0$ satisfying
\begin{equation*}
    \left\vert \left\langle r_{m}(t),\varphi_{k,\beta}(t,\pm x) \right\rangle \right\vert\leq K v^{2k}\left(\ln{\frac{1}{v}}\right)^{c(k)}\exp\left(\frac{C v}{\ln{\frac{1}{v}}}\vert t-T_{0,k}\vert\right),
\end{equation*}
for any $\beta\in\{\zeta,v,\gamma,\omega\},$
while $\vert t-T_{0,k}\vert\leq \frac{\left(\ln{\frac{1}{v}}\right)^{\frac{4}{3}}}{v}.$  So, Lemma \ref{coerc2} implies the existence of $c>0$ and $K>1$ satisfying
\begin{equation}\label{pooo}
    L(r_{m},t)\geq c\norm{r_{m}(t)}_{H^{1}}^{2}-Kv^{4k}\left(\ln{\frac{1}{v}}\right)^{2c(k)}\exp\left(\frac{2C v}{\ln{\frac{1}{v}}}\vert t-T_{0,k}\vert\right),
\end{equation}
while $\vert t-T_{0,k}\vert\leq \frac{\left(\ln{\frac{1}{v}}\right)^{\frac{4}{3}}}{v}.$ 
\par Furthermore, using the partial differential equation \eqref{mainequationrj}, we can verify similarly to the Step $1$ for constants $K>1,\,c>0$ that the function 
$E(t,r_{m})$ satisfies 
\begin{align}\label{ej}
E(t,r_{m})+Kv^{4k}\left(\ln{\frac{1}{v}}\right)^{2c(k)}\exp\left(\frac{2C v}{\ln{\frac{1}{v}}}\vert t-T_{0,k}\vert\right)\geq c\norm{r_{m}(t)}_{H^{1}}^{2},\\ \label{dej}
\dot E(t,r_{m})\leq  K\left[\frac{v}{\ln{\frac{1}{v}}}\norm{r_{m}(t)}_{H^{1}}^{2}+\norm{r_{m}(t)}_{H^{1}}\norm{r(t)}_{H^{m}}+v^{2k+1}\norm{r_{m}(t)}_{H^{1}}\right],
\end{align}
if $v>0$ is sufficiently small, $\norm{r(t)}_{H^{m}}<1$ and $\vert t-T_{0,k}\vert\leq \frac{\left(\ln{\frac{1}{v}}\right)^{\frac{4}{3}}}{v}.$ 
\par From the estimates above and the hypothesis \eqref{hypoo}, we can verify inductively for any natural number $l_{1}\leq l$ that there exist numbers $C,\,C(l_{1})>1,\,c(l_{1})>0$ satisfying
\begin{equation}\label{jl}
\norm{r(t)}_{H^{l_{1}}}\leq C(l_{1})v^{2k+1-l_{1}}\left(\ln{\frac{1}{v}}\right)^{c(l_{1})}\exp\left(\frac{2C v}{\ln{\frac{1}{v}}}\vert t-T_{0,k}\vert\right),
\end{equation}
for any $\vert t-T_{0,k}\vert\leq \frac{\left(\ln{\frac{1}{v}}\right)^{\frac{4}{3}}}{v},$ if $v>0$ is small enough.
Indeed, estimate \eqref{jl} was verified for $l_{1}=1$ in the first step.
\par Consequently, if \eqref{jl} is true for $1\leq l_{1}\leq m<l,$ then we can verify from the elementary estimate
\begin{equation*}
\norm{r_{m}(t)}_{H^{1}}\norm{r(t)}_{H^{m}}\leq \frac{v}{\ln{\frac{1}{v}}}\max\left(\norm{r_{m}(t)}_{H^{1}},C(m)v^{2k-m}\left(\ln{\frac{1}{v}}\right)^{c(m)+1}\exp\left(\frac{2C\vert t-T_{0,k} \vert}{\ln{\frac{1}{v}}}\right)\right)^{2}
\end{equation*}
and estimates \eqref{ej}, \eqref{dej} using Gronwall Lemma that there exist $C>1,\,c(j)>0$ and $C_{1}(m)>1$ depending on $j$ such that
\begin{equation*}
c\norm{\partial^{m}_{x}r(t)}_{H^{1}}\leq C_{1}(m)v^{2k-m}\left(\ln{\frac{1}{v}}\right)^{c(j)}\exp\left(\frac{2C v}{\ln{\frac{1}{v}}}\vert t-T_{0,k}\vert\right),
\end{equation*}
if $v>0$ is sufficiently small and $\vert t-T_{0,k}\vert\leq \frac{\left(\ln{\frac{1}{v}}\right)^{\frac{4}{3}}}{v}.$ Therefore, since $\norm{r(t)}_{H^{j+1}}$ is equivalent to $\norm{r_{m}(t)}_{H^{1}}+\norm{r(t)}_{H^{m}},$ we conclude that there exists $C_{2}(m)>1$ depending only on $m$ satisfying
\begin{equation*}
\norm{r(t)}_{H^{m+1}}\leq C_{2}(m)v^{2k-m}\left(\ln{\frac{1}{v}}\right)^{c(l)}\exp\left(\frac{2C v}{\ln{\frac{1}{v}}}\vert t-T_{0,k}\vert\right),
\end{equation*}
for any $\vert t-T_{0,k}\vert\leq \frac{\left(\ln{\frac{1}{v}}\right)^{\frac{4}{3}}}{v},$ if $v>0$ is small enough, and so \eqref{jl} is true for any natural number $l_{1}\leq l$ when \eqref{hypoo} is true.
\\
\textbf{Step 3.}(Proof of the Weighted Estimates in $H^{1}.$)
Let $r_{j,m}(t,x)\coloneqq \partial^{m}_{x}\left[x^{j}r(t,x)\right]$  for any $j\in\mathbb{N}.$ 
\par Furthermore, similarly to the proof of estimate \eqref{lambdar}, using Taylor's Expansion Theorem, identity
\begin{equation}\label{xlid}
    x^{j}\Lambda\left(P_{k}(t,x,\sigma_{u}(t))+e^{i\gamma_{k}(t)}r(t,x)\right)=0,
\end{equation} 
and the estimate $\norm{f}_{L^{\infty}}\leq K\norm{f}_{H^{1}}$ in any function $f\in H^{1}(\mathbb{R})$ for some $K>1,$ we can verify from \eqref{000} and the estimate \eqref{final} on $\norm{r(t)}_{H^{1}}$ that $r_{j,0}$ satisfies for some constant $c>0,$ and a parameter $c(k,j)>0$ the following partial differential estimate
\begin{multline}\label{mainequationr2}
\begin{aligned}
\partial_{t}r_{j,0}(t,x)=&i\partial^{2}_{x}r_{j,0}(t,x)-i\omega r_{j,0}(t,x)-j(j-1) i r_{j-2,0}(t,x)+2j i\partial_{x}r_{j-1,0}(t,x)\\
&{+}i\left[F^{'}\left(\phi_{\omega}(x-(\zeta_{k}+p_{\zeta}))^{2}\right)+F^{'}\left(\phi_{\omega}(x+(\zeta_{k}+p_{\zeta}))^{2}\right)\right]r_{j,0}(t,x)\\
&{+}iF^{''}\left(\phi_{\omega}(x-(\zeta_{k}+p_{\zeta}))^{2}\right)\phi_{\omega}(x-(\zeta_{k}+p_{\zeta}))^{2}r_{j,0}(t,x)\\
&{+}iF^{''}\left(\phi_{\omega}(x+(\zeta_{k}+p_{\zeta}))^{2}\right)\phi_{\omega}(x+(\zeta_{k}+p_{\zeta}))^{2}r_{j,0}(t,x)\\
&{+}iF^{''}\left(\phi_{\omega}(x-(\zeta_{k}+p_{\zeta}))^{2}\right)\phi_{\omega}(x-(\zeta_{k}+p_{\zeta}))^{2}\overline{r_{j,0}(t,x)}e^{2i\alpha(t,x,\sigma_{u})-2i\gamma_{k}(t)}\\
&{+}iF^{''}\left(\phi_{\omega}(x+(\zeta_{k}+p_{\zeta}))^{2}\right)\phi_{\omega}(x+(\zeta_{k}+p_{\zeta}))^{2}\overline{r_{j,0}(t,x)}e^{2i\alpha(t,{-}x,\sigma_{u})-2i\gamma_{k}(t)}\\
&{+}O\left(v^{2k}\left(\ln{\frac{1}{v}}\right)^{c(k,j)}+v^{2}\left(\ln{\frac{1}{v}}\right)^{c}\norm{r_{j,0}(t)}_{H^{1}}+\norm{r(t)}_{H^{1}}\norm{r_{j,0}(t)}_{H^{1}}\right),
\end{aligned}
\end{multline}
for all natural number $j\geq 1$ and all $t$ satisfying $\left\vert t -T_{0,k}\right\vert\leq \frac{\left(\ln{\frac{1}{v}}\right)^{\frac{4}{3}}}{v}.$ The expression associated to $O$ means a function $G(t,x)$ satisfying for a $C>1$
\begin{equation*}
    \norm{G(t,x)}_{H^{1}}\leq C\left[v^{2k}\left(\ln{\frac{1}{v}}\right)^{c(k,j)}+v^{2}\left(\ln{\frac{1}{v}}\right)^{c}\norm{r_{j,0}(t)}_{H^{1}}+\norm{r(t)}_{H^{1}}\norm{r_{j,0}(t)}_{H^{1}}\right],
\end{equation*}
if $\left\vert t -T_{0,k}\right\vert\leq \frac{\left(\ln{\frac{1}{v}}\right)^{\frac{4}{3}}}{v}$ and $v>0$ is small enough.
Indeed, estimate \eqref{mainequationr2} is a elementary consequence of the product of $x^{j}$ with estimate \eqref{mainequationr}
\par Moreover, from estimate \eqref{final} and identities \eqref{dphizeta}, \eqref{dphigamma}, \eqref{dphiomega}, \eqref{dphiv}, we can verify from Cauchy-Schwarz inequality that
\begin{equation}\label{orthpopopopo}
   \max_{\beta\in\{\zeta,v,\gamma,\omega\}} \left\vert \left\langle r_{j,0}(t), \varphi_{\beta,k}(t,\pm x)\right\rangle\right\vert\leq Kv^{2k}\left(\ln{\frac{1}{v}}\right)^{c_{0}(k,j)}\exp\left(\frac{Cv}{\ln{\frac{1}{v}}}\left\vert t-T_{0,k}\right\vert\right),
\end{equation}
for some positive constants $C,\,K>1,$ and a number $c_{0}(k,j)>0$ depending only on $j$ and $k.$ 
\par Next, considering the functions
\begin{align}\label{Quadratic energy2}
    L(t,r_{j,0})=&\int_{\mathbb{R}}\left\vert \partial_{x}r_{j,0}(t,x)\right\vert^{2}+\omega \left\vert r_{j,0}(t,x)\right\vert^{2}\,dx\\ \nonumber
    &{-}\int_{\mathbb{R}}F^{'}\left(\phi_{\omega}(x-\zeta_{k}(t))^{2}\right)\left\vert  r_{j,0}(t,x)\right\vert^{2}+F^{'}\left(\phi_{\omega}(x+\zeta_{k}(t))^{2}\right)\left\vert  r_{j,0}(t,x)\right\vert^{2}\,dx\\ \nonumber
    &{-}\Ree\int_{\mathbb{R}} F^{''}\left(\phi_{\omega}(x-\zeta_{k})^{2}\right)\phi_{\omega}(x-\zeta_{k})^{2}e^{2i\alpha_{k}(t,x)}\overline{r_{j,0}(t,x)^{2}}e^{{-}2i\gamma_{k}(t)}\,dx\\ \nonumber
    &{-}\Ree\int_{\mathbb{R}}F^{''}\left(\phi_{\omega}(x+\zeta_{k})^{2}\right)\phi_{\omega}(x+\zeta_{k})^{2}e^{2i\alpha_{k}(t,{-}x)}\overline{r_{j,0}(t,x)^{2}}e^{{-}2i\gamma_{k}(t)}\,dx\\ \nonumber
    &{-}\int_{\mathbb{R}} F^{''}\left(\phi_{\omega}(x-\zeta_{k})^{2}\right)\phi_{\omega}(x-\zeta_{k})^{2}\vert r_{j,0}(t,x)\vert^{2}\,dx\\ \nonumber
    &{-}\int_{\mathbb{R}} F^{''}\left(\phi_{\omega}(x+\zeta_{k})^{2}\right)\phi_{\omega}(x+\zeta_{k})^{2}\vert r_{j,0}(t,x)\vert^{2}\,dx,
\end{align}
and
\begin{align}\label{localmomentum2}
    P_{1}(t,r_{j,0})\coloneqq \I \int_{\mathbb{R}}\chi_{1}(t,x)\overline{r_{j,0}(t,x)}\partial_{x}r_{j,0}(t,x)\,dx,\,\, P_{2}(t,r_{j,0})\coloneqq \I \int_{\mathbb{R}}\chi_{2}(t,x)\overline{r_{j,0}(t,x)}\partial_{x}r_{j,0}(t,x)\,dx,
\end{align}
we can verify similarly to the proof of estimate \eqref{dE} that the Lyapunov function
\begin{equation*}
    E(t,r_{j,0})=L(t,r_{j,0})-\dot d(t)P_{2}(t,r_{j,0})+\dot d(t)P_{1}(t,r_{j,0})
\end{equation*}
satisfies
\begin{align}\label{energt00}
    \left\vert \dot E(t,r_{j,0})  \right \vert =O\Bigg(&v^{2k}\left(\ln{\frac{1}{v}}\right)^{c(k,j)}\norm{r_{j,0}(t)}_{H^{1}}+\frac{v}{\ln{\frac{1}{v}}}\norm{r_{j,0}(t)}_{H^{1}}^{2}+\norm{r_{j,0}(t)}_{H^{1}}\norm{r(t)}_{H^{1}}^{2}\\ \nonumber
    &{+}j(j-1)\norm{r_{j,0}(t)}_{H^{1}}\norm{r_{j-2,0}(t)}_{H^{1}}+j\norm{r_{j,0}(t)}_{H^{1}} \norm{r_{j-1,0}(t)}_{H^{2}}\Bigg),
\end{align}
if $\left\vert t -T_{0,k}\right\vert\leq \frac{\left(\ln{\frac{1}{v}}\right)^{\frac{4}{3}}}{v},\,0\leq j\leq l$ and $v>0$ is small enough.
\par Moreover, using the estimates \eqref{orthpopopopo}, we can verify similarly to the proof of Lemma \ref{coerc} from Lemma \ref{coer} that there exists $K>1$ satisfying
\begin{equation}\label{coerct00}
E(t,r_{j,0})+Kv^{4k}\left(\ln{\frac{1}{v}}\right)^{2c_{0}(k,j)}\exp\left(\frac{2Cv}{\ln{\frac{1}{v}}}\left\vert t-T_{0,k}\right\vert\right)\geq \norm{r_{j,0}(t)}_{H^{1}}^{2}.
\end{equation}

\par Furthermore, motivated by the estimates \eqref{energt00} and \eqref{coerct00}, we are going to verify in the next step by induction on $j$ for any $j\in\mathbb{N}_{\leq l+1},\,m\in\mathbb{N}_{\leq l+1-j}$ satisfying that there exist numbers $C>1,\,C_{j+m}>1,\,c_{j+m}>0$ satisfying
\begin{equation}\label{weigind}
    \norm{r_{j,0}(t)}_{H^{m}}\leq C_{j+m}v^{2k-(2j+m)+1}\left(\ln{\frac{1}{v}}\right)^{c_{j+m}}\exp\left(C\frac{v\vert t-T_{0,k}\vert}{\ln{\frac{1}{v}}}\right),
\end{equation}
for all $\left\vert t -T_{0,k}\right\vert\leq \frac{\left(\ln{\frac{1}{v}}\right)^{\frac{4}{3}}}{v},$ if $v>0$ is sufficiently small.
\par Indeed if for a natural number $j_{0}<l$ the \eqref{weigind} is true for any $0\leq j\leq j_{0},$ then, using estimate \eqref{energt00} for $j=j_{0}+1,$ we obtain that 
\begin{equation*}
     \left\vert \dot E(t,r_{j_{0}+1,0})  \right \vert =O\left(\frac{v}{\ln{\frac{1}{v}}}\norm{r_{j_{0}+1},0(t)}_{H^{1}}^{2}+v^{2k-2j_{0}-1}\left(\ln{\frac{1}{v}}\right)^{c_{j_{0}+2}}\exp\left(C\frac{v\vert t-T_{0,k}\vert}{\ln{\frac{1}{v}}}\right)\norm{r_{j_{0}+1,0}(t)}_{H^{1}}\right),
\end{equation*}
from which we would obtain using Gronwall Lemma similarly to the approach in Step $1$ that there exist $K,\, C_{2}>1$ such that
\begin{equation}\label{j0+1}
   \norm{ x^{j_{0}+1} r(t,x)}_{H^{1}}=\norm{r_{j,0}(t)}_{H^{1}}\leq Kv^{2k-2j_{0}-2}\left(\ln{\frac{1}{v}}\right)^{c_{j_{0}+2}+1}\exp\left(C_{2}\frac{v\vert t-T_{0,k}\vert}{\ln{\frac{1}{v}}}\right),
\end{equation}
while $\left\vert t -T_{0,k}\right\vert\leq \frac{\left(\ln{\frac{1}{v}}\right)^{\frac{4}{3}}}{v}.$  \\
\textbf{Step 4.}(Proof of \eqref{weigind}.) The estimate \eqref{weigind} for $j=0$ was already proved in Step $2,$ so we can assume that \eqref{weigind} is true when $0\leq j\leq j_{0}< l$ for some natural number $j_{0}\geq 0.$ Indeed if it is true for any  $0\leq j\leq l,$ there is nothing more to prove. 
\par Moreover, from the last argument in Step $3,$ we verified that if \eqref{weigind} is true for all natural number $j$ satisfying $0\leq j\leq j_{0},$ then \eqref{j0+1} is true for all $\left\vert t -T_{0,k}\right\vert\leq \frac{\left(\ln{\frac{1}{v}}\right)^{\frac{4}{3}}}{v}.$ Therefore, there is a maximum $m_{0}\geq 1$ such that 
\begin{equation}\label{otimo}
     \norm{ x^{j_{0}+1} r(t,x)}_{H^{m}}=\norm{r_{j_{0}+1,0}(t)}_{H^{m}}\leq Kv^{2k-2j_{0}-2-m+1}\left(\ln{\frac{1}{v}}\right)^{c_{j_{0}+2}+1}\exp\left(C_{2}\frac{v\vert t-T_{0,k}\vert}{\ln{\frac{1}{v}}}\right),
\end{equation}
is true while $\left\vert t -T_{0,k}\right\vert\leq \frac{\left(\ln{\frac{1}{v}}\right)^{\frac{4}{3}}}{v}$ for  all natural $m$ satisfying $1\leq m\leq m_{0}.$ If $m_{0}\geq l-j_{0}+1,$ then \eqref{weightednorm} would be true for any $0\leq j\leq j_{0}+1.$ Consequently, to prove \eqref{weightednorm} for any $j=j_{0}+1,$ it is enough to verify that if \eqref{otimo} is true for any $0<m\leq m_{0} <l-j_{0}+1,$ then it is true for $m_{0}+1.$ 
\par Next, we recall the function $r_{j,m}(t,x)=\partial^{m}_{x}\left[x^{j}r(t,x)\right].$ Since $F$ is a polynomial satisfying \eqref{H1}, using Taylor's Expansion Theorem, \eqref{000}, and the product rule of derivative, we can verify from the derivative of \eqref{mainequationr} on $x$ that
\begin{multline}\label{mainequationrm}
\begin{aligned}
\partial_{t}r_{j_{0}+1,m_{0}}(t,x)=&i\partial^{2}_{x}r_{j_{0}+1,m_{0}}(t,x)-i\omega r_{j_{0}+1,m_{0}}(t,x)\\&{-}j_{0}(j_{0}+1) i r_{j_{0}-1,m_{0}}(t,x)+2(j_{0}+1) i\partial_{x}r_{j_{0},m_{0}}(t,x)
\\&{+}i\left[F^{'}\left(\phi_{\omega}(x-(\zeta_{k}+p_{\zeta}))^{2}\right)+F^{'}\left(\phi_{\omega}(x+(\zeta_{k}+p_{\zeta}))^{2}\right)\right]r_{j_{0}+1,m_{0}}(t,x)\\
&{+}iF^{''}\left(\phi_{\omega}(x-(\zeta_{k}+p_{\zeta}))^{2}\right)\phi_{\omega}(x-(\zeta_{k}+p_{\zeta}))^{2}r_{j_{0}+1,m_{0}}(t,x)\\
&{+}iF^{''}\left(\phi_{\omega}(x+(\zeta_{k}+p_{\zeta}))^{2}\right)\phi_{\omega}(x+(\zeta_{k}+p_{\zeta}))^{2}r_{j_{0}+1,m_{0}}(t,x)\\
&{+}iF^{''}\left(\phi_{\omega}(x-(\zeta_{k}+p_{\zeta}))^{2}\right)\phi_{\omega}(x-(\zeta_{k}+p_{\zeta}))^{2}\overline{r_{j_{0}+1,m_{0}}(t,x)}e^{2i\alpha(t,x,\sigma_{u})-2i\gamma_{k}(t)}\\
&{+}iF^{''}\left(\phi_{\omega}(x+(\zeta_{k}+p_{\zeta}))^{2}\right)\phi_{\omega}(x+(\zeta_{k}+p_{\zeta}))^{2}\overline{r_{j_{0}+1,m_{0}}(t,x)}e^{2i\alpha(t,{-}x,\sigma_{u})-2i\gamma_{k}(t)}\\
&{+}O\left(v^{2k}\left(\ln{\frac{1}{v}}\right)^{c(k,j_{0},m_{0})}+v^{2}\left(\ln{\frac{1}{v}}\right)^{c}\norm{r_{j_{0}+1,0}(t)}_{H^{m_{0}+1}}\right)\\&{+}O\left(\max_{d_{1}+d_{2}=m_{0}+2,\,\min(d_{1},d_{2})\geq 1}\norm{r(t)}_{H^{d_{1}}}\norm{r_{j_{0}+1,0}(t)}_{H^{d_{2}}}+\norm{r_{j_{0}+1,0}(t)}_{H^{m_{0}}}\right),
\end{aligned}
\end{multline}
where $c(k,j_{0},m_{0})>0$ is a number depending on $k,\,j_{0},m_{0},$ while $\left\vert t -T_{0,k}\right\vert\leq \frac{\left(\ln{\frac{1}{v}}\right)^{\frac{4}{3}}}{v}$ and $v>0$ is small enough.
The expression inside $O$ in \eqref{mainequationrm} means a function $G(t,x)$ satisfying for some constant $C>1$
\begin{align*}
\norm{G(t,x)}_{H^{1}}\leq &C\left[v^{2k}\left(\ln{\frac{1}{v}}\right)^{c(k,j_{0},m_{0})}+v^{2}\left(\ln{\frac{1}{v}}\right)^{c}\norm{r_{j_{0}+1,0}(t)}_{H^{m_{0}+1}}\right]\\&{+}C\left[\max_{d_{1}+d_{2}=m_{0}+2,\,\min(d_{1},d_{2})\geq 1}\norm{r(t)}_{H^{d_{1}}}\norm{r_{j,0}(t)}_{H^{d_{2}}}+\norm{r_{j_{0}+1,0}(t)}_{H^{m_{0}}}\right],
\end{align*}
while $\left\vert t -T_{0,k}\right\vert\leq \frac{\left(\ln{\frac{1}{v}}\right)^{\frac{4}{3}}}{v}.$
\par Therefore, if \eqref{otimo} is true for $m=m_{0},$ we deduce that there exists $C>1$ satisfying
\begin{align*}
\norm{G(t,x)}_{H^{1}}\leq &C\left[v^{2k}\left(\ln{\frac{1}{v}}\right)^{c(k,j_{0},m_{0})}+v^{2}\left(\ln{\frac{1}{v}}\right)^{c}\norm{r_{j_{0}+1,0}(t)}_{H^{m_{0}+1}}\right]\\&{+}C\left[\max_{d_{1}+d_{2}=m_{0}+2,\,\min(d_{1},d_{2})\geq 1}\norm{r(t)}_{H^{d_{1}}}\norm{r_{j,0}}_{H^{d_{2}}}\right]\\&{+}C\left[v^{2k-2j_{0}-1-m_{0}}\left(\ln{\frac{1}{v}}\right)^{c_{j_{0}+2}+1}\exp\left(C_{2}\frac{v\vert t-T_{0,k}\vert}{\ln{\frac{1}{v}}}\right)\right],
\end{align*}
\par Consequently, similarly to the proof of estimates of \eqref{energt00}, \eqref{coerct00}, if $0\leq m\leq l-j_{0},$ we can verify using estimates \eqref{jl}, \eqref{weigind},and the partial differential equation \eqref{mainequationrm} that if $k\geq k_{0}$ is large enough, then
\begin{align}\label{energt002m}
    \left\vert \dot E(t,r_{j_{0}+1,m_{0}})  \right \vert =O\Bigg(&v^{2k-2j_{0}-1-m_{0}}\left(\ln{\frac{1}{v}}\right)^{c_{j_{0}+2}+1}\exp\left(C_{2}\frac{v\vert t-T_{0,k}\vert}{\ln{\frac{1}{v}}}\right)\norm{r_{j_{0}+1,m_{0}}(t)}_{H^{1}}\\ \nonumber &{+}\frac{v}{\ln{\frac{1}{v}}}\norm{r_{j_{0}+1,m_{0}}(t)}_{H^{1}}^{2}+j_{0}(j_{0}+1)\norm{r_{j_{0}-1,m_{0}}(t)}_{H^{1}}\norm{r_{j_{0}+1,m_{0}}(t)}_{H^{1}}\\&{+}(j_{0}+1)\norm{r_{j_{0},m_{0}}(t)}_{H^{2}} \norm{r_{j_{0}+1,m_{0}}(t)}_{H^{1}}\Bigg),
\end{align}
and
\begin{equation}\label{coerct00m}
E(t,r_{j_{0}+1,m_{0}})+Kv^{4k}\left(\ln{\frac{1}{v}}\right)^{2c(k,j_{0}+1)}\exp\left(\frac{2Cv}{\ln{\frac{1}{v}}}\left\vert t-T_{0,k}\right\vert\right)\geq \norm{r_{j_{0}+1,m_{0}}(t)}_{H^{1}}^{2},
\end{equation}
for some number $c(k,j_{0}+1)>0,$ and all $\left\vert t -T_{0,k}\right\vert\leq \frac{\left(\ln{\frac{1}{v}}\right)^{\frac{4}{3}}}{v},$ if $v>0$ is small enough.
\par Furthermore, from the estimates \eqref{weigind} for $j\leq j_{0},$ we deduce from \eqref{weigind} that
\begin{equation}\label{e00better}
\left\vert \dot E(t,r_{j_{0}+1,m_{0}})  \right \vert =O\left(\frac{v}{\ln{\frac{1}{v}}}\norm{r_{j_{0}+1,m_{0}}(t)}_{H^{1}}^{2}+\norm{r_{j_{0}+1,m}(t)}_{H^{1}}v^{2k-2j_{0}-m_{0}-1}\left(\ln{\frac{1}{v}}\right)^{c_{j,m}}\exp\left(C\frac{v\vert t-T_{0,k} \vert}{\ln{\frac{1}{v}}}\right)\right),   
\end{equation}
for some constant $c_{j_{0},m}$ while $\left\vert t -T_{0,k}\right\vert\leq \frac{\left(\ln{\frac{1}{v}}\right)^{\frac{4}{3}}}{v}.$
\par In conclusion, using the assumption \eqref{hypoo}, and estimates \eqref{coerct00m}, \eqref{e00better}, we obtain using Gronwall Lemma in the same way we used in Step $1$ that there exist constants $K_{m_{0}}>1,\,C_{2}>1,\,c_{j_{0},m_{0}}>0$ satisfying
\begin{equation*}
    \norm{\partial^{m}_{x}\left[r(t) x^{j_{0}+1}\right]}_{H^{1}}\leq K_{m_{0}}v^{2k-2j_{0}-m_{0}-2}\left(\ln{\frac{1}{v}}\right)^{c_{j_{0},m_{0}}+1}\exp\left(C_{2}\frac{v\vert t-T_{0,k} \vert}{\ln{\frac{1}{v}}}\right),
\end{equation*}
 for any natural number $m\leq m_{0},$ if $\left\vert t -T_{0,k}\right\vert\leq \frac{\left(\ln{\frac{1}{v}}\right)^{\frac{4}{3}}}{v}$ and $v>0$ is small enough. The estimate above implies that \eqref{weigind} is true for $j=j_{0}+1,$ so it is true for all $j\in\mathbb{N}_{\leq l}$ when $v>0$ is sufficiently small, which implies \eqref{rrr} when $j=l$ and $m=1.$
\end{proof}
\section{Orbital Stability of two Solitary Waves}\label{orbsec}
The main objective of this section is to prove the following result.
\begin{theorem}\label{orbitstab}
There exists $C>1$ and $\delta\in(0,1)$ such that if $0<v_{0}<\delta,$ and $\zeta_{0}>\frac{16}{\sqrt{\omega}}\ln{\frac{1}{v}},$ and $r_{j,1}\in H^{1}(\mathbb{R})$ is an odd function satisfying $\norm{r_{j,1}}_{H^{1}}<v_{0}^{5},$ then the solution $\psi(t,x)$ of the following Initial Value Problem 
\begin{equation*}
    \begin{cases}
    i\psi_{t}+\psi_{xx}+F^{'}\left(\vert \psi\vert^{2}\right)\psi=0,\\
    \psi(0,x)=e^{i\gamma_{0}}\left(e^{{-}iv_{0}\frac{x}{2}}\phi_{\omega}(x-\zeta_{0})-e^{iv_{0}\frac{x}{2}}\phi_{\omega}(x+\zeta_{0})\right)+r_{j,1}(x)
    \end{cases}
\end{equation*}
satisfies
\begin{equation}
    \psi(t,x)=e^{i\left(\gamma(t)+\omega t-\frac{v_{0}\zeta(t)}{4}\right)}\left(e^{{-}i\frac{v_{0}}{2}(x-\frac{\zeta(t)}{2})}\phi_{\omega}(x-\zeta(t))-e^{i\frac{v_{0}}{2}(x+\frac{\zeta(t)}{2})}\phi_{\omega}(x+\zeta(t))\right)+r(t,x),
\end{equation}
such that
\begin{align}\label{estdynm}
\left\vert \dot \gamma(t)-\omega+\frac{v_{0}^{2}}{4} \right\vert+\vert \dot \zeta(t)+v_{0} \vert \leq  & C\left[e^{{-}\sqrt{\omega}\zeta_{0}}+\norm{r_{j,1}}_{H^{1}}\right],\\ \nonumber
\norm{r(t)}_{H^{1}}\leq & C\left[ \norm{r_{j,1}}_{H^{1}}+e^{{-}\frac{\sqrt{\omega}}{2}\zeta_{0}}\right],
\end{align}
for any $t\leq 0.$
\end{theorem}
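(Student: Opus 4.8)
The plan is to establish Theorem \ref{orbitstab} by a modulation argument whose error is closed with the \emph{exact} conservation of the Hamiltonian \eqref{H} and mass \eqref{M} together with an almost conserved localized momentum, the whole point being that the resulting estimate is uniform on the entire half-line $(-\infty,0]$ rather than on a polynomial time window as in Section \ref{s}. First I would record that, since the datum is odd in $x$ and the nonlinearity $F'(|u|^2)u$ is odd, $\psi(t,\cdot)$ stays odd for all $t$; this symmetry forces the two bumps to carry exactly opposite velocities, so that no velocity parameter needs to be modulated and the decomposition involves only the phase $\gamma$ and the half-separation $\zeta$ appearing in the statement. Applying the Implicit Function Theorem exactly as in Lemma \ref{Modulation}, on the maximal interval $(-T_\ast,0]$ on which the solution stays within a fixed small $H^1$ tube of the two-soliton configuration, I would write $\psi = \Phi(t,x;\gamma(t),\zeta(t)) + r(t,x)$ with $\Phi$ the profile in the conclusion and $r$ orthogonal in the sense of \eqref{dotproduct} to the two symmetrized kernel directions built from $\phi_\omega$ and $\phi_\omega'$.

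Next I would differentiate the orthogonality conditions in $t$ and use that $\psi$ solves \eqref{NLS3} to obtain an almost diagonal linear system for $(\dot\gamma,\dot\zeta)$; estimating the coupling and the soliton interaction through Lemma \ref{interactt} yields bounds of the form \eqref{estdynm}, namely $|\dot\gamma(t)|+|\dot\zeta(t)-v_0|\lesssim \|r(t)\|_{H^1}+e^{-\sqrt\omega\zeta(t)}$. A crucial by-product is geometric: because the two opposite-phase solitons separate into the past, $\zeta(t)\geq\zeta_0$ for every $t\le 0$ and $\zeta(t)$ grows linearly as $t\to-\infty$, so that the interaction is integrable backward in time,
\begin{equation*}
\int_{-\infty}^{0}e^{-\sqrt\omega\zeta(s)}\,ds\lesssim \frac{e^{-\sqrt\omega\zeta_0}}{\sqrt\omega\,v_0}.
\end{equation*}
This single bound is what will ultimately produce the time-independent constants in \eqref{estdynm}.

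For the control of $r$ I would build a Lyapunov functional adapted to the two oppositely moving profiles, of the shape $E(t)=H(\psi)+\omega Q(\psi)+v_0\big(P_1(t,\psi)-P_2(t,\psi)\big)$, where $P_1,P_2$ are the localized momenta \eqref{localmomentum} associated with the cut-offs \eqref{chi12}. Expanding $E$ around $\Phi$, the zeroth order term is the energy of the two-soliton (conserved up to interaction), the first order term vanishes up to an $O(e^{-\sqrt\omega\zeta})$ interaction precisely because each soliton is a constrained critical point of $H+\omega Q\pm v_0 M$ and because $r$ satisfies the orthogonality conditions, and the second order term is the quadratic form. Splitting $r=\chi_1 r+\chi_2 r$ and applying the single-soliton coercivity of $S_\omega$ from Lemma \ref{coer} and Remark \ref{coerremark} to each localized piece — exactly the mechanism of Lemma \ref{coerc} — I would get $E(t)-E_{\Phi}(t)\geq c\|r(t)\|_{H^1}^2-C e^{-\sqrt\omega\zeta(t)}$.

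Finally I would differentiate $E$ in time. Since $H$ and $Q$ are exactly conserved, $\dot E$ only sees the non-conservation of the localized momenta, which by the computation behind Corollary \ref{codpj} is controlled by $e^{-\sqrt\omega\zeta(t)}\|r(t)\|_{H^1}+\|r(t)\|_{H^1}^3$ plus the modulation terms already estimated; integrating from $t$ to $0$ and using the integrability of $e^{-\sqrt\omega\zeta}$ above gives $|E(t)-E(0)|\lesssim e^{-\sqrt\omega\zeta_0}+o(1)\sup_{s}\|r(s)\|_{H^1}^2$. Combining with coercivity and with the datum bound $\|r(0)\|_{H^1}\lesssim\|r_{j,1}\|_{H^1}$ yields $\|r(t)\|_{H^1}^2\lesssim\|r_{j,1}\|_{H^1}^2+e^{-\sqrt\omega\zeta_0}$, which both closes the bootstrap (showing $T_\ast=\infty$) and gives the square-root loss $e^{-\frac{\sqrt\omega}{2}\zeta_0}$ in the conclusion. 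I expect the main obstacle to be exactly this last step: arranging the Lyapunov functional and the monotonicity of the localized momenta so that the accumulated error over the infinite interval $(-\infty,0]$ stays bounded by $e^{-\sqrt\omega\zeta_0}$, i.e.\ obtaining constants that do not depend on the length $T_\ast$ of the interval — this is where the linear growth of $\zeta(t)$ and the repulsive, opposite-phase nature of the configuration are indispensable.
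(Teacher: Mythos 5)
Your decomposition, modulation estimates, and coercivity-by-localization steps all match the paper's scheme (Lemma \ref{Modulation}-style modulation, Lemma \ref{interactt} for interactions, Lemma \ref{coer} and Remark \ref{coerremark} for coercivity, and the backward-in-time growth of $\zeta$). The genuine gap is exactly at the step you yourself flag as ``the main obstacle'': your mechanism for controlling $r$ over the whole half-line $(-\infty,0]$ is to differentiate a Lyapunov functional $E(t)=H(\psi)+\omega Q(\psi)+v_{0}\left(P_{1}-P_{2}\right)$ and integrate $\dot E$ in time, using the integrability of $e^{-\sqrt{\omega}\zeta(s)}$. That integrability only disposes of the error terms that are \emph{linear} in $\norm{r}_{H^{1}}$ with exponentially small coefficients. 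But the derivative of any such functional (see Lemma \ref{energyestimate1}, Lemma \ref{dpj} and estimate \eqref{dE}) also contains terms of the form $\frac{v}{\ln\frac{1}{v}}\norm{r(t)}_{H^{1}}^{2}$ (coming from $\partial_{x}\chi_{j}=O(1/\zeta)$ and the modulation equations) and $\norm{r(t)}_{H^{1}}^{3}$, whose time-coefficients are \emph{not} integrable on $(-\infty,0]$: since $\norm{r(s)}_{H^{1}}$ does not decay as $s\to-\infty$ (it stabilizes near $\norm{r_{j,1}}_{H^{1}}+e^{-\sqrt{\omega}\zeta_{0}/2}$), one has $\int_{-\infty}^{0}\norm{r(s)}_{H^{1}}\,ds=+\infty$ and $\int_{-\infty}^{0}\frac{v}{\ln\frac{1}{v}}\,ds=+\infty$, so your claimed bound $\vert E(t)-E(0)\vert\lesssim e^{-\sqrt{\omega}\zeta_{0}}+o(1)\sup_{s}\norm{r(s)}_{H^{1}}^{2}$ does not follow; a Gronwall closure would produce a factor $\exp\left(\int_{-\infty}^{0}\cdots\right)$ that diverges. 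This is precisely why the energy method of Section \ref{s} is confined to time windows of length $O\!\left(\frac{(\ln\frac{1}{v})^{4/3}}{v}\right)$ and cannot be transported verbatim to an infinite interval.

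The paper closes this gap by a different mechanism that involves \emph{no time integration of error terms at all}. Because $\psi$ is odd, the half-line quantities \eqref{Half-Mass}, \eqref{Half-Energy}, \eqref{Half-Momentum} have exact structure: $H^{+}(\psi(t))=\frac{1}{2}H(\psi)$ and $Q^{+}(\psi(t))=\frac{1}{2}Q(\psi)$ are exactly conserved, and the half-line momentum is exactly monotone, $\frac{d}{dt}M^{+}(\psi(t))=\frac{\vert\psi_{x}(t,0)\vert^{2}}{2}\geq 0$ (Lemma \ref{almostcons}; this uses $\psi(t,0)=0$, i.e.\ oddness, in an essential way). Hence the functional
\begin{equation*}
\mathcal{M}(\psi)(t)=H^{+}(\psi)({-}t)+\left(\omega+\frac{v_{0}^{2}}{4}\right)\frac{Q^{+}(\psi)({-}t)}{2}-\frac{v_{0}}{2}M^{+}(\psi)({-}t)
\end{equation*}
obeys a one-sided comparison between any negative time and $t=0$ as an identity of monotone/conserved quantities, with zero accumulated error. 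The error terms ($e^{-\sqrt{\omega}\zeta}$ interactions, quadratic remainders) then enter only through the \emph{static} Taylor expansion of $\mathcal{M}$ around the two-soliton at each fixed time (Lemma \ref{linearpart}, Lemma \ref{asymptlemma}) together with the coercivity of Remark \ref{coerremark}, giving $\norm{r_{1}(t)}_{H^{1}}^{2}\lesssim \norm{r_{j,1}}_{H^{1}}^{2}+e^{-\sqrt{\omega}\zeta_{0}}$ uniformly, after which the modulation ODEs and a continuity argument show $\zeta(t)\geq\zeta_{0}$ and close the bootstrap. In short: you need to replace your ``integrate $\dot E$ over $(-\infty,0]$'' step by the half-line monotonicity argument (or some equivalent sign-definite structure); without it, the cubic and non-integrable quadratic terms make the proposed estimate unclosable.
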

\begin{remark}
    The proof of Theorem \ref{orbitstab} is completely similar to the demonstration of the Theorem $1.3$ from the paper \cite{third}. More precisely, we will use the monotonicity of the energy, mass, and momentum of the solution on the half-line as it was done in \cite{third}. 
\end{remark}
The Theorem \ref{orbitstab} will allow us to analyze the dynamics of the two colliding solitons when the time $t$ approaches ${-}\infty,$ and, consequently, we will be able to conclude the proof of Theorem \ref{main}.    
\begin{lemma}\label{linearpart}
 Let 
 \begin{equation*}
 \phi_{\omega,\pm}(x)=e^{i\omega t}e^{i\frac{\pm vx}{2}}\phi_{\omega}(x).
 \end{equation*}
For any function $r\in H^{1}(\mathbb{R},\mathbb{C}),$ we have that
\begin{equation*}
    \left\langle H^{'}(\phi_{\omega,\pm})+(\omega+\frac{v^{2}}{4}) \frac{Q^{'}(\phi_{\omega,\pm})}{2}\mp \frac{v}{2}M^{'}(\phi_{\omega,\pm}),r\right\rangle=0.
\end{equation*}
\end{lemma}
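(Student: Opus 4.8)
The statement is equivalent to the assertion that $\phi_{\omega,\pm}$ is a critical point of the action functional $\mathcal{A}_{\pm}(u)=H(u)+\left(\omega+\frac{v^{2}}{4}\right)\frac{Q(u)}{2}\mp\frac{v}{2}M(u)$ relative to the real inner product \eqref{dotproduct}, so the plan is to compute the three $L^{2}$-gradients $H',Q',M'$ and verify that the indicated linear combination vanishes pointwise. Differentiating the conserved quantities \eqref{H}, \eqref{M}, \eqref{P} and integrating by parts gives, for the gradient characterized by $\langle G'(u),r\rangle=\frac{d}{d\epsilon}G(u+\epsilon r)\big|_{\epsilon=0}$, the formulas $H'(u)=-u_{xx}-F'(|u|^{2})u$, $\frac{Q'(u)}{2}=u$, and $M'(u)=-2iu_{x}$; here the vanishing of the boundary terms is justified by $u,r\in H^{1}(\mathbb{R})$ together with the exponential decay of $\phi_\omega$ recorded in Remark \ref{asyrem}.

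Next I would substitute $\psi(x)=e^{\pm i\frac{v}{2}x}\phi_{\omega}(x)$ (ignoring for the moment the spatially constant phase $e^{i\omega t}$) and compute $\psi'=e^{\pm i\frac{v}{2}x}\big(\pm\tfrac{iv}{2}\phi_{\omega}+\phi'_{\omega}\big)$ and $\psi''=e^{\pm i\frac{v}{2}x}\big(-\tfrac{v^{2}}{4}\phi_{\omega}\pm iv\phi'_{\omega}+\phi''_{\omega}\big)$. Inserting these into
\begin{equation*}
H'(\psi)+\Big(\omega+\tfrac{v^{2}}{4}\Big)\psi\mp\tfrac{v}{2}M'(\psi)=-\psi''-F'(\phi_\omega^2)\psi+\Big(\omega+\tfrac{v^{2}}{4}\Big)\psi\pm iv\,\psi',
\end{equation*}
one factors out $e^{\pm i\frac{v}{2}x}$ and checks two cancellations: the terms carrying $\phi'_{\omega}$ (namely $\mp iv\phi'_\omega$ coming from $-\psi''$ and $\pm iv\phi'_\omega$ coming from $\pm iv\psi'$) cancel, and the purely $v^{2}$-proportional coefficients of $\phi_\omega$ sum to $\tfrac{v^2}{4}+\tfrac{v^2}{4}-\tfrac{v^2}{2}=0$. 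What survives is exactly $e^{\pm i\frac{v}{2}x}\big(-\phi''_{\omega}+\omega\phi_{\omega}-F'(\phi_{\omega}^{2})\phi_{\omega}\big)$, which is identically zero by the soliton ODE \eqref{odeord}.

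Finally, to restore the factor $e^{i\omega t}$ I would invoke the phase-covariance of the three gradients, that is $G'(e^{i\theta}u)=e^{i\theta}G'(u)$ for $G\in\{H,Q,M\}$, which follows from the phase-invariance of the functionals \eqref{H}, \eqref{M}, \eqref{P}; this merely multiplies the whole vanishing combination by $e^{i\omega t}$ and leaves the conclusion intact. Pairing the identically-zero gradient combination with an arbitrary $r\in H^{1}(\mathbb{R})$ under \eqref{dotproduct} then yields the claim. The only genuinely delicate point is fixing the correct normalization and sign of the momentum gradient $M'(u)=-2iu_{x}$ (equivalently $\mp\frac{v}{2}M'(\psi)=\pm iv\,\psi'$), since it is precisely this term that must cancel the first-order derivative and half of the $v^{2}$ contribution generated by the Galilean boost; everything else is a routine differentiation of smooth, exponentially decaying profiles.
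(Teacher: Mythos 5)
Your proof is correct and takes essentially the same route as the paper: both compute the $L^{2}$-gradients $H'(u)=-u_{xx}-F'(|u|^{2})u$, $Q'(u)/2=u$, $M'(u)=-2iu_{x}$ via integration by parts, substitute the boosted profile so that the $\pm iv\phi_{\omega}'$ terms and the $v^{2}$ contributions cancel, and conclude from the soliton equation \eqref{odeord}. The only cosmetic difference is that you verify the cancellation pointwise before pairing with $r$ (and handle $e^{i\omega t}$ by phase covariance), whereas the paper carries out the same computation directly inside the dot product \eqref{dotproduct}.
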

\begin{proof}[Proof of Lemma \ref{linearpart}.]
\par It is enough to prove Lemma \ref{linearpart} for $\phi_{\omega,+},$ the proof for $\phi_{\omega,-}$ is analogous. First, from the definition of $\phi_{\omega,+},$ we have the following identity
\begin{multline}\label{dx2linear}
  \begin{aligned}
    {-}\int_{\mathbb{R}}\partial^{2}_{x}\phi_{\omega,+}(x)\overline{r(x)}\,dx=&{-}\int_{\mathbb{R}}e^{i(\omega t+\frac{vx}{2})}\phi^{''}_{\omega}(x)\overline{r(x)}-iv\int_{\mathbb{R}}e^{i(\omega t+\frac{vx}{2})}\phi^{'}_{\omega}(x)\overline{r(x)}\,dx\\&{+}\frac{v^{2}}{4}\int_{\mathbb{R}}\phi_{\omega,+}(x)\overline{r(x)}\,dx.
\end{aligned}
\end{multline}
\par Moreover, using the dot product \eqref{dotproduct}, from the definitions of \eqref{H}, \eqref{M} and \eqref{P}, we can verify using integration by parts for any $r\in H^{1}(\mathbb{R},\mathbb{C})$ that
\begin{align*}
    \left\langle H^{'}(\phi_{\omega,+}),r(x) \right\rangle=&\left\langle {-}\partial^{2}_{x}\phi_{\omega,+}(x)-F^{'}(\phi_{\omega}(x)^{2})\phi_{\omega,+},r(x)\right\rangle,\\
    \left\langle Q^{'}(\phi_{\omega,+}),r\right\rangle=&2\left\langle \phi_{\omega}(x)e^{i(\omega t+\frac{vx}{2})},r(x) \right\rangle,\\
    \left\langle M^{'}(\phi_{\omega,+}),r\right\rangle=&\left\langle {-}2i\phi^{'}_{\omega}(x)e^{i(\omega t+\frac{vx}{2})}+v\phi_{\omega}(x)e^{i(\omega t+\frac{vx}{2})},r(x) \right\rangle.
\end{align*}
 Therefore, using
\begin{equation*}
    {-}\phi^{''}_{\omega}(x)+\omega \phi_{\omega}(x)-F^{'}(\phi_{\omega}(x)^{2})\phi_{\omega}(x)=0,
\end{equation*}
and the identities above, we obtain the result of Lemma \ref{linearpart}.
\end{proof}
\par Furthermore, before the start of the proof of Theorem \ref{orbitstab}, we also need to analyze the properties of the following localized quantities:
\begin{align}\label{Half-Mass}\tag{Half-Mass}
Q^{+}(\psi)=&\int_{0}^{{+}\infty}\vert \psi(x) \vert^{2}\,dx,\\ \label{Half-Energy}\tag{Half-Energy}
H^{+}(\psi)=&\int_{0}^{{+}\infty} \frac{\left\vert \psi_{x}(x)\right\vert^{2}}{2}-\frac{F\left(\vert \psi(x) \vert^{2}\right)}{2}\,dx,\\ \label{Half-Momentum}\tag{Half-Momentum}
M^{+}(\psi)=&\I\int_{0}^{{+}\infty}\psi_{x}\overline{\psi}\,dx.
\end{align}

First, using the fact that we are considering odd solutions of the partial differential equation \eqref{NLS3}, we can verify that $H^{+}(\psi(t))=\frac{1}{2}H(\psi)$ and $Q^{+}(\psi(t))=\frac{1}{2}Q(\psi),$ therefore they are constant functions on $t.$ Moreover, the oddness of the function $\psi$ also implies that $M^{+}(\psi(t))$ is a non-decreasing function on $t.$ Consequently, we have the following proposition:
\begin{lemma}\label{almostcons}
The functions $M^{+},\,H^{+},\,Q^{+}$ satisfy for every $t\in\mathbb{R}:$
\begin{align*}
    Q^{+}(\psi(t))=&Q^{+}(\psi(0)),\,H^{+}(\psi(t))= H^{+}(\psi(0)),\\
    \frac{d}{d t}M^{+}(\psi(t))\geq & 0.
\end{align*}
\end{lemma}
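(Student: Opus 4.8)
The plan is to handle the three quantities separately, exploiting the oddness of $\psi$ in the spatial variable together with the conservation laws of \eqref{NLS3}. First I would record the parity consequences of $\psi(t,-x)=-\psi(t,x)$: since the derivative of an odd function is even, the densities $|\psi|^2$, $|\psi_x|^2$ and $F(|\psi|^2)$ are all even in $x$. Hence $Q(\psi)=2\int_0^{+\infty}|\psi|^2\,dx=2Q^+(\psi)$ and $H(\psi)=2H^+(\psi)$ by comparing \eqref{M}, \eqref{H} with \eqref{Half-Mass}, \eqref{Half-Energy}. Because $Q$ and $H$ are conserved along the flow of \eqref{NLS3}, the identities $Q^+(\psi(t))=\tfrac12 Q(\psi)$ and $H^+(\psi(t))=\tfrac12 H(\psi)$ immediately yield that $Q^+$ and $H^+$ are constant in $t$, settling the first two assertions.

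The monotonicity of $M^+$ is the substantive part, and it rests on the local conservation law for the momentum density $\mathcal{P}=\I(\bar\psi\psi_x)$. Using $i\psi_t=-\psi_{xx}-F'(|\psi|^2)\psi$ to substitute for $\psi_t$ and $\psi_{xt}$, a direct computation shows that the two terms proportional to $F'(|\psi|^2)\Ree(\bar\psi\psi_x)$ cancel, leaving $\partial_t\mathcal{P}=\partial_x\mathcal{F}$ with flux
\begin{equation*}
\mathcal{F}=\Ree(\bar\psi\psi_{xx})-|\psi_x|^2+|\psi|^2F'(|\psi|^2)-F(|\psi|^2).
\end{equation*}
Here the nonlinear contribution is a total derivative: with $g(s)=sF'(s)-F(s)$ one has $g'(s)=sF''(s)$ and $F(0)=0$ from \eqref{H1}, so $\partial_x\big[g(|\psi|^2)\big]=F''(|\psi|^2)|\psi|^2\,\partial_x(|\psi|^2)$, which is exactly the surviving nonlinear term in $\partial_t\mathcal{P}$.

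Integrating this identity over $(0,+\infty)$ then gives
\begin{equation*}
\frac{d}{dt}M^+(\psi(t))=\int_0^{+\infty}\partial_x\mathcal{F}\,dx=\mathcal{F}(t,+\infty)-\mathcal{F}(t,0).
\end{equation*}
The boundary term at $+\infty$ vanishes since $\psi(t)\in H^1$ forces the flux, which is controlled by the $H^1$ norm, to tend to $0$. At $x=0$ the oddness of $\psi$ gives $\psi(t,0)=0$, so every term of $\mathcal{F}$ carrying a factor of $\psi$ drops out and $\mathcal{F}(t,0)=-|\psi_x(t,0)|^2$. Consequently $\tfrac{d}{dt}M^+(\psi(t))=|\psi_x(t,0)|^2\ge 0$, as claimed.

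The main obstacle I anticipate is not the algebra but the regularity justification: the pointwise evaluation of $\mathcal{F}$ at $x=0$ and the integration by parts are only formal for a generic $H^1$ solution. I would close this gap by the standard approximation argument, running the computation for smooth, rapidly decaying solutions (where all boundary terms are legitimate and $\mathcal{F}(t,+\infty)=0$ is transparent) and then passing to the limit using local well-posedness of \eqref{NLS3} and continuous dependence in $H^1$, exactly as in the half-line monotonicity arguments used in \cite{third}.
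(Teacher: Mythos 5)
Your proposal is correct and follows essentially the same route as the paper: oddness gives $Q^{+}=\tfrac12 Q$ and $H^{+}=\tfrac12 H$ for the first two claims, and for $M^{+}$ the paper likewise differentiates in time, integrates by parts using $\psi(t,0)=0$ to land on $\vert\psi_{x}(t,0)\vert^{2}$ (up to a factor of $2$; the paper states $\tfrac12\vert\psi_{x}(t,0)\vert^{2}$, a discrepancy immaterial to the sign), and then removes the regularity assumption by density of $H^{2}$ in $H^{1}$ together with local well-posedness, exactly as in your final paragraph. Your explicit writing of the momentum flux $\mathcal{F}$ is just a more detailed rendering of the paper's ``integration by parts'' step, not a different argument.
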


\begin{proof}
    The conservation of $Q^{+}(\psi(t))$ and $H^{+}(\psi(t))$ was already explained in the last paragraph. Next, using the time derivative of $M^{+}(\psi(t)),$ identity $\psi(t,0)=0$ and integration by parts, we deduce that if $\psi(t)$ is an odd solution of \eqref{NLS3} belonging to $C(\mathbb{R},H^{2}(\mathbb{R})),$ then 

    \begin{equation*}
        \frac{d}{dt}M^{+}(\psi(t))=\frac{\vert \psi_{x}(t,0) \vert^{2}}{2}\geq 0.
    \end{equation*}
    Therefore, $\frac{d M^{+}(\psi(t))}{dt}\geq 0$ follows from the density of $H^{2}_{x}(\mathbb{R})$ in $H^{1}_{x}(\mathbb{R})$ and the local well-posedness of the partial differential equation \eqref{NLS3} in $H^{s}_{x}(\mathbb{R})$ for $s\geq 1.$
\end{proof}
\par The next important results we need to prove Theorem \ref{orbitstab} are the estimates of the functions $M^{+},\, H^{+},\, Q^{+}$ around a small perturbation of two solitons separated with a large distance and each one of them moving with low speed. \par Next, to simplify our notation, we consider the following bilinear form
\begin{equation*}
    \left\langle f,g\right\rangle_{L^{2}(0,{+}\infty)}\coloneqq\Ree \int_{0}^{{+}\infty} f(x)\overline{g(x)}\,dx.
\end{equation*}

\begin{lemma}\label{asymptlemma} Let 
\begin{align*}
\psi(x)=&e^{i\gamma}\left(e^{{-}i\frac{vx}{2}}\phi_{\omega}(x-\zeta)-e^{i\frac{vx}{2}}\phi_{\omega}(x+\zeta)\right)+r(x)\\
=&Sym(\phi_{\omega,\sigma})(x)+r(x),
\end{align*}such that  $r\in H^{1}_{x}(\mathbb{R})$ is an odd function, and $\omega>0$ satisfies
\begin{equation*}
   Q(\phi_{\omega})= Q^{+}(\psi)=\frac{Q(\psi)}{2}.
\end{equation*}
Then, there exists $\delta\in(0,1)$ such that if $0<v<\delta$ and $\zeta>\delta^{{-}1},$ then
\begin{align}\label{m1}
   \left\langle r,\phi_{\omega,\sigma} \right\rangle_{L^{2}(0,{+}\infty)}=&O(\zeta e^{{-}2\sqrt{\omega}\zeta}+\norm{r}_{L^{2}}^{2}),\\ \label{e1}
    H^{+}(\psi)=&H(\phi_{\omega,\sigma})+\left \langle H^{'}(\phi_{\omega,\sigma}),r\right\rangle_{L^{2}(0,{+}\infty)}+\frac{1}{2}\left\langle D^{(2)}H(\phi_{\omega,\sigma})r,r\right\rangle_{L^{2}(0,{+}\infty)}\\ \nonumber &{+}O\left(\norm{r}_{H^{1}}^{3}+e^{{-}\sqrt{\omega}\zeta}\norm{r}_{H^{1}}+\zeta e^{{-}2\sqrt{\omega}\zeta}\right),\\ \label{p1}
    M^{+}(\psi)=&\frac{v}{2}\norm{\phi_{\omega}}_{L^{2}}^{2}+\left\langle M^{'}(\phi_{\omega,\sigma}),r(x)\right\rangle_{L^{2}(0,{+}\infty)}+M^{+}(r)+O\left( e^{{-}\sqrt{\omega}\zeta}\norm{r}_{H^{1}}+e^{{-}2\sqrt{\omega}\zeta}\zeta\right).
\end{align}
\end{lemma}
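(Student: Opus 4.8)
The plan is to prove the three estimates \eqref{m1}, \eqref{e1}, \eqref{p1} by expanding each quantity around the two-soliton profile $Sym(\phi_{\omega,\sigma})$, tracking the cross-terms that arise from the overlap of the two solitons and from the remainder $r$. The overlap terms are controlled by the exponential decay of $\phi_\omega$ (Remark \ref{asyrem}, Remark \ref{phis+}) combined with the interaction integral estimates of Lemma \ref{interactt}, which convert products like $\phi_\omega(x-\zeta)\phi_\omega(x+\zeta)$ into quantities of order $e^{-2\sqrt{\omega}\zeta}$ (with possible polynomial weights in $\zeta$). The remainder terms are handled by Taylor's Expansion Theorem, isolating the zeroth-, first-, and second-order contributions in $r$ and placing the higher-order ones into $O(\|r\|_{H^1}^3)$ via the algebra/Sobolev structure of the polynomial $F$ (Lemma \ref{taylorlemma}).

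Let me spell out the three steps in order. \textbf{First}, for \eqref{m1}, I would use the hypothesis $Q(\phi_\omega)=Q^+(\psi)=\tfrac12 Q(\psi)$. Writing $Q^+(\psi)=\int_0^{+\infty}|Sym(\phi_{\omega,\sigma})+r|^2\,dx$ and expanding the square produces $\int_0^\infty |Sym(\phi_{\omega,\sigma})|^2$, the cross term $2\langle r, Sym(\phi_{\omega,\sigma})\rangle_{L^2(0,+\infty)}$, and $\|r\|_{L^2(0,+\infty)}^2$. On the half-line $x>0$ the dominant soliton is $e^{-ivx/2}\phi_\omega(x-\zeta)$, so $\int_0^\infty |Sym(\phi_{\omega,\sigma})|^2 = Q(\phi_\omega) + O(e^{-2\sqrt{\omega}\zeta})$ by Lemma \ref{interactt} (and oddness), while the cross term is $\langle r,\phi_{\omega,\sigma}\rangle_{L^2(0,+\infty)}$ up to an error of size $\|r\|_{H^1}e^{-\sqrt{\omega}\zeta}$ coming from the subdominant second soliton. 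Setting $Q^+(\psi)=Q(\phi_\omega)$ and solving for the cross term yields \eqref{m1}. \textbf{Second}, for \eqref{e1}, I would Taylor-expand the half-energy functional $H^+$ around $Sym(\phi_{\omega,\sigma})$: $H^+(\psi)=H^+(Sym(\phi_{\omega,\sigma}))+\langle (H^+)'(Sym(\phi_{\omega,\sigma})),r\rangle+\tfrac12\langle D^{(2)}H^+(Sym(\phi_{\omega,\sigma}))r,r\rangle+O(\|r\|_{H^1}^3)$, the cubic remainder being justified by the smoothness of $F$ and $\|r\|_{L^\infty}\lesssim\|r\|_{H^1}$. Then I would replace $Sym(\phi_{\omega,\sigma})$ by the single dominant soliton $\phi_{\omega,\sigma}$ in each term, absorbing the replacement errors into $O(e^{-\sqrt{\omega}\zeta}\|r\|_{H^1}+e^{-2\sqrt{\omega}\zeta})$ via Lemma \ref{interactt}. \textbf{Third}, for \eqref{p1}, the momentum $M^+$ is quadratic, so $M^+(\psi)=M^+(Sym(\phi_{\omega,\sigma}))+\langle M'(Sym(\phi_{\omega,\sigma})),r\rangle_{L^2(0,+\infty)}+M^+(r)$ exactly; the leading term evaluates to $\tfrac v2\|\phi_\omega\|_{L^2}^2$ since $M(\phi_{\omega,\sigma})=v\|\phi_\omega\|_{L^2}^2/2$ (the phase $e^{\mp ivx/2}$ contributes the factor $v$, and $M(\phi_\omega)=0$ by evenness of $\phi_\omega$). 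The overlap corrections here carry an extra factor of $v$ and a weight $\zeta$, producing $O(ve^{-\sqrt{\omega}\zeta}\|r\|_{H^1}+ve^{-2\sqrt{\omega}\zeta}\zeta)$ by the weighted version of Lemma \ref{interactt}.

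The main obstacle will be the careful bookkeeping of the interaction (overlap) terms, particularly ensuring that the correct powers of $v$ and the polynomial-in-$\zeta$ weights appear in each error bound. The momentum estimate \eqref{p1} is the most delicate in this respect, because the $e^{\mp ivx/2}$ phases mean that one must distinguish between terms where the derivative $\partial_x$ hits the Gaussian-type phase (producing a factor $v$) and terms where it hits $\phi_\omega$ (producing the soliton's decay), and then feed the resulting mixed products $x^m \phi_\omega(x-\zeta)\phi_\omega(x+\zeta)$ into Lemma \ref{interactt} to extract the weight $\zeta e^{-2\sqrt{\omega}\zeta}$. The energy estimate \eqref{e1} additionally requires confirming that the first-variation term $\langle (H^+)'(Sym(\phi_{\omega,\sigma})),r\rangle$ can be written as $\langle H'(\phi_{\omega,\sigma}),r\rangle_{L^2(0,+\infty)}$ up to the stated errors — here the fact that $\phi_\omega$ solves \eqref{odeord} is what lets me recognize $H'(\phi_{\omega,\sigma})$ in the form appearing in Lemma \ref{linearpart}. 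Apart from this algebraic care, all steps reduce to the decay estimates already available in the excerpt.
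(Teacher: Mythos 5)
Your proposal is correct and follows essentially the same route as the paper's proof: expand the half-line mass, energy, and momentum around the dominant soliton $\phi_{\omega,\sigma}$, control all overlap terms with the exponential decay of $\phi_{\omega}$ and Lemma \ref{interactt}, handle the remainder contributions by Taylor's Expansion Theorem (with $\norm{r}_{L^{\infty}}\lesssim \norm{r}_{H^{1}}$), and deduce \eqref{m1} from the mass constraint $Q^{+}(\psi)=Q(\phi_{\omega})$ combined with the oddness of $\psi$ and $r$. Your only deviation — expanding first around $Sym(\phi_{\omega,\sigma})$ and then discarding the subdominant soliton on the half-line — is a reorganization of the same computation rather than a genuinely different argument.
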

\begin{proof}
    \par First, from the estimates
    \begin{equation}\label{gedec}
        \left\vert\frac{d^{l}}{dx^{l}}\phi_{\omega}(x)\right\vert=O(e^{{-}\sqrt{\omega}\vert x\vert}),
    \end{equation}
   we deduce that if $x\geq 0,$ then 
   \begin{equation}\label{de+}
       \left\vert \frac{\partial^{l}}{\partial x^{l}}\phi_{\omega}(x+\zeta) \right\vert=O\left(e^{{-}\sqrt{\omega}\zeta}\right).
   \end{equation}
 \par Next, since $r$ and $\psi$ are odd functions, we can verify from the definition of \eqref{Half-Mass} the following identity
 \begin{equation}\label{taym}
     Q^{+}(\psi)=Q^{+}\left(Sym\left(\phi_{\omega,\sigma}\right)\right)+2\left\langle Sym\left(\phi_{\omega,\sigma}\right),r \right\rangle_{L^{2}(0,{+}\infty)}+\frac{1}{2}\norm{r}_{L^{2}}^{2}.
 \end{equation}
\par Moreover, using Lemma \ref{interactt} and estimate \eqref{gedec}, we have that
\begin{equation*}
    Q^{+}\left(Sym\left(\phi_{\omega,\sigma}\right)\right)=Q(\phi_{\omega})+O\left(\zeta e^{{-}2\sqrt{\omega}\zeta}\right).
\end{equation*}
Consequently, since we are assuming that $Q(\phi_{\omega})=Q(u),$ the estimate \eqref{m1} proceeds from the identity above with \eqref{taym}.
\par Next, from \eqref{de+}, we can also verify from the Fundamental Theorem of Calculus that
\begin{equation}\label{est}
    \int_{0}^{{+}\infty}\left\vert \phi_{\omega}(x+\zeta) \right\vert^{2} +\left\vert \frac{d}{dx}\phi_{\omega}(x+\zeta) \right\vert^{2}\,dx=O(e^{{-}2\sqrt{\omega}\zeta}).
\end{equation}
There, we can verify using Cauchy-Schwarz inequality that
\begin{equation}\label{est2}
    \left\vert \int_{0}^{{+}\infty}\left\vert r\right\vert^{l}\phi_{\omega}(x+\zeta)^{m}\right\vert=O(\norm{r}_{H^{1}}e^{{-}\sqrt{\omega}\zeta}),
\end{equation}
for all $l,$ and $m$ in $\mathbb{N}_{\geq 1}.$ Consequently, using \eqref{est} and \eqref{est2}, we obtain the estimate \eqref{e1} from Taylor's Expansion Theorem applied on $H^{+}(u)$ around $\eta_{\sigma}.$  
\par Finally, the estimates \eqref{p1} and \eqref{e1} follow from the Taylor's Expansion Theorem applied to $M^{+}(u)$ and $H^{+}(u)$ around $\phi_{\omega,\sigma}$ and from we the estimates \eqref{est2}, \eqref{est},  \eqref{de+}.
\end{proof}
\par From now on, we are going to start the proof of the main result of the section.
\begin{proof}[Proof of Theorem \ref{orbitstab}]
We can rewrite $u(0,x)$ as 
\begin{equation*}
    \psi(0,x)=e^{i\gamma_{0}}\left(e^{i\frac{{-}v_{0}x}{2}}\phi_{\omega}\left(x-\zeta_{0}\right)-e^{i\frac{v_{0}x}{2}}\phi_{\omega}\left(x+\zeta_{0}\right)\right)+r(x),
\end{equation*}
such that $Q(\psi(0))=2 Q(\phi_{\omega}),$ and $r$ is an odd function in $H^{1}(\mathbb{R})$ satisfying $\norm{r}_{H^{1}}\lesssim\norm{r_{j,1}}_{H^{1}}.$
\par Furthermore, since the partial differential equation \eqref{NLS3} is locally well-posed in $H^{1}(\mathbb{R}),$ see Chapter $3$ of the book \cite{dsipersivebook}, we have that that there exists $T<0$ such that
\begin{equation*}
    \psi(t,x)=e^{i\gamma_{0}}\left(e^{i\frac{{-}v_{0}x}{2}}\phi_{\omega}\left(x-\zeta_{0}\right)-e^{i\frac{v_{0}x}{2}}\phi_{\omega}\left(x+\zeta_{0}\right)\right)+r(t,x),
\end{equation*}
and $\norm{r(t)}_{H^{1}}<v^{2},$ if $t\in[T,0].$ Therefore, the following set
\begin{equation*}
    B=\left\{t<0\vert\,\inf_{y>\frac{8}{\sqrt{\omega}}\ln{\frac{1}{v_{0}}},\gamma\in\mathbb{R}}\norm{\psi(t)-Sym\left(e^{i(\gamma-\frac{v_{0}(\cdot)}{2})}\phi_{\omega}(\cdot-y)\right)(x)}_{H^{1}(\mathbb{R})}<v_{0}^{2}\right\}
\end{equation*}
is not empty.
\par Furthermore, using the Implicit Function Theorem for Banach spaces, see Section $3.1$ from \cite{Martel}, if an interval $(t,0)\subset B,$ we can verify the existence of real functions of class $C^{1}$ $\zeta(t)>\frac{8}{\sqrt{\omega}}\ln{\frac{1}{v_{0}}},\,\gamma(t)$ such that
\begin{equation*}
    \psi(s,x)=e^{i\gamma(s)}\left(e^{i\frac{{-}v_{0}x}{2}}\phi_{\omega}(x-\zeta(s))-e^{i\frac{v_{0}x}{2}}\phi_{\omega}(x+\zeta(s))\right)+r_{1}(s,x),
\end{equation*}
and $r_{1}$ is a odd function in $H^{1}_{x}(\mathbb{R})$ satisfying
\begin{align}\label{mod1iden}
    \left\langle e^{i(\gamma(s)\mp\frac{v_{0}x}{2})}(x\mp \zeta(s))\phi_{\omega}(x\mp \zeta(s)),r_{1}(s)\right\rangle=&0,\\ \label{mod2iden}
    \left\langle ie^{i(\gamma(s)\mp\frac{v_{0}x}{2})}\partial_{\omega}\phi_{\omega}(x\mp \zeta(s)),r_{1}(s)\right\rangle=&0,
\end{align}
and 
\begin{equation*}
    \norm{r_{1}(s)}_{H^{1}(\mathbb{R})}\leq C\inf_{y\geq \frac{7}{\sqrt{\omega}}\ln{\frac{1}{v}}}\norm{\psi(t)-Sym\left(e^{i(\gamma(s)-\frac{v_{0}(\cdot)}{2})}\phi_{\omega}(\cdot-y)\right)}_{H^{1}(\mathbb{R})},
\end{equation*}
for all $s\in [t,0),$ where $C>1$ is a constant.
\par From now on, we are going to prove that $B=\mathbb{R}_{<0}.$ Moreover, we consider for any $t>0$ the following function
\begin{equation*}
\mathcal{M}(\psi)(t)\coloneq H^{+}(\psi)({-}t)+\left(\omega+\frac{v_{0}^{2}}{4}\right)\frac{Q^{+}\psi({-}t)}{2}-\frac{v_{0}}{2}M^{+}(\psi)({-}t),
\end{equation*}
\par First, Lemma \ref{almostcons} implies that
\begin{equation*}
    \mathcal{M}(\psi)(t)\leq \mathcal{M}(\psi)(0).
\end{equation*}
\par Furthermore, using Lemma \ref{linearpart}, estimates \eqref{est}, \eqref{est2} and Taylor's Expansion Theorem, we can verify that
\begin{multline}
    \begin{aligned}\mathcal{M}(\psi)(t)=\mathcal{M}(\phi_{\omega}(x))+\Big\langle &{-}\partial^{2}_{x}r_{1}(t,x)+\omega r_{1}-F^{'}\left(\phi_{\omega}(x-\zeta(t))^{2}\right)r_{1}(t,x)\\&{-}F^{''}\left(\phi_{\omega}(x-\zeta(t))^{2}\right)\phi_{\omega}(x-\zeta(t))e^{i(\gamma(t)-\frac{v_{0}x}{2})}\overline{r_{1}(t,x)}\\
    &{-}F^{''}\left(\phi_{\omega}(x-\zeta(t))^{2}\right)\phi_{\omega}(x-\zeta(t))e^{{-}i(\gamma(t)-\frac{v_{0}x}{2})}r_{1}(t),r_{1}(t,x)\Big\rangle_{L^{2}(0,{+}\infty)}
    \\&{+}O\left(\norm{r_{1}(t)}_{H^{1}}^{2}(e^{{-}\sqrt{\omega}\zeta(t)}+v_{0})+\norm{r_{1}(t)}_{H^{1}}^{3}+\zeta(t)e^{{-}2\sqrt{\omega}\zeta(t)}\right),
\end{aligned}
\end{multline}
while $t$ is in $B.$
Consequently, since $r_{1}$ satisfies the orthogonality conditions \eqref{mod1iden} and \eqref{mod2iden}, and Lemma \ref{asymptlemma} implies that
\begin{equation*}
    \left\langle r_{1}(t),e^{i(\gamma(t)-\frac{v_{0}x}{2})}\phi_{\omega}(x-\zeta(t))\right\rangle=O\left(\max\{\norm{r_{1}(t)}_{H^{1}}^{2},\zeta(t)e^{{-}2\sqrt{\omega}\zeta(t)}\}\right),
\end{equation*}
we can deduce from Remark \ref{coerremark} of Lemma \ref{coer} the existence of constants $C,\,c>0$ independent of $v_{0}$ satisfying
\begin{equation*}
    \mathcal{M}(\psi)(t)+C\zeta(t)e^{{-}2\sqrt{\omega}\zeta(t)}\geq  \mathcal{M}(\phi_{\omega}(x))+c\norm{r_{1}(t)}_{H^{1}}^{2},
\end{equation*}
while $t$ is in $B.$ Therefore, since $\mathcal{M}(\psi)(t)\leq \mathcal{M}(\psi)(0)$ and $\psi(0)$ satisfies the hypotheses of Theorem \ref{orbitstab}, we obtain the following inequality
\begin{equation}\label{PO}
C\norm{r_{j,1}}_{H^{1}}^{2}+Ce^{{-}\sqrt{\omega}\zeta_{0}}+Ce^{{-}\sqrt{\omega}\zeta(t)}\geq  \norm{r_{1}(t)}_{H^{1}}^{2},
\end{equation}
for some constant $C>0,$ while $t<0$ is an element of $B.$
\par Moreover, using identity
 $\Lambda(\psi)=0$ and Lemma \ref{interactt}, we can verify that
\begin{align*}
    i\partial_{t}r_{1}+\partial^{2}_{x}r_{1}=&\left(\dot \gamma(t)-\omega -\frac{v_{0}\dot \zeta(t)}{4}\right)Sym\left(e^{i\left(\gamma(t)-\frac{v_{0}(\cdot)}{2}\right)}\phi_{\omega}(\cdot-\zeta(t))\right)(x)\\
    &{+}i(\dot \zeta(t)+v_{0})Sym\left(e^{i\left(\gamma(t)-\frac{v_{0}(\cdot)}{2}\right)}\phi^{'}_{\omega}(\cdot-\zeta(t))\right)(x)\\
&{-}F^{'}\left(\phi_{\omega}(x-\zeta(t))^{2}+\phi_{\omega}(x+\zeta(t))^{2}\right)r_{1}(t,x)
\\&{-}F^{''}\left(\phi_{\omega}(x-\zeta(t))^{2}\right)\phi_{\omega}(x-\zeta(t))e^{i(\gamma(t)-\frac{v_{0}x}{2})}\overline{r_{1}(t,x)}\\
&{-}F^{''}\left(\phi_{\omega}(x-\zeta(t))^{2}\right)\phi_{\omega}(x-\zeta(t))e^{{-}i(\gamma(t)-\frac{v_{0}x}{2})}r_{1}(t)\\&{+}O\left(e^{{-}2\sqrt{\omega}y(t)}+\norm{r_{1}(t)}\zeta(t)e^{{-}2\sqrt{\omega}\zeta(t)}+\norm{r_{1}(t)}_{H^{1}}^{2}\right).
\end{align*}
Consequently, from the time derivative of the orthogonality conditions \eqref{mod1iden} and \eqref{mod2iden}, we obtain the existence of a constant $C>0$ satisfying the following estimates
\begin{align}\label{dynmod1}
    \left\vert \dot \gamma(t)-\omega +\frac{v_{0}^{2}}{4} \right\vert<&C\left[\norm{r_{1}(t)}_{H^{1}}+e^{{-}2\sqrt{\omega}\zeta(t)}\right],\\ \label{dynmod2}
    \vert \dot \zeta(t)+v_{0} \vert\leq &C\left[\norm{r_{1}(t)}_{H^{1}}+e^{{-}2\sqrt{\omega}\zeta(t)}\right],
\end{align}
while $t$ is in $B.$
\par In particular, from the definition of the set $B,$ we have that if $v_{0}>0$ is sufficiently small, the estimate \eqref{dynmod2} implies that $\dot \zeta(t)>\frac{3}{4}v_{0}>0,$ while $t$ is in $B.$ Therefore $\zeta(t)>\zeta(0)$ for any $t<0$ in $B,$ from which, using estimate \eqref{PO}, we deduce that
\begin{align}\label{esttt1}
  \norm{r_{1}(t)}_{H^{1}}^{2}\leq &C\norm{r_{j,1}}_{H^{1}}^{2}+Ce^{{-}\sqrt{\omega}\zeta_{0}}<v_{0}^{2},\\ \nonumber
  \zeta(t)>&\zeta(0)\geq \frac{8}{\sqrt{\omega}}\ln{\frac{1}{v_{0}}}.
\end{align}
\par Consequently, since $\norm{r_{1}(t)}_{H^{1}},\, \zeta(t)$ are continuous functions on $t,$ we can verify similarly as in the Step $6$ of the proof of Theorem $1.3$ from \cite{third} that $B$ is equal to $\mathbb{R}_{<0}$ and that estimate \eqref{esttt1} is true for all $t\leq 0.$ The estimates \eqref{estdynm} follows from \eqref{dynmod1} and \eqref{dynamicsmod} using \eqref{esttt1}.
\end{proof}
\section{Proof of Theorem \ref{main}}
\par From now on, let $u$ be the unique solution of \eqref{NLS3} satisfying \eqref{initialcondition} for $v_{1}=0.$ From Theorems \ref{uniq}, \ref{app lemma} and Remark \ref{uniq}, we can verify for any $l\in\mathbb{N}$ that there exist $k\in\mathbb{N}$ sufficiently large and $v>0$ small enough that there exists $T_{0,k}\in\mathbb{R}$ satisfying $ \frac{\left(\ln{\frac{1}{v}}\right)^{\frac{4}{3}}}{20 v}<T_{0,k}<\frac{\left(\ln{\frac{1}{v}}\right)^{\frac{4}{3}}}{10 v}$ and there exists  $t_{0}$ satisfying $\vert t_{0}\vert \leq C_{0}\frac{\ln{\frac{1}{v}}}{v}$ for some constant $C_{0}>1$ independent of $k$ such that
\begin{align*}
    \norm{(1+\vert x\vert^{2})^{l}\left[u(T_{0,k}+t_{0,k},x)-P_{k}(T_{0,k},x,\sigma_{k}(T_{0,k}))\right]}_{H^{2l}}<&v^{10k}
\end{align*}
Consequently, Theorem \ref{energyestimatetheor} implies that
\begin{align}\label{final10}
    \norm{u(t+t_{0},x)-P_{k}(t,x,\sigma_{k}(t))}_{H^{1}}<&v^{k},\\
\label{final11}
    \norm{x^{l}\left[u(t+t_{0},x)-P_{k}(t,x,\sigma_{k}(t))\right]}_{H^{1}}<&v^{k},
\end{align}
for any $t$ satisfying
${-}2\frac{\left(\ln{\frac{1}{v}}\right)^{\frac{4}{3}}}{v}\leq t \leq \frac{\left(\ln{\frac{1}{v}}\right)^{\frac{4}{3}}}{v}.
$
The estimate \eqref{final11} and Theorem \ref{app lemma} imply the inequality \eqref{weightednorm} for $v_{1}=0$ in Theorem \ref{main}. Consequently, it remains to prove the estimates \eqref{almoelres} in Theorem \ref{main}, this will follow from estimate \eqref{final10} and Theorem \ref{orbitstab} from Section \ref{orbsec}.
\par Moreover, for $v_{1}=0,$ $u(t)\in H^{1}(\mathbb{R})$ is an odd function because of the uniqueness result of Theorem \ref{uniq}, and $P_{k}(t,x,\sigma_{k}(t))$ is also odd functions on $x.$ Therefore, $u(t+t_{0},x)-P_{k}(t,x,\sigma_{k}(t))$ is an odd function on $x$ for any $t\in\mathbb{R}.$
\par Consequently, using the information of the asymptotics of $P_{k}$ in Theorem \ref{app lemma}, we can apply Theorem \ref{orbitstab} for the initial data $\psi(0,x)=u({-}t_{0}-\frac{10k\ln{\frac{1}{v}}}{v\sqrt{\omega}},x).$ From the local Well-Posedness of \eqref{NLS3} in $H^{1},$ we have that $\psi(t,x)=u(t-t_{0}-\frac{10k\ln{\frac{1}{v}}}{v\sqrt{\omega}},x).$ Therefore, Theorem \ref{orbitstab} implies that $u(t,x)$ satisfies
\eqref{almoelres} and \eqref{modass} for any $t< \frac{{-}\left(\ln{\frac{1}{v}}\right)^{\frac{4}{3}}}{v}$ when $v>0$ is sufficiently small. In conclusion, Theorem \ref{main} is true when $v_{1}=0$ and Remark \ref{v1=0} implies that Theorem \ref{main} is true for any $v_{1}\in\mathbb{R}.$

\appendix
\section{Proof of Theorem \ref{uniq}}
 Before we start the proof, we consider for any $v,\, T>0$ the following norms
\begin{equation*}
\norm{f}_{L^{2}_{T,v}}=\sup_{t\geq T}e^{vt}\norm{f(t,x)}_{L^{2}_{x}(\mathbb{R})},\, \norm{f}_{H^{1}_{T,v}}=\sup_{t\geq T}e^{vt}\norm{f(t,x)}_{H^{1}_{x}(\mathbb{R})}.
\end{equation*}
It is not difficult to verify that the set of complex functions $f(t,x)$ with domain $[T,{+}\infty)\times\mathbb{R}$ having bounded norm $\norm{\cdot}_{L^{2}_{T,v}}$ is a Banach space $L^{2}_{T,v}$ and the same happens for the norm $\norm{\cdot}_{H^{1}_{T,v}}$ which forms a Banach Space $H^{1}_{T,v}.$ 
\par Furthermore, Theorem \ref{uniq} is a consequence of the following proposition.
\begin{lemma}\label{edl}
 There exist $C>1$ and $\delta>0$ such that if $0<v<\delta,\,T\geq \frac{4\ln{\frac{1}{v}}}{\sqrt{\omega}v}$ and $f(t,x)$ is any function odd on $x$ satisfying
 \begin{equation*}
     \norm{f}_{H^{1}_{T,\frac{3\sqrt{\omega}v}{4}}}<{+}\infty,
 \end{equation*}
then there exists a unique function $r(t,x)\in H^{1}_{T,\delta}$ odd on $x$ 
satisfying
\begin{align}\label{linearnls}
       i\partial_{t}r+\partial^{2}_{x}r-\omega r(t,x)
{+}&F^{'}\left(\phi_{\omega}(x-vt)^{2}+\phi_{\omega}(x+vt)^{2}\right)r(t,x)
\\ \nonumber {+}&F^{''}\left(\phi_{\omega}(x-vt)^{2}\right)\phi_{\omega}(x-vt)e^{i({-}\frac{v^{2}t}{2}+\frac{vx}{2})}\overline{r(t,x)}\\ \nonumber
{-}&F^{''}\left(\phi_{\omega}(x+vt)^{2}\right)\phi_{\omega}(x+vt)e^{i(\frac{{-}v^{2}t}{2}-\frac{vx}{2})}\overline{r(t)}
\\ \nonumber
{+}&F^{''}\left(\phi_{\omega}(x-vt)^{2}\right)\phi_{\omega}(x-vt)e^{{-}i({-}\frac{v^{2}t}{2}+\frac{vx}{2})}r(t)\\ \nonumber
{-}&F^{''}\left(\phi_{\omega}(x+vt)^{2}\right)\phi_{\omega}(x+vt)e^{{-}i({-}\frac{v^{2}t}{2}-\frac{vx}{2})}r(t)
= e^{{-}i\omega t}f(t,x).
\end{align}
Furthermore,

\begin{equation}\label{e000}
    \norm{r(t)}_{H^{1}_{T,\frac{3\sqrt{\omega}v}{4}}}<\frac{C}{v^{2}}\norm{f(t)}_{H^{1}_{T,\frac{3\sqrt{\omega}v}{4}}},
\end{equation}
and for any $m\in\mathbb{N},$ there is $C_{m}$ such that 
\begin{equation}
    \norm{x^{m}r(t)}_{H^{l}_{T,\frac{3\sqrt{\omega}v}{4}}}<\frac{C_{m}}{v^{2(2m+l)}}\max_{d_{2}\leq m, d_{1}+d_{2}=m+l+1   }\norm{f(t)(1+ x^{2})^{\frac{d_{2}}{2}}}_{H^{d_{1}}_{T,\frac{3\sqrt{\omega}v}{4}}}
\end{equation}
\end{lemma}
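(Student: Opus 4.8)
The plan is to follow the exponentially weighted scheme of Jendrej and Chen referenced in the text: first establish the a priori bound \eqref{e000}, which simultaneously yields uniqueness, then construct the solution by an exhaustion/compactness argument, and finally bootstrap to the weighted and higher-order norms by induction. Throughout I write $\mu=\frac{3\sqrt{\omega}v}{4}$ for the weight exponent, and I record that for $t\geq T\geq\frac{4\ln\frac{1}{v}}{\sqrt{\omega}v}$ the two solitons sit at $x=\pm vt$ with interaction of size $O(e^{-\sqrt{\omega}vt})$, a quantity that is summable against the weight precisely because $\mu<\sqrt{\omega}v$. This threshold is the structural reason the weighted estimates close.

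For the a priori estimate I would argue by energy methods rather than by Duhamel, because the potential in \eqref{linearnls} is $O(1)$ pointwise and cannot be treated perturbatively against the free propagator (the free Schrödinger flow with mass $\omega$ is unitary, so a Neumann-series inversion would have contraction constant $\mu^{-1}\sim v^{-1}\gg 1$). I recognize the left side of \eqref{linearnls} as $-e^{-i\omega t}$ times the linearization around the two-soliton profile, whose quadratic form is exactly $L(t,r)$ of \eqref{Quadratic energy}. By Lemmas \ref{coerc} and \ref{coerc2}, after adding the localized momentum corrections $\dot d(t)(P_{1}-P_{2})$ as in \eqref{Efunctional}, the functional $E(t,r)$ is coercive modulo the finite list of neutral directions built at each soliton from $\phi_{\omega},\phi_{\omega}',\partial_{\omega}\phi_{\omega},x\phi_{\omega}$, namely the profiles \eqref{dphigamma}, \eqref{dphizeta}, \eqref{dphiomega}, \eqref{dphiv}. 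Differentiating $E(t,r)$ along \eqref{linearnls} and invoking the computations of Section \ref{s} gives $\left\vert\frac{d}{dt}E(t,r)\right\vert\lesssim\frac{v}{\ln\frac{1}{v}}\norm{r(t)}_{H^{1}}^{2}+\norm{r(t)}_{H^{1}}\norm{f(t)}_{H^{1}}$, up to the contribution of the neutral modes.

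The neutral modes are handled by projecting \eqref{linearnls} onto each profile above; as in Lemma \ref{dynamicsmod} this produces a closed first-order system for the scalar projections, in which the $\zeta$ and $v$ components form a second-order subsystem $\ddot p_{\zeta}\approx -C p_{\zeta}e^{-2\sqrt{\omega}vt}+\langle g,\cdot\rangle$ with coefficients of size $O(e^{-2\sqrt{\omega}vt})$. Solving these ODEs by integrating from $+\infty$, which is the only prescription compatible with decay, bounds each projection by the weighted source norm; the second-order $\zeta$–$v$ equation forces two integrations against the weight, each contributing a factor $\mu^{-1}\sim v^{-1}$, and this is the origin of the $v^{-2}$ loss in \eqref{e000}. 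Feeding the projection bounds into the coercivity inequality for $E$ and running Gronwall on $[T,\infty)$ against $e^{\mu t}$ closes the estimate and forces $r\equiv 0$ when $f\equiv 0$, giving uniqueness. Existence then follows by solving the truncated problems on $[T,T_{n}]$ with terminal data $r(T_{n})=0$ — each a linear Schrödinger equation with smooth bounded coefficients, well-posed backward in time — applying the a priori bound uniformly in $n$, and extracting a weak-$\ast$ limit as $T_{n}\to+\infty$, which solves \eqref{linearnls} and inherits \eqref{e000}. The weighted and higher Sobolev bounds follow by induction on $m+l$: applying $\partial_{x}^{l}(x^{m}\,\cdot)$ to \eqref{linearnls} and commuting through the operator produces, besides a term of the same structure in $\partial_{x}^{l}(x^{m}r)$, only commutators such as $[\partial_{x}^{2},x^{m}]$ and derivatives of the localized potentials, all lower order in $m+l$ and already controlled, each commutation costing one further factor $\mu^{-2}\sim v^{-2}$; repeating the energy estimate yields the stated powers $v^{-2(2m+l)}$.

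The main obstacle will be the rigorous treatment of the conjugate-linear terms (those multiplying $\overline{r}$) in the energy identity for $E(t,r)$: they destroy the exact $L^{2}$ near-conservation and must be absorbed using the exponential localization of $\phi_{\omega}(\cdot\mp vt)$ together with the coercivity of $S_{\omega}$ from Lemma \ref{coer}, while one simultaneously tracks the $v$-powers so that the total loss is no worse than $v^{-2}$. A secondary difficulty is verifying that the neutral-mode ODE system is genuinely closed and that its homogeneous solutions do not decay at rate $\mu$, so that integration from $+\infty$ is both well-defined and forced; this is exactly what converts the selection of a decaying branch into the uniqueness statement of Theorem \ref{uniq}.
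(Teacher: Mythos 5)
Your proposal follows essentially the same route as the paper's own proof: a Lyapunov functional built from the linearized quadratic form plus localized momentum corrections, coercivity modulo the neutral modes, an upper-triangular ODE system for the neutral-mode projections integrated backward from $t={+}\infty$ (whose double integration is indeed the source of the $v^{-2}$ loss), existence by solving truncated problems with terminal data $r(T_{n})=0$ and passing to the limit, uniqueness from the homogeneous a priori bound, and the weighted/higher-order estimates by induction on the commutators $[\partial_{x}^{l}x^{m},\cdot]$. The structure, the key functional, the treatment of the neutral directions via the paper's Lemma \ref{ortll}-type identities, and the bookkeeping of $v$-powers all coincide with the paper's argument.
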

\begin{lemma}\label{ortll}
Let $r$ be a solution in $C([T,+\infty],H^{1})$ of \eqref{linearnls}, and let
\begin{align*}
    Ort_{1,\pm}(t)=&\left\langle r(t),i\phi^{'}_{\omega}\left(\pm x-vt\right)e^{i(\frac{v(\pm x)}{2}-\frac{v^{2}t}{4})} \right\rangle,\,\\ Ort_{2,\pm}(t)=&\left\langle r(t),\phi_{\omega}\left(\pm x-vt\right)e^{i(\frac{v(\pm x)}{2}-\frac{v^{2}t}{4})} \right\rangle,\\
     Ort_{3,\pm}(t)=&\left\langle r(t),(\pm x-vt)\phi_{\omega}\left(\pm x-vt\right)e^{i(\frac{v(\pm x)}{2}-\frac{v^{2}t}{4})} \right\rangle,\\ Ort_{4,\pm}(t)=&\left\langle r(t),i\partial_{\omega}\phi_{\omega}\left(\pm x-vt\right)e^{i(\frac{v(\pm x)}{2}-\frac{v^{2}t}{4})} \right\rangle.
\end{align*}    
There exist $c>1,\,\delta>0$ such that if $0<v<\delta,\,T\geq \frac{4\ln{\left(\frac{1}{v}\right)}}{\sqrt{\omega}v},$
then the following estimates are true for any $t\in[T,\,{+}\infty]$
\begin{align*}
    \frac{dOrt_{1,\pm}(t)}{dt}=&O\left((1+vt)^{c}e^{{-}2\sqrt{\omega}v\vert t\vert}\norm{r(t)}_{H^{1}}+\norm{f(t)}_{L^{2}}\right),\\
    \frac{dOrt_{2,\pm}(t)}{dt}=&O\left((1+vt)^{c}e^{{-}2\sqrt{\omega}v\vert t\vert}\norm{r(t)}_{H^{1}}+\norm{f(t)}_{L^{2}}\right),\\
    \frac{dOrt_{3,\pm}(t)}{dt}=&\mp 2Ort_{1,\pm}(t)+O\left((1+vt)^{c}e^{{-}2\sqrt{\omega}v\vert t\vert}\norm{r(t)}_{H^{1}}+\norm{f(t)}_{L^{2}}\right),\\
    \frac{dOrt_{4,\pm}(t)}{dt}=&Ort_{2,\pm}(t)+O\left((1+vt)^{c}e^{{-}2\sqrt{\omega}v\vert t\vert}\norm{r(t)}_{H^{1}}+\norm{f(t)}_{L^{2}}\right).
\end{align*}
\end{lemma}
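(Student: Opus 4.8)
The plan is to differentiate each of the eight quantities $Ort_{j,\pm}$ directly in time, substitute the evolution equation \eqref{linearnls} for $\partial_t r$, and then exploit the fact that the test functions are exactly the (generalized) kernel elements of the single--soliton operator $S_\omega$, transported by the Galilean symmetry. By the reflection $x\mapsto -x$ together with the oddness of $r$ it suffices to treat the $+$ case, and reflecting is precisely what produces the sign $\mp$ in the third identity. Writing $\beta_\pm(t,x)=\tfrac{v(\pm x)}{2}-\tfrac{v^2t}{4}$ and denoting by $\chi_{j,+}$ the corresponding test function, I would begin from
$$\frac{d}{dt}Ort_{j,+}(t)=\langle \partial_t r,\chi_{j,+}\rangle+\langle r,\partial_t\chi_{j,+}\rangle,$$
and use \eqref{linearnls} to replace $i\partial_t r$ by the linear operator applied to $r$ plus the forcing $e^{-i\omega t}f$. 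The forcing contributes a term bounded by $\|f\|_{L^2}$ through Cauchy--Schwarz, matching the last summand in every estimate.

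Next I would split the two--soliton potential appearing in \eqref{linearnls} into the part localized at the right soliton (centred at $x=vt$) and the remaining interaction part, which is localized at $x=-vt$ and is therefore exponentially small against the test functions $\chi_{j,+}$ (all supported effectively near $x=vt$). Quantitatively, pairing any such interaction term with $\chi_{j,+}$ gives a factor controlled by Lemma \ref{interactt}, yielding the bound $(1+vt)^{c}e^{-2\sqrt\omega v|t|}\|r\|_{H^1}$; here the hypothesis $T\ge \tfrac{4\ln(1/v)}{\sqrt\omega v}$ guarantees that the inter--soliton distance $2vt$ is at least $\tfrac{8}{\sqrt\omega}\ln(1/v)$, so this error is genuinely small. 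After this reduction the operator acting on $r$ may be replaced, up to the admissible error, by the single--soliton linearized operator $H_{v,+}$ obtained from $S_\omega$ by translation and Galilean boost.

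For the surviving single--soliton contribution I would use the self--adjointness of $H_{v,+}$ (equivalently of $\mathcal L$ in \eqref{linearr}) to transfer the operator onto the test function, and then apply Lemma \ref{invertexpfor}: since the centre moves at the exact speed $v$ and the phase is exactly $\omega t+\beta_+$, all modulation remainders in that lemma vanish identically, so the flow reduces to $(S_\omega\rho_j)_+$ for the base profile $\rho_j$. The identities $S_\omega(\phi'_\omega)=0$ and $S_\omega(i\phi_\omega)=0$ from Lemma \ref{kersw} then make the single--soliton part of $\tfrac{d}{dt}Ort_{1,+}$ and $\tfrac{d}{dt}Ort_{2,+}$ vanish, leaving only the interaction and forcing errors. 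For $j=3,4$ I would instead invoke the Jordan--block relations $S_\omega(\partial_\omega\phi_\omega)=-\phi_\omega$ and $S_\omega(i\tfrac{x}{2}\phi_\omega)=-i\phi'_\omega$ from \eqref{somega1}, \eqref{somega2}; undoing the boost, these turn the time derivative of the scaling mode into the phase mode and that of the boost mode into the translation mode, producing exactly the terms $Ort_{2,+}$ and $-2\,Ort_{1,+}$ respectively (reflecting to $+2$ in the $-$ case).

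The main obstacle is the careful bookkeeping of the Galilean phase $e^{i\beta_\pm}$ and the factors of $i$ when transferring $H_{v,+}$ onto the test functions and applying Lemma \ref{invertexpfor}: one must verify that, after these manipulations, the single--soliton part lands \emph{exactly} on $Ort_{1,\pm}$ and $Ort_{2,\pm}$ with the stated integer coefficients $\mp 2$ and $+1$, rather than on some other linear combination of the four modes, and that the cross terms between the boost phase and the nonlinear potential only contribute to the $e^{-2\sqrt\omega v|t|}$ error. Once the algebra of \eqref{somega1}--\eqref{somega2} is matched to the pairings defining $Ort_{j,\pm}$, the four estimates follow by collecting the interaction bound from Lemma \ref{interactt} together with the Cauchy--Schwarz bound on the forcing.
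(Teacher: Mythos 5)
Your proposal is correct and follows essentially the same route as the paper's (very terse) proof: differentiate the pairings, substitute \eqref{linearnls}, integrate by parts to move the operator onto the test functions, absorb the two-soliton interaction terms via Lemma \ref{interactt} and the forcing via Cauchy--Schwarz, and then invoke the kernel identities $S_{\omega}(\phi^{'}_{\omega})=S_{\omega}(i\phi_{\omega})=0$ together with the generalized-kernel relations \eqref{somega1}--\eqref{somega2} to produce the coefficients $\mp 2$ and $+1$. The only inessential difference is your reduction to the $+$ case via reflection and oddness of $r$ (the lemma as stated does not assume $r$ odd), but the $-$ case follows from the identical direct computation, so nothing is lost.
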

\begin{proof}[Proof of Lemma \ref{ortll}.]
Let $\varphi_{\omega,\zeta},\,\varphi_{\omega,\gamma},\,\varphi_{\omega,v}$ and $\varphi_{\omega,\omega}$ be the following functions
\begin{align*}
    \varphi_{\omega,\zeta}(t,x)=&\phi^{'}_{\omega}\left( x-vt\right)e^{i(\omega t+\frac{v( x)}{2}-\frac{v^{2}t}{4})},\,\varphi_{\omega,\gamma}(t,x)=i\phi_{\omega}\left( x-vt\right)e^{i(\omega t+\frac{v x}{2}-\frac{v^{2}t}{4})},\\
    \varphi_{\omega,v}(t,x)=&i(x-vt)\phi_{\omega}\left( x-vt\right)e^{i(\omega t+\frac{v x}{2}-\frac{v^{2}t}{4})},\,\varphi_{\omega,\omega}(t,x)=\partial_{\omega}\phi_{\omega}(x-vt)e^{i(\omega t+\frac{v x}{2}-\frac{v^{2}t}{4})}.
\end{align*}
From the formula \ref{linearnls} and integration by parts, we deduce using Lemma \ref{interactt} that
\begin{align*}
    \frac{d}{dt}\left\langle r(t),i\varphi_{\omega,j}(t,x) \right\rangle=&{-}\left\langle r(t),S_{\omega}\left(\frac{\partial}{\partial j}\big\vert_{t=0}\phi_{\omega}(\cdot-vt)e^{i(\omega t+\frac{v x}{2}-\frac{v^{2}t}{4})}\right)(x-vt)e^{i(\frac{v x}{2}-\frac{v^{2}t}{4})} \right\rangle\\&{+}O\left((1+vt)^{c}e^{{-}2\sqrt{\omega}v\vert t\vert}\norm{r(t)}_{H^{1}}+\norm{f(t)}_{L^{2}}\right),
\end{align*}
for some constant $c>1$ and any $j\in\{\zeta,v,\gamma,\omega\}.$ 
\par Consequently, using identities $S_{\omega}(\phi^{'}_{\omega})=0,\,S_{\omega}(i\phi_{\omega})=0,$ \eqref{somega1} and \eqref{somega2}, we conclude all the estimates above for each $j\in\{\zeta,v,\gamma,\omega\}.$

\end{proof}
\begin{proof}[Proof of Lemma \ref{edl}.]
We will divide the proof of Theorem \ref{uniq} into two parts, first the existence part and later the uniqueness part.\\
\textbf{Step 1.}(Ansatz for the solution $u.$) First, it is not difficult to verify that the right-hand side of the equation \eqref{linearnls} comes from the linearized equation \eqref{NLS3} for a solution $u(t)$ of the following form
\begin{equation}\label{pos}
    u(t,x)=\phi_{\omega}(x-vt)e^{i(\omega t+\frac{vx}{2}-\frac{v^{2}t}{4})}-\phi_{\omega}(x+vt)e^{i(\omega t+\frac{{-}vx}{2}-\frac{v^{2}t}{4})}+e^{i\omega t}r(t,x),
\end{equation}
such that $Q(u)=2Q(\phi_{\omega}).$ We are going to apply a fixed-point method to prove the existence of a unique $r$ having $\norm{r}_{H^{1}_{T,\delta}}<1$ for some large $T>0$ and small $\delta>0$ such that $u(t,x)$ in \eqref{pos} is the unique solution of \eqref{NLS3}.  
\par Furthermore, since $Q(u)=2Q(\phi_{\omega}),$ we obtain from Lemma \ref{asymptlemma} that if $t>0$ is sufficiently large and $\norm{r(t)}_{H^{1}}$ is small enough, then
\begin{equation}\label{oooo}
    \left\langle e^{i\omega t}r(t,x),\phi_{\omega}(\pm x-vt)e^{i(\omega t+\frac{v(\pm x)}{2}-\frac{v^{2}t}{4})}  \right\rangle=O\left(\norm{r(t)}_{H^{1}}^{2}+e^{{-}2\sqrt{\omega}vt}\right).
\end{equation}
\textbf{Step 2.}(Lyapunov function for energy Estimate.)\\
The energy estimate method used here is completely similar to the one used in the proof of Theorem \ref{energyestimatetheor}.
\par First, we consider the Hessian of the energy $H$ of the two solitary waves 
\begin{multline*}
 \begin{aligned}
 L_{v}(r,t)=&\int_{\mathbb{R}}\left\vert \partial_{x}r(t,x)\right\vert^{2}+\omega \left\vert r(t,x)\right\vert^{2}\,dx\\ \nonumber
    &{-}\int_{\mathbb{R}}F^{'}\left(\phi_{\omega}(x-vt)^{2}\right)\left\vert  r(t,x)\right\vert^{2}+F^{'}\left(\phi_{\omega}(x+vt)^{2}\right)\left\vert  r(t,x)\right\vert^{2}\,dx\\ \nonumber
    &{-}\Ree\int_{\mathbb{R}} F^{''}\left(\phi_{\omega}(x-vt)^{2}\right)\phi_{\omega}(x-vt)^{2}e^{i(vx-\frac{v^{2}t}{2})}\overline{r(t,x)^{2}}\,dx\\ \nonumber
    &{-}\Ree\int_{\mathbb{R}}F^{''}\left(\phi_{\omega}(x+vt)^{2}\right)\phi_{\omega}(x+vt)^{2}e^{i(vx-\frac{v^{2}t}{2})}\overline{r(t,x)^{2}}\,dx\\ \nonumber
    &{-}\int_{\mathbb{R}} F^{''}\left(\phi_{\omega}(x-vt)^{2}\right)\phi_{\omega}(x-vt)^{2}\vert r(t,x)\vert^{2}\,dx\\ \nonumber
    &{-}\int_{\mathbb{R}} F^{''}\left(\phi_{\omega}(x+vt)^{2}\right)\phi_{\omega}(x+vt)^{2}\vert r(t,x)\vert^{2}\,dx.
\end{aligned}
\end{multline*}
Moreover, using a smooth cut-off function $\chi$ satisfying \eqref{chi}, we consider the localized momentum corrections
\begin{equation*}
M_{v}(r,t)=v \I \int_{\mathbb{R}}\chi\left(\frac{x+vt}{2vt}\right)\overline{r(t,x)}\partial_{x}r(t,x)\,dx- v\I \int_{\mathbb{R}}\left[1-\chi\left(\frac{x+vt}{2vt}\right)\right]\overline{r(t,x)}\partial_{x}r(t,x)\,dx.
\end{equation*}
\par From now on, we consider the Lyapunov function $E_{v}(r,t)=L_{v}(r,t)+M_{v}(r,t)$ to estimate $\norm{r}_{H^{1}_{T,\delta}}$ for parameters $\delta\in (0,1)$ and $T>1$ to be chosen later.
\par First, using Lemma \ref{coer} and estimate \eqref{oooo}, we can verify the existence of positive constants $c,\,C$ such that if $v>0$ is small enough, $t>4\frac{\ln{\left(\frac{1}{v}\right)}}{v},$ then
\begin{equation}\label{coerLv}
    E_{v}(r,t)>c\norm{r(t)}_{H^{1}}^{2}-C\left[\sum_{j=1}^{4}Ort_{j,+}(t)^{2}+Ort_{j,-}(t)^{2}\right].
\end{equation}
\textbf{Step 3.}(Derivative of $E_{v}(r,t).$) 
%From the formula of \eqref{pos} satisfied by $u(t,x),$ we can verify using Taylor's Expansion Theorem and Lemma \ref{interactt} that
%\begin{align}\label{eqrr}
 %   i\partial_{t}r+\partial^{2}_{x}r-\omega r(t,x)=&
%{-}F^{'}\left(\phi_{\omega}(x-vt)^{2}+\phi_{\omega}(x+vt)^{2}\right)r(t,x)
%\\ \nonumber %&{-}F^{''}\left(\phi_{\omega}(x-vt)^{2}\right)\phi_{\omega}(x-vt)e^{i({-}\frac{v^{2}t}{2}+\frac{vx}{2})}\overline{r(t,x)}\\ \nonumber
%&{+}F^{''}\left(\phi_{\omega}(x+vt)^{2}\right)\phi_{\omega}(x+vt)e^{i(\frac{{-}v^{2}t}{2}-\frac{vx}{2})}\overline{r(t)}\,dx
%\\ \nonumber
%&{-}F^{''}\left(\phi_{\omega}(x-vt)^{2}\right)\phi_{\omega}(x-vt)e^{{-}i({-}\frac{v^{2}t}{2}+\frac{vx}{2})}r(t)\\ \nonumber
%&{+}F^{''}\left(\phi_{\omega}(x+vt)^{2}\right)\phi_{\omega}(x+vt)e^{{-}i({-}\frac{v^{2}t}{2}-\frac{vx}{2})}r(t)
%\\ \nonumber &{+}O\left(e^{{-}2\sqrt{\omega}vt}+\norm{r(t)}_{H^{1}}(1+vt)e^{{-}2\sqrt{\omega}vt}+\norm{r(t)}_{H^{1}}^{2}\right).
%\end{align}
Similarly to the proofs of Lemmas \ref{energyestimate1} and \ref{dpj} but using $\zeta(t)=vt,\, \gamma(t)=0$ in the place of $\zeta_{k}(t)$ and $\gamma_{k}(t),$ we can verify the existence of a constant $C>1$ satisfying
\begin{equation}\label{energyestimate2solitons}
    \left\vert \dot E_{v}(r,t)\right\vert<C\left(\frac{1}{t}\norm{r(t)}_{H^{1}}^{2}+\norm{r(t)}_{H^{1}}\norm{f(t)}_{H^{1}}\right),
\end{equation}
if $\norm{r(t)}_{H^{1}}<1,$ and $t$ is large enough, in this case the condition $t\geq \frac{4\ln{\frac{1}{v}}}{\sqrt{\omega}v}$ is sufficient when $v>0$ is small enough. \\
\textbf{Step 4.}(Estimate of the spectral projection of $r(t)$ in non-positive part of $S_{\omega}.$) \\
In this step, we focus on the estimate of the expressions $Ort_{1,\pm}(t),\,Ort_{2,\pm}(t),\,Ort_{3,\pm}(t)$ and $Ort_{4,\pm}(t)$ defined in Lemma \ref{ortll} when $t$ is close to ${+}\infty.$ 
 \par If $r\in H^{1}_{T,\delta},$ we can verify that
\begin{equation*}
    \lim_{t\to{+}\infty} Ort_{j,\pm}(t)=0 \text{, for any $j\in\{1,2,3,4\}.$}
\end{equation*}
Consequently, using Lemma \ref{ortll} and the Fundamental Theorem of Calculus, we deduce the existence of a constant $C>1$ satisfying
\begin{equation*}
    \left\vert Ort_{j,{\pm}}(t) \right\vert\leq C \left[\frac{e^{{-}\frac{7}{4}\sqrt{\omega}vt}}{v}\norm{r}_{H^{1}_{T,\frac{\sqrt{3}\omega v}{4}}}+\frac{e^{{-}\frac{3}{4}\sqrt{\omega}vt}}{v}\norm{f}_{H^{1}_{T,\frac{\sqrt{3}\omega v}{4}}}\right],
\end{equation*}
if $j\in\{1,2\}$ and $t\geq T.$
\par Therefore, using Lemma \ref{ortll} for $j\in\{3,4\}$ and the Fundamental Theorem of Calculus, we deduce from the estimate above that
\begin{equation*}
     \left\vert Ort_{j,{\pm}}(t) \right\vert\leq C \left[\frac{e^{{-}\frac{7}{4}\sqrt{\omega}vt}}{v^{2}}\norm{r}_{H^{1}_{T,\frac{\sqrt{3}\omega v}{4}}}+\frac{e^{{-}\frac{3}{4}\sqrt{\omega}vt}}{v^{2}}\norm{f}_{H^{1}_{T,\frac{\sqrt{3}\omega v}{4}}}\right],
\end{equation*}
if $t\geq T.$
\\
\textbf{Step 5.}(Estimate of $\norm{r}_{H^{1}_{T,\frac{\sqrt{3}\omega v}{4}}}.$)
First, from the estimate \eqref{coerLv} of Step $2$ and the estimates of Step $4,$ we can verify using $\norm{f}_{L^{2}_{T,\frac{3}{4}\sqrt{\omega}v}}< 1$ that if $v>0$ is small enough and $\norm{r}_{H^{1}_{T,\frac{3}{4}\sqrt{\omega}v}}< 1$ for a sufficiently large $T\geq \frac{4\ln{\left(\frac{1}{v}\right)}}{\sqrt{\omega} v},$ then there is a constant $c>0$ independent of $v$ satisfying
\begin{equation}\label{coerwe}
    \frac{1}{v^{4}}\norm{f}_{H^{1}_{T,\frac{3}{4}\sqrt{\omega}v}}^{2}+e^{\frac{3}{2}\sqrt{\omega}vt}E_{v}(r,t)>ce^{\frac{3}{2}\sqrt{\omega}vt}\norm{r(t)}_{H^{1}}^{2} \text{, for all $t\geq T. $}
\end{equation}
\par Next, from the estimate \eqref{energyestimate2solitons} of Step $3,$ we deduce that if $\norm{r}_{H^{1}_{T,\frac{3}{4}\sqrt{\omega}v}}\leq 1$ for a $T\geq \frac{4\ln{\left(\frac{1}{v}\right)}}{\sqrt{\omega} v},$ then, for the same constant $C>1$ in \eqref{energyestimate2solitons}, the estimate
\begin{equation}\label{dlvt}
    \left\vert \dot E_{v}(r,t) \right\vert<C\left(\frac{e^{{-}\frac{3}{2}\sqrt{\omega}vt}}{t}\norm{r}_{H^{1}_{T,\frac{3}{4}\sqrt{\omega}v}}^{2}+e^{{-}\frac{3\sqrt{\omega}vt}{2}}\norm{r(t)}_{H^{1}_{T,\frac{3}{4}\sqrt{\omega}v}}\norm{f(t)}_{H^{1}_{T,\frac{3}{4}\sqrt{\omega}v}}\right)
    %+\frac{e^{\frac{9}{4}\sqrt{\omega}vt}}\norm{r(t)}_{H^{1}_{T,\frac{3}{4}\sqrt{\omega}v}}^{3}\right)
\end{equation}
holds for all $t\geq T.$ Therefore, integrating estimate \eqref{dlvt} from $t\geq T$ through ${+}\infty,$ we obtain the existence of a constant $C>1$ satisfying
\begin{equation}\label{upperboundlv}
    \vert E_{v}(r,t)\vert\leq C\left(\frac{e^{{-}\frac{3}{2}\sqrt{\omega}vt}}{vt}\norm{r}_{H^{1}_{T,\frac{3}{4}\sqrt{\omega}v}}^{2}+\frac{e^{{-}\frac{3\sqrt{\omega}vt}{2}}}{v}\norm{r(t)}_{H^{1}_{T,\frac{3}{4}\sqrt{\omega}v}}\norm{f(t)}_{H^{1}_{T,\frac{3}{4}\sqrt{\omega}v}}\right),
\end{equation}
for all $t\geq T.$
\par Consequently, using estimates \eqref{coerwe}, \eqref{upperboundlv} and Young Inequality, we can deduce the existence of a constant $C_{\omega}>1$ satisfying
\begin{equation}\label{finalest}
    \norm{r}_{H^{1}_{T,\frac{3}{4}\sqrt{\omega}v}}\leq \frac{C_{\omega}}{v^{2}}\norm{f}_{H^{1}_{T,\frac{3}{4}\sqrt{\omega}v}}.
\end{equation}
\textbf{Step 6.}(Existence of a solution $r\in H^{1}_{T,\delta}.$) The proof of existence of a solution $r\in H^{1}_{T,\delta}$ of \eqref{linearnls} is similar to the one in the demonstration of Lemma $3.1$ of \cite{multison}.
\par First, for some $T_{n}>T,$ we consider a smooth cut-off function $\chi_{0}$ such that
\begin{equation*}
    \chi_{0}(t)=
    \begin{cases}
        1 \text{, if $t\leq {-}1,$}\\
        0 \text{, if $t\geq 0$}
    \end{cases}.
\end{equation*}
Let $r_{n}$ be the solution of
\begin{multline}\label{rn}
  \begin{aligned}
      i\partial_{t}r_{n}+\partial^{2}_{x}r_{n}-\omega r_{n}
{+}&F^{'}\left(\phi_{\omega}(x-vt)^{2}+\phi_{\omega}(x+vt)^{2}\right)r_{n}
\\  {+}&F^{''}\left(\phi_{\omega}(x-vt)^{2}\right)\phi_{\omega}(x-vt)e^{i({-}\frac{v^{2}t}{2}+\frac{vx}{2})}\overline{r_{n}}\\ 
{-}&F^{''}\left(\phi_{\omega}(x+vt)^{2}\right)\phi_{\omega}(x+vt)e^{i(\frac{{-}v^{2}t}{2}-\frac{vx}{2})}\overline{r_{n}}
\\ 
{+}&F^{''}\left(\phi_{\omega}(x-vt)^{2}\right)\phi_{\omega}(x-vt)e^{{-}i({-}\frac{v^{2}t}{2}+\frac{vx}{2})}r_{n}\\
{-}&F^{''}\left(\phi_{\omega}(x+vt)^{2}\right)\phi_{\omega}(x+vt)e^{{-}i({-}\frac{v^{2}t}{2}-\frac{vx}{2})}r_{n}
= e^{{-}i\omega t}\chi_{0}(t-T_{n})f(t,x),   
\end{aligned}
\end{multline}
with initial condition $r_{n}(T_{n})=0.$ Since the partial differential equation above is locally Well-Posed in $H^{1}(\mathbb{R}),$ there is a unique solution $r_{n}.$ Moreover, since $\chi_{0}(t-T_{n})f(t,x)=0$ for any $t\geq T_{n},$ we have that $r_{n}(t)=0$ for all $t\geq T_{n},$ so $r_{n}$ is in $H^{1}_{T,\frac{3\sqrt{\omega}v}{4}}.$  
\par Next, we consider a sequence $T_{n}$ converging to ${+}\infty.$ Since $f\in H^{1}_{T,\frac{3\sqrt{\omega}v}{4}},$ we have that 
\begin{equation*}
\lim_{n\to{+\infty}}\norm{f-\chi_{0}(t-T_{n})f}_{H^{1}_{T,\frac{3\sqrt{\omega}v}{4}}}=0.
\end{equation*}
Consequently, from Step $5,$ $r_{n}$ is a Cauchy-Sequence in $H^{1}_{T,\frac{3\sqrt{\omega}v}{4}}.$ This Cauchy-Sequence converges to the solution $r$ of \eqref{linearnls}, and this solution is unique because of the estimate \eqref{finalest} from the previous step.    \\
\textbf{Step 7.}(Weighted estimates.)
Furthermore, if for any $l\in\mathbb{N}$
\begin{equation}\label{lw}
    \sup_{t\geq T}e^{\frac{3\sqrt{\omega}vt}{4}}\left[\norm{x^{l}f(t,x)}_{H^{1}}+\norm{\partial^{l}_{x}f(t,x)}_{H^{1}}\right]<{+}\infty,
\end{equation}
the solution $r_{n}$ of \eqref{rn} with initial condition $r_{n}(T_{n})=0$ shall be defined in $C(([T,{+}\infty),H^{l}(\mathbb{R}))\cap C(([T,{+}\infty),x^{l}H^{1}(\mathbb{R})),$ because of the Local Well-Posedness of the $1d$ linear Schrödinger equation in these weighted spaces, and since $\phi_{\omega}\in\mathscr{S}(\mathbb{R})$ satisfies Remark \ref{phis+}.
\par First, since $f$ satisfies \eqref{lw} and the estimate \eqref{finalest} implies that
\begin{equation*}
    \norm{r_{n}}_{H^{1}_{T,\frac{3\sqrt{\omega}v}{4}}}<\frac{C_{\omega}}{v^{2}}\norm{f}_{H^{1}_{T,\frac{3\sqrt{\omega}v}{4}}},
\end{equation*} 
we can verify after we differentiate the partial differential equation \eqref{rn} on $x$ using \eqref{finalest} the existence of a constant $C_{2,\omega}>1$ satisfying
\begin{equation*}
    \norm{\partial_{x}r_{n}}_{H^{1}_{T,\frac{3\sqrt{\omega}v}{4}}}<\frac{C_{2,\omega}}{v^{4}}\norm{f}_{H^{2}_{T,\frac{3\sqrt{\omega}v}{4}}},
\end{equation*}
for any $n\in\mathbb{N}.$ 
\par Similarly, we can verify by induction for any $l\in\mathbb{N}$ the existence of a constant $C_{l+1,\omega}>1$ satisfying
\begin{equation}\label{gl0}
    \norm{r_{n}}_{H^{l+1}_{T,\frac{3\sqrt{\omega}v}{4}}}<\frac{C_{l+1,\omega}}{v^{2(l+1)}}\norm{f}_{H^{l+1}_{T,\frac{3\sqrt{\omega}v}{4}}},
\end{equation}
for any $n\in\mathbb{N}.$
Moreover, we are going to verify for any natural number $m\geq0$ that 
\begin{equation}\label{gl0m}
\norm{x^{m}r_{n}}_{H^{l+1}_{T,\frac{3\sqrt{\omega}v}{4}}}<\frac{C_{l+1,m,\omega}}{v^{2(2m+l+1)}}\max_{d_{1}+d_{2}=m+l+1,d_{1}\leq m}\norm{(1+ x^{2})^{\frac{d_{1}}{2}}f}_{H^{d_{2}}_{T,\frac{3\sqrt{\omega}v}{4}}},
\end{equation}
for any $n\in\mathbb{N}$ such that the constant $C_{l+1,m,\omega}>1$ does not depend on $n.$ We already verified that \eqref{gl0m} is true when $m=0,$ so we assume that is true from $m=0$ through $m=m_{1}\geq 0.$ 
\par Furthermore, the function $r_{n,m}(t,x)\coloneqq x^{m}r_{n}(t,x)$ is a strong solution of the following partial differential equation
\begin{multline}\label{rnnm}
  \begin{aligned}
      i\partial_{t}r_{n,m}+\partial^{2}_{x}r_{n,m}-\omega r_{n,m}
{+}&F^{'}\left(\phi_{\omega}(x-vt)^{2}+\phi_{\omega}(x+vt)^{2}\right)r_{n,m}
\\  {+}&F^{''}\left(\phi_{\omega}(x-vt)^{2}\right)\phi_{\omega}(x-vt)e^{i({-}\frac{v^{2}t}{2}+\frac{vx}{2})}\overline{r_{n,m}}\\ 
{-}&F^{''}\left(\phi_{\omega}(x+vt)^{2}\right)\phi_{\omega}(x+vt)e^{i(\frac{{-}v^{2}t}{2}-\frac{vx}{2})}\overline{r_{n,m}}
\\
{+}&F^{''}\left(\phi_{\omega}(x-vt)^{2}\right)\phi_{\omega}(x-vt)e^{{-}i({-}\frac{v^{2}t}{2}+\frac{vx}{2})}r_{n,m}\\ 
{-}&F^{''}\left(\phi_{\omega}(x+vt)^{2}\right)\phi_{\omega}(x+vt)e^{{-}i({-}\frac{v^{2}t}{2}-\frac{vx}{2})}r_{n,m}
\\= &e^{{-}i\omega t}\chi_{0}(t-T_{n})x^{m}f(t,x)\\&{+}m(m-1)r_{n,m-2}-2m\partial_{x}r_{n,m-1},   
\end{aligned}
\end{multline}
Consequently, from estimate \eqref{finalest},  we deduce after we differentiate \eqref{rnnm} on $x$ $l$ times the existence of constants  $C_{\omega,m,l},\,C_{\omega,m,l,0}>1$
satisfying
\begin{align}\nonumber
     \norm{r_{n,1}}_{H^{1}_{T,\frac{3\sqrt{\omega}v}{4}}}<&\frac{C_{\omega,m,0}}{v^{2}}\left[\norm{x f}_{H^{1}_{T,\frac{3\sqrt{\omega}v}{4}}}+\norm{r_{n}}_{H^{2}_{T,\frac{3\sqrt{\omega}v}{4}}}\right],\\ \nonumber
     \norm{r_{n,m}}_{H^{1}_{T,\frac{3\sqrt{\omega}v}{4}}}<&\frac{C_{\omega,m,0}}{v^{2}}\left[\norm{x^{m}f}_{H^{1}_{T,\frac{3\sqrt{\omega}v}{4}}}+\norm{r_{n,m-2}}_{H^{1}_{T,\frac{3\sqrt{\omega}v}{4}}}+\norm{r_{n,m-1}}_{H^{2}_{T,\frac{3\sqrt{\omega}v}{4}}}\right]
    \\ \nonumber\norm{\partial^{l}_{x}r_{n,m}}_{H^{1}_{T,\frac{3\sqrt{\omega}v}{4}}}<&\frac{C_{\omega,m,l,0}}{v^{2}}\Bigg[\norm{\partial^{l}_{x}\left[x^{m}f\right]}_{H^{1}_{T,\frac{3\sqrt{\omega}v}{4}}}+\norm{\partial^{l}_{x}r_{n,m-2}}_{H^{1}_{T,\frac{3\sqrt{\omega}v}{4}}}+\norm{\partial^{l+1}_{x}r_{n,m-1}}_{H^{1}_{T,\frac{3\sqrt{\omega}v}{4}}}\\ \nonumber &{+}\norm{r_{n,m}}_{H^{l}_{T,\frac{3\sqrt{\omega}v}{4}}}\Bigg],
\end{align}
for any $n\in\mathbb{N},l\in\mathbb{N}_{\geq 1}$  and $m\in\mathbb{N}_{\geq 2}.$ 
\par Therefore, for any $l\in\mathbb{N}_{\geq 1},$ the second and third estimates above imply the existence of a constant $C_{\omega,m_{1},l,1}>1$ satisfying
\begin{align}\label{id90}
\norm{r_{n,m}}_{H^{l+1}_{T,\frac{3\sqrt{\omega}v}{4}}}<&\frac{C_{\omega,m,l,1}}{v^{2}}\Bigg[\norm{(1+x^{2})^{\frac{m}{2}}f}_{H^{l+1}_{T,\frac{3\sqrt{\omega}v}{4}}}+\max(m-2,0)\norm{r_{n,m-2}}_{H^{l+1}_{T,\frac{3\sqrt{\omega}v}{4}}}\\& \nonumber{+}\norm{r_{n,m-1}}_{H^{l+2}_{T,\frac{3\sqrt{\omega}v}{4}}}+\norm{r_{n,m}}_{H^{l}_{T,\frac{3\sqrt{\omega}v}{4}}}\Bigg],
\end{align}
for any $n\in\mathbb{N}.$   
\par Consequently, if \eqref{gl0m} is true form $m=m_{1},$ then the estimate of $\norm{r_{n,1}}_{}$ \eqref{id90} imply that \eqref{gl0m} is true for $m=m_{1}+1,$ which finishes the proof by induction of the weighted norm of the remainder $r_{n}.$ 
\par In conclusion, since $r_{n}$ converge to $r$ in $H^{1}_{T,\frac{3\sqrt{\omega}v}{4}},$ we can verify from the estimates \eqref{gl0m} and Banach-Alaoglu Theorem that
\begin{equation*}
\norm{x^{m}r}_{H^{l}_{T,\frac{3\sqrt{\omega}v}{4}}}<\frac{C_{\omega,m,l}}{v^{2(2m+l)}}\max_{d_{1}+d_{2}=m+l,d_{1}\leq m}\norm{(1+x^{2})^{\frac{d_{1}}{2}}f}_{H^{d_{2}}_{T,\frac{3\sqrt{\omega}v}{4}}}.
\end{equation*}

\end{proof}
\begin{proof}[Proof of Theorem \ref{uniq}.]
\textbf{Step 1.}(Exponential Decay of the remainder.)
Let $r(t)=e^{{-}i\omega t}u+\phi_{\omega}(x-vt)e^{i(\frac{vx}{2}-\frac{v^{2}t}{4})}-\phi_{\omega}(x+vt)e^{i(\frac{{-}vx}{2}-\frac{v^{2}t}{4})}.$ Since $u$ is a solution of the partial differential equation \eqref{NLS3}, we have that
\begin{align}\label{linearnlss}
       i\partial_{t}r+\partial^{2}_{x}r-\omega r(t,x)
{+}&F^{'}\left(\phi_{\omega}(x-vt)^{2}+\phi_{\omega}(x+vt)^{2}\right)r(t,x)
\\ \nonumber {+}&F^{''}\left(\phi_{\omega}(x-vt)^{2}\right)\phi_{\omega}(x-vt)e^{i({-}\frac{v^{2}t}{2}+\frac{vx}{2})}\overline{r(t)}\\ \nonumber
{-}&F^{''}\left(\phi_{\omega}(x+vt)^{2}\right)\phi_{\omega}(x+vt)e^{i(\frac{{-}v^{2}t}{2}-\frac{vx}{2})}\overline{r(t)}
\\ \nonumber
{+}&F^{''}\left(\phi_{\omega}(x-vt)^{2}\right)\phi_{\omega}(x-vt)e^{{-}i({-}\frac{v^{2}t}{2}+\frac{vx}{2})}r(t)\\ \nonumber
{-}&F^{''}\left(\phi_{\omega}(x+vt)^{2}\right)\phi_{\omega}(x+vt)e^{{-}i({-}\frac{v^{2}t}{2}-\frac{vx}{2})}r(t)
= \left[I(t)+N(r)\right],
\end{align}
such that
\begin{align*}
    I(t)=&F^{'}\left(\phi_{\omega}(x-vt)^{2}+\phi_{\omega}(x+vt)^{2}\right)\left[\phi_{\omega}(x-vt)e^{i\frac{vx}{2}}-\phi_{\omega}(x+vt)e^{{-}i\frac{vx}{2}}\right]\\&{-}F^{'}\left(\phi_{\omega}(x-vt)^{2}\right)\phi_{\omega}(x-vt)e^{i\frac{vx}{2}}+F^{'}\left(\phi_{\omega}(x+vt)^{2}\right)\phi_{\omega}(x+vt)e^{{-}i\frac{vx}{2}},
\end{align*}
and the function $N(r)(t,x)$ is bounded in $H^{1}(\mathbb{R})$ satisfying for some constant $C$ any $t\geq 1$
\begin{equation*}
    \norm{N(r)(t)}_{H^{1}}\leq C\left[\norm{r(t)}_{H^{1}}e^{{-}2\sqrt{\omega}vt}(1+\vert t\vert v)+\norm{r(t)}_{H^{1}}^{2}\right],
\end{equation*}
this follows from the fact that $H^{1}(\mathbb{R})$ is an algebra and from the Lemma \ref{interactt} with Remark \ref{phis+}. 
\par Furthermore, for any two functions $r_{1},\,r_{2}\in C\left([T,{+}\infty),H^{1}(\mathbb{R})\right),$ we can verify from the partial differential equations \eqref{NLS3} and \eqref{linearnlss} that
\begin{align}\label{p0}
    \norm{N(r_{1})(t)-N(r_{2})(t)}_{H^{1}}\leq C\norm{r_{1}(t)-r_{2}(t)}_{H^{1}}\Bigg[&\max_{j\in\{1,2\}}\norm{r_{j}(t)}_{H^{1}}^{2q-2}+\max_{j\in\{1,2\}}\norm{r_{j}(t)}_{H^{1}}\\ \nonumber
    &{+}e^{{-}2\sqrt{\omega}vt}(1+\vert t\vert v)\Bigg],
\end{align}
for any $t\geq 1,$ the number $q\in\mathbb{N}_{\geq 2}$ is the degree of the polynomial $F.$  
\par Consequently, since Lemma \ref{interactt} and Remark \ref{phis+} imply the existence of a constant $C>1$ such that
\begin{equation*}
    \norm{I(t)}_{H^{1}}\leq Ce^{{-}2\sqrt{\omega }vt},
\end{equation*}
for any $t\geq 1,$ we deduce using Young Inequality that $r$ is a solution of a partial differential equation \eqref{linearnls} such that
\begin{equation*}
    \norm{f(t)}_{H^{1}}\leq C\left[\norm{r(t)}_{H^{1}}^{2}+e^{{-}2\sqrt{\omega }vt}\right],
\end{equation*}
for some constant $C>1$ when $v>0$ is sufficiently small and $t\geq \frac{4\ln{\left(\frac{1}{v}\right)}}{\sqrt{\omega}v}.$
\par Moreover, since $F$ is a real polynomial satisfying \eqref{H1} and $\phi_{\omega}$ satisfies Remark \eqref{ordt}, we deduce using Lemma \ref{interactt} the following estimate
\begin{equation}\label{In}
    \norm{(1+ x^{2})^{\frac{m}{2}}I(t)}_{H^{n}}\leq C_{m,n} \left(1+\vert t \vert v\right)^{m}e^{{-}2\sqrt{\omega}\vert t\vert v},
\end{equation}
for any $m,\,n\in\mathbb{N},$ if $t\geq 1.$  
\par Furthermore, since $H^{n}(\mathbb{R})$ is an algebra for any $n\geq 1$ and
\begin{equation*}
    \norm{f g}_{H^{n}}\leq C_{n}\norm{f}_{H^{n}}\norm{g}_{H^{n}},
\end{equation*}
for any $f,\,g\in H^{n}(\mathbb{R})$ for a constant $C_{n}>1,$ we can verify from Lemma \ref{interactt}, the definition of $N,$ and the fact that $F$ is a real polynomial satisfying \eqref{H1} the following estimate
\begin{equation}\label{Nnorm}
    \norm{(1+ x ^{2})^{\frac{m}{2}} N(r)}_{H^{n}}\leq C_{m,n}\left[\norm{(1+ x^{2})^{\frac{m}{2}}r(t)}_{H^{n}}e^{{-}2\sqrt{\omega}vt}(1+\vert t\vert v)+\norm{(1+x^{2})^{\frac{m}{2}}r(t)}_{H^{n}}\norm{r(t)}_{H^{n}}\right],
\end{equation}
if $\norm{r(t)}_{H^{n}}\leq 1$ and $t>1$ is sufficiently large, for some constant $C_{m,n}>1$ depending only on $m,n.$ 
\par Consequently, using Lemma \ref{edl} and estimates \eqref{In}, \eqref{Nnorm}, we conclude that there exists $C_{m,n}>1$ satisfying
\begin{equation}\label{op}
\max_{d_{1}+d_{2}=m+n,d_{1}\leq m}\norm{(1+\vert x \vert^{2})^{\frac{d_{1}}{2}}r(t)}_{H^{d_{2}}_{T,\frac{3\sqrt{\omega}v}{4}}}\leq \frac{C_{m,n}}{v^{2(2m+n)}}e^{{-}5\frac{\sqrt{\omega}\vert T\vert v}{4}}\left(1+v T\right)^{m}<1,
\end{equation}
for any $T\geq \frac{4(m+n)\ln{\frac{1}{v}}}{\sqrt{\omega}v}$ when $v>0$ is small enough.
\par Next, using estimate \eqref{e000} of Lemma \ref{edl} and \eqref{p0}, we can verify using the Picard iteration method that there exists a unique solution $u_{0}(t)$ of \eqref{NLS3} satisfying \eqref{moq1} when $c=\frac{3\sqrt{\omega}v}{4},$ and from the estimate \eqref{op} we obtain \eqref{pureweight}.\\
\textbf{Step 2.}(Uniqueness of the two solitary waves solution.)
\par It remains to prove that if a solution $u$ satisfies \eqref{moq1} for some $c_{1}>0,$ then it satisfies \eqref{moq1} for $c_{1}\geq \frac{3\sqrt{\omega}v}{4}.$ We assume that there exists solution $u_{0}$ of \eqref{NLS3} satisfying \eqref{moq1} for $0<c<\frac{3\sqrt{\omega}v}{4}.$ Let $u$ be the solution of \eqref{NLS3} satisfying \eqref{moq1} constructed in Step $1.$ Obviously, $u_{0}$ satisfies \eqref{moq1} for any $c<\frac{3\sqrt{\omega}v}{4},$ because $u_{0}\in H^{1}_{T,\frac{3\sqrt{\omega}v}{4}}$ for a $T>1$ large enough. 
\par From now on, we consider the following partial differential equation satisfied by $r_{d}(t)\coloneqq e^{{-}i\omega t}\left(u(t)-u_{0}(t)\right)$ when $t>0$ is sufficiently large.
\begin{align*}
     i\partial_{t}r_{d}+\partial^{2}_{x}r_{d}-\omega r_{d}
{+}&F^{'}\left(\phi_{\omega}(x-vt)^{2}+\phi_{\omega}(x+vt)^{2}\right)r_{d}
\\ \nonumber {+}&F^{''}\left(\phi_{\omega}(x-vt)^{2}\right)\phi_{\omega}(x-vt)e^{i({-}\frac{v^{2}t}{2}+\frac{vx}{2})}\overline{r_{d}}\\ \nonumber
{-}&F^{''}\left(\phi_{\omega}(x+vt)^{2}\right)\phi_{\omega}(x+vt)e^{i(\frac{{-}v^{2}t}{2}-\frac{vx}{2})}\overline{r_{d}}
\\ \nonumber
{+}&F^{''}\left(\phi_{\omega}(x-vt)^{2}\right)\phi_{\omega}(x-vt)e^{{-}i({-}\frac{v^{2}t}{2}+\frac{vx}{2})}r_{d}\\ \nonumber
{-}&F^{''}\left(\phi_{\omega}(x+vt)^{2}\right)\phi_{\omega}(x+vt)e^{{-}i({-}\frac{v^{2}t}{2}-\frac{vx}{2})}r_{d}
= N(u)-N(u_{0}).
\end{align*}
\par The next steps are completely similar to the argument in the proof of Lemma \ref{linearnls} for $f=N(u)-N(u_{0})$ but now considering the space $H^{1}_{T,c}$ for a $T>1$ sufficiently large. 
\par Since $f=N(u)-N(u_{0}),$ we deduce from \eqref{p0} and triangle inequality that
\begin{equation}\label{b0}
    \norm{f(t)}_{H^{1}}\leq C\left[\norm{r_{d}(t)}_{H^{1}}^{2}+\norm{r_{d}(t)}_{H^{1}}\norm{r(t)}_{H^{1}}+e^{{-}\frac{39}{20}\sqrt{\omega}vt}\norm{r_{d}(t)}_{H^{1}}\right],
\end{equation}
if $v>0$ is small enough, $t\geq 1$ is sufficiently large, and $r(t)$ is the remainder denoted in Step $1.$ From this, Lemma \ref{ortll}, and estimate \eqref{coerLv}, we can verify from the Fundamental Theorem of Calculus the existence of constants $k>0$ satisfying for any $T>1$ large enough the following inequality
\begin{equation}\label{en000}
    E_{v}(r_{d},t)\geq k\norm{r_{d}}_{H^{1}_{T,c}}^{2}e^{{-}2cT}.
\end{equation}

\par Next, using estimates \eqref{energyestimate2solitons}, \eqref{b0}, \eqref{en000}, and the fact that $\norm{r}_{H^{1}_{T,\frac{3}{4}\sqrt{\omega}v}}<{+}\infty,$ we can verify from the Fundamental Theorem of Calculus and Minkowski inequality the existence of constants $k>0,\,K>1$ depending on $c$ and $v$ satisfying the following inequality
\begin{equation}\label{b000}
    k\norm{r_{d}(t)}_{H^{1}_{T,c}}^{2}e^{{-}2cT}\leq K\norm{r_{d}(t)}_{H^{1}_{T,c}}^{2}e^{{-}\frac{3}{2}\sqrt{\omega}vT-2cT}, 
\end{equation}
for all $T> 1$  sufficiently large. Consequently,
\begin{equation*}
\norm{r_{d}}_{H^{1}_{T,c}}=0,
\end{equation*}
when $T>1$ is large enough, since $c,\,v>0.$
\par Therefore, $\norm{r_{d}(t)}_{H^{1}}\equiv 0,$ when $t>1$ is large enough.  In conclusion, the uniqueness of \eqref{NLS3} in $H^{1}_{T,\frac{3}{4}\sqrt{\omega}v}$ implies that $u(t)\equiv u_{0}(t),$ which finishes the proof of Theorem \ref{uniq}.  

%\par Consequently, we deduce that
%\begin{equation*}
%\norm{r_{d}(t)}_{H^{1}_{T,\frac{3}{4}\sqrt{\omega}v}}^{2}<\norm{r_{d}(t)}_{H^{1}_{T,\sqrt{\omega}v}}^{2}< \frac{k}{K}.
%\end{equation*}
%In conclusion, since $r_{d}=r-r_{j,1}$ and $r\in H^{1}_{T,\frac{3}{4}\sqrt{\omega}v}$ for any $T\geq \frac{4\ln{\left(\frac{1}{v}\right)}}{\sqrt{\omega}v},$  $r_{j,1}\in H^{1}_{T,\frac{3}{4}\sqrt{\omega}v},$ and so $r_{j,1}\equiv r,$ because we proved uniqueness of $u$ satisfying \eqref{moq1} when $c=\frac{3}{4}\sqrt{\omega}v.$
\end{proof}

\bibliographystyle{plain}
\bibliography{main}
\end{document}